\documentclass[reqno,11pt,a4paper]{amsart}
\usepackage{graphicx} % Required for inserting images

\usepackage[toc]{appendix}
\usepackage[T1]{fontenc}
\usepackage{enumerate} 
\usepackage{amsmath,amsfonts,amssymb,mathrsfs, bbm }
\usepackage[latin1]{inputenc}
\usepackage[english]{babel}
\usepackage{mathtools}
\usepackage{braket}
\usepackage{color}
\usepackage{hyperref}
\usepackage{xfrac, nicefrac}
\usepackage{csquotes}
\usepackage{todonotes}
\usepackage{esint}

\usepackage{orcidlink}

\usepackage[backend=biber]{biblatex}
\newtheorem{theorem}{Theorem}[section]
\newtheorem{lemma}[theorem]{Lemma}
\newtheorem{proposition}[theorem]{Proposition}

\newtheorem{definition}[theorem]{Definition}
\newtheorem{remark}[theorem]{Remark}
\newtheorem*{remark*}{Remark}
\newtheorem*{definition*}{Definition}

\numberwithin{equation}{section}

\usepackage{xcolor}

\usepackage{hyperref}

\newcommand{\R}{\mathbb{R}}
\newcommand{\N}{\mathbb{N}}

\newcommand{\B}{\mathcal{B}}
\newcommand{\T}{\mathcal{T}}

\newcommand{\data}{\textnormal{\texttt{data}}}
\newcommand{\datah}{\textnormal{\texttt{data}}_{\rm{h}}}

\newcommand{\mint}{\mathop{\int\hskip -1.05em -\, \!\!\!}\nolimits}

\def\Yint#1{\mathchoice
    {\YYint\displaystyle\textstyle{#1}}%
    {\YYint\textstyle\scriptstyle{#1}}%
    {\YYint\scriptstyle\scriptscriptstyle{#1}}%
    {\YYint\scriptscriptstyle\scriptscriptstyle{#1}}%
      \!\iint}
\def\YYint#1#2#3{{\setbox0=\hbox{$#1{#2#3}{\iint}$}
    \vcenter{\hbox{$#2#3$}}\kern-0.51\wd0}}
\def\longdash{{-}\mkern-5mu{-}} 
\def\miint{\Yint\longdash}

\def\A{\mathcal A}
\def\l{\lambda}
\def\L{\Lambda}

\def\wB{\widetilde{B}}
\def\e{\varepsilon}

\def\sm{\setminus}

\title[Regularity of quasilinear parabolic equations]{Schauder estimates for quasilinear parabolic equations with general Orlicz growth}

\begin{document}

\begin{abstract}
We develop a unified approach to Schauder estimates for scalar
parabolic equations
\[
    \partial_t u-\operatorname{div}\big(\A(x,t,Du)\big)=f(x,t)
\]
with general Orlicz growth  and coefficients that are H\"older continuous in the spatial variables and merely measurable in time. Combining intrinsic gradient estimates with a perturbative
argument, we establish quantitative local bounds and H\"older
estimates for the spatial gradient in the supercritical range
and, for locally bounded weak solutions, throughout the full
admissible growth range.

Our results extend the Schauder regularity theory for parabolic $p$-Laplace type equations to a broad class of possibly nonradial operators with Orlicz growth, while simplifying the regularity arguments and treating singular, degenerate, and mixed growth laws within a unified framework.
\end{abstract}

\subjclass[2020]{Primary 35B65, 35K59;
Secondary 35B45, 35K65, 35K67, 35K92} 
\keywords{Quasilinear equations, parabolic problems,  $p$-Laplacian, Orlicz-Laplacian, gradient regularity }

\author[Carlo Alberto Antonini]{Carlo Alberto Antonini \orcidlink{0000-0002-7663-1090}}  \address{Carlo Alberto Antonini \\ 
Dipartimento di Matematica ``Federico Enriques'', 
Universit\`a degli studi di Milano, 
Viale Cesare Saldini 50, 20133,
Milan,
Italy\\ ORCID ID: 0000-0002-7663-1090}
\email{\url{carlo.antonini@unimi.it}\\ \url{antonini@altamatematica.it}}

\maketitle

\section{Introduction}\label{sec:intro}

In this paper we study the local regularity of solutions to scalar
parabolic equations of the form
\begin{equation}\label{eq:1}
    \partial_t u-\operatorname{div}\big(\A(x,t,Du)\big)
    =f(x,t)\qquad\text{in }\Omega_T,
\end{equation}
where $\Omega_T\coloneqq\Omega\times(0,T)$,
$\Omega\subset\R^N$ is open, $N\geq2$, and $T>0$.

The growth and ellipticity assumptions on $\A$ are modeled on
the parabolic Orlicz-Laplace equation
\begin{equation*}
    \partial_t u-\operatorname{div}\left(
    \frac{B'(|Du|)}{|Du|}\,Du\right)=f,
\end{equation*}
where $B:[0,\infty)\to[0,\infty)$ is a Young function of the form
\begin{equation}\label{Bb}
    B(t)\coloneqq\int_0^t b(s)\,ds,
    \qquad b(t)\coloneqq t\,a(t).
\end{equation}
The particular choice $B(t)=t^p/p$
gives the evolutionary $p$-Laplace equation
\[
\partial_t u-\Delta_p u=f,\qquad \Delta_p u\coloneqq  \mathrm{div}\big(|Du|^{p-2} Du\big).
\]

We assume that
\begin{equation}\label{iasa}
\begin{split}
    &a\in C^1((0,\infty);\R_+),\qquad a(1)=1,\\
    &-1<i_a\leq\inf_{t>0}\frac{t\,a'(t)}{a(t)}
    \leq\sup_{t>0}\frac{t\,a'(t)}{a(t)}\leq s_a<\infty.
\end{split}
\end{equation}
Without loss of generality, we may suppose that $i_a<0<s_a$, and we also write
\begin{equation*}
    i_B\coloneqq i_a+2\quad\text{and}\quad s_B\coloneqq s_a+2.
\end{equation*}
In particular, $1<i_B<2<s_B<\infty$.

We assume that the vector field
$\A=\{\A^i\}_{i=1}^N:\Omega_T\times\R^N\to\R^N$
is a Carath\'eodory map such that
$\xi\mapsto\A(x,t,\xi)$ is of class
$C^1(\R^N\setminus\{0\})$ for almost every $(x,t)\in\Omega_T$.
We assume that there exist $\lambda_0,\Lambda_0>0$ such that
\begin{equation}\label{ass:A}
\begin{split}
    &\sum_{i,j=1}^N\left|
    \frac{\partial\A^i}{\partial\xi_j}(x,t,\xi)
    \right|\leq\Lambda_0\,a(|\xi|),\\
    &\sum_{i,j=1}^N
    \frac{\partial\A^i}{\partial\xi_j}(x,t,\xi)
    \eta_i\eta_j
    \geq\lambda_0\,a(|\xi|)|\eta|^2
\end{split}
\end{equation}
for almost every $(x,t)\in\Omega_T$, all
$\xi\in\R^N\setminus\{0\}$ and $\eta\in\R^N$.
Its dependence on the spatial variable satisfies, for some
$\alpha\in(0,1)$ and $\Lambda_{\rm h}>0$,
\begin{equation}\label{A:hold}
    |\A(x,t,\xi)-\A(y,t,\xi)|
    \leq\Lambda_{\rm h}\big(1+b(|\xi|)\big)|x-y|^\alpha
\end{equation}
for all $x,y\in\Omega$, $\xi\in\R^N$, and almost every
$t\in(0,T)$. Finally, we assume that
\begin{equation}\label{ass:f}
    f\in L^d(\Omega_T),\qquad d>N+2.
\end{equation}
Notation and definitions, including the parabolic Orlicz spaces and the
notion of local weak solution, are given in Section \ref{sec:prel}.
The structural constants $\data$ and $\datah$ are specified in \eqref{data}.

The main result of this paper is the following theorem, which
establishes Schauder estimates for \eqref{eq:1} under general
Orlicz growth.
\begin{theorem}\label{main:thm}
    Let
    $u$
    be a local weak solution to \eqref{eq:1} under
    \eqref{iasa}, \eqref{ass:A}, \eqref{A:hold} and \eqref{ass:f}.
    Assume that either
    \[
        i_B>\frac{2N}{N+2}
        \qquad\text{or}\qquad
        u\in L^\infty_{\rm loc}(\Omega_T).
    \]
    Then
    \[
        u\in C^{1,1/2}_{\rm loc}(\Omega_T),
        \qquad
        Du\in C^{\beta_*,\beta_*/2}_{\rm loc}(\Omega_T)
    \]
    for some $\beta_*=\beta_*(\data,\alpha,d)\in(0,1)$.
    More precisely, the following estimates hold.

    \begin{itemize}
    \item[(i)] If $i_B>2N/(N+2)$, then, for every
    $Q_{R,S}=Q_{R,S}(x_0,t_0)\Subset\Omega_T$ and $c\in\R$,
    \begin{equation}\label{main:boundedness}
    \begin{split}
        \|u-c\|_{L^\infty(Q_{R/2,S/4})}
        \leq C_{R,S}\left[
        1+\miint_{Q_{R,S}}
        \big\{|u-c|^2+B(|u-c|)\big\}\,dxdt
        \right]^{\frac{i_B}{(N+2)i_B-2N}},
    \end{split}
    \end{equation}
    where $C_{R,S}$ depends only on $\datah$, $R,S$,
    and $\|f\|_{L^d(Q_{R,S})}$.

    \item[(ii)] For every $Q_{2\rho}(x_0,t_0)\Subset\Omega_T$,
    with $0<2\rho\leq1$, there exist
    \[
        R_*=R_*(\datah,\|f\|_{L^d(Q_{2\rho}(x_0,t_0))})
        \in(0,1],
        \qquad C=C(\data,\alpha,d)\geq1,
    \]
    such that, setting
    \begin{equation}\label{pert:lambda-final}
        \lambda\coloneqq C\left[
        1+\frac{\omega}{\min\{\rho,R_*\}}
        +B^{-1}\left(
        \frac{\omega^2}{\min\{\rho,R_*\}^2}\right)
        \right],\qquad  \omega\coloneqq\operatorname*{osc}_{Q_{2\rho}(x_0,t_0)}u,
    \end{equation}
    we have
    \begin{equation}\label{pert:gradient}
        \|Du\|_{L^\infty(Q_\rho(x_0,t_0))}\leq\lambda.
    \end{equation}
    Moreover,
    \begin{equation}\label{pert:holder}
    \begin{split}
        |Du(x,t)-Du(y,s)|
        \leq C\lambda\left[
        \frac{|x-y|+\sqrt{a(\lambda)|t-s|}}
        {\min\{\rho,R_*\}\min\{1,\sqrt{a(\lambda)}\}}
        \right]^{\beta_*}
    \end{split}
    \end{equation}
    for all $(x,t),(y,s)\in Q_{\rho/2}(x_0,t_0)$, and
    \begin{equation}\label{pert:oscillation}
        \operatorname*{osc}_{Q_r(x,t)}u
        \leq C(\datah)\left[
        \lambda+b(\lambda)
        +\|f\|_{L^d(Q_{2\rho}(x_0,t_0))}\right]r
    \end{equation}
    whenever $(x,t)\in Q_{\rho/2}(x_0,t_0)$ and
    $0<r\leq\rho/2$.
    \end{itemize}
\end{theorem}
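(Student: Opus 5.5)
The plan has three stages: a local $L^\infty$ bound for $u$, a Lipschitz bound for $u$ via comparison with frozen problems, and then Hölder continuity of $Du$ by a Campanato-type iteration. The $L^\infty$ bound is only needed when $i_B>2N/(N+2)$; otherwise it is assumed. The structure of the Hölder step follows the Schauder theory for the parabolic $p$-Laplacian, with intrinsic cylinders $Q_r^\lambda=B_r\times(-r^2/a(\lambda),0)$ adapted to the Orlicz scaling.

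\textbf{Step 1 (part (i)).} I would run a De Giorgi iteration on the truncations $(u-c-k)_\pm$. The Caccioppoli inequality follows from testing the weak formulation with $(u-c-k)_\pm\zeta^2$, using a Steklov average in time. It controls
\[
\sup_t\int (u-c-k)_\pm^2\zeta^2\qquad\text{and}\qquad \iint B(|D(u-c-k)_\pm|)\zeta^2 .
\]
The right-hand side contains $B(|u-c-k|\,|D\zeta|)$ together with an $f$-term. The $f$-term is handled by Hölder's inequality, and this is where $d>N+2$ enters. To iterate I would combine two ingredients:
\begin{itemize}
\item the parabolic Gagliardo--Nirenberg embedding in the Orlicz setting, which uses $B(t)\gtrsim t^{i_B}$ for $t\ge1$;
\item the lower index $i_B$ to compare $B$ with powers.
\end{itemize}
The iteration closes exactly when $i_B>2N/(N+2)$. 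That condition is the positivity of the exponent $(N+2)i_B-2N$, which produces the power $i_B/((N+2)i_B-2N)$ in \eqref{main:boundedness}. The additive $1$ absorbs the region where $B$ behaves like $t^{s_B}$, as well as the contribution of $f$.

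\textbf{Step 2 (gradient bound \eqref{pert:gradient}).} Once $u$ is bounded, $\omega$ is finite. On each small intrinsic cylinder I would compare $u$ with the solution $v$ of the homogeneous problem
\[
\partial_t v-\operatorname{div}\A(x_c,t,Dv)=0,
\]
with the spatial variable frozen at the centre $x_c$ and the same lateral and initial data. The comparison estimate comes from testing with $u-v$ and using the monotonicity implied by \eqref{ass:A}. It bounds a mean of
\[
a(|Du|+|Dv|)\,|Du-Dv|^2
\]
by $r^{\alpha}$ times a $(1+b)$-energy coming from \eqref{A:hold}, plus an $f$-term of size $r^{1-(N+2)/d}$. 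For $v$ I would invoke the known intrinsic results for measurable-in-time, $x$-independent Orlicz structures: a sup bound for $Dv$ and an excess decay for $Dv$ on intrinsic cylinders, stated earlier in the paper.

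Feeding the comparison into the excess decay and summing over dyadic scales yields a Lipschitz bound through a Campanato/Lebesgue-point argument. The bound holds at scales $r\le \min\{\rho,R_*\}$, where $R_*$ is chosen so that the perturbation terms are small relative to $\lambda$. The starting value of $\lambda$ comes from the energy estimate. Caccioppoli on $Q_{2\rho}$ gives a mean of $B(|Du|)$ controlled by $\omega/\rho$-type terms, and the intrinsic balancing $\lambda^2a(\lambda)\sim\omega^2/\rho^2$ produces the $B^{-1}(\omega^2/\rho^2)$ term in \eqref{pert:lambda-final}.

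\textbf{Step 3 (estimates \eqref{pert:holder} and \eqref{pert:oscillation}).} With $|Du|\le\lambda$ established, I would rerun the comparison on cylinders $Q^\lambda_r$. There the excess decay of $v$ carries a geometric factor $(r/R)^{\beta_0}$, and the perturbation adds a term $C\lambda r^{\alpha'}$. A standard iteration lemma then gives excess decay with some $\beta_*<\min\{\beta_0,\alpha',1-(N+2)/d\}$. Campanato's characterization in the intrinsic metric $|x-y|+\sqrt{a(\lambda)|t-s|}$ gives \eqref{pert:holder}. The normalizing factor $\min\{1,\sqrt{a(\lambda)}\}$ accounts for the fact that $Q^\lambda_r$ may be thinner or thicker in time than the standard cylinder.

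For \eqref{pert:oscillation}, the spatial oscillation is at most $\lambda r$. For the oscillation in time I would test the equation against a spatial cutoff. This bounds the time oscillation of spatial averages by
\[
r^{-1}\,\|\A(\cdot,Du)\|_\infty\, r^2 \lesssim b(\lambda)r ,
\]
plus the $f$ contribution.

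\textbf{Main obstacle.} I expect Step 2 to be the hardest: making the comparison work uniformly across the singular and degenerate regimes. In the regime $|Du|\ll\lambda$ the excess decay for $v$ has to switch between a degenerate alternative and a nondegenerate one. I would handle this with the usual two-alternative argument on intrinsic cylinders. The dichotomy is whether the averaged $|Du|$ is small relative to $\lambda$. Each alternative is controlled via $a(\lambda)$-scaling and the index bounds $i_a,s_a$, which make $a(\lambda s)/a(\lambda)$ comparable to powers of $s$.
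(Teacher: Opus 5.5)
Your Steps 1 and 3 match the paper in substance: a De Giorgi iteration with a parabolic Orlicz--Sobolev inequality; comparison with the frozen problem on $Q_R^{(\lambda)}$ plus an interpolation of radii to get a Campanato bound; and control of the time oscillation of spatial averages as in Lemma \ref{lem:pert:gluing}. The gap is in Step 2, which is the core of the theorem. You never break the circular dependence between $\lambda$ and $\sup|Du|$.

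Every ingredient of the comparison on $Q_R^{(\lambda)}$ presupposes $|Du|\le\lambda$ there:
\begin{itemize}
\item the bound $\operatorname{osc}_{Q_R^{(\lambda)}}u\le C\lambda R$ of Lemma \ref{lem:pert:gluing};
\item hence, by the maximum principle, $\operatorname{osc}v\le C\lambda R$ and $|u-v|\le C\lambda R$, which is used for the $f$-term;
\item the pointwise bound $B(|Du|)\le B(\lambda)$ in the coefficient error;
\item above all, $\|Dv\|_{L^\infty(Q^{(\lambda)}_{R/2})}\le C\lambda$, which in the full range $i_B>1$ is available only from the oscillation form \eqref{gr:bd2}.
\end{itemize}
Summing the Campanato decay over dyadic scales then gives only $|Du(x,t)-(Du)_{Q_r^{(\lambda)}(x,t)}|\le C\lambda(r/R_0)^{\beta_*}$. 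That bound is relative to the very $\lambda$ you are trying to produce, and you still have to control the mean of $Du$ at the top scale.

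``Starting $\lambda$ from the energy'' does not supply this. Caccioppoli on $Q_{2\rho}$ controls averages of $B(|Du|)$ on standard cylinders. The intrinsic cylinders that fit inside have radius $\lesssim\rho\min\{1,\sqrt{a(\lambda)}\}$, and passing to them costs a factor $a(\lambda)^{-N/2}$ when $a(\lambda)<1$. This is exactly the $F^{-1}$ phenomenon of Theorem \ref{thm:grbddhom}(i), which closes only for $i_B>2N/(N+2)$. In the bounded case with $i_B\le 2N/(N+2)$, an energy start of this kind is not available. Also, the two-alternative argument you single out as the main obstacle is not needed at this stage, since \eqref{osc:intr} already gives decay at the fixed level $\lambda$.

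The missing idea is an a priori bound combined with absorption on nested cylinders. The paper first proves the estimates for smooth solutions of regularized problems (Section \ref{sec:approximation}). For $\rho\le R_1<R_2\le 2\rho$ one can then set $\lambda:=1+\|Du\|_{L^\infty(Q_{R_2})}<\infty$ and $R_0:=\frac18\min\{R_2-R_1,R_*\}\min\{1,\sqrt{a(\lambda)}\}$. With these choices every comparison above is legitimate at every center in $Q_{R_1}$.

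The mean is controlled by the divergence theorem rather than by energy: $|(Du)_{Q_r^{(\lambda)}}|\le C\omega/r$. Choosing $r=\theta R_0$ with $C\theta^{\beta_*}\le 1/8$ gives
\[
\|Du\|_{L^\infty(Q_{R_1})}\le\frac{\lambda}{8}+C\frac{\omega}{\min\{R_2-R_1,R_*\}}\big(1+a(\lambda)^{-1/2}\big).
\]
Young's inequality with exponents $(2/i_B,2/(2-i_B))$ converts the $a(\lambda)^{-1/2}$ term into $\frac{\lambda}{8}+CB^{-1}\big(\omega^2/\min\{R_2-R_1,R_*\}^2\big)$. This step, not an energy balance, is the source of the $B^{-1}$ term in \eqref{pert:lambda-final}. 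Lemma \ref{lem:iteration} then removes the resulting $\frac14\|Du\|_{L^\infty(Q_{R_2})}$ term. Since all constants are uniform in the regularization, Arzel\`a--Ascoli and uniqueness transfer the bounds to $u$.

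Without this device, your Step 2 does not yield \eqref{pert:gradient}. A fully worked-out exit-time argument could replace it in principle, but it would face the same obstruction in the bounded subcritical case. Your Step 3, which assumes $|Du|\le\lambda$, is therefore not yet justified either.
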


\subsection{Examples}\label{intro:examples}

We give a few examples of equations covered by
Theorem \ref{main:thm}.
For examples of admissible Orlicz functions $B$, we refer to
\cite[pp.~5--6]{Antonini2027}, \cite{Lieberman1991},
and \cite[Section 2.3]{BaroniLindfors2017}.
Such growth conditions arise, for instance, in energy models
from mechanics, fluid dynamics, and magnetism.
A concrete example is provided by scalar potential formulations
of stationary, irrotational subsonic flows of compressible fluids.
The corresponding energy densities depend on the gradient
through nonpower laws and, after suitable extension, satisfy
these growth conditions.
We refer to \cite[Introduction]{BaroniLindfors2017}
for a discussion of these models.

As already mentioned, the prototypical example is the evolutionary
$p$-Laplace equation
\[
    \partial_t u-\Delta_p u=f,
    \qquad p>1,
\]
corresponding to $a(t)=t^{p-2}$ and $B(t)=t^p/p$.

Our assumptions also allow for spatially varying coefficients
and a divergence datum:
\[
    \partial_t u-\operatorname{div}\big(
    c(x,t)a(|Du|)Du\big)
    =f+\operatorname{div}F.
\]
Here $c:\Omega_T\to\R$ and $F:\Omega_T\to\R^N$
are bounded, measurable in time, and H\"older
continuous in space, uniformly in time, with $0<c_0\leq c(x,t)\leq c_1<\infty$.

Anisotropic examples are given by
\[
    \partial_t u-\operatorname{div}\left(
    c(x,t)\,a\big(H(Du)\big)
    \nabla_\xi H^2(Du)\right)
    =f+\operatorname{div}F,
\]
where $H$ is a norm on $\R^N$, of class
$C^2(\R^N\setminus\{0\})$, such that
\[
    \nu\,\mathrm{Id}
    \leq \frac12\nabla_\xi^2H^2(\xi)
    \leq L\,\mathrm{Id},
    \qquad \xi\neq0,
\]
for some $0<\nu\leq L<\infty$. A further example is provided by operators associated
with a variable metric:
\[
    \partial_t u-\operatorname{div}\left(
    a\left(\sqrt{M(x,t)Du\cdot Du}\right)
    M(x,t)Du\right)
    =f+\operatorname{div}F,
\]
where $M$ is a symmetric matrix field satisfying
\[
    \nu\,\mathrm{Id}\leq M(x,t)\leq L\,\mathrm{Id},
\]
and is measurable in time and H\"older continuous
in space, uniformly in time.
This corresponds to the spatially and temporally dependent
norm $H(x,t,\xi)\coloneqq\sqrt{M(x,t)\xi\cdot\xi}$.

\subsection{Related literature}\label{intro:literature}

In the groundbreaking works
\cite{DiBenedettoFriedman1984,DiBenedettoFriedman1985,DiBenedettoFriedman1985Addendum} DiBenedetto and Friedman
established local H\"older continuity of the gradient for solutions
to the evolutionary $p$-Laplace system
\[
    \partial_t \mathbf{u}-\operatorname{div}(|D\mathbf{u}|^{p-2}D\mathbf{u})=0,
\]
in the supercritical range $p>2N/(N+2)$. The main idea is to work with intrinsic cylinders, whose time
scale is adjusted to the size of the gradient to compensate for
the different scaling of the evolutionary and diffusive terms. Further developments include the work
of Chen \cite{Chen1986}, the boundary estimates of Chen and DiBenedetto
\cite{ChenDiBenedetto1989}, and Choe's treatment of singular equations
\cite{Choe1991}. We refer to
\cite{DiBenedetto1993,Lieberman1996,DiBenedettoGianazzaVespri2012}
for accounts of the theory.

For systems with variable coefficients and a nontrivial
right-hand side, Misawa \cite{Misawa2002} considered systems of the form
\begin{equation}\label{eq:misawa}
    \partial_t\mathbf{u}-\operatorname{div}\left[
    \big(G(x,t)D\mathbf{u}:D\mathbf{u}\big)^{\frac{p-2}{2}}
    G(x,t)D\mathbf{u}
    \right]
    =
    \operatorname{div}\big(|F|^{p-2}F\big),
\end{equation}
where $G$ is a symmetric, uniformly elliptic matrix.
In the supercritical range $p>2N/(N+2)$, he established
local H\"older continuity of the gradient, assuming
that $G$ and $F$ are H\"older continuous with respect
to the usual parabolic metric. Further criteria for the right-hand side in the vectorial setting
were obtained by Karim and Misawa \cite{KarimMisawa2016}.

A related line of research concerns the transfer of integrability
from the data to the gradient. Starting with the higher integrability
theorem of Kinnunen and Lewis \cite{KinnunenLewis2000}, this theory
was developed further by Misawa \cite{Misawa2005} and Acerbi and
Mingione \cite{AcerbiMingione2007} in the framework of nonlinear
parabolic Calder\'on--Zygmund estimates.

Beyond power growth, Lieberman \cite{Lieberman2006} studied autonomous
parabolic systems of the form
\begin{equation}\label{eq:lieberm}
    \partial_t \mathbf{u}-\operatorname{div}(a(|D\mathbf{u}|)D\mathbf{u})=0.
\end{equation}
Under the conditions \eqref{iasa}, and assuming
a priori boundedness of the gradient, he proved its H\"older
continuity. His assumptions
also include a scale invariant continuity condition on the derivative
of the stress field $a(|\mathbf{p}|)\mathbf{p}$.

A substantial development of the inhomogeneous theory came from
pointwise gradient estimates in terms of potentials of the datum.
Duzaar and Mingione
\cite{DuzaarMingione2009,DuzaarMingione2011} obtained such estimates
through caloric Riesz potentials for parabolic equations with
quadratic growth.

Kuusi and Mingione subsequently developed an intrinsic potential
theory for nonlinear parabolic equations, announced in
\cite{KuusiMingione2011Parabolic}. For autonomous scalar equations
of $p$-Laplace type with $p\geq2$, they established pointwise
gradient bounds through intrinsic Wolff potentials
\cite{KuusiMingione2014Wolff}. In the singular range $2-1/(N+1)<p\leq2$, intrinsic Riesz
potential estimates were obtained in
\cite{KuusiMingione2013Gradient}. Dong and Zhu
\cite{DongZhu2022} subsequently extended these estimates
to the larger range $2N/(N+1)<p\leq2$.

For $p\geq2$, the passage from Wolff to Riesz potentials was
achieved in \cite{KuusiMingione2014Riesz}, where the coefficients
are  also allowed to be Dini continuous in space and merely measurable
in time. The resulting estimates give gradient continuity
under local uniform decay of the caloric Riesz potential.
In particular, the Riesz potential criteria in
\cite{KuusiMingione2013Gradient,KuusiMingione2014Riesz}
yield gradient continuity under the borderline Lorentz
condition $f\in L(N+2,1)$ in their respective ranges.

Under the same Lorentz condition, Kuusi and Mingione
obtained local gradient boundedness throughout the
supercritical range $p>2N/(N+2)$ for autonomous scalar
equations of $p$-Laplace type and systems with Uhlenbeck
structure \cite{KuusiMingione2012Potential}.
They subsequently established gradient continuity throughout
this range for evolutionary $p$-Laplace systems, allowing
for a scalar coefficient that is Dini continuous in space
and measurable in time
\cite{KuusiMingione2014Borderline}.
For problems with a divergence datum, their perturbative
approach \cite{KuusiMingione2012Perturbation} yields gradient
continuity under suitable Dini assumptions on the spatial
coefficients and the datum, and nonlinear Schauder estimates
when both are H\"older continuous in space.
These latter results apply to scalar equations with general
operators of $p$-Laplace type and to systems with
radial structure.

Other contributions to parabolic potential theory
include the works of Liskevich, Skrypnik and Sobol
\cite{LiskevichSkrypnikSobol2011Potential,LiskevichSkrypnikSobol2011Gradient,LiskevichSkrypnikSobol2013Parabolic}
and Liskevich and Skrypnik \cite{LiskevichSkrypnik2013Singular}.
For related estimates for solutions of porous medium and doubly
nonlinear equations, we refer to B\"ogelein, Duzaar and Gianazza
\cite{BogeleinDuzaarGianazza2013Potential} and Sturm \cite{Sturm2018}.

A gradient H\"older theory for parabolic systems with a variable
exponent $p(x,t)$ was studied in 
\cite{BogeleinDuzaar2012}.
For scalar equations, Adimurthi and Banerjee
\cite{AdimurthiBanerjee2022} gave a unified approach to gradient
H\"older regularity in the singular and degenerate regimes.
Adimurthi, Ghosh and Tewary \cite{AdimurthiGhoshTewary2022}
studied time-dependent nonstandard growth, including double-phase
and variable-exponent models, under an a priori gradient bound.
Gradient bounds for systems with stronger degeneracies at small
gradient sizes were obtained by P. Ambrosio and B\"auerlein
\cite{AmbrosioBauerlein2024}.
For systems with power growth, intrinsic gradient oscillation estimates
were developed further by B\"ogelein, Duzaar, Liao and Scheven
\cite{BogeleinDuzaarLiaoScheven2022}. More recently, B\"ogelein, Duzaar, Gianazza, Liao and Scheven
\cite{Bogelein2026} proved Schauder estimates for locally
bounded weak solutions to the parabolic $p$-Laplace system
\[
\partial_t \mathbf{u}
-\operatorname{div}\big(a(x,t)(\mu^2+|D\mathbf{u}|^2)^{\frac{p-2}{2}}D\mathbf{u}\big)=0,
\qquad \mu\in[0,1],
\]
for every $p>1$, assuming that the coefficient $a(x,t)$
is H\"older continuous in space and measurable in time. 
A related direction concerns parabolic problems with
nonstandard $(p,q)$-growth. B\"ogelein, Duzaar and Marcellini
\cite{BogeleinDuzaarMarcellini2013Equations} established
quantitative local gradient bounds for scalar equations
with variable coefficients under suitable restrictions
on the gap between $p$ and $q$.
Variational approaches to existence for parabolic systems
with nonstandard and general growth were developed in
\cite{BogeleinDuzaarMarcellini2013Variational,
Marcellini2020Parabolic}.

For parabolic equations with Orlicz growth, Hwang and Lieberman
\cite{HwangLieberman2015Degenerate,HwangLieberman2015Singular}
established H\"older continuity of bounded solutions in the
degenerate and singular regimes, under the assumptions
$i_a\geq0$ and $s_a\leq0$, respectively.
For quantitative H\"older estimates in the degenerate regime,
see also Amaral and Ara\'ujo \cite{AmaralAraujo2026}.
Local boundedness for systems with general growth was proved by
Isernia \cite{Isernia2018}.

Baroni and Lindfors
\cite{BaroniLindfors2017} obtained interior spatial Lipschitz estimates
and boundary continuity for a broad class of autonomous scalar equations
\[
\partial_t u-\mathrm{div}\big(\A(Du) \big)=0,
\]
in the supercritical regime $i_B>2N/(N+2)$.

Diening, Scharle and Schwarzacher
\cite{DieningScharleSchwarzacher2019} established quantitative
local gradient bounds for parabolic systems
\eqref{eq:lieberm} with Uhlenbeck structure and Orlicz growth,
assuming local square integrability of the gradient and
an additional growth condition, satisfied in particular when
$i_B>2-4/N$.
Their argument treats singular and degenerate diffusion
simultaneously. Combined with Lieberman's theorem
\cite{Lieberman2006}, these bounds also yield gradient
H\"older continuity under a further continuity assumption
on the second derivative of the energy.

Ok, Scilla and Stroffolini \cite{OkScillaStroffolini2024Uhlenbeck}
subsequently obtained boundedness of solutions and gradients, and
H\"older continuity of the gradient, directly from the natural energy
class in the supercritical range. Their gradient H\"older estimates
require an additional H\"older condition on the Hessian of the radial energy.

Other aspects of the Orlicz theory include higher integrability,
established by H\"ast\"o and Ok \cite{HastoOk2021} and extended to
generalized Orlicz growth in \cite{HastoOk2025Generalized}, and
Calder\'on-Zygmund estimates with discontinuous coefficients, proved
by Oh and Ok \cite{OhOk2022}.
The parabolic Lipschitz truncation and caloric approximation of Diening,
Schwarzacher, Stroffolini and Verde
\cite{DieningSchwarzacherStroffoliniVerde2017}
provide tools for systems with general growth. For systems without
Uhlenbeck structure, these tools lead to partial gradient regularity;
see Foss, Isernia, Leone and Verde
\cite{FossIserniaLeoneVerde2023} and Ok, Scilla and Stroffolini
\cite{OkScillaStroffolini2024Partial}.

% Boundary gradient bounds were obtained by B\"ogelein
% \cite{Bogelein2015Global} for systems with nonhomogeneous
% Dirichlet data, and by B\"ogelein, Duzaar, Liao and Scheven
% \cite{BogeleinDuzaarLiaoScheven2022Boundary} for systems with
% Uhlenbeck structure under homogeneous Dirichlet conditions
% in convex domains.
% We also mention the work of Lee, Lian, Yun and Zhang \cite{LeeLianYunZhang2025Boundary}, who
% developed a viscosity approach to boundary gradient
% H\"older regularity for the parabolic $p$-Laplace equation.
% For boundary gradient bounds for nondegenerate systems
% with $(p,q)$-growth in smooth convex domains, see Strunk
% \cite{Strunk2026Boundary}.

\subsection{Main novelties and strategy of the proof}\label{intro:proof}

Theorem \ref{main:thm} extends the scalar parabolic Schauder
theory from power growth to general Orlicz growth, allowing
for coefficients that are H\"older continuous in space
and merely measurable in time.

 The results  cover singular and degenerate
diffusion, including growth laws exhibiting both behaviors,
within a single argument. The scalar structure of the problem allows us
to consider general, possibly nonradial operators without
imposing any additional continuity assumption on
$\nabla_\xi\A$, in contrast to the results for parabolic
systems with Orlicz growth 
\cite{Lieberman2006,OkScillaStroffolini2024Uhlenbeck}.

A further contribution concerns the homogeneous equation
\[
    \partial_t v-\operatorname{div}\big(\A_0(t,Dv)\big)=0,
\]
for which we establish gradient bounds on arbitrary cylinders
$Q_{R,S}$ in Theorem \ref{thm:grbddhom}. More precisely, setting
\[
    E\coloneqq\miint_{Q_{R,S}}B(|Dv|)\,dxdt,
    \qquad F(s)\coloneqq s^2a(s)^{(N+2)/2},
\]
in the supercritical range $i_B>2N/(N+2)$ we prove
\begin{equation}\label{se:0}
    \|Dv\|_{L^\infty(Q_{R/2,S/4})}
    \leq C\left[
    \left(\frac{S}{R^2}E\right)^{1/2}
    +F^{-1}\left(
    \left(\frac{R^2}{S}\right)^{N/2}E
    \right)\right],
\end{equation}
while for bounded solutions we obtain
\begin{equation}\label{se:1} \|Dv\|_{L^\infty(Q_{R/2,S/4})}
    \leq C\left[
    \frac{\operatorname*{osc}_{Q_{R,S}}v}{R}
    +B^{-1}\left(
    \frac{(\operatorname*{osc}_{Q_{R,S}}v)^2}{S}
    \right)\right],
\end{equation}
which holds throughout the admissible growth range $i_B>1$. 

The first estimate controls the gradient solely through
the natural energy. This is the Orlicz analogue of the classical bounds for the gradient of solutions to the evolutionary
$p$-Laplace system-- see
\cite[Chapter VIII, Theorems~5.1 and~5.2$'$, Remark~5.4]{DiBenedetto1993} and \cite[Propositions 3.5, 3.11]{Bogelein2026}.
Its formulation for general growth may also be useful in the
development of parabolic gradient potential estimates.

We remark that the supercritical restriction
cannot in general be dropped without additional information
on the solution: already for the evolutionary $p$-Laplace
equation, local boundedness may fail in the subcritical range;
see \cite[Chapter V, Section 5]{DiBenedetto1993} and the
counterexample in
\cite[Chapter XII, Section 13]{DiBenedetto1993}.

The second estimate replaces the energy by the oscillation
of the solution and extends gradient boundedness to the full
growth range $i_B>1$.  Together with the intrinsic gradient oscillation
estimates described below, it provides the homogeneous comparison estimates
needed in the perturbative argument.

Despite the more general growth, our proof of
\eqref{se:0}--\eqref{se:1} considerably simplifies the classical
argument for the $p$-Laplace system in
\cite[Chapter VIII, Section 5]{DiBenedetto1993}.
The idea is the following.
Following the approach of
\cite[Theorem 5.2]{KuusiMingione2012Potential} and
\cite[Proposition 3.5]{BaroniLindfors2017},
we first establish a conditional pointwise gradient estimate
on intrinsic cylinders:
\[
    \sup_{Q_R^{(\lambda)}(x_0,t_0)}|Dv|\leq4\lambda
    \quad\Longrightarrow\quad
    |Dv|(x_0,t_0)
    \leq\lambda+C\left(
    \frac{1}{a(\lambda)}
    \miint_{Q_R^{(\lambda)}(x_0,t_0)}
    B(|Dv|)\,dxdt\right)^{1/2}.
\]
This is the content of Proposition \ref{prop:temp:grbd}.
Its proof exploits the fact that
$H\coloneqq(|Dv|^2-\lambda^2)_+$ is a nonnegative subsolution
to a linear equation whose coefficients are uniformly
parabolic after intrinsic scaling.

Once this is established, we obtain \eqref{se:0} through
an interpolation argument on nested cylinders, distinguishing
the cases
\[
    \theta_\lambda\coloneqq\frac{S}{R^2}a(\lambda)\geq1
    \qquad\text{and}\qquad
    \theta_\lambda<1.
\]
These correspond to whether the prescribed time depth is
larger or smaller than the intrinsic one--
compare with the arguments in
\cite[Chapter VIII, p.~239, below (5.4), and p.~241]{DiBenedetto1993}.

To prove \eqref{se:1}, we instead compare the gradient scale
$\lambda$ with the oscillation scale $\omega/r$, distinguishing
the cases
\[
    \lambda r\geq K\omega
    \qquad\text{and}\qquad
    \lambda r<K\omega,
\]
where $\omega=\operatorname*{osc}_{Q_{R,S}}v$ and $K$
is a sufficiently large structural constant.

Next we establish the H\"older continuity of $Dv$ through
the intrinsic oscillation estimate
\begin{equation}\label{se:2}
  \sup_{Q^{(\l)}_R}|Dv|\leq \lambda\implies  \operatorname*{osc}_{Q_r^{(\lambda)}}Dv
    \leq C\lambda\left(\frac rR\right)^\beta,
    \qquad 0<r\leq R.
\end{equation}

Its proof rests on the parabolic De Giorgi type inequalities in
Propositions \ref{pr:1DG} and \ref{prop:levels}, which extend the
elliptic inequalities of \cite[Lemmas 4.6, 4.7]{Antonini2027}, together with the
logarithmic estimate in Proposition \ref{prop:log}. 
 They lead to the fundamental alternative of Proposition \ref{prop:fund:alt}:
either the gradient stays away from zero in a smaller cylinder,
where the differentiated equation becomes uniformly parabolic after
intrinsic scaling, or its size decreases by a fixed factor.

The De Giorgi inequalities simplify this argument and allow
the full growth class to be treated without separate proofs
for singular and degenerate diffusion. As already mentioned,
no additional continuity assumption on $\nabla_\xi\A$ is
needed in our argument, whereas in the vectorial setting such assumptions
are used in the first alternative to control the comparison
with solutions of the linearized system--see \cite[Lemma 6.6, pp.~146--147]{OkScillaStroffolini2024Uhlenbeck}
and \cite[Lemma 5.3, pp.~362--363]{Lieberman2006}.

Finally, we transfer these estimates to \eqref{eq:1}
through a perturbative argument on intrinsic cylinders,
similar to, yet simpler than, those in
\cite{KuusiMingione2012Perturbation,KuusiMingione2014Riesz,Bogelein2026}. More in detail, we fix $\lambda\geq1$
as an upper bound for the gradient $|Du|$ on an outer cylinder.
On each smaller intrinsic cylinder
$Q_R^{(\lambda)}=Q_R^{(\lambda)}(x_0,t_0)$, we consider
the solution $v$ to the frozen problem
\[
    \begin{cases}
        \partial_t v-\operatorname{div}\big(\A(x_0,t,Dv)\big)=0
        &\text{in }Q_R^{(\lambda)},\\
        v=u&\text{on }\partial_{\rm par}Q_R^{(\lambda)}.
    \end{cases}
\]
The comparison estimate of Lemma \ref{lem:pert:comparison} gives
\[
    \miint_{Q_{R/2}^{(\lambda)}}|Du-Dv|^2\,dxdt
    \leq C\lambda^2\mathcal P_\lambda(R)^{2/s_B},
\]
where $\mathcal P_\lambda(R)$, defined in \eqref{pert:size},
measures the spatial oscillation of the coefficients
and the contribution of the source term. For $\lambda\geq1$,  owing to the assumptions  \eqref{A:hold} and \eqref{ass:f},
these errors are controlled by $(R/R_*)^\sigma$ for some $\sigma\in(0,1)$ depending on the data. 

The subsequent argument can then be summarized as follows:
\[
\begin{aligned}
    \sup_{Q_R^{(\lambda)}}|Du|\leq\lambda
    &\implies
    \operatorname*{osc}_{Q_R^{(\lambda)}}u
    \leq C\lambda R
    &&\text{(Oscillation Lemma \ref{lem:pert:gluing})}
    \\
    &\implies
    \operatorname*{osc}_{Q_R^{(\lambda)}}v
    \leq C\lambda R
    &&\text{(Maximum principle)}
    \\
    &\implies
    \|Dv\|_{L^\infty(Q_{R/2}^{(\lambda)})}
    \leq C\lambda
    &&\text{(by \eqref{se:1})}
    \\
    &\implies
    \operatorname*{osc}_{Q_r^{(\lambda)}}Dv
    \leq C\lambda\left(\frac rR\right)^\beta
    &&\text{(by \eqref{se:2})}
        \\
    &\implies
    \left(
    \miint_{Q_r^{(\lambda)}}
    |Du-(Du)_{Q_r^{(\lambda)}}|^2\,dxdt
    \right)^{1/2}
    \leq C\lambda\left(\frac r{R_0}\right)^{\beta_*}
    &&\text{(Comparison)}
\end{aligned}
\]
for all $0<r\leq R/2$, and a suitable $R_0$.
We then exploit a simple trick: spatial integration by parts gives $|(Du)_{Q_r^{(\lambda)}}|\leq C\,\omega/r$,
where $\omega$ denotes the oscillation of $u$ on the outer cylinder. Combining this bound with a telescoping argument
based on the Campanato estimate above yields
\[
    |Du(x,t)|
    \leq C\lambda\left(\frac r{R_0}\right)^{\beta_*}
    +C\frac{\omega}{r}.
\]
Choosing $r$ as a sufficiently small fixed fraction of $R_0$
and taking the supremum over $(x,t)$, we may absorb the term
involving $\lambda$  and (modulo an iteration on nested cylinders) we  obtain \eqref{pert:lambda-final}-\eqref{pert:gradient}. The Campanato estimate above then gives
the H\"older continuity of $Du$ in \eqref{pert:holder}.

The arguments above are first carried out under
additional smoothness assumptions on the solution and the
coefficients. These are removed in Section \ref{sec:approximation}
through an approximation procedure based on
Lemma \ref{lem:approx:A}, which regularizes the stress field
$\A$ while preserving its structural properties.

\subsection{Outline of the paper} The paper is organized as follows.
Section \ref{sec:prel} collects notation and preliminary results.
Section \ref{sec:bddness} establishes local boundedness of solutions.
The homogeneous gradient estimates are proved in
Section \ref{sec:homogeneous}, while
Section \ref{sec:perturb} develops the perturbative argument
and proves the main theorem under auxiliary smoothness assumptions.
These assumptions are removed through the approximation procedure
in Section \ref{sec:approximation}.

\section{Preliminary results}\label{sec:prel}

\subsection{Notation}

We denote points in $\R^{N+1}$ by $(x,t)$, where
$x=(x_1,\dots,x_N)\in\R^N$ and $t\in\R$.
We denote by $x\cdot y$ the standard scalar product in $\R^N$,
and by $|x|=\sqrt{x\cdot x}$ the Euclidean norm of $x$.
We write $e_i$ for the $i$-th canonical unit vector of $\R^N$.
Also, for a matrix $M=\{M_{ij}\}$, we denote by
$|M|=(\sum_{i,j}|M_{ij}|^2)^{1/2}$ its Frobenius norm.

We write $B_R(x_0)$ for the $N$-dimensional ball of radius
$R>0$ centered at $x_0\in\R^N$, and when $x_0=0$ we simply
write $B_R\coloneqq B_R(0)$.
Given $R,S>0$ and $(x_0,t_0)\in\R^{N+1}$, we define the
backward parabolic cylinder
\[
    Q_{R,S}(x_0,t_0)
    \coloneqq B_R(x_0)\times(t_0-S,t_0),
\]
and write
\[
    Q_R(x_0,t_0)\coloneqq Q_{R,R^2}(x_0,t_0).
\]
For $\lambda>0$, we also introduce the intrinsic cylinders
\begin{equation}\label{def:cilintr}
    Q_R^{(\lambda)}(x_0,t_0)
    \coloneqq
    B_R(x_0)\times
    \left(t_0-\frac{R^2}{a(\lambda)},t_0\right).
\end{equation}
When the center is the origin, we simply write
$Q_{R,S}$, $Q_R$ and $Q_R^{(\lambda)}$, respectively.
The reference to the center will also be omitted whenever
it is clear from the context.

More generally, we write $Q=\mathcal B\times\mathcal T$
for a cylinder with spatial base $\mathcal B\subset\R^N$
and time interval $\mathcal T=(t_1,t_2)$.
Its parabolic boundary is denoted by
\[
    \partial_{\rm par}Q
    \coloneqq
    (\partial\mathcal B\times\mathcal T)
    \cup
    (\overline{\mathcal B}\times\{t_1\}).
\]

Given a measurable set $E$ in $\R^N$ or $\R^{N+1}$,
we denote by $\partial E$ its boundary, $\overline E$ its
closure, and $|E|$ its Lebesgue measure in the corresponding
dimension. We write $\mathcal H^{N-1}$ for the
$(N-1)$-dimensional Hausdorff measure.
The notation $E\Subset U$ means that $\overline E$ is a
compact subset of the open set $U$.
Also, we write $\mathbbm{1}_E$ for the characteristic
function of $E$.

For a real-valued function $v=v(x,t)$, we write
\[
    Dv=(D_1v,\dots,D_Nv),
    \qquad D_kv=\partial_{x_k}v,
\]
for its spatial gradient, $D^2v$ for its spatial Hessian
matrix, and $\partial_tv$ for its time derivative.
These derivatives are understood in the distributional
sense whenever classical derivatives are not available.
If $F=(F^1,\dots,F^m)$ is vector-valued, we write
$(DF)_{ij}=D_jF^i$.
For the stress field $\A=\A(x,t,\xi)$, the symbols
$\nabla_x\A$ and $\nabla_\xi\A$ denote differentiation
with respect to $x$ and $\xi$, respectively, with the
other variables kept fixed.

We use the standard notation $L^p(E)$ and
$\|\cdot\|_{L^p(E)}$ for Lebesgue spaces and their norms.
If $E\subset\R^N$ and $Q\subset\R^{N+1}$ have finite
positive measure, we define the spatial and space-time
averages of an integrable function $F$ by
\[
    (F)_E
    \coloneqq \mint_E F\,dx
    =\frac{1}{|E|}\int_E F\,dx,
    \qquad
    (F)_Q
    \coloneqq \miint_Q F\,dxdt
    =\frac{1}{|Q|}\iint_Q F\,dxdt.
\]

For a measurable function $F:E\to\R^m$ and $a\in\R^m$,
we write
\[
    \{F=a\}\coloneqq\{z\in E:F(z)=a\}.
\]
Similarly, if $F$ is real-valued, its sub- and superlevel
sets are denoted by $\{F<a\}$, $\{F>a\}$, etc.
We also write $\operatorname{spt}F$ for the support of $F$.
For a real-valued function $v$, we denote its positive
and negative parts by
\[
    v_+\coloneqq\max\{v,0\},
    \qquad
    v_-\coloneqq-\min\{v,0\}.
\]
If $F=(F^1,\dots,F^m)$, we define its oscillation over $E$ by
\[
    \operatorname*{osc}_E F
    \coloneqq
    \max_{i=1,\dots,m}
    \left(
        \operatorname*{ess\,sup}_E F^i
        -
        \operatorname*{ess\,inf}_E F^i
    \right).
\]
For continuous functions, essential suprema and infima
can be replaced by ordinary suprema and infima.

For $\beta\in(0,1)$, the parabolic H\"older seminorm is
defined by
\[
    [F]_{C^{\beta,\beta/2}(Q)}
    \coloneqq
    \sup_{\substack{(x,t),(y,s)\in Q\\(x,t)\neq(y,s)}}
    \frac{|F(x,t)-F(y,s)|}
    {\big(|x-y|+|t-s|^{1/2}\big)^\beta}.
\]
We endow $C^{\beta,\beta/2}(Q)$ with the norm
\[
    \|F\|_{C^{\beta,\beta/2}(Q)}
    \coloneqq
    \|F\|_{L^\infty(Q)}
    +[F]_{C^{\beta,\beta/2}(Q)}.
\]
The notation $C^{1,1/2}_{\rm loc}$ refers to functions
with continuous spatial first derivatives and local
$1/2$-H\"older continuity with respect to time.

Finally, we introduce the structural quantities
\begin{equation}\label{data}
    \begin{split}
        \data&\coloneqq(N,\lambda_0,\Lambda_0,i_a,s_a),
        \\
        \datah&\coloneqq(\data,\alpha,d,\Lambda_{\rm h}).
    \end{split}
\end{equation}
Throughout the paper, $c$ and $C$ denote positive constants
whose values may change from line to line.
Their dependence on the relevant parameters will be
specified in the statements and proofs.
\subsection{Young functions} \label{sec:young}

In this section we recollect few useful properties of the Young function $B$ and of $b,a$. Their proofs can be found in \cite[Section 2.1]{Antonini2027}.

We have
\begin{equation}\label{ciu:a}
    \begin{split}
    C^{i_a}a(t)\leq a(C\,t)\leq C^{s_a}a(t)\,,\quad &\text{for all $C>1$}
    \\
    c^{s_a}a(t)\leq a(c\,t)\leq c^{i_a}a(t)\quad &\text{for all $c<1$,}
    \end{split}
\end{equation}

Given  $m_0,M_0>0$  two fixed constants, there holds
\begin{equation}\label{ultra:utile}
    c_0\leq\frac{a(t_2)}{a(t_1)}\leq C_0\,,\quad \text{for all $t_1,t_2>0$ such that $m_0\leq\frac{t_2}{t_1}\leq M_0$}
\end{equation}
where $c_0,C_0>0$ depend on $i_a,s_a,m_0,M_0$.
\vspace{0.1cm}

Setting
\begin{equation}\label{def:isb}
    i_b=i_a+1,\quad\text{and}\quad s_b=s_a+1\,,
\end{equation}
then
\begin{equation}\label{ibsb}
    \begin{split}
            0<i_b\leq\inf_{t>0}\frac{b'(t)\,t}{b(t)}\leq \sup_{t>0} \frac{b'(t)\,t}{b(t)}\leq s_b<\infty\,,
    \end{split}
\end{equation}

In particular, the function $b$ is strictly monotone, $b(0)=0$, $\lim_{t\to +\infty} b(t)=+\infty$, and
\begin{equation}\label{b2t}
\begin{split}
    C^{i_b}b(t)\leq b(C\,t)\leq C^{s_b}b(t)\,,\quad &\text{for all $C>1$}
    \\
    c^{s_b}b(t)\leq b(c\,t)\leq c^{i_b}b(t)\quad &\text{for all $c<1$,}
    \end{split}
\end{equation}
and the (quasi-)triangle inequality for $b(t)$:
\begin{equation}\label{triangleb}
    b(t+s)\leq 2^{s_b}\left\{b(t)+b(s)\right\},\quad t,s\geq 0.
\end{equation}

 The functions $B(t)$, $b(t)\,t=a(t)\,t^2$ are equivalent, that is
\begin{equation}\label{Bcomeb}
    \left(\frac{1}{1+s_b}\right)a(t)\,t^2\leq B(t)\leq \left(\frac{1}{1+i_b}\right)a(t)\,t^2,\quad \text{for $t>0$.}
\end{equation}

\begin{equation}\label{B=1}
   \frac{1}{2+s_a}\leq  B(1)\leq \frac{1}{2+i_a}.
\end{equation}
We also set
\begin{equation}
    i_B=i_b+1,\quad\text{and}\quad s_B=s_b+1\,,
\end{equation}
so that we have
\begin{equation}\label{iBsB}
    1<i_B\leq \inf_{t>0}\frac{B'(t)t}{B(t)}\leq \sup_{t>0}\frac{B'(t)t}{B(t)}\leq s_B<\infty\,,
\end{equation}
we have  the so-called $\Delta_2$ and $\nabla_2$-properties: 
\begin{equation}\label{B2t}
\begin{split}
    &C^{i_B}B(t)\leq B(C\,t)\leq C^{s_B}B(t)\,,\quad \text{for all $C>1$}
    \\
    &c^{s_B}B(t)\leq B(c\,t)\leq c^{i_B}B(t)\quad \text{for all $c<1$,}
    \end{split}
\end{equation}
and the (quasi-)triangle inequality for $B(t)$:
\begin{equation}\label{triangle}
    B(t+s)\leq 2^{s_B}\left\{B(t)+B(s)\right\},\quad t,s>0.
\end{equation}

We also have
\begin{equation}\label{usso}
    t^{i_B}\leq C\,(B(t)+1)\quad\text{for all $t>0$.}
\end{equation}
for some constant $C=C(i_B,s_B)$.
The following monotonicity properties hold:
\begin{equation}\label{mon:iBsB}
    \begin{split}
        &t\mapsto \frac{B(t)}{t^{\mathfrak i_B}} \quad\text{is a nondecreasing  function for all $\mathfrak i_B\leq i_B$}
        \\
        &t\mapsto \frac{B(t)}{t^{\mathfrak s_B}} \quad\text{is a nonincreasing function for all $\mathfrak{s}_B\geq s_B$}.
    \end{split}
\end{equation}

Another elementary, yet very useful inequality is the following:
\begin{equation}\label{el:comparison}
    B(|\xi-\eta|)\leq C\,\delta\big[B(|\xi|)+B(|\eta|) \big]+C\,\delta^{-1}\,a\big(|\xi|+|\eta| \big)\,|\xi-\eta|^2,
\end{equation}
valid for all $\delta \in (0,1]$, for all $\xi,\eta\in \R^n$, with $C=C(i_a,s_a)$. 
\vspace{0.1cm}

Let us now introduce the \emph{complementary Young function} (also called conjugate Young function)
\begin{equation}\label{def:wB}
    \widetilde{B}(t)=\sup\left\{st-B(s):\,s>0 \right\}\,.
\end{equation}
This may be written as
\begin{equation}\label{wB}
    \wB(t)=\int_0^t b^{-1}(s)\,ds
\end{equation}
where $b^{-1}(s)$ is the inverse function of $b$.
We have that
\begin{equation*}
    0<\frac{1}{s_b}\leq \inf_{s>0}\frac{(b^{-1})'(s)\,s}{b^{-1}(s)}\leq \sup_{s>0}\frac{(b^{-1})'(s)\,s}{b^{-1}(s)}\leq \frac{1}{i_b}<\infty,
\end{equation*}
and
\begin{equation}\label{wBcome}
    \left(\frac{1}{i_B'}\right)b^{-1}(s)\,s\leq \wB(s)\leq \left(\frac{1}{s_B'}\right)b^{-1}(s)\,s,\quad s>0,
\end{equation}
and
\begin{equation}
    1<s'_B\leq\inf_{t>0}\frac{\wB'(s)s}{\wB(s)}\leq \sup_{t>0}\frac{\wB'(s)s}{\wB(s)} \leq i'_B<\infty\,,
\end{equation}
where 
\[
i_B'=\frac{i_B}{i_B-1}\quad \text{and}\quad s_B'=\frac{s_B}{s_B-1}
\]
are the H\"older's conjugates of $i_B,s_B>1$, respectively.
We have
\begin{equation}\label{tB:usef}
\begin{split}
    &c^{i_B'}\wB(t)\leq \wB(c t)\leq c^{s_B'}\wB(t),\quad\text{for all } c\in (0,1)\text{ and $t>0$}
    \\
    &C^{s_B'}\wB(t)\leq \wB(C t)\leq C^{i_B'}\wB(t),\quad\text{for all } C\geq 1 \text{ and $t>0$},
    \end{split}
\end{equation}
and
\begin{equation}\label{tB=1}
    c(i_a,s_a)\leq \widetilde{B}(1)\leq C(i_a,s_a).
\end{equation}
Moreover, the function $B\circ b^{-1}$ is equivalent to $\widetilde{B}$, that is
\begin{equation}\label{come:BwB}
  c(i_a,s_a)\,\wB(t)  \leq B\big(b^{-1}(t) \big)\leq C(i_a,s_a)\,\wB(t),\quad t>0.
\end{equation}

We have Young's inequality for Orlicz-functions
\begin{equation}\label{Young}
\begin{split}
st& \leq\delta^{i_B} B(s)+\delta^{-i_B'}\wB(t)\quad\text{and}
\\
    st&\leq \delta^{s_B'}\wB(s)+\delta^{-s_B}\,B\left( t\right),\quad \text{for any $\delta\in (0,1]$,}
    \end{split}
\end{equation}
which readily follow by the definition of $\wB$ in \eqref{def:wB}, together with \eqref{B2t} and \eqref{tB:usef}.

Moreover, we have
\begin{equation}\label{wBbt}
    c(i_b,s_b)\,B(t)\leq \wB\big(b(t)\big)\leq C(i_b,s_b)\,B(t)\,,\quad t>0,
\end{equation}
and the following version of Young's inequality holds:
\begin{equation}\label{young1}
    b(t)\,s\leq \delta\,B(t)+C(i_a,s_a)\,\delta^{-(s_B-1)}\,B(s).
\end{equation}

The following approximation lemma holds 

\begin{lemma}\label{lemma:ae}
    Let $a\in C^1((0,\infty))$ be a function satisfying \eqref{iasa}, with $i_a\leq 0\leq s_a$. Then there exists a sequence of functions $\{a_\e\}_{\e\in(0,1)}\subset C^\infty\big([0,+\infty)\big)$, such that, for all $\e>0$, $a_\e(1)=1$,
    \begin{equation}\label{ae:nonzero}         
  \frac{1}{2}\, \e\leq  \,a_\e(t)\leq 2\,\e^{-1}\quad\text{for all $t\geq 0$,}
    \end{equation}   
and
    \begin{equation}\label{iaesae}
  a_\e(1)=1,\qquad i_a\leq \inf_{t>0}\frac{a_\e'(t)\,t}{a_\e(t)}\leq \sup_{t>0}\frac{a_\e'(t)\,t}{a_\e(t)}\leq s_a,
    \end{equation}
Moreover, by setting
\begin{equation}\label{def:bBe}
    b_\e(t)=a_\e(t)\,t,\quad B_\e(t)=\int_0^t b_\e(s)\,ds
\end{equation}
  we have
    \begin{align}
        &\lim_{\e\to 0} a_\e=a\quad\text{uniformly in  $[L,M]$ for every $0<L\leq M$}\label{ae:unif}
        \\
       & \lim_{\e\to 0} b_\e=b\quad\text{uniformly in  $ [0,M]$ for every $M>0$}\label{be:unif}
    \end{align}
    and hence
    \begin{align}
    &\lim_{\e\to 0} a_\e(|\xi|)\,\xi=a(|\xi|)\,\xi\quad\text{uniformly in  $|\xi|\leq M$ for every $M>0$.}\label{aexi:unif}
    \\
        &\lim_{\e\to 0} B_\e=B\quad\text{uniformly in $ [0,M]$ for every $M>0$.}\label{BE:unif}
        \end{align}
Furthermore
\begin{equation}\label{aexi:Cinf}
 \text{the map}\quad \xi\mapsto a_\e(|\xi|)\quad\text{is of class $C^\infty(\R^N)$.}  
\end{equation}
\end{lemma}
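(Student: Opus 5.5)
The construction will work at the level of the logarithmic derivative of $a$, written as a function of $s=\log t$. Put
\[
g(s)\coloneqq \frac{t\,a'(t)}{a(t)}\Big|_{t=e^s},\qquad s\in\R,
\]
so that $g$ is continuous with $i_a\le g\le s_a$, and $a(e^s)=\exp\big(\int_0^s g(\sigma)\,d\sigma\big)$ because $a(1)=1$. The approximation has two stages. First mollify $g$ in $s$ to obtain a smooth $g_\e$ with values still in $[i_a,s_a]$ (mollification preserves pointwise bounds) and with $g_\e\to g$ locally uniformly. Then replace $g_\e$ by $g_\e\,\chi_\e$, where $\chi_\e$ is a smooth cutoff with $0\le\chi_\e\le1$, equal to $1$ on $[-K_\e,K_\e]$ and vanishing for $|s|\ge K_\e+1$. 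Since $i_a\le0\le s_a$, the truncated function $g_\e\chi_\e$ is a convex combination of $g_\e$ and $0$, so it still takes values in $[i_a,s_a]$. Define
\[
a_\e(t)\coloneqq\exp\Big(\int_0^{\log t} g_\e\chi_\e\,d\sigma\Big)\quad (t>0),\qquad a_\e(0)\coloneqq a_\e(e^{-K_\e-1}).
\]
Then $a_\e(1)=1$ and \eqref{iaesae} hold by construction. Moreover $a_\e$ is constant for $t\le e^{-K_\e-1}$ and for $t\ge e^{K_\e+1}$, which makes it smooth on $[0,\infty)$ and gives \eqref{aexi:Cinf}: near $\xi=0$ the map $\xi\mapsto a_\e(|\xi|)$ is locally constant, and away from $0$ it is a composition of smooth maps.

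Next I fix $K_\e$ to obtain \eqref{ae:nonzero}. Since $|\log a_\e|\le \max\{|i_a|,s_a\}(K_\e+1)$, it suffices to choose $K_\e\to\infty$ so slowly that $\max\{|i_a|,s_a\}(K_\e+1)\le\log(1/\e)$. This gives $\e\le a_\e\le\e^{-1}$, which is stronger than required. If $i_a=s_a=0$ then $a\equiv1$ and $a_\e=1$ works trivially.

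The convergence statements follow from this. On $[L,M]$ we have $\log t\in[\log L,\log M]$, and for $\e$ small $\chi_\e=1$ there, so $\log a_\e-\log a=\int_0^{\log t}(g_\e-g)\to0$ uniformly. This is \eqref{ae:unif}.

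The step needing care is \eqref{be:unif} near $t=0$, where $a$ may blow up because $i_a<0$. The plan is to use the uniform power bounds. Both $b$ and $b_\e$ satisfy $b(t)\le t^{1+i_a}$ for $t\le1$, since $a(t)\le t^{i_a}$ for $t\le 1$ by \eqref{ciu:a} with $a(1)=1$, and $a_\e$ obeys the same bound via \eqref{iaesae}. Since $1+i_a>0$, for a given $\eta>0$ choose $L$ with $2L^{1+i_a}<\eta$; this makes $|b_\e-b|<\eta$ on $[0,L]$. On $[L,M]$ convergence comes from \eqref{ae:unif}. The remaining limits follow at once: \eqref{aexi:unif} from $|a_\e(|\xi|)\xi-a(|\xi|)\xi|=|b_\e(|\xi|)-b(|\xi|)|$, and \eqref{BE:unif} by integrating the uniform convergence of $b_\e$ over $[0,M]$.
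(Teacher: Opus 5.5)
Your proposal is correct. The paper does not prove this lemma itself: it imports the approximants from \cite[Lemma 3.3]{CianchiMazya2011}, \cite[Lemma 4.5]{CianchiMazya2014} and \cite[Lemma 2.1]{Antonini2027}, and enforces $a_\e(1)=1$ afterwards by dividing by $a_\e(1)$. Since $a_\e(1)\to 1$ and $t a_\e'/a_\e$ is unchanged by constant multiples, this renormalization preserves \eqref{iaesae} and only perturbs the bounds in \eqref{ae:nonzero} by the factors $\frac12$ and $2$.

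Your route is self-contained and works directly with the logarithmic derivative $g$.
\begin{itemize}
\item Mollifying $g$ and multiplying by a cutoff keeps it in $[i_a,s_a]$, precisely because $i_a\le 0\le s_a$.
\item Integrating from $s=0$ gives $a_\e(1)=1$ exactly, with no renormalization step.
\item The support of the cutoff controls $|\log a_\e|$, which yields the sharper bound $\e\le a_\e\le\e^{-1}$.
\item Constancy of $a_\e$ near $t=0$ gives smoothness on $[0,\infty)$ and \eqref{aexi:Cinf} for free.
\end{itemize}
Your proof of \eqref{be:unif} is also sound. The uniform bound $b_\e(t),\,b(t)\le t^{1+i_a}$ on $[0,1]$ follows from integrating the lower index bound from $\log t$ to $0$, and it is exactly where $i_a>-1$ is used.

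One cosmetic point needs fixing. With a transition layer of fixed width $1$, the requirement $\max\{|i_a|,s_a\}(K_\e+1)\le\log(1/\e)$ cannot be met with $K_\e\ge 0$ when $\e$ is close to $1$. For such $\e$, simply take $\chi_\e\equiv 0$, that is $a_\e\equiv 1$, which satisfies every requirement trivially. Alternatively, let the support of $\chi_\e$ scale like $\log(1/\e)/\max\{|i_a|,s_a\}$.
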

For the proof, see \cite[Lemma 2.1]{Antonini2027}, \cite[Lemma 3.3]{CianchiMazya2011}, \cite[Lemma 4.5]{CianchiMazya2014}. The normalization $a_\e(1)=1$ can obtained by dividing
the approximating functions there by their value at $1$.

\subsection{The stress field \texorpdfstring{$\mathcal{A}$}{A}}\label{sec:stress}
In this subsection, we collect some properties of the vector field $\A$, 
often referred to as the \emph{stress field}.  

We start with the following lemma establishing growth and coercivity properties of $\A$. The proof follows from
\cite[Lemma 2.4]{Antonini2027}, with the same argument
applying to $\A(x,t,\xi)$ in place of $\A(x,\xi)$.

\begin{lemma}\label{lemma:Agrco}
  Assume that $a,\A$ satisfy \eqref{iasa}, \eqref{ass:A}. Then we have
\begin{equation}\label{co:gr}
    \begin{split}
        &\sum_{i=1}^N |\A^i(x,t,\xi)-\A^i(x,t,0)|\leq C_1\,b(|\xi|)
        \\
        &\big(\A(x,t,\xi)-\A(x,t,0)\big)\cdot \xi\geq c_1\,B(|\xi|)\,,\quad \text{for a.e. $(x,t)\in \Omega_T$ and  all $\xi\in \R^N$,}
    \end{split}
\end{equation}
where $c_1=c_1(\data), C_1(\data)$ are positive constants. Moreover,
\begin{equation}\label{strong:coer}
    \big(\A(x,t,\xi)-\A(x,t,\eta)\big)\cdot (\xi-\eta)\geq c_0\,a\big(|\xi|+|\eta|\big)\,|\xi-\eta|^2.
\end{equation}
with $c_0=c_0(\data)>0$.
\end{lemma}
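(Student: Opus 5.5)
The argument is pointwise in $(x,t)$: fix a point where the bounds \eqref{ass:A} hold and suppress $(x,t)$, writing $\A(\xi)$. Every claim then reduces to integrating along segments. For $\xi,\eta\in\R^N$ set $\gamma(s)=\eta+s(\xi-\eta)$, $s\in[0,1]$. Whenever the segment avoids the origin, the fundamental theorem of calculus gives
\[
\A(\xi)-\A(\eta)=\int_0^1\nabla_\xi\A(\gamma(s))\,(\xi-\eta)\,ds .
\]
If the segment passes through $0$, we remove a small neighbourhood of that parameter value and let it shrink. This limit is legitimate because $a(|\gamma(s)|)$ stays integrable near the point where $|\gamma|=0$: the condition $i_a>-1$ gives $a(t)\le C t^{i_a}$ for $t<1$ by \eqref{ciu:a}, and $t^{i_a}$ is integrable along a line. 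Continuity of $\A$ at $0$ also follows from this integrability, so $\A(0)$ is well defined.

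For the growth bound in \eqref{co:gr}, take $\eta=0$. Then $|\gamma(s)|=s|\xi|$, and \eqref{ass:A} together with \eqref{ciu:a} gives
\[
|\A(\xi)-\A(0)|\le\Lambda_0|\xi|\int_0^1 a(s|\xi|)\,ds\le \Lambda_0|\xi|a(|\xi|)\int_0^1 s^{i_a}\,ds=\frac{\Lambda_0}{1+i_a}\,b(|\xi|).
\]
For the coercivity bound, take the same path and use the lower bound in \eqref{ass:A} with $\eta_i=\xi_i$:
\[
\big(\A(\xi)-\A(0)\big)\cdot\xi\ge\lambda_0|\xi|^2\int_0^1 a(s|\xi|)\,ds\ge\lambda_0|\xi|^2a(|\xi|)\int_0^1 s^{s_a}\,ds .
\]
Combining this with \eqref{Bcomeb} yields $c_1B(|\xi|)$.

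The strong coercivity \eqref{strong:coer} is the main step. Applying the lower bound in \eqref{ass:A} along $\gamma$ gives
\[
\big(\A(\xi)-\A(\eta)\big)\cdot(\xi-\eta)\ge\lambda_0|\xi-\eta|^2\int_0^1 a(|\gamma(s)|)\,ds .
\]
It therefore suffices to prove
\[
\int_0^1 a(|\gamma(s)|)\,ds\ge c\,a(|\xi|+|\eta|).
\]
We split into two cases according to the sign of $s_a$ and $i_a$.

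First, where $a$ is decreasing in size (for instance $t^{i_a}$ with $i_a<0$), we use $|\gamma(s)|\le|\xi|+|\eta|$. The constraint $ta'/a\le s_a$ controls the ratio of $a$ between two values in the increasing case only, so the sign matters here.

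The robust route, which handles both signs at once, is a geometric lemma. Assume without loss of generality $|\xi|\ge|\eta|$, and set $M=|\xi|+|\eta|$, so that $|\xi|\ge M/2$. Then on a subinterval of $[0,1]$ of length at least $1/4$ (say $s\in[3/4,1]$ when $|\xi-\eta|$ is small relative to $|\xi|$), we have $M/8\le|\gamma(s)|\le M$. When $|\xi-\eta|$ is comparable to $M$, we use instead the linear growth of $|\gamma|$ away from its minimum to find such an interval. On this interval \eqref{ultra:utile} gives $a(|\gamma(s)|)\ge c_0\,a(M)$, and integrating over an interval of length at least $1/4$ gives the claim.

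The only delicate point is checking the interval claim in full generality. Along a segment, $|\gamma(s)|$ is a convex function whose maximum over $[0,1]$ is at least $|\xi|\ge M/2$ and is attained at an endpoint. Its distance from that endpoint value decreases at rate at most $|\xi-\eta|\le M$, so it stays above $M/4$ on an interval of length $1/4$. This gives \eqref{strong:coer} with $c_0=c_0(\data)$.
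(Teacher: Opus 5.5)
Your proof is correct and follows the standard argument behind the paper's proof, which simply defers to \cite[Lemma 2.4]{Antonini2027}. That argument integrates $\nabla_\xi\A$ along $\gamma(s)=\eta+s(\xi-\eta)$, excising the origin (harmless since $a(t)\le t^{i_a}$ for $t<1$ and $i_a>-1$). It then bounds $\int_0^1 a(|\gamma(s)|)\,ds\ge c\,a(|\xi|+|\eta|)$ using $|\gamma(s)|\ge|\xi|-(1-s)|\xi-\eta|\ge M/4$ on $[3/4,1]$ together with \eqref{ultra:utile}.

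You should drop the paragraph on the ``decreasing'' case, which is muddled, and the case split on the size of $|\xi-\eta|$, which is unnecessary. That last Lipschitz estimate already covers every configuration uniformly.
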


Now let  $Q=\B\times \T\subset \Omega_T$; since the equation is left unchanged by subtracting $\A(x_0,t,0)$ for any $x_0\in \Omega$, we may assume that
\[
\A(x_0,t,0)=0\quad\text{for all $t\in (0,T)$, }
\]
for any fixed $x_0\in \B$. By \eqref{A:hold} it then follows that 
\begin{equation}\label{ovu:Linf}
\sup_{(x,t)\in Q}|\A(x,t,0)|\leq \Lambda_{\mathrm{h}}\,(\mathrm{diam} \,Q)^\alpha\quad\text{for all $Q\Subset \Omega_T$,}
\end{equation}
 From this inequality, \eqref{co:gr} and \eqref{Young}, we deduce that
\begin{equation}\label{deb:ass:A}
    \begin{cases}
        |\A(x,t,\xi)|\leq C_1\,b(|\xi|)+\Lambda_{\mathrm{h}}\,(\mathrm{diam} Q)^\alpha
        \\
        \A(x,t,\xi)\cdot \xi\geq c_1\,B(|\xi|)-c\widetilde{B}\left(\Lambda_{\mathrm{h}}\,(\mathrm{diam} Q)^\alpha\right),
    \end{cases}
\end{equation}
for a.e. $(x,t)\in Q$ and $\xi \in \R^N$, with $c_1,C_1,c$ depending on $\data$.

We shall also need to regularize Equation \eqref{eq:1}.
To this end, we establish the following approximation lemma
for the stress field $\A$.

\begin{lemma}[Approximation of the stress field]\label{lem:approx:A}
    Let $a$ and $\A$ satisfy \eqref{iasa}, \eqref{ass:A},  \eqref{A:hold} and $\A(x_0,t,0)=0$ for some $x_0\in \Omega$ and all $t\in (0,T)$. Let   $Q\Subset\Omega_T$ be a cylinder, and let $a_\e,\,b_\e,B_\e$ be given by Lemma \ref{lemma:ae}.
    
    Then there exists a sequence of vector fields
    $\{\A_\epsilon\}_{\epsilon\in (0,1)}\subset  C^\infty(Q\times\R^N;\R^N)$ such that the following holds:
\begin{equation}\label{Ae:c}
    \A_\epsilon(x,t,\cdot)\to\A(x,t,\cdot)\quad\text{locally uniformly
    in $\R^N$ for a.e. $(x,t)\in Q$;}
\end{equation}
there exist $c,C>0$ depending only on $\data$ such that, for all $\epsilon >0$,

    \begin{equation}\label{approx:A:ell}
    \begin{split}
        \nabla_\xi\A_\epsilon(x,t,\xi)\zeta\cdot\zeta
        \geq c\,a_\epsilon(|\xi|)|\zeta|^2,\qquad|\nabla_\xi\A_\epsilon(x,t,\xi)|
        \leq C\,a_\epsilon(|\xi|),
    \end{split}
    \end{equation}
    for every $(x,t)\in Q$, $\xi,\zeta\in\R^N$,
\begin{equation}\label{sup:Dx}
    \sup_{Q'\times\R^N}|\nabla_x\A_\epsilon|\leq C_\epsilon<+\infty\quad\text{for all $Q'\Subset Q$,}
\end{equation}
  for some constant $C_\epsilon$,  and 
    \begin{equation}\label{approx:A:hold}
        |\A_\epsilon(x,t,\xi)-\A_\epsilon(y,t,\xi)|
        \leq C\Lambda_{\mathrm h}|x-y|^\alpha
        \big(1+b_\epsilon(|\xi|)\big)
    \end{equation}
    whenever $(x,t),(y,t)\in Q$. 
    
    In particular, by Lemma \ref{lemma:Agrco} and \eqref{ae:nonzero}, \begin{equation}\label{approx:A:co}
    \begin{split}
        |\A_\epsilon(x,t,\xi)-\A_\epsilon(x,t,0)|
        &\leq C b_\epsilon(|\xi|),\\
        \big(\A_\epsilon(x,t,\xi)-\A_\epsilon(x,t,0)\big)\cdot\xi
        &\geq c B_\epsilon(|\xi|),
    \end{split}
    \end{equation}
    as well as
    \begin{equation}\label{approx:A:unif}
        c\,\epsilon|\zeta|^2\leq
        \nabla_\xi\A_\epsilon(x,t,\xi)\zeta\cdot\zeta,
        \qquad |\nabla_\xi\A_\epsilon(x,t,\xi)|\leq C\epsilon^{-1}.
    \end{equation}

\end{lemma}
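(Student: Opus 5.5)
The construction has three steps: smooth $\A$ in $\xi$ so that it has growth $a_\e$ instead of $a$, mollify in $(x,t)$, and add a small uniformly elliptic correction. First I would replace the $\xi$-dependence. I would write $\A(x,t,\xi)=\A(x,t,0)+\int_0^1\nabla_\xi\A(x,t,s\xi)\xi\,ds$ and consider the normalized matrix field
\[
M(x,t,\xi)\coloneqq\frac{\nabla_\xi\A(x,t,\xi)}{a(|\xi|)},
\]
which by \eqref{ass:A} is bounded by $\Lambda_0$ and has ellipticity at least $\lambda_0$ on $\R^N\setminus\{0\}$. I would then set
\[
\widehat\A_\e(x,t,\xi)\coloneqq\A(x,t,0)+\int_0^1 a_\e(s|\xi|)\,M(x,t,s\xi)\,\xi\,ds .
\]
This is not in general a gradient-compatible construction: its Jacobian picks up terms involving $a_\e'$ and $\nabla_\xi M$. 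A safer route is to mollify directly. I would compose with a radial reparametrization $\xi\mapsto\Phi_\e(\xi)\coloneqq\phi_\e(|\xi|)\xi/|\xi|$, where $\phi_\e$ is chosen so that $a(\phi_\e(t))\phi_\e'(t)\approx a_\e(t)$. Since the exact form of $a_\e$ from Lemma \ref{lemma:ae} is not prescribed, I would instead construct $a_\e$ myself in the standard way, freezing $a$ outside $[\e,\e^{-1}]$ (the cited references do exactly this). I expect this step to be the main obstacle.

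With that choice, the plan is the following.
\begin{enumerate}
\item Define $\A^{(1)}_\e(x,t,\xi)=\A(x,t,T_\e\xi)$ plus a linear term, where $T_\e$ truncates radially to $[\e,\e^{-1}]$ smoothly. This gives ellipticity and growth comparable to $a_\e$, with constants depending only on $\data$, by \eqref{ultra:utile} and \eqref{ciu:a}.
\item Convolve in $(x,t,\xi)$ with a mollifier $\rho_\delta(x,t)\psi_\delta(\xi)$, with $\delta=\delta(\e)$ small. I would extend $\A$ beyond $Q$ by reflection, or else work on a slightly larger cylinder $Q\Subset\Omega_T$.
\item Add $\e\,\xi$ to guarantee \eqref{approx:A:unif}.
\end{enumerate}

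Convolution in $\xi$ preserves the pointwise bounds \eqref{approx:A:ell}: the averaged matrix $\nabla_\xi\A$ is still positive definite with lower bound $\min a_\e$ over the ball $B_\delta(\xi)$. That minimum is comparable to $a_\e(|\xi|)$ as long as $\delta\le\e/2$, since $a_\e$ is bounded above and below in $[\e/2,2/\e]$. Convolution in $x$ produces \eqref{sup:Dx}, with $C_\e\sim\delta^{-1}$ times a sup bound. It preserves \eqref{approx:A:hold}, because the Hölder estimate in \eqref{A:hold} is translation invariant in $x$ and $b(|\xi-\eta|)\le Cb_\e(|\xi|)$ for $|\eta|\le\delta$. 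Convolution in $t$ is harmless, since all bounds hold uniformly for a.e.\ $t$.

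The convergence \eqref{Ae:c} follows for a.e.\ $(x,t)$ from Lebesgue differentiation in $(x,t)$, together with the continuity of $\A$ in $\xi$ and the uniform convergence $a_\e\to a$ on compact sets. Finally, \eqref{approx:A:co} and \eqref{approx:A:unif} follow from Lemma \ref{lemma:Agrco} applied to $\A_\e$ with $a_\e$ in place of $a$, together with \eqref{ae:nonzero}.
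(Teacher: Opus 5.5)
There is a genuine gap, and it sits exactly where you anticipated: Step 1 does not produce a field with ellipticity and growth comparable to $a_\e$. If $T_\e\xi=\tau_\e(|\xi|)\xi/|\xi|$ with $\tau_\e\geq\e$, then $T_\e$ is discontinuous at $\xi=0$, since it sends small $\xi$ onto a sphere of radius about $\e$. Away from the origin, the chain rule gives $\nabla_\xi[\A(x,t,T_\e\xi)]=\nabla_\xi\A(x,t,T_\e\xi)\,DT_\e(\xi)$. Here $DT_\e$ has radial eigenvalue $\tau_\e'(|\xi|)$ and tangential eigenvalue $\tau_\e(|\xi|)/|\xi|$. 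In the truncated regions these are wildly different: the radial one vanishes, while the tangential one equals $\e/|\xi|\to\infty$ near $0$ and $\e^{-1}/|\xi|\to 0$ at infinity. For a general, non-radial and non-symmetric $\nabla_\xi\A$, the product of an elliptic matrix with such an anisotropic symmetric matrix need not be positive definite. So \eqref{ultra:utile} and \eqref{ciu:a} cannot give \eqref{approx:A:ell}.

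An unspecified ``linear term'' $\mu(|\xi|)\xi$ helps only if it dominates the indefinite part in the truncated regions. It must then be switched off in the middle range, since otherwise the field is no longer comparable to $a_\e$ there. Switching it off recreates the same problem in the transition zone.

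Step 2 has a related flaw. You justify comparability of $\min_{B_\delta(\xi)}a_\e$ with $a_\e(|\xi|)$ by the bounds of $a_\e$ on $[\e/2,2/\e]$. That only gives constants of order $\e^{-2}$, whereas \eqref{approx:A:ell} requires constants depending on $\data$ alone. The right tool is \eqref{ultra:utile}, but it needs $|\xi|\geq 2\delta$ and says nothing near the origin.

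The missing idea is to glue rather than compose. The paper proceeds in three steps.
\begin{enumerate}
\item It mollifies in $(x,t)$, which preserves \eqref{ass:A} and \eqref{A:hold} with the original $a$.
\item It mollifies in $\xi$ at scale $\e<\delta/2$. This keeps ellipticity and growth comparable to $a(|\xi|)$, hence to $a_\e(|\xi|)$, only on the annulus $\delta\leq|\xi|\leq e^{2K}\delta^{-1}$. That comparison uses only \eqref{ae:unif} for the given $a_\e$, so you need not construct $a_\e$ yourself.
\item Near $0$ and near infinity it uses the model field $c_\gamma(x,t)+a_\e(|\xi|)\xi$, whose Jacobian is controlled by \eqref{iaesae}. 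It interpolates by the convex combination $(1-\eta_\delta(|\xi|))\widehat\A_{\gamma,\e}+\eta_\delta(|\xi|)\big(c_\gamma+a_\e(|\xi|)\xi\big)$, with a cutoff that varies over logarithmically long annuli, $|r\eta_\delta'(r)|\leq K^{-1}$.
\end{enumerate}
Both pieces differ from $c_\gamma$ by at most $Cb_\e(|\xi|)$ on the transition annuli. So the only new Jacobian term is bounded by $|\eta_\delta'|\,Cb_\e(|\xi|)\leq (C/K)\,a_\e(|\xi|)$, which is absorbed by taking $K=K(\data)$ large.

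Your remaining steps are fine or unnecessary. Adding $\e\xi$ is not needed, because \eqref{approx:A:unif} already follows from \eqref{approx:A:ell} and \eqref{ae:nonzero}. But without the gluing step the core estimate \eqref{approx:A:ell} is not established.
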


\begin{proof}
    We follow the proof of \cite[Lemma 2.5]{Antonini2027},
    with an additional convolution argument in $(x,t)$. Throughout
    the proof, $c,c',c'',C,C',C''>0$ denote constants depending only
    on $\data$, unless otherwise specified.

    Let $\rho\in C_c^\infty(B_1)$ and
    $\vartheta\in C_c^\infty((-1,1))$ be standard, nonnegative radially symmetric
    mollifiers with unit integral. For $\gamma>0$
    sufficiently small, we define
    \begin{equation}\label{approx:A:gamma}
        \A_\gamma(x,t,\xi)
        \coloneqq\iint_{B_1\times (-1,1)}\rho(y)\vartheta(s)
        \A(x-\gamma y,t-\gamma^2s,\xi)\,dyds,
        \qquad (x,t)\in Q.
    \end{equation}
    
By \eqref{ass:A}, and since the convolution
acts only on the $(x,t)$-variables, we have

\begin{equation}\label{approx:A:gammaell}
    \begin{split}
   \nabla_\xi\A_\gamma(x,t,\xi)\zeta\cdot\zeta
        \geq\l_0\, a(|\xi|)|\zeta|^2,\qquad|\nabla_\xi\A_\gamma(x,t,\xi)|
        \leq C\L_0 \,a(|\xi|),
    \end{split}
    \end{equation}
    for $\xi\neq0$ and $\zeta\in \R^N$. Set
    \[c_\gamma(x,t)\coloneqq \A_\gamma(x,t,0).\]
    In view of Lemma \ref{lemma:Agrco} and the
    properties of convolution, we have
    \begin{equation}\label{approx:A:gr}
        |\A_\gamma(x,t,\xi)-c_\gamma(x,t)|
        \leq C_1 b(|\xi|),
    \end{equation}
    where $C_1=C_1(\data)>0$.
    
    Moreover, by \eqref{approx:A:gamma} and \eqref{A:hold}, it is easy to see that
    \begin{equation}\label{approx:A:gammahold}
        |\A_\gamma(x,t,\xi)-\A_\gamma(y,t,\xi)|
        \leq\Lambda_{\mathrm h}|x-y|^\alpha
        \big(1+b(|\xi|)\big).
    \end{equation}
 Next, we further regularize in the $\xi$-variable via convolution
 \[
  \widehat\A_{\gamma,\e}(x,t,\xi)
        \coloneqq\A_\gamma*_{\xi}\rho_\e(x,t,\xi)=\int_{\R^N} \rho(z)\,\A_\gamma(x,t,\xi-\epsilon\,z)\,dz.
 \]
    Let $K=K(\data)\geq1$ be sufficiently large,
    to be fixed below, and choose
    $0<\delta<e^{-K}$. Then  we take
    $\eta_\delta\in C^\infty([0,\infty))$ such that
    \begin{equation}\label{approx:A:cut}
    \begin{split}
        &0\leq\eta_\delta\leq1,
        \qquad |r\eta_\delta'(r)|\leq K^{-1},\\
        &\eta_\delta\equiv1
        \quad\text{in }[0,\delta]\cup
        [e^{2K}\delta^{-1},+\infty),\\
        &\eta_\delta\equiv0
        \quad\text{in }[e^{2K}\delta,\delta^{-1}].
    \end{split}
    \end{equation}
    Such functions can be constructed by smoothing suitable
    piecewise affine functions of $\log r$-- see the Remark following the proof of \cite[Lemma 2.5]{Antonini2027}.
    By \eqref{ae:unif}, \eqref{be:unif} and the
    positivity of $a$, we may choose $0<\e_\delta<\delta/2$
    so small that, whenever $0<\e<\e_\delta$,
    \begin{equation}\label{approx:A:comparison}
        \frac12 a_\e(r)\leq a(r)\leq2a_\e(r)
        \quad\text{for all  }\delta\leq r
        \leq e^{2K}\delta^{-1}.
    \end{equation}
Next, by \eqref{approx:A:gammaell} and \eqref{approx:A:comparison}, for all $\xi\in \R^N$ such that $\delta\leq|\xi|
    \leq e^{2K}\delta^{-1}$, we have
\begin{equation}\label{un:coer}
    \begin{split}
        \nabla_\xi \widehat\A_{\gamma,\e}(x,t,\xi)\zeta\cdot \zeta
       & \geq \lambda_0\,\left(\int_{B_1}\rho(z)\,a(|\xi-\e z|)\,dz\right)\,|\zeta|^2
        \\
        &\geq c\,\l_0\,a(|\xi|)\,|\zeta|^2\geq c'\,a_\e(|\xi|)\,|\zeta|^2,\qquad\text{for all $\delta\leq|\xi|
    \leq e^{2K}\delta^{-1}$},
    \end{split}
\end{equation}
where in the second inequality we used
$c\,a(|\xi|)\leq a(|\xi-\e z|)\leq C\,a(|\xi|)$, which follows
from \eqref{ultra:utile}, since
$|\xi|/2\leq|\xi-\e z|\leq 2|\xi|$
for $|\xi|\geq\delta$, $\e<\delta/2$,
and $|z|\leq1$.
With a similar reasoning, by \eqref{approx:A:gammaell} and \eqref{approx:A:comparison}, we deduce
\begin{equation}\label{du:growth}
    |\nabla_\xi \widehat{\A}_{\gamma,\e}(x,t,\xi)|\leq C\,a_\e(|\xi|)\quad\text{for all $\delta\leq|\xi|\leq e^{2K}\delta^{-1}$,}
\end{equation}
while from \eqref{approx:A:gr}, \begin{equation}\label{approx:A:hat}
\begin{split}
        |\widehat\A_{\gamma,\e}(x,t,\xi)
        -c_\gamma(x,t)|
        &\leq  \int_{B_1}\rho(z)\left| \A_\gamma(x,t,\xi-\e z)-\A_\gamma(x,t,0)\right|\,dz
        \\
        &\leq  C \int_{B_1}\rho(z)\,b(|\xi-\e z|)\,dz
        \\
        &\leq C'b(|\xi|)\stackrel{\eqref{approx:A:comparison}}{\leq} C'' b_\e(|\xi|)\qquad \text{for all $\delta\leq |\xi| \leq e^{2K}\delta^{-1}$,}
        \end{split}
    \end{equation}
where in the second inequality we used $b(|\xi-\e z|)\leq b(2|\xi|)\leq Cb(|\xi|) $ by the monotonicity of $b$ and \eqref{b2t}.

We now define    \begin{equation}\label{approx:A:def}
    \begin{split}
        \A_{\gamma,\e,\delta}(x,t,\xi)
        &\coloneqq
        \big(1-\eta_\delta(|\xi|)\big)
        \widehat\A_{\gamma,\e}(x,t,\xi)+\eta_\delta(|\xi|)
        \big(c_\gamma(x,t)+a_\e(|\xi|)\xi\big).
    \end{split}
    \end{equation}
    Since $\eta_\delta\equiv1$ near $\xi=0$
    and \eqref{aexi:Cinf} holds, we have
    $\A_{\gamma,\e,\delta}\in
    C^\infty(Q\times\R^N;\R^N)$ and
    \begin{equation}\label{approx:A:zero}
        \A_{\gamma,\e,\delta}(x,t,0)
        =c_\gamma(x,t).
    \end{equation}
Moreover, since $\eta_\delta\equiv1$ for $|\xi|\geq e^{2K}\delta^{-1}$,
 we have
$\nabla_x\A_{\gamma,\e,\delta}(x,t,\xi)=\nabla_xc_\gamma(x,t)$
on this set, and therefore
\begin{equation}\label{yuia}
\sup_{Q'\times \R^N} |\nabla_x\A_{\gamma,\e,\delta}|\leq C_{\gamma,\e,\delta}\quad\text{for all $Q'\Subset Q$.}
\end{equation}  
    We now claim that, for all $\xi,\zeta\in\R^N$,
    \begin{equation}\label{approx:A:struct}
    \begin{split}
        \nabla_\xi\A_{\gamma,\e,\delta}(x,t,\xi)
        \zeta\cdot\zeta
        \geq c\,a_\e(|\xi|)|\zeta|^2,\qquad |\nabla_\xi\A_{\gamma,\e,\delta}(x,t,\xi)|
        \leq C\,a_\e(|\xi|),
    \end{split}
    \end{equation}
    with $c,C>0$ independent of $\gamma,\e,\delta$.
To see this, we differentiate 
    \eqref{approx:A:def}:
    \begin{equation}\label{approx:A:der}
    \begin{split}
        \nabla_\xi\A_{\gamma,\e,\delta}
        &=\big(1-\eta_\delta(|\xi|)\big)\nabla_\xi
        \widehat\A_{\gamma,\e}
        +\eta_\delta(|\xi|)\, \nabla_\xi(a_\e(|\xi|)\xi)\\
        &\quad+
        \big(c_\gamma
        -\widehat\A_{\gamma,\e}+a_\e(|\xi|)\xi\big)
        \otimes \eta'_\delta(|\xi|)\,\frac{\xi}{|\xi|}.
    \end{split}
    \end{equation}
    By \eqref{iaesae} (see also the computations of \cite[pp. 17]{Antonini2027}),
    \begin{equation}\label{cirua}
        \nabla_\xi(a_\e(|\xi|)\xi)\zeta\cdot\zeta
        \geq(1+i_a)
        a_\e(|\xi|)|\zeta|^2,
        \qquad
        |\nabla_\xi(a_\e(|\xi|)\xi)|\leq C a_\e(|\xi|).
    \end{equation}
 Hence, in the set $\{\xi \in \R^N: \eta_\delta'(|\xi|)=0\}$, Equation \eqref{approx:A:struct} follows from \eqref{approx:A:cut}, 
    \eqref{un:coer},  \eqref{du:growth}. On the remaining
    annuli, which are contained in $\delta\leq|\xi|\leq e^{2K}\delta^{-1}$ by   \eqref{approx:A:cut}, Equation \eqref{approx:A:hat} gives
    \[
    \begin{split}
        \big|c_\gamma
        -\widehat\A_{\gamma,\e}+a_\e(|\xi|)\xi\big|
        |\eta'_\delta(|\xi|)|\leq
        \frac{C}{K|\xi|}b_\e(|\xi|)=\frac{C}{K}a_\e(|\xi|).
    \end{split}
    \]
    By choosing $K=K(\data)$ sufficiently
    large, and using   \eqref{un:coer}, \eqref{cirua} into \eqref{approx:A:der} we deduce
    \begin{equation*}
        \nabla_\xi \A_{\gamma,\e,\delta}(x,t,\xi)\zeta\cdot\zeta\geq  c\,a_\e(|\xi|)\,|\zeta|^2-\frac{C}{K}a_\e(|\xi|)|\zeta|^2\geq c'\,a_\e(|\xi|)\,|\zeta|^2.
    \end{equation*}
The bound for
    $|\nabla_\xi\A_{\gamma,\e,\delta}|$ follows
    in the same way, by using \eqref{du:growth}. This proves \eqref{approx:A:struct}.

    Moreover, by \eqref{approx:A:gammahold},
    \[
    \begin{split}
        |\widehat\A_{\gamma,\e}(x,t,\xi)
        -\widehat\A_{\gamma,\e}(y,t,\xi)|\leq\Lambda_{\mathrm h}|x-y|^\alpha
        \int_{B_1}\rho(z)
        \big(1+b(|\xi-\e z|)\big)\,dz.
    \end{split}
    \]
  
If $\eta_\delta(|\xi|)\neq1$, then, using
$b(|\xi-\e z|)\leq C\,b(|\xi|)$ on this set
and \eqref{approx:A:comparison}, we deduce

    \[
        \int_{B_1}\rho(z)
        \big(1+b(|\xi-\e\,z|)\big)\,dz
        \leq C\big(1+b_\e(|\xi|)\big).
    \]
    Since
    $|c_\gamma(x,t)-c_\gamma(y,t)|
    \leq\Lambda_{\mathrm h}|x-y|^\alpha$ by \eqref{approx:A:gammahold},
    and $a_\e$ and $\eta_\delta$ are independent
    of $(x,t)$, \eqref{approx:A:def} and the two estimates above yield
    \begin{equation}\label{approx:A:holderaux}
        |\A_{\gamma,\e,\delta}(x,t,\xi)
        -\A_{\gamma,\e,\delta}(y,t,\xi)|
        \leq C\Lambda_{\mathrm h}|x-y|^\alpha
        \big(1+b_\e(|\xi|)\big).
    \end{equation}

We now prove \eqref{Ae:c}. Fix $M>0$ and
let $\delta>0$ be sufficiently small that
$\delta^{-1}>M$. By \eqref{approx:A:cut},
$\eta_\delta(|\xi|)=0$ whenever
$e^{2K}\delta\leq|\xi|\leq M$.
Thus, by \eqref{approx:A:def},
\eqref{approx:A:gr} and the properties
of convolution, we have
\[
\begin{split}
\sup_{|\xi|\leq M}&
|\A_{\gamma,\e,\delta}(x,t,\xi)-\A(x,t,\xi)|\leq
\sup_{|\xi|\leq M}
|\A_\gamma(x,t,\xi)-\A(x,t,\xi)|\\
&\qquad+
\sup_{|\xi|\leq M}
|\widehat\A_{\gamma,\e}(x,t,\xi)
-\A_\gamma(x,t,\xi)|+
C b(e^{2K}\delta)
+b_\e(e^{2K}\delta).
\end{split}
\]
By the properties of convolution and \eqref{be:unif}, passing to the limit we infer
\[
\lim_{\gamma\to 0}\lim_{\delta\to 0}\lim_{\e\to 0}\A_{\gamma,\e,\delta}(x,t,\xi)=\A(x,t,\xi)\quad\text{ uniformly in $|\xi|\leq M$, for a.e. $(x,t)\in Q$.}
\]
Thereby choosing  $\delta_j\downarrow0$,
    $\e_j\in(0,\e_{\delta_j})$ and
    $\gamma_j\downarrow0$ by a diagonal argument,
    and setting
    $\A_j\coloneqq\A_{\gamma_j,\e_j,\delta_j}$, then $\A_j\in C^\infty(Q\times \R^N;\R^N)$, \eqref{Ae:c} holds, and  \eqref{approx:A:ell}-\eqref{approx:A:hold}
    follow from the estimates \eqref{yuia}, \eqref{approx:A:struct} and \eqref{approx:A:holderaux}, up to relabeling the sequence.
\end{proof}

\subsection{Orlicz Lebesgue and Sobolev spaces}\label{sec:spaces}

Let $\Omega$ be an open subset of $\R^N$.
The Lebesgue-Orlicz space associated with $B$ is defined as
\[
    L^B(\Omega)
    \coloneqq
    \left\{
        u:\Omega\to\R\text{ measurable}:
        \int_\Omega B(|u|)\,dx<\infty
    \right\}.
\]
We endow this space with the \emph{Luxemburg norm}
\begin{equation}\label{lux}
    \|u\|_{L^B(\Omega)}
    \coloneqq
    \inf\left\{
        k>0:
        \int_\Omega B\left(\frac{|u(x)|}{k}\right)\,dx
        \leq1
    \right\}.
\end{equation}
The same definitions apply to vector-valued functions,
with $|u|$ denoting the Euclidean norm.

Since $B$ and its complementary Young function
$\widetilde B$ satisfy the $\Delta_2$-condition,
$L^B(\Omega)$ is a separable and reflexive Banach space;
see \cite[Theorems 3.5.2 and 3.6.6]{HarjulehtoHasto2019}.
Moreover, convergence in $L^B(\Omega)$ is equivalent
to convergence of the modular, that is,
\[
    u_j\to u\quad\text{in }L^B(\Omega)
    \quad\Longleftrightarrow\quad
    \lim_{j\to\infty}
    \int_\Omega B(|u_j-u|)\,dx=0.
\]
The Orlicz-Sobolev space is defined as
\[
    W^{1,B}(\Omega)
    \coloneqq
    \left\{
        u\in L^B(\Omega):
        Du\in L^B(\Omega;\R^N)
    \right\},
\]
where $Du$ denotes the distributional gradient of $u$.
It is a separable and reflexive Banach space when
endowed with the norm
\[
    \|u\|_{W^{1,B}(\Omega)}
    \coloneqq
    \|u\|_{L^B(\Omega)}
    +\|Du\|_{L^B(\Omega)}.
\]
Endowed with this norm, $W^{1,B}(\Omega)$ is a separable
and reflexive Banach space; see
\cite[Theorem 6.1.4(b)--(d)]{HarjulehtoHasto2019}. Accordingly, we define
\[
    W^{1,B}_c(\Omega)
    \coloneqq
    \left\{
        u\in W^{1,B}(\Omega):
        \operatorname{spt}u\Subset\Omega
    \right\},
    \qquad
    W^{1,B}_0(\Omega)
    \coloneqq
    \overline{C_c^\infty(\Omega)}^{\,W^{1,B}(\Omega)}.
\]
If $\Omega$ is a bounded Lipschitz domain,
$W^{1,B}_0(\Omega)$ coincides with the space of functions
in $W^{1,B}(\Omega)$ having zero boundary trace.
The local space $W^{1,B}_{\rm loc}(\Omega)$ consists of
functions belonging to $W^{1,B}(\Omega')$ for every
open set $\Omega'\Subset\Omega$.

Let now $I=(t_1,t_2)$ be a bounded open interval.
For measurable functions on $\Omega\times I$,
the space $L^B(\Omega\times I)$ and its Luxemburg norm
are defined as above, with integration taken with
respect to $dxdt$.
We use the notation
\[
    L^B(I;W^{1,B}(\Omega))
    \coloneqq
    \left\{
        u\in L^B(\Omega\times I):
        D_xu\in L^B(\Omega\times I;\R^N)
    \right\},
\]
endowed with the norm
\[
    \|u\|_{L^B(I;W^{1,B}(\Omega))}
    \coloneqq
    \|u\|_{L^B(\Omega\times I)}
    +\|Du\|_{L^B(\Omega\times I)}.
\]
Here and below, this notation is understood in the
space-time modular sense. In particular,
\[
    u\in L^B(I;W^{1,B}(\Omega))
    \quad\Longleftrightarrow\quad
    \int_I\int_\Omega
    \big[B(|u|)+B(|Du|)\big]\,dxdt<\infty,
\]
where $Du$ is understood in the distributional sense.
The same argument as in the proof of
\cite[Theorem 6.1.4]{HarjulehtoHasto2019}
shows that this space is a separable and reflexive
Banach space. Indeed, the map
\[
    u\mapsto(u,Du)
\]
identifies it with a closed subspace of
$L^B(\Omega\times I)\times L^B(\Omega\times I;\R^N)$.

Similarly, we set
\[
    \begin{split}
    L^B(I;W^{1,B}_0(\Omega))
    \coloneqq
    \big\{
        u\in L^B(I;W^{1,B}(\Omega)):
        u(\cdot,t)\in W^{1,B}_0(\Omega)
        \text{ for a.e. }t\in I
    \big\}.
    \end{split}
\]
Thus, a lateral boundary condition $u=g$ on
$\partial\Omega\times I$ is understood as
\[
    u-g\in L^B(I;W^{1,B}_0(\Omega)).
\]
Local versions of the preceding spaces are defined
by restriction to cylinders compactly contained
in the domain.

The natural energy class considered in this paper is
\[
    L^B(I;W^{1,B}(\Omega))
    \cap L^\infty(I;L^2(\Omega)),
\]
where the second condition means that
\[
    \operatorname*{ess\,sup}_{t\in I}
    \int_\Omega |u(x,t)|^2\,dx<\infty.
\]
We also use $C(\overline I;L^2(\Omega))$ for the space
of functions continuous in time with respect to
the strong $L^2(\Omega)$ topology.
An initial condition $u(\cdot,t_1)=u_0$, with
$u_0\in L^2(\Omega)$, is understood as
\[
    \lim_{t\downarrow t_1}
    \|u(\cdot,t)-u_0\|_{L^2(\Omega)}=0.
\]

When $B(t)=t^p/p$, $1<p<\infty$, the preceding spaces
reduce to the usual Lebesgue and Sobolev spaces,
with equivalent norms.

\begin{definition}[Local weak solution]\label{def:weak:solution}
    We say that $u$ is a \emph{local weak solution}
    to \eqref{eq:1} if
    \[
        u\in
        C_{\rm loc}\big((0,T);L^2_{\rm loc}(\Omega)\big)
        \cap
        L^B_{\rm loc}\big((0,T);W^{1,B}_{\rm loc}(\Omega)\big)
    \]
    and for every $\varphi\in C_c^\infty(\Omega_T)$,
    \[
        -\iint_{\Omega_T}u\,\partial_t\varphi\,dxdt
        +
        \iint_{\Omega_T}
        \A(x,t,Du)\cdot D\varphi\,dxdt
        =
        \iint_{\Omega_T}f\,\varphi\,dxdt.
    \]
\end{definition}

\begin{definition}[Weak solution to the Cauchy--Dirichlet problem]
\label{def:weak:dirichlet}
Let
\[
    u_0\in C([0,T];L^2(\Omega))
    \cap L^B((0,T);W^{1,B}(\Omega)).
\]
We say that $u$ is a \emph{weak solution} to 
\begin{equation*}
    \begin{cases}
        \partial_t u-\mathrm{div}\big( \A(x,t,Du)\big)=f\quad &\text{in $\Omega_T$}
        \\
        u=u_0\quad &\text{on $\partial_{\rm par}\Omega_T$}
    \end{cases}
\end{equation*}
 if it satisfies
the preceding weak formulation,
\[
    u\in C([0,T];L^2(\Omega))
    \cap L^B((0,T);W^{1,B}(\Omega)),
\]
and
\[
    u-u_0\in L^B((0,T);W^{1,B}_0(\Omega)),
    \qquad
    u(\cdot,0)=u_0(\cdot,0)
    \quad\text{in }L^2(\Omega).
\]
The initial condition is understood in the strong
$L^2(\Omega)$ sense.
\end{definition}

\begin{remark}\label{rem:energy}
\rm{
We observe that the growth bound
\eqref{deb:ass:A},  \eqref{wBbt} and \eqref{tB:usef} yield
\[
    \widetilde B\big(|\A(x,t,Du)|\big)
    \leq C_Q\big(1+B(|Du|)\big)
    \qquad\text{a.e. in }Q,
\]
for every cylinder $Q\Subset\Omega_T$.
Consequently,
\[
    \A(x,t,Du)\in
    L^{\widetilde B}_{\rm loc}(\Omega_T;\R^N),
\]
and by H\"older's inequality in Orlicz spaces,
\[
    \iint_Q|\A(x,t,Du)\cdot Du|\,dxdt
    \leq
    2\,\|\A(x,t,Du)\|_{L^{\widetilde B}(Q)}
    \|Du\|_{L^B(Q)}
    <\infty.
\]
In particular, $\A(x,t,Du)\cdot Du\in
L^1_{\rm loc}(\Omega_T)$.}
\end{remark}

\subsection{Technical lemmata}
In this section we recollect a few technical lemmas.  We start with De Giorgi's hypergeometric lemma (see \cite[Lemma 7.1, pp. 220]{G03}).
\begin{lemma}\label{lem:hyp}
    Let $\{Z_j\}$, $j=0,1,2,\dots$, be a sequence of positive numbers satisfying the recursive inequality
    \begin{equation*}
        Z_{j+1}\leq C_0\,b^{j}\,Z_j^{1+\alpha}\,,
    \end{equation*}
    where $C_0,b>1$ and $\alpha>0$ are given numbers. If $Z_0\leq C_0^{-1/\alpha}b^{-1/\alpha^2}$, then
        \begin{equation*}
        \lim_{j\to \infty} Z_j=0\,.
        \end{equation*}
\end{lemma}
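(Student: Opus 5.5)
This is De Giorgi's hypergeometric lemma: whenever $Z_0$ lies below the threshold, the sequence $Z_j$ decays geometrically, and hence $Z_j\to0$. The plan is to prove by induction the explicit bound
\[
Z_j\leq b^{-j/\alpha}\,Z_0\qquad\text{for all } j\geq0 .
\]
This forces $Z_j\to0$, because $b>1$ and $\alpha>0$ give $b^{-j/\alpha}\to0$.

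The base case $j=0$ is trivial. For the inductive step, assume $Z_j\leq b^{-j/\alpha}Z_0$. The recursion then gives
\[
Z_{j+1}\leq C_0\,b^{j}\,Z_j^{1+\alpha}\leq C_0\,b^{j}\,b^{-j(1+\alpha)/\alpha}\,Z_0^{1+\alpha}
= C_0\,b^{-j/\alpha}\,Z_0^{\alpha}\,Z_0 .
\]
Here the exponent of $b$ was simplified via $j-j(1+\alpha)/\alpha=-j/\alpha$. To close the induction we need
\[
C_0\,b^{-j/\alpha}\,Z_0^{\alpha}\leq b^{-(j+1)/\alpha},
\qquad\text{that is,}\qquad C_0\,Z_0^{\alpha}\leq b^{-1/\alpha}.
\]
This holds because the hypothesis $Z_0\leq C_0^{-1/\alpha}b^{-1/\alpha^2}$, raised to the power $\alpha$, is exactly $Z_0^\alpha\leq C_0^{-1}b^{-1/\alpha}$.

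The only delicate point is choosing the geometric ratio $b^{-1/\alpha}$ in the ansatz so that the factor $b^{j}$ coming from the recursion is exactly absorbed. With that choice the argument is a one-line induction, and there is no real obstacle.
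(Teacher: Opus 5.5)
Your induction $Z_j\le b^{-j/\alpha}Z_0$ is correct, including the exponent bookkeeping and the observation that closing the inductive step is exactly equivalent to the hypothesis $Z_0^\alpha\le C_0^{-1}b^{-1/\alpha}$. The paper gives no proof of its own and simply cites Giusti [G03, Lemma 7.1], whose argument is this same induction.
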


We recall the standard iterative lemma \cite[Chapter 6, Lemma 6.1, p. 191]{G03}
\begin{lemma}\label{lem:iteration}
    Let $\phi:[\rho,R]\to[0,\infty)$ be a bounded function. Assume that there exist
    constants $0\leq\theta<1$, $A,B\geq0$, and $\alpha,\beta>0$ such that
    \[
        \phi(t)
        \leq
        \theta\,\phi(s)
        +\frac{A}{(s-t)^\alpha}
        +\frac{B}{(s-t)^\beta}
    \]
    for every $\rho\leq t<s\leq R$. Then there exists a constant
    $C=C(\theta,\alpha,\beta)>0$ such that
    \[
        \phi(\rho)
        \leq
        C\left\{
            \frac{A}{(R-\rho)^\alpha}
            +
            \frac{B}{(R-\rho)^\beta}
        \right\}.
    \]
\end{lemma}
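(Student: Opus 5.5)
This is the standard iteration lemma, so the plan is the classical argument of Giaquinta. Fix $\rho\le t_0<R$ and build an increasing sequence $t_0<t_1<t_2<\dots$ with $t_i\uparrow R$, defined by
\[
t_{i+1}-t_i=(1-\tau)\tau^i(R-t_0)
\]
for a parameter $\tau\in(0,1)$ to be fixed. Apply the hypothesis with $t=t_i$, $s=t_{i+1}$:
\[
\phi(t_i)\le\theta\,\phi(t_{i+1})+\frac{A}{(1-\tau)^\alpha\tau^{i\alpha}(R-t_0)^\alpha}+\frac{B}{(1-\tau)^\beta\tau^{i\beta}(R-t_0)^\beta}.
\]

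Iterating $k$ times gives
\[
\phi(t_0)\le\theta^k\phi(t_k)+\frac{A}{(1-\tau)^\alpha(R-t_0)^\alpha}\sum_{i=0}^{k-1}(\theta\tau^{-\alpha})^i+\frac{B}{(1-\tau)^\beta(R-t_0)^\beta}\sum_{i=0}^{k-1}(\theta\tau^{-\beta})^i.
\]
The one real choice is $\tau$. Take it close enough to $1$ that $\theta\tau^{-\max\{\alpha,\beta\}}<1$; this is possible because $\theta<1$. Both geometric series then converge, with sums bounded by a constant depending only on $\theta,\alpha,\beta$.

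Since $\phi$ is bounded, $\theta^k\phi(t_k)\le\theta^k\sup\phi\to0$ as $k\to\infty$. Letting $k\to\infty$ therefore yields
\[
\phi(t_0)\le C(\theta,\alpha,\beta)\Bigl\{\frac{A}{(R-t_0)^\alpha}+\frac{B}{(R-t_0)^\beta}\Bigr\},
\]
and taking $t_0=\rho$ proves the claim. The boundedness of $\phi$ is used only to kill the remainder term. There is no real obstacle beyond choosing $\tau$ correctly.
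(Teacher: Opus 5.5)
Your proposal is correct and is the standard argument. The paper does not prove this lemma itself; it cites \cite[Chapter 6, Lemma 6.1]{G03}, whose proof is exactly yours: the geometric sequence $t_{i+1}-t_i=(1-\tau)\tau^i(R-\rho)$, with $\tau$ chosen so that $\theta\tau^{-\max\{\alpha,\beta\}}<1$, and the boundedness of $\phi$ used to send $\theta^k\phi(t_k)\to0$.
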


We recall the following isoperimetric type inequality.

\begin{lemma}[\cite{DiBenedetto1993},
    Chapter I, Lemma 2.2]\label{lem:isoper}
Let $v\in W^{1,1}(B_\rho(x_0))\cap C(B_\rho(x_0))$
for some $\rho>0$ and $x_0\in\R^N$, and let
$k$ and $l$ be any pair of real numbers such that
$k<l$. Then there exists a constant
$C=C(N)>0$, such that
\begin{equation}\label{eq:degiorgi:isoperimetric}
    (l-k)|\{v>l\}|
    \leq
    C\,
    \frac{\rho^{N+1}}{|\{v<k\}|}
    \int_{\{k<v<l\}}|Dv|\,dx.
\end{equation}
Here all the level sets are understood to be
subsets of $B_\rho(x_0)$.
\end{lemma}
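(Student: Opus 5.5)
The isoperimetric inequality of Lemma \ref{lem:isoper} can be proved by averaging a one-dimensional fundamental theorem of calculus along segments. By density and continuity we may first assume $v\in C^1(\overline{B_\rho(x_0)})$; the general case $v\in W^{1,1}\cap C$ then follows by mollification, since the level sets $\{v>l\}$ and $\{v<k\}$ of a continuous function are open and their measures are lower semicontinuous under uniform convergence on compacts. To keep the strict inequalities stable, it is convenient to pass through slightly shifted levels $k'>k$, $l'<l$ and then let $k'\downarrow k$, $l'\uparrow l$.

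The first step is a truncation. Let $w\coloneqq \min\{(v-k)_+,\,l-k\}$, so that $w=0$ on $\{v\le k\}$, $w=l-k$ on $\{v\ge l\}$, and $Dw=Dv\,\mathbbm{1}_{\{k<v<l\}}$ almost everywhere. Then, for every $x\in\{v>l\}$ and $y\in\{v<k\}$,
\[
(l-k)=w(x)-w(y)\le \int_0^{|x-y|}|Dw(y+s\,e)|\,ds,\qquad e=\frac{x-y}{|x-y|},
\]
and the segment from $y$ to $x$ stays in the convex set $B_\rho(x_0)$.

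The second step is to integrate this inequality in $x\in\{v>l\}$ and $y\in\{v<k\}$, which gives
\[
(l-k)\,|\{v>l\}|\,|\{v<k\}|\le \int_{\{v<k\}}\int_{B_\rho}\int_0^{|x-y|}|Dw(y+s e)|\,ds\,dx\,dy .
\]
For fixed $y$, pass to polar coordinates $x=y+r e$ with $r\le R(y,e)\le 2\rho$. The inner double integral becomes
\[
\int_{S^{N-1}}\int_0^{2\rho} r^{N-1}\int_0^{r}|Dw(y+se)|\,ds\,dr\,de
\le \frac{(2\rho)^N}{N}\int_{S^{N-1}}\int_0^{2\rho}|Dw(y+se)|\,ds\,de .
\]
The last quantity equals $\frac{(2\rho)^N}{N}\int_{B_\rho}\frac{|Dw(z)|}{|z-y|^{N-1}}\,dz$.

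The third step is to integrate in $y$ over $\{v<k\}\subset B_\rho$ and use Fubini. The elementary bound $\int_{B_\rho}|z-y|^{1-N}\,dy\le C(N)\rho$ then yields
\[
(l-k)|\{v>l\}|\,|\{v<k\}|\le C(N)\rho^{N+1}\int_{B_\rho}|Dw|\,dz=C(N)\rho^{N+1}\int_{\{k<v<l\}}|Dv|\,dz .
\]
Dividing by $|\{v<k\}|$ gives \eqref{eq:degiorgi:isoperimetric}; if $|\{v<k\}|=0$ the right-hand side is $+\infty$ and there is nothing to prove.

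The only delicate point is the justification of the chain rule for $w$ and the approximation step. Both are standard: truncations of Sobolev functions satisfy $Dw=Dv\,\mathbbm{1}_{\{k<v<l\}}$ almost everywhere, and continuity of $v$ makes the level sets behave well under mollification.
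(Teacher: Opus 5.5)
Your argument is correct, and it is essentially the standard proof of DiBenedetto's Chapter I, Lemma 2.2. The paper does not prove the lemma itself but only cites that result: you truncate $v$ between the levels $k$ and $l$, integrate along segments joining $\{v<k\}$ to $\{v>l\}$, pass to polar coordinates, and use $\int_{B_\rho}|z-y|^{1-N}\,dy\le C(N)\rho$, with your preliminary reduction to $C^1$ functions via the shifted levels $k'>k$, $l'<l$ being a sound way to handle $v\in W^{1,1}\cap C$.
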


\section{Boundedness of solutions}\label{sec:bddness}
In this section we study the local boundedness of solutions of \eqref{eq:1}.  We start with the following 

\begin{lemma}[Parabolic Orlicz-Sobolev inequality]
    Let $Q=\mathcal{B}\times I$, where $\mathcal{B}\subset \R^N $ is a ball, and $I\subset \R$ is an open interval; let
    $v\in L^\infty(I;L^2(\mathcal{B}))\cap W^{1,B}(Q)$ be a function compactly supported
    in space for a.e. $t\in I$. Then
    \begin{equation}\label{par:sob}
        \iint_Q |v|^{\frac{2i_B}{N}}\,B(|v|)\,dxdt
        \leq
        C\,
        \left(
        \sup_{t\in I}\int_{\mathcal{B}\times\{t\}}|v|^2\,dx
        \right)^{\frac{i_B}{N}}
        \iint_Q\big(B(|Dv|)+B(|v|)\big)\,dxdt,
    \end{equation}
    for some constant $C=C(N,i_a,s_a)>0$.
\end{lemma}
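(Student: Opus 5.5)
The plan is to reduce \eqref{par:sob} to a slicewise Gagliardo--Nirenberg type inequality in Orlicz form and then integrate in time. Fix $t\in I$ and write $v=v(\cdot,t)$, which is compactly supported in $\mathcal B$. The key intermediate estimate will be the spatial inequality
\[
\int_{\mathcal B}|v|^{\frac{2i_B}{N}}B(|v|)\,dx\leq C\left(\int_{\mathcal B}|v|^2\,dx\right)^{\frac{i_B}{N}}\int_{\mathcal B}\big(B(|Dv|)+B(|v|)\big)\,dx,
\]
after which integrating over $I$ and bounding $\int_{\mathcal B}|v(\cdot,t)|^2dx$ by its supremum gives \eqref{par:sob} at once.

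To prove the slice estimate, I will apply the $W^{1,1}$ Sobolev inequality $\|w\|_{L^{N/(N-1)}}\leq C(N)\|Dw\|_{L^1}$ (valid for compactly supported $w$, with constant independent of $\mathcal B$) to an auxiliary function $w=G(|v|)$, where $G$ is built from $B$ and a power so that $G(s)^{N/(N-1)}$ is comparable to $s^{2i_B/N}B(s)$ after a H\"older split. Concretely, I will try $w=|v|^{\gamma}B(|v|)^{\mu}$ with exponents adapted to $i_B$. Then $|Dw|\leq C\,|v|^{\gamma}B(|v|)^{\mu-1}\big(\gamma B(|v|)/|v|+\mu b(|v|)\big)|Dv|$, and by \eqref{Bcomeb} this is $\leq C|v|^{\gamma-1}B(|v|)^{\mu}|Dv|$. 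The factor $|Dv|$ is handled with Young's inequality \eqref{Young}, $b(s)\,r\leq \delta B(r)+C\widetilde B(b(s))\leq \delta B(r)+CB(s)$ via \eqref{wBbt}, producing $B(|Dv|)+B(|v|)$. The remaining factor in $|v|$ is then estimated by H\"older against $\int|v|^2$, and the monotonicity \eqref{mon:iBsB} of $B(s)/s^{i_B}$ is what makes the exponent $i_B/N$ come out.

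Alternatively, and likely more cleanly, I will run the standard interpolation argument: by H\"older,
\[
\int |v|^{\frac{2i_B}{N}}B(|v|)\le \Big(\int |v|^2\Big)^{\frac{i_B}{N}}\Big(\int B(|v|)^{\frac{N}{N-i_B}}\Big)^{\frac{N-i_B}{N}}
\]
(when $i_B<N$), and then use an Orlicz--Sobolev embedding in the form $\big(\int B(|v|)^{N/(N-i_B)}\big)^{(N-i_B)/N}\lesssim \int (B(|Dv|)+B(|v|))$. The latter would come from applying the $W^{1,1}$ Sobolev inequality to $w=B(|v|)^{\theta}$ with $\theta=(N-1)/(N-i_B)$, noting $|Dw|\lesssim B(|v|)^{\theta-1}b(|v|)|Dv|$, and then a Young/H\"older split. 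The case $i_B\geq N$, possible only for small $N$, would be handled by replacing $i_B$ with a smaller admissible exponent $\mathfrak i_B$ in \eqref{mon:iBsB}; alternatively, one can note that $B(s)\geq c\,s^{i_B}$ for $s\geq1$ and treat large and small values of $|v|$ separately.

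The main obstacle is this last embedding step. The split of $B(|v|)^{\theta-1}b(|v|)|Dv|$ yields a term that is not linear in the energy but nonlinear, of the form $\big(\int B(|v|)^{N/(N-i_B)}\big)^{\kappa}$ times the energy, and the exponents must close homogeneously. Since $B$ is not a power, exact homogeneity fails. I would first try to restore it by rescaling: normalize $v$ so that the energy equals one, and use the $\Delta_2$/$\nabla_2$ bounds \eqref{B2t} to compare $B(\lambda s)$ with $\lambda^{i_B}B(s)$ and $\lambda^{s_B}B(s)$. If this loses the precise exponent $i_B/N$, I would fall back to a pointwise splitting $\{|v|\le1\}\cup\{|v|>1\}$ combined with \eqref{usso}, accepting the additive $B(|v|)$ term on the right-hand side, which is exactly why it appears in \eqref{par:sob}.
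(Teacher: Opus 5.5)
Your outer architecture is right and matches the paper's: H\"older in space with exponents $N/i_B$ and $N/(N-i_B)$ to extract $\big(\int_{\mathcal B}|v|^2\,dx\big)^{i_B/N}$, then a slicewise Orlicz--Sobolev bound
\[
\Big(\int_{\mathcal B}B(|v|)^{\frac{N}{N-i_B}}\,dx\Big)^{\frac{N-i_B}{N}}\le C\int_{\mathcal B}\big(B(|Dv|)+B(|v|)\big)\,dx,
\]
then integration in time. The case $i_B\ge N$ never arises, since the normalization $i_a<0$ gives $i_B<2\le N$. But this embedding is the whole content of the lemma, and you do not prove it. You flag it as the main obstacle and hand it to fallbacks that do not work.

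\begin{itemize}
\item Applying Young's inequality $b(|v|)|Dv|\le B(|Dv|)+CB(|v|)$ to $|D B(|v|)^\theta|=\theta B(|v|)^{\theta-1}b(|v|)|Dv|$ leaves $\int B(|v|)^{\theta-1}B(|Dv|)\,dx$. Here the weight $B(|v|)^{\theta-1}$ is unbounded, since $\theta-1=\frac{i_B-1}{N-i_B}>0$. It cannot be split off without higher integrability of $B(|Dv|)$.
\item Normalizing the energy does not touch this weighted term. Undoing a normalization $v\mapsto v/\mu$ also reintroduces mismatched powers $\mu^{i_B}$ versus $\mu^{s_B}$ from \eqref{B2t}, because \eqref{par:sob} is a modular inequality, not a homogeneous one.
\item Splitting at $|v|=1$ via \eqref{usso} produces an additive constant, hence a term like $|\operatorname{spt}v(\cdot,t)|$, not $B(|v|)$. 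The resulting inequality is not \eqref{par:sob}.
\end{itemize}

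The missing idea is one pointwise inequality, which follows from the monotonicity of $t\mapsto B(t)/t^{i_B}$ in \eqref{mon:iBsB}:
\[
B(s)\Big(\frac rs\Big)^{i_B}\le B(r)+B(s)\qquad\text{for all }r\ge0,\ s>0.
\]
If $r\le s$ the ratio factor is at most $1$; if $r>s$ then $B(s)/s^{i_B}\le B(r)/r^{i_B}$.

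The point is never to apply Young to $b(|v|)|Dv|$. Instead write $b(|v|)\le CB(|v|)/|v|$ by \eqref{Bcomeb}, which keeps the quantity $B(|v|)(|Dv|/|v|)^{i_B}$ intact. The paper sets $V=B(|v|)^{1/i_B}$. Then $|DV|^{i_B}\le CB(|v|)(|Dv|/|v|)^{i_B}\le C(B(|Dv|)+B(|v|))$, and the standard $W^{1,i_B}$ Sobolev inequality applied to $V$ gives the embedding directly.

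Your $W^{1,1}$ route with $w=B(|v|)^\theta$ also closes once you do this. Write $|Dw|\le C\,B(|v|)^{\theta-\frac1{i_B}}\cdot B(|v|)^{\frac1{i_B}}\frac{|Dv|}{|v|}$ and use H\"older with exponents $(i_B',i_B)$. Since $(\theta-\frac1{i_B})i_B'=\frac N{N-i_B}$ and $\frac{N-1}N-\frac{i_B-1}{i_B}=\frac{N-i_B}{Ni_B}$, the powers of $\int B(|v|)^{N/(N-i_B)}$ cancel exactly. This cancellation needs an a priori finiteness or truncation argument, which the paper's route avoids. The pointwise inequality then finishes the proof. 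Once the non-power behaviour of $B$ is confined to that single inequality, there is no homogeneity obstruction.
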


\begin{proof}
Set $  V\coloneqq B(|v|)^{1/i_B}$.
Then
\[
\begin{split}
    |DV|^{i_B}
    &=
    \left(\frac{1}{i_B}\right)^{i_B}
    B(|v|)^{1-i_B}\,b(|v|)^{i_B}\,|Dv|^{i_B}\stackrel{\eqref{Bcomeb}}{\leq}
    C\,B(|v|)\,
    \left(\frac{|Dv|}{|v|}\right)^{i_B}.
\end{split}
\]
Thanks to \eqref{mon:iBsB}, the map
$t\mapsto B(t)/t^{i_B}$ is monotone increasing. In particular, if
$|Dv|\leq |v|$, then $|DV|^{i_B}\leq C\,B(|v|)$,
whereas if $|Dv|>|v|$, then $\frac{B(|v|)}{|v|^{i_B}}\,|Dv|^{i_B}
    \leq B(|Dv|)$ by monotonicity. In both cases, we have found
\[
    |DV|^{i_B}
    \leq C\,\big(B(|Dv|)+B(|v|)\big).
\]
Then, if $i_B^*=\frac{N i_B}{N-i_B}$, by Sobolev inequality
\[
\begin{split}
    \left(
    \int_\mathcal{B} B(|v|)^{\frac{N}{N-i_B}}\,dx
    \right)^{\frac{N-i_B}{N}}
    &=
    \left(
    \int_\mathcal{B} |V|^{i_B^*}\,dx
    \right)^{\frac{i_B}{i_B^*}}\leq
    C(N,i_B)\int_\mathcal{B} |DV|^{i_B}\,dx
    \\
    &\leq
    C\int_\mathcal{B}\big(B(|Dv|)+B(|v|)\big)\,dx.
\end{split}
\]
Next, by H\"older's inequality with exponents
$\big(\frac{N}{i_B},\frac{N}{N-i_B}\big)$, and the above estimate,
\begin{equation*}
    \begin{split}
        \iint_Q |v|^{\frac{2i_B}{N}}\,B(|v|)\,dxdt
        &\leq
        \int_I
        \left(
        \int_\mathcal{B} |v|^2\,dx
        \right)^{\frac{i_B}{N}}
        \left(
        \int_\mathcal{B}
        B(|v|)^{\frac{N}{N-i_B}}\,dx
        \right)^{\frac{N-i_B}{N}}
        dt
        \\
        &\leq
        C\,
        \left(
        \sup_{t\in I}
        \int_{B\times\{t\}}|v|^2\,dx
        \right)^{\frac{i_B}{N}}
        \iint_Q
        \big(B(|Dv|)+B(|v|)\big)\,dxdt,
    \end{split}
\end{equation*}
that is our thesis.
\end{proof}

\begin{theorem}[Local boundedness]\label{thm:bddness}
    Let $u$ be a local weak solution to \eqref{eq:1} under \eqref{iasa}, \eqref{ass:A}, \eqref{A:hold} and 
    \begin{equation}\label{fdsmall}
        f\in L^d(\Omega_T),\quad d>\frac{N+i_B}{i_B}.
    \end{equation}
    Suppose that \begin{equation}\label{ia:basso}
        i_B>\frac{2N}{N+2}.
    \end{equation}
    Then $u\in L^\infty_{\rm loc}(\Omega_T)$, and for every $Q_{R,S}=Q_{R,S}(x_0,t_0)\Subset\Omega_T$ and every $c\in\R$,
\begin{equation}\label{u:boundedness}
 \|u-c\|_{L^\infty(Q_{R/2,S/4})}
 \le C_{R,S}\,\left[1+\miint_{Q_{R,S}}
 \left\{ |u-c|^2+B(|u-c|) \right\}dxdt
 \right]^{ \frac{i_B}{(N+2)\,i_B-2N}}.
\end{equation}
Here $C_{R,S}$ is a constant depending on $\datah, R,S, \|f\|_{L^d(Q_{R,S})} $.
    \end{theorem}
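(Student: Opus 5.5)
The plan is a De Giorgi iteration on shrinking cylinders, built on the parabolic Orlicz--Sobolev inequality \eqref{par:sob}. After translating in time and space, and replacing $u$ by $u-c$ (the equation is invariant under adding constants), it suffices to bound $\sup u_+$; the bound for $u_-$ follows by applying the same argument to $-u$, which solves an equation of the same type. Fix $\A(x_0,t,0)=0$ as in Subsection \ref{sec:stress}, so that \eqref{deb:ass:A} holds on $Q_{R,S}$. Take radii $R_j=\frac R2(1+2^{-j})$, times $S_j=\frac S4(1+3\cdot 2^{-j})$, cylinders $Q_j=Q_{R_j,S_j}$, levels $k_j=k(1-2^{-j-1})$ with $k\ge1$ to be chosen, and cutoffs $\zeta_j$ equal to $1$ on $Q_{j+1}$, vanishing near $\partial_{\rm par}Q_j$, with $|D\zeta_j|\lesssim 2^j/R$ and $|\partial_t\zeta_j|\lesssim 2^j/S$.

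\textbf{Step 1 (energy estimate).} Test the equation with $(u-k_j)_+\zeta_j^{s_B}$; this is justified through Steklov averages, using $u\in C_{\rm loc}(L^2_{\rm loc})$. With $w_j=(u-k_{j+1})_+$ one should get
\[
\sup_t\int w_j^2\zeta^{s_B}\,dx+\iint B(|Dw_j|)\zeta^{s_B}
\le C\iint\Big[\frac{2^{js_B}}{R^{s_B}}B(w)+\frac{2^j}{S}w^2\Big]
+C\iint_{\{u>k_{j}\}}\big(|f|w+\widetilde B(\Lambda_{\rm h})\big),
\]
with $w=(u-k_j)_+$. The $\A$-term is handled by \eqref{deb:ass:A}, \eqref{young1} and the $\Delta_2$ property \eqref{B2t}. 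To absorb $B(|D\zeta|w)$ one uses $B(2^j w/R)\le (2^j/R)^{s_B}B(w)$ for $2^j/R\ge1$, with the constant depending on $R$.

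\textbf{Step 2 (Sobolev and level-set bounds).} Set
\[
Y_j=\miint_{Q_j}\{w_j^2+B(w_j)\}.
\]
Apply \eqref{par:sob} to $v=w_{j}\zeta_j$. Then use Chebyshev-type inequalities on $\{u>k_{j+1}\}$, namely $w_j\ge k2^{-j-2}$ there, to convert powers of $w$ into powers of $Y_j$ times negative powers of $k$. To bound $\iint_{Q_{j+1}} w^2$ and $\iint_{Q_{j+1}} B(w)$, use Hölder against $|\{u>k_{j+1}\}|$ and the gain $\iint|v|^{2i_B/N}B(|v|)$. Combining the $\sup_t L^2$ factor and the energy factor in \eqref{par:sob}, the total exponent is $1+i_B/N$, while the homogeneity loss coming from the mixture of $w^2$ and $B(w)$ must be tracked carefully. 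The $f$-term is handled by Hölder with $d>(N+i_B)/i_B$, which produces a factor $|\{u>k_j\}|^{1-1/d-\dots}$ that also has a superlinear power. The outcome should be
\[
Y_{j+1}\le C\,b^j\,k^{-\gamma}Y_j^{1+\epsilon}
\]
for some $\epsilon>0$ and $\gamma>0$. The exponent $\epsilon$ is positive precisely because $i_B>2N/(N+2)$: the relevant gain is $\frac{2i_B}{N}+i_B-2>0$, i.e.\ $(N+2)i_B>2N$.

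\textbf{Step 3 (conclusion).} Lemma \ref{lem:hyp} gives $Y_j\to0$, hence $u\le k$ on $Q_{R/2,S/4}$, provided
\[
Y_0\le C^{-1/\epsilon}b^{-1/\epsilon^2}k^{\gamma/\epsilon}.
\]
Choosing $k=\max\{1,(CY_0)^{\epsilon/\gamma}\}$ gives the power $\epsilon/\gamma=i_B/((N+2)i_B-2N)$.

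The main obstacle is Step 2: because $B$ is not a power, the quantities $w^2$ and $B(w)$ have different homogeneities. I would handle this by working only with $k\ge1$, and by using \eqref{mon:iBsB} (specifically $B(w)/w^{i_B}$ nondecreasing, so that $B(w)\ge B(k_0)(w/k_0)^{i_B}$ on $\{w>k_0\}$) together with \eqref{usso} to reduce every term to a power-type bound with exponent $i_B$ from below. Doing this consistently is what should produce exactly the exponent $i_B/((N+2)i_B-2N)$, with the "$1+$" in \eqref{u:boundedness} coming from the choice $k\ge1$ and from the $f$ and $\widetilde B(\Lambda_{\rm h})$ contributions.
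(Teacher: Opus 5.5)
Your plan is correct and is essentially the paper's proof. The common ingredients are the Caccioppoli inequality on level sets from testing with $(u-k)_+\zeta^{s_B}$, the parabolic Orlicz--Sobolev inequality \eqref{par:sob}, and a pointwise conversion (using $k\ge1$ and the lower $i_B$-growth of $B$) of $w^2+B(w)$ into $C2^{Cj}k^{-\delta_0}w^{2i_B/N}B(w)$ on $\{w\ge 2^{-j-3}k\}$, with $\delta_0=i_B+\frac{2i_B}{N}-2$. The paper only differs in rescaling to $R=S=1$ and in using an intermediate level $\widehat\kappa_j$ to keep the indices consistent; your $w_j$ versus $w$ bookkeeping needs the same fix.

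Three small corrections are worth making:
\begin{itemize}
\item Apply \eqref{par:sob} to $w\zeta^{s_B/i_B}$ rather than $w\zeta$. Only then are $\sup_t\int v^2$ and $\iint B(|Dv|)$ controlled by the Caccioppoli left-hand side, which carries $\zeta^{s_B}$; with $w\zeta$ you would need the reverse of the inequality $B(\zeta t)\ge\zeta^{s_B}B(t)$.
\item The superlinear exponent is $\epsilon=i_B/N$ for every $i_B$. The supercritical condition is instead what makes the $k$-exponent $\gamma=\delta_0$ positive, and this gives $\epsilon/\gamma=i_B/((N+2)i_B-2N)$.
\item When several terms appear, as with the $f$-contribution, take the exponent of $k$ from the worst ratio $\epsilon_i/\gamma_i$ after bounding $Y_j\le Y_0$. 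Do not use $\min\epsilon_i$ and $\min\gamma_i$ separately.
\end{itemize}
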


\begin{proof}
By scaling and translation, we may assume that $c=0$, $(x_0,t_0)=(0,0)$ and $R=S=1$. Indeed, for $(y,s)\in Q_1=B_1\times(-1,0)$, we may define
\[
    v(y,s)\coloneqq u(x_0+Ry,t_0+Ss)-c.
\]
Then $v$ is a local weak solution to
\[
    \partial_s v-\operatorname{div}_y
    \widehat{\A}(y,s,Dv)=\widehat f(y,s)
    \qquad\text{in }Q_1,
\]
where
\[
    \widehat{\A}(y,s,\xi)
    \coloneqq \frac{S}{R}\,
    \A\left(x_0+Ry,t_0+Ss,\frac{\xi}{R}\right),
    \qquad
    \widehat f(y,s)\coloneqq
    S f(x_0+Ry,t_0+Ss).
\]
As
\[
    \nabla_\xi\widehat{\A}(y,s,\xi)
    =
    \frac{S}{R^2}\,
    \nabla_\xi\A
    \left(x_0+Ry,t_0+Ss,\frac{\xi}{R}\right),
\]
and, by \eqref{ciu:a},
\[
    \min\{R^{-i_a},R^{-s_a}\}\,a(t)
    \leq a(t/R)
    \leq
    \max\{R^{-i_a},R^{-s_a}\}\,a(t)
    \qquad\text{for all }t>0.
\]
it follows that $\widehat{\A}$ satisfies
\eqref{ass:A} and \eqref{A:hold} with the same
function $a$ and possibly different constants,
depending also on $R,S$. Moreover,
\[
    \|\widehat f\|_{L^d(Q_1)}
    =
    S^{1-\frac1d}R^{-\frac Nd}
    \|f\|_{L^d(Q_{R,S})}.
\]
Thus, absorbing the factors depending on $R,S$
into $C_{R,S}$, it suffices to prove
\eqref{u:boundedness} when $c=0$,
$(x_0,t_0)=(0,0)$ and $R=S=1$.
In the following, to ease the notation, we write $u,\A,f$ in place
of $v,\widehat{\A},\widehat f$, respectively.
\vspace{0.1cm}

    \noindent\textit{Step 1: Caccioppoli inequality on level sets.} In this step, since $u$ is not weakly differentiable with respect to the time variable, the computations are formal. They can be rigorously justified by means of Steklov averages; see \cite[I.3-(i) and the proof of II.Proposition 3.1]{DiBenedetto1993}.

Let $\zeta\in C^\infty(Q_1;[0,1])$
be such that
\begin{equation*}
    \operatorname{spt}\zeta(\cdot,t)\subset K\Subset B_1\quad\text{for all $t\in (-1,0)$,}\quad \text{and}\quad\zeta(\cdot,-1)=0.
\end{equation*}
 Let $t_0\in (-1,0)$ and $Q_{t_0}=B_1\times (-1,t_0)$; we multiply equation \eqref{eq:1} with $(u-\kappa)_+\,\zeta^{s_B}$ and integrate over $Q_t$, thus getting
    \[
    \iint_{Q_t}  \partial_t u\,(u-\kappa)_+\,\zeta^{s_B}\,dxdt+\iint_{Q_t}  \A(x,t,Du)\cdot D((u-\kappa)_+\zeta^{s_B})=\iint_{Q_t} f\,(u-\kappa)_+\,\zeta^{s_B}\,dxdt.
    \]
   As $\zeta(\cdot,-1)=0$, we have 
   \[
   \begin{split}
   \iint_{Q_{t_0}}\partial_t u\,(u-\kappa)_+\,\zeta^{s_B}dxdt&=\iint_{Q_{t_0}}\frac{1}{2}\partial_t[(u-\kappa)_+^2\,\zeta^{s_B}]\,dxdt-\iint_{Q_{t_0}}\frac{s_B}{2}\,(u-\kappa)^2_+\,\zeta^{s_B-1}\,\partial_t \zeta dxdt
   \\
   &=\frac{1}{2}\int_{B_1\times \{t_0\}}(u-\kappa)_+^2\,\zeta^{s_B}dx-\frac{s_B}{2}\iint_{Q_{t_0}}\,(u-\kappa)^2_+\,\zeta^{s_B-1}\,\partial_t \zeta dxdt.
   \end{split}
   \] 
Next, by \eqref{deb:ass:A}
\[
\begin{split}
    &\A(x,t,Du)\cdot D(u-\kappa)_+\zeta^{s_B}\geq c_1\,B\big(|D(u-\kappa)_+|\big)\,\zeta^{s_B}-C_0\,\zeta^{s_B}\,\mathbbm{1}_{\{u>\kappa\}}
\end{split}
\] 
and using also \eqref{young1}, for all $\delta\in (0,1)$ we have
\[
\begin{split}
    |\A(x,t,Du)&\,(u-\kappa)_+\,\zeta^{s_B-1}\,D\zeta|
    \\
    &\leq C_1\,b\big(|D(u-\kappa)_+|\big)\,(u-\kappa)_+\,\zeta^{s_B-1}\,|D\zeta|+C_0\,(u-\kappa)_+\,\zeta^{s_B-1}\,|D\zeta|
    \\
    &\leq \delta\,\widetilde{B}\big(b(|D(u-\kappa)_+|)\,\zeta^{s_B-1}\big)+C_\delta\,B\left((u-\kappa)_+\,|D\zeta|\right)+C\widetilde{B}(\zeta^{s_B-1})\,\mathbbm{1}_{\{u>\kappa\}}.
\end{split}
\]
% Then, by \eqref{ass:f} and \eqref{young1},
% \[
% \begin{split}
% \iint_{Q_1} |f|\,(u-\kappa)_+\,\zeta^{s_B}dxdt\leq  \delta &\,\iint_{Q_1}  \widetilde{B}\big(b(|D(u-\kappa)_+|)\zeta^{s_B-1} \big)
% \\
% &+C_\delta\iint_{Q_1}  B((u-\kappa)_+\,\zeta)\,dxdt+\iint_{Q_1}  |f_0|\,(u-\kappa)_+\,\zeta^{s_B}\,dxdt.
% \end{split}
% \]
We also observe that, by \eqref{tB:usef} and \eqref{come:BwB}, we have
\[
 \widetilde{B}\big(b(|D(u-\kappa)_+|)\zeta^{s_B-1} \big)\leq C\,B\big(|D(u-\kappa)_+|\big)\,\zeta^{s_B}.
\]
Therefore, by combining the above expressions, by choosing $\delta\in(0,1)$ small enough depending on the data to reabsorb terms, and  taking the supremum over $t_0\in (-1,0)$, we obtain the Caccioppoli inequality
\begin{equation}\label{cac:lev}
\begin{split}
    \sup_{t_0\in(-1,0)}\int_{B_1\times \{t_0\}} &(u-\kappa)_+^2\,\zeta^{s_B}\,dx+\iint_{Q_1} B(|D(u-\kappa)_+|\,\zeta^{s_B/i_B})\,dxdt
    \\
    \leq C &\,\iint_{Q_1}  B\big((u-\kappa)_+(|D\zeta|+\zeta)\big)\,dxdt+C\,|\{u>\kappa\}\cap \mathrm{supp}\,\zeta|
    \\
    &+C\,\iint_{Q_1}  (u-\kappa)_+^2\,|\partial_t \zeta|\,dxdt+C\,\iint_{Q_1}  |f|\,(u-\kappa)_+\,\zeta^{s_B}\,dxdt,
    \end{split}
\end{equation}
where we also used that $0\leq \zeta\leq 1$, \eqref{tB=1}, and that $B(|D(u-\kappa)_+|\,\zeta^{s_B/i_B})\leq B(|D(u-\kappa)_+|)\,\zeta^{s_B}$ by \eqref{B2t}.
\vspace{0.1cm}

\noindent\textit{Step 2: Iteration and conclusion.}
Let $1/2<r_1<r_2<1$, and choose $\zeta$ such that
\begin{equation*}
    \zeta\in C^\infty_c(Q_{r_2};[0,1]),\qquad
    \zeta\equiv 1\quad\text{on $Q_{r_1}$},\qquad
    |D\zeta|\leq \frac{C(N)}{r_2-r_1},\qquad
    |\partial_t\zeta|\leq \frac{C(N)}{(r_2-r_1)^2}.
\end{equation*}
We combine \eqref{cac:lev} with the Sobolev inequality \eqref{par:sob}.
Recalling that $s_B/i_B\geq 1$, and using \eqref{B2t},
\eqref{triangle}, and
$\zeta^{2s_B/i_B}\leq \zeta^{s_B}$ as $i_B\leq 2$, we get
\begin{equation}\label{lev:sob}
    \begin{split}
        \iint_{Q_{r_1}} &(u-\kappa)_+^{\frac{2i_B}{N}}\,
        B\big((u-\kappa)_+\big)\,dxdt
        \\
        &\leq
        \iint_{Q_1}
        (u-\kappa)_+^{\frac{2i_B}{N}}\,
        \zeta^{\frac{2i_B}{N}\frac{s_B}{i_B}}\,
        B\left((u-\kappa)_+\zeta^{\frac{s_B}{i_B}}\right)\,dxdt
        \\
        &\leq C
        \bigg\{
        \iint_{Q_{r_2}}
        B\left(\frac{(u-\kappa)_+}{r_2-r_1}\right)\,dxdt
        +|\{u>\kappa\}\cap Q_{r_2}|
        \\
        &\hspace{2.5cm}
        +\iint_{Q_{r_2}}
        \frac{(u-\kappa)_+^2}{(r_2-r_1)^2}\,dxdt
        +\iint_{Q_{r_2}}
        |f|(u-\kappa)_+\,dxdt
        \bigg\}^{1+\frac{i_B}{N}}.
    \end{split}
\end{equation}

Now, for $j=0,1,\dots$, let
\[
r_j\coloneqq \frac12+2^{-j-1},
\qquad
\kappa_j\coloneqq K(1-2^{-j-1}),
\]
where $K\geq 1$ will be chosen later, and set
\[
Q_j\coloneqq Q_{r_j},
\qquad
w_j\coloneqq (u-\kappa_j)_+,
\qquad
A_j\coloneqq Q_j\cap\{u>\kappa_j\}.
\]
Notice that $r_j-r_{j+1}=2^{-j-2}$ and $\kappa_{j+1}-\kappa_j=K\,2^{-j-2}$. We also set
\begin{equation}\label{def:Yj}
    Y_j\coloneqq
    \iint_{Q_j}\big(w_j^2+B(w_j)\big)\,dxdt.
\end{equation}
For every $j\geq 0$, we further introduce the intermediate level
\[
\widehat\kappa_j\coloneqq\frac{\kappa_j+\kappa_{j+1}}{2},
\qquad
\widehat w_j\coloneqq(u-\widehat\kappa_j)_+,
\qquad
\widehat A_j\coloneqq Q_j\cap\{u>\widehat\kappa_j\}.
\]
Then
\begin{equation}\label{iter:midgap}
    \widehat\kappa_j-\kappa_j
    =
    \kappa_{j+1}-\widehat\kappa_j
    =
    K\,2^{-j-3}.
\end{equation}
Applying \eqref{lev:sob} with
$r_1=r_{j+1}$, $r_2=r_j$, and $\kappa=\widehat\kappa_j$, and using
\eqref{B2t}, we find
\begin{equation}\label{iter:pre}
    \begin{split}
        \iint_{Q_{j+1}}
        \widehat w_j^{\frac{2i_B}{N}}B(\widehat w_j)\,dxdt
        \leq C\,2^{Cj}
        \bigg\{
        Y_j+|\widehat A_j|
        +\iint_{Q_j}|f|\,\widehat w_j\,dxdt
        \bigg\}^{1+\frac{i_B}{N}}.
    \end{split}
\end{equation}
Indeed, since $\widehat w_j\leq w_j$, we have $B\left(\frac{\widehat w_j}{r_j-r_{j+1}}\right)
=
B(2^{j+2}\widehat w_j)
\leq C2^{js_B}B(w_j)$, while $\frac{\widehat w_j^2}{(r_j-r_{j+1})^2}\leq C4^jw_j^2$. We next estimate the measure of $\widehat A_j$. On $\widehat A_j$ we have, by \eqref{iter:midgap}, $w_j\geq
\widehat\kappa_j-\kappa_j
=
K2^{-j-3}$; consequently
\begin{equation}\label{iter:measure}
    |\widehat A_j|
    \leq
    C4^jK^{-2}Y_j.
\end{equation}
We now estimate the term containing $f$. Since $\widehat w_j=0$ outside $\widehat A_j$, by H\"older's inequality,
\begin{equation}\label{iter:f0}
    \begin{split}
        \iint_{Q_j}|f|\widehat w_j\,dxdt&\leq
        \|f\|_{L^d(Q_1)}
        |\widehat A_j|^{\frac12-\frac1d}
        \left(\iint_{\widehat A_j}\widehat w_j^2\,dxdt\right)^{1/2}
        \\
        &\leq
        \|f\|_{L^d(Q_1)}
        |\widehat A_j|^{\frac12-\frac1d}Y_j^{1/2}.
    \end{split}
\end{equation}
Combining \eqref{iter:measure} and \eqref{iter:f0}, we obtain
\begin{equation}\label{iter:f0b}
    \begin{split}
        \iint_{Q_j}|f|\widehat w_j\,dxdt
        &\leq
        C\|f\|_{L^d(Q_1)}
        2^{j(1-\frac2d)}
        K^{-1+\frac2d}
        Y_j^{1-\frac1d}\leq
        C\,2^{Cj}Y_j^{1-\frac1d},
    \end{split}
\end{equation}
where in the last inequality we used $K\geq1$, and the constant $C$ is
allowed to depend on $\|f\|_{L^d(Q_1)}$.

Therefore, from \eqref{iter:pre}, \eqref{iter:measure}, and \eqref{iter:f0b}, we infer that
\begin{equation}\label{iter:sob2}
    \iint_{Q_{j+1}}
    \widehat w_j^{\frac{2i_B}{N}}B(\widehat w_j)\,dxdt
    \leq
    C2^{Cj}
    \left(
    Y_j+Y_j^{1-\frac1d}
    \right)^{1+\frac{i_B}{N}}.
\end{equation}
Now set
\[
\delta_0\coloneqq i_B+\frac{2i_B}{N}-2,
\]
and observe that $0<\delta_0\leq 2i_B/N$ as $2N/(N+2)<i_B=i_a+2\leq 2$ by \eqref{ia:basso}.

We claim that there exists $C\geq1$ such that
\begin{equation}\label{iter:pointwise}
    s^2+B(s)
    \leq
    C\,2^{Cj}\,K^{-\delta_0}
    s^{\frac{2i_B}{N}}B(s)\quad\text{for all $s\geq K2^{-j-3}$.}
\end{equation}
To see this, we use \eqref{B2t} so that, if $s\geq 1$, then $B(s)\geq c\,s^{i_B}$, and hence
\[
\frac{s^{2-\frac{2i_B}{N}}}{B(s)}+s^{-\frac{2i_B}{N}}
\leq C\,s^{-\delta_0}\leq C\,K^{-\delta_0}\,2^{(j+3)\delta_0},
\]
where we also used that $\delta_0\leq 2i_B/N$. If instead $K\,2^{-j-3}\leq s<1$, then $B(s)\geq c\,s^{s_B}$, and so
\[
\begin{split}
\frac{s^{2-\frac{2i_B}{N}}}{B(s)}&+s^{-\frac{2i_B}{N}}
\leq C\,s^{-\left(s_B+\frac{2i_B}{N}-2\right)}+s^{-\frac{2i_B}{N}}
\\
&\leq C\,K^{-\left(s_B+\frac{2i_B}{N}-2\right)}\,2^{(j+3)\left(s_B+\frac{2i_B}{N}-2\right)}
+K^{-\frac{2i_B}{N}}\,2^{(j+3)\frac{2i_B}{N}}\leq C\,2^{Cj}\,K^{-\delta_0},
\end{split}
\]
where in the last inequality we used $K\geq1$, $s_B\geq i_B$, and
\[
s_B+\frac{2i_B}{N}-2\geq i_B+\frac{2i_B}{N}-2=\delta_0,
\qquad
\frac{2i_B}{N}\geq\delta_0.
\]
This proves \eqref{iter:pointwise}.

Next, on $A_{j+1}$ we have, by \eqref{iter:midgap}, $\widehat w_j\geq
\kappa_{j+1}-\widehat\kappa_j
=
K2^{-j-3}$ and $w_{j+1}\leq\widehat w_j$. Hence, by \eqref{iter:pointwise},
\[
\begin{split}
Y_{j+1}
&=
\iint_{Q_{j+1}}
\big(w_{j+1}^2+B(w_{j+1})\big)\,dxdt\leq
C2^{Cj}K^{-\delta_0}
\iint_{Q_{j+1}}
\widehat w_j^{\frac{2i_B}{N}}B(\widehat w_j)\,dxdt.
\end{split}
\]
Combining this estimate with \eqref{iter:sob2}, we obtain
\begin{equation}\label{iter:recursion}
    \begin{split}
        Y_{j+1}
        \leq
        C2^{Cj}K^{-\delta_0}
        \bigg[
        Y_j^{\,1+\frac{i_B}{N}}
        +
        Y_j^{\left(1-\frac1d\right)
        \left(1+\frac{i_B}{N}\right)}
        \bigg].
    \end{split}
\end{equation}
Since by assumption $d>(N+i_B)/i_B$, we have
\begin{equation}\label{suuf:f}
    \left(1-\frac1d\right)
    \left(1+\frac{i_B}{N}\right)>1.
\end{equation}
Therefore, \eqref{iter:recursion} is a superlinear De Giorgi recursion. Finally, let
\[
Y\coloneqq
1+\iint_{Q_1}\big(u_+^2+B(u_+)\big)\,dxdt.
\]
Since $Y_j\leq Y$, by choosing
\begin{equation}\label{iter:choiceK}
    K=
    C\,Y^{\frac{i_B}{(N+2)i_B-2N}},
\end{equation}
with $C$ sufficiently large depending only on the data and on
$\|f\|_{L^d(Q_1)}$, the fast geometric convergence lemma \ref{lem:hyp} yields
\[
Y_j\longrightarrow0.
\]
Since $\kappa_j\uparrow K$ and $Q_j\downarrow Q_{1/2}$, we conclude that
\[
u\leq K\qquad\text{a.e. in }Q_{1/2}.
\]
Repeating the same argument for $-u$, whose equation satisfies the same structural assumptions as \eqref{eq:1}, gives the corresponding lower bound. This proves the theorem.     

\end{proof}

\begin{theorem}[Global boundedness]\label{thm:global:bddness}
    Let $\Omega\subset\R^N$ be a bounded
    domain, and let $u$
    be a weak solution to the Cauchy-Dirichlet problem \begin{equation}\label{test:bdr}
        \begin{cases}
            \partial_t u-\mathrm{div}\big(\A(x,t,Du) \big)=f\quad &\text{in $\Omega_T$}
            \\
            u=u_0\quad&\text{on $\partial_{\rm par}\Omega_T$.}
        \end{cases}
    \end{equation}
    under the assumptions
    \eqref{iasa}, \eqref{ass:A}, \eqref{A:hold},  \eqref{fdsmall},  
    and 
    $u_0\in L^B(0,T;W^{1,B}(\Omega))\cap L^\infty(\Omega_T)$.
Then $u\in L^\infty(\Omega_T)$, with estimate
    \begin{equation}\label{global:u:boundedness}
        \|u\|_{L^\infty(\Omega_T)}
        \leq
        \|u_0\|_{L^\infty(\Omega_T)}
        +C\left(
        1+\|f\|_{L^d(\Omega_T)}^{
        \frac{N+i_B}{i_B(N+1)-N}}
        \right),
    \end{equation}
    where $C>0$ depends only on
    $\datah$, $\operatorname{diam}\Omega$ and $T$.
\end{theorem}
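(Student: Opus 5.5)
We will run a global De Giorgi iteration on the truncations $(u-\kappa)_+$ at levels $\kappa\geq M\coloneqq\|u_0\|_{L^\infty(\Omega_T)}$, with no spatial cut-off. For such $\kappa$ we have $(u-\kappa)_+\leq(u-u_0)_+$, so that $(u-\kappa)_+(\cdot,t)\in W^{1,B}_0(\Omega)$ for a.e. $t$ and $(u-\kappa)_+(\cdot,0)=0$. Hence $(u-\kappa)_+$ is an admissible test function on $\Omega\times(0,t_1)$ after Steklov averaging, exactly as in Step 1 of Theorem \ref{thm:bddness}. Since it needs no cut-off, the terms involving $|D\zeta|$ and $|\partial_t\zeta|$ disappear. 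Testing the equation and using \eqref{deb:ass:A} with $Q=\Omega_T$ (the constant $\widetilde B(\Lambda_{\rm h}(\operatorname{diam}Q)^\alpha)$ depends on $\operatorname{diam}\Omega$ and $T$), we aim for the energy inequality
\[
\sup_{t\in(0,T)}\int_\Omega(u-\kappa)_+^2\,dx+\iint_{\Omega_T}B(|D(u-\kappa)_+|)\,dxdt\leq C\,|\{u>\kappa\}|+C\iint_{\Omega_T}|f|(u-\kappa)_+\,dxdt .
\]

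Next we combine this with the parabolic Orlicz--Sobolev inequality \eqref{par:sob}, applied to $v=(u-\kappa)_+$ on a ball containing $\Omega$ with $v$ extended by zero. The term $\iint B(|v|)$ on its right-hand side is the only lower-order contribution. We plan to absorb it via a Poincar\'e-type inequality in Orlicz form, $\int_\Omega B(|v|)\leq C(\operatorname{diam}\Omega)\int_\Omega B(|Dv|)$, which holds because $v(\cdot,t)\in W^{1,B}_0(\Omega)$ and $B$ is doubling. This gives
\[
\iint_{\Omega_T}(u-\kappa)_+^{\frac{2i_B}{N}}B((u-\kappa)_+)\leq C\Big(|A_\kappa|+\iint|f|(u-\kappa)_+\Big)^{1+\frac{i_B}{N}},\qquad A_\kappa\coloneqq\{u>\kappa\}.
\]
We then run the iteration on levels $\kappa_j=M+K(1-2^{-j-1})$ with the intermediate levels $\widehat\kappa_j$. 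The natural quantity to iterate here is $Y_j\coloneqq|A_{\kappa_j}|$, since no bulk norm of $u$ appears in the target estimate. On $A_{\kappa_{j+1}}$ we have $\widehat w_j\geq K2^{-j-3}$, so Chebyshev's inequality bounds $Y_{j+1}$ by $(K2^{-j-3})^{-2i_B/N}B(K2^{-j-3})^{-1}$ times the left-hand side above, and $B(s)\geq c\min\{s^{i_B},s^{s_B}\}$. For the $f$ term we use H\"older's inequality,
\[
\iint|f|\widehat w_j\leq\|f\|_{L^d}\,|\widehat A_j|^{1-\frac1d-\frac12}\Big(\iint \widehat w_j^2\Big)^{1/2}.
\]
Here $\iint\widehat w_j^2\leq T\sup_t\int\widehat w_j^2$, and this supremum is controlled by the energy inequality itself: absorbing via Young's inequality gives $\sup_t\int\widehat w_j^2\leq C\big(|\widehat A_j|+\|f\|_{L^d}^2|\widehat A_j|^{1-2/d}\big)$. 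The result is a recursion
\[
Y_{j+1}\leq C\,2^{Cj}\,K^{-\gamma}\big(1+\|f\|_{L^d}\big)^{\gamma'}Y_j^{1+\varepsilon},
\]
with $\varepsilon>0$ exactly because $d>(N+i_B)/i_B$, as in \eqref{suuf:f}. The exponent $\gamma$ is taken from the case $K\geq1$, where $B(K)\approx K^{i_B}$.

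Since $Y_0\leq|\Omega_T|$, Lemma \ref{lem:hyp} gives $Y_j\to0$ once $K^{\gamma}\geq C(1+\|f\|_{L^d}^{\gamma'})$. Hence $u\leq M+K$, and repeating the argument for $-u$ gives the lower bound. The main obstacle is the bookkeeping of exponents in this last step: arranging the homogeneities so that the threshold on $K$ is exactly $C(1+\|f\|_{L^d}^{(N+i_B)/(i_B(N+1)-N)})$. We will try first to balance the $|A|$ term against the $f$ term by scaling, with $u\mapsto u/K$ and $f\mapsto f/K$ in the regime $K\geq1$, and read the exponent off the ratio $\varepsilon$ over the $K$-power. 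Failing an exact match, we will accept a constant depending on $\operatorname{diam}\Omega$ and $T$ to adjust the lower-order terms.
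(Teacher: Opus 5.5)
\textbf{There is a genuine gap, in the source-term estimate, and it sits exactly where you assert superlinearity.}

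You bound $\iint\widehat w_j^2\le T\sup_t\int\widehat w_j^2$. This uses only the $L^\infty_tL^2_x$ part of the energy. After absorption it gives
\[
\iint|f|\,\widehat w_j\,dxdt\le C\Big(\|f\|_{L^d}\,|\widehat A_j|^{1-\frac1d}+\|f\|_{L^d}^2\,|\widehat A_j|^{1-\frac2d}\Big).
\]
Inserted into $(\cdots)^{1+i_B/N}$, the last term produces $Y_j^{(1-\frac2d)(1+\frac{i_B}{N})}$. This exponent exceeds $1$ if and only if $d>2(N+i_B)/i_B$. Hence for $(N+i_B)/i_B<d\le 2(N+i_B)/i_B$, a range allowed by \eqref{fdsmall}, your recursion is not superlinear and Lemma \ref{lem:hyp} does not apply.

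The appeal to \eqref{suuf:f} is misleading. In Theorem \ref{thm:bddness} the iterated quantity $Y_j=\iint(w_j^2+B(w_j))$ itself controls $\iint w_j^2$, so the source term there is $\lesssim Y_j^{1-1/d}$. With $Y_j=|A_{\kappa_j}|$ you must pay for $\iint\widehat w_j^2$ through the energy, and $L^\infty_tL^2_x$ control alone is too weak for that.

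\textbf{The missing idea is to estimate the source term using the improved space-time integrability from the parabolic Sobolev inequality.} This is what the paper does.
\begin{itemize}
\item First pass from $B(|D(u-k)_+|)$ to $|D(u-k)_+|^{i_B}$ via \eqref{usso}; the $+1$ only adds $C|\{u>k\}|$.
\item The energy $\mathcal E_k$ then controls $\iint(u-k)_+^q\le C\,\mathcal E_k^{1+i_B/N}$ with $q=i_B(N+2)/N$, by the standard parabolic Sobolev inequality for functions with zero lateral trace.
\item Apply H\"older directly with exponents $d$, $q$ and $(1-\frac1d-\frac1q)^{-1}$. Note that $q$ may be smaller than $2$ here, since no supercriticality is assumed, so interpolating $L^2$ between $L^q$ and $L^\infty$ is not an option.
\item Absorb by Young with exponents $1/\mu$ and $1/(1-\mu)$, where $\mu=\frac{N+i_B}{i_B(N+2)}$. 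This gives $\mathcal E_k\le C\|f\|_{L^d}^{1/(1-\mu)}|A_k|^{\nu/(1-\mu)}+C|A_k|$, with $\nu=1-\frac1d-\frac{N}{i_B(N+2)}$.
\end{itemize}
Chebyshev then yields $Y_{j+1}\le C2^{qj}K^{-q}\,\mathcal E_{k_j}^{1+i_B/N}$, with $Y_j$-exponent $\frac{(1+i_B/N)\nu}{1-\mu}$. This exceeds $1$ exactly when $d>(N+i_B)/i_B$.

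The same bookkeeping settles your final worry without any ad hoc adjustment. The smallness condition $K^q\ge C\big(1+\|f\|_{L^d}^{(1+i_B/N)/(1-\mu)}\big)$ gives precisely the exponent $\frac{N+i_B}{i_B(N+1)-N}$ in \eqref{global:u:boundedness}.

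Your Orlicz--Poincar\'e detour to use \eqref{par:sob} is legitimate, but \eqref{usso} together with the $L^{i_B}$ parabolic Sobolev inequality is simpler.
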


\begin{proof}

Throughout the proof, $c,C>0$ denote
constants depending only on the $\data_{\rm h}$,
$\operatorname{diam}\Omega$ and $T$, unless otherwise specified.

\vspace{0.1cm}

\noindent\textit{Step 1: Global Caccioppoli
inequality on level sets.}
For every $k\geq \|u_0\|_{L^\infty(\Omega_T)}$,
since $u=u_0$ on $\partial_{\rm par}\Omega_T$,
we have
\[
    (u-k)_+\in L^B(0,T;W^{1,B}_0(\Omega)),
    \qquad (u-k)_+(\cdot,0)=0.
\]
Thus, testing \eqref{test:bdr} with $(u-k)_+$
and integrating over $\Omega\times(0,\tau)$,
for $\tau\in(0,T)$, we obtain
\[
\begin{split}
    \frac12\int_\Omega
    (u-k)_+^2(x,\tau)\,dx
    +\iint_{\Omega\times(0,\tau)}
    \A(x,t,Du)\cdot D(u-k)_+\,dxdt
    =
    \iint_{\Omega\times(0,\tau)}
    f\,(u-k)_+\,dxdt.
\end{split}
\]
As usual, these computations can be justified
by means of Steklov averages.

Since $D(u-k)_+=Du$ a.e. on $\{u>k\}$
and $D(u-k)_+=0$ a.e. outside $\{u>k\}$,
using \eqref{ovu:Linf} and
taking the supremum over $\tau\in(0,T)$
gives
\begin{equation}\label{global:cac}
\begin{split}
    &\operatorname*{ess\,sup}_{0<t<T}
    \int_\Omega (u-k)_+^2(x,t)\,dx
    +\iint_{\Omega_T}B(|D(u-k)_+|)\,dxdt\\
    &\qquad\leq
    C\iint_{\{u>k\}}|f|(u-k)_+\,dxdt
    +C|\{u>k\}|.
\end{split}
\end{equation}

By \eqref{usso}, and since $D(u-k)_+=0$
outside $\{u>k\}$, \eqref{global:cac}
yields
\begin{equation}\label{global:energy}
\begin{split}
    &\operatorname*{ess\,sup}_{0<t<T}
    \int_\Omega (u-k)_+^2(x,t)\,dx
    +\iint_{\Omega_T}|D(u-k)_+|^{i_B}\,dxdt\\
    &\qquad\leq
    C\iint_{\{u>k\}}|f|(u-k)_+\,dxdt
    +C|\{u>k\}|.
\end{split}
\end{equation}

    \vspace{0.1cm}

\noindent\textit{Step 2: Parabolic Sobolev
inequality and the source term.}
Set
\[
\begin{split}
    q&\coloneqq\frac{i_B(N+2)}{N},\\
    \mathcal E_k&\coloneqq
    \operatorname*{ess\,sup}_{0<t<T}
    \int_\Omega (u-k)_+^2(x,t)\,dx
    +\iint_{\Omega_T}|D(u-k)_+|^{i_B}\,dxdt.
\end{split}
\]
Since $(u-k)_+$ has zero spatial trace,
the parabolic Sobolev inequality
\cite[Chapter I, Proposition 3.1]{DiBenedetto1993} gives
\begin{equation}\label{cka}
\begin{split}
    \iint_{\Omega_T}(u-k)_+^q\,dxdt
    &\leq
    C\left(
    \operatorname*{ess\,sup}_{0<t<T}
    \int_\Omega (u-k)_+^2\,dx
    \right)^{i_B/N}
    \iint_{\Omega_T}|D(u-k)_+|^{i_B}\,dxdt\\
    &\leq C\mathcal E_k^{1+i_B/N}.
\end{split}
\end{equation}

By H\"older's inequality,
\[
\begin{split}
    \iint_{\{u>k\}}|f|(u-k)_+\,dxdt
    &\leq
    \|f\|_{L^d(\Omega_T)}
    \left(
    \iint_{\Omega_T}(u-k)_+^q\,dxdt
    \right)^{1/q}
    |\{u>k\}|^{1-\frac1d-\frac1q}\\
    &\leq
    C\|f\|_{L^d(\Omega_T)}\,
    \mathcal E_k^\mu\,|\{u>k\}|^\nu,
\end{split}
\]
where
\[
    \mu\coloneqq
    \frac{N+i_B}{i_B(N+2)},
    \qquad
    \nu\coloneqq
    1-\frac1d-\frac{N}{i_B(N+2)}.
\]
Since $i_B>1$ and $d>(N+i_B)/i_B$,
we have $0<\mu<1$ and $\nu>0$.

Inserting the preceding estimate into
\eqref{global:energy}, we obtain
\[
    \mathcal E_k
    \leq
    C\|f\|_{L^d(\Omega_T)}\,
    \mathcal E_k^\mu\,|\{u>k\}|^\nu
    +C|\{u>k\}|.
\]
Young's inequality with conjugate exponents
$1/\mu$ and $1/(1-\mu)$ then yields
\begin{equation}\label{global:energy2}
    \mathcal E_k
    \leq
    C\|f\|_{L^d(\Omega_T)}^{\frac1{1-\mu}}
    |\{u>k\}|^{\frac{\nu}{1-\mu}}
    +C|\{u>k\}|.
\end{equation}

    \vspace{0.1cm}

    \noindent\textit{Step 3: De Giorgi iteration.}
    For $j=0,1,\dots$, let
    \[
        k_j\coloneqq \|u_0\|_{L^\infty(\Omega_T)}+K(1-2^{-j}),
        \qquad
        A_j\coloneqq
        \Omega_T\cap\{u>k_j\},
        \qquad
        Y_j\coloneqq|A_j|,
    \]
    where $K\geq1$ will be chosen later.

    Since $k_{j+1}-k_j=K2^{-j-1}$,
    we have
    \[
        (u-k_j)_+
        \geq K2^{-j-1}
        \qquad\text{on }A_{j+1}.
    \]
    Therefore, by \eqref{cka} and \eqref{global:energy2},
    \[
    \begin{split}
        Y_{j+1}
        &\leq
        \frac{2^{q(j+1)}}{K^q}
        \iint_{\Omega_T}(u-k_j)_+^q\,dxdt\leq
        \frac{C2^{qj}}{K^q}
        \mathcal E_{k_j}^{1+i_B/N}\\
        &\leq
        \frac{C2^{qj}}{K^q}
        \bigg[
        Y_j^{1+\frac{i_B}{N}}
        +
        \|f\|_{L^d(\Omega_T)}^{
        \frac{1+i_B/N}{1-\mu}}
        Y_j^{
        \frac{(1+i_B/N)\nu}{1-\mu}}
        \bigg].
    \end{split}
    \]
   Observe that both exponents of $Y_j$
    are strictly greater than one, since
    \[
    \begin{split}
        \frac{(1+i_B/N)\nu}{1-\mu}-1\quad=
        \frac{i_B(N+2)}
        {N[i_B(N+1)-N]}
        \left(
        i_B-\frac{N+i_B}{d}
        \right)>0,
    \end{split}
    \]
    where the strict inequality follows
 from the assumption $d> (N+i_B)/i_B$.

    Consequently, setting
    \[
        \sigma\coloneqq
        \min\left\{
        \frac{i_B}{N},\,
        \frac{(1+i_B/N)\nu}{1-\mu}-1
        \right\}>0,
    \]
    and recalling that
    $Y_j\leq|\Omega_T|$, we obtain
    \begin{equation}\label{global:iteration}
        Y_{j+1}
        \leq
        \frac{C2^{qj}}{K^q}
        \left(
        1+
        \|f\|_{L^d(\Omega_T)}^{
        \frac{1+i_B/N}{1-\mu}}
        \right)
        Y_j^{1+\sigma}.
    \end{equation}

    Notice that $|Y_0|\leq |\Omega_T|$.
    Thus, by choosing
    \[
        K=C\left(
        1+\|f\|_{L^d(\Omega_T)}^{
        \frac{1+i_B/N}{q(1-\mu)}}
        \right)
    \]
    with $C$ sufficiently large
    depending only on the $\datah$,
    $\operatorname{diam}\Omega$ and $T$, the fast geometric
    convergence lemma \ref{lem:hyp}
    applied to \eqref{global:iteration}
    gives
    \[
        Y_j\longrightarrow0.
    \]
    Since
    \[
        \frac{1+i_B/N}{q(1-\mu)}
        =
        \frac{N+i_B}{i_B(N+1)-N},
    \]
    and $k_j\uparrow \|u_0\|_{L^\infty(\Omega_T)}+K$, we obtain
    \[
        u\leq \|u_0\|_{L^\infty(\Omega_T)}+
        C\left(
        1+\|f\|_{L^d(\Omega_T)}^{
        \frac{N+i_B}{i_B(N+1)-N}}
        \right)
        \qquad\text{a.e. in }\Omega_T.
    \]

    Finally, applying the same argument
    to $-u$, whose equation satisfies
    the same structural assumptions,
    gives the corresponding lower bound.
    This proves \eqref{global:u:boundedness}.
\end{proof}

\section{Gradient regularity of homogeneous problems}\label{sec:homogeneous}
In this section we study solutions $v $ to the homogeneous parabolic equation
\begin{equation}\label{eq:hom}
    \partial_t v-\mathrm{div}(\A_0(t,Dv))=0\quad \text{in $\Omega_T$.}
\end{equation}
Here we assume that $\A_0:(0,T)\times \R^N\to \R^N$ is a Caratheodory function such that 
\[
\text{$\xi\mapsto\A_0(t,\xi)$ is of class $C^1(\R^N\sm \{0\})$ for almost every $t\in (0,T)$,}
\]
 and it satisfies
\begin{equation}\label{ass:Ahom}
        \begin{split}
&\sum_{i,j=1}^N\left|\frac{\partial \A_0^i }{\partial \xi_j}(t,\xi)\right|\leq \L_0\,a\big(|\xi|\big),
\qquad\sum_{i,j=1}^N \frac{\partial \A^i_0 }{\partial \xi_j}(t,\xi)\,\eta_i\,\eta_j\geq \l_0\,a\big(|\xi|\big)\,|\eta|^2
    \end{split}
\end{equation}
for almost every $t\in (0,T)$, for all $\xi\in \R^N\sm\{0\}$ and all $\eta\in \R^N$.

\subsection{Spatial gradient boundedness}\label{subse:grbd}
The main theorem of this section is the following.
\begin{theorem}\label{thm:grbddhom}
    Let $v$ be a local weak solution to \eqref{eq:hom} with \eqref{iasa} and \eqref{ass:Ahom} in force. 
Then there exists a constant $C=C(\data)>0$ such that, for all $Q_{R,S}=Q_{R,S}(x_0,t_0)\Subset \Omega_T $, the following hold:
\begin{itemize}
    \item[(i)]  if  $i_a>-\frac{4}{N+2}$, then by setting 
    \begin{equation}\label{dufe}
    F(s)\coloneqq s^2a(s)^{(N+2)/2}\quad\text{and}\quad E\coloneqq\miint_{Q_{R,S}}B(|Dv|)\,dxdt,
    \end{equation}
    we have the $L^\infty$ gradient estimate
\begin{equation}\label{gr:bd1}
 \|Dv\|_{L^\infty(Q_{R/2,S/4})}
 \le C\left\{\left(\frac S{R^2}E\right)^{1/2}
 +F^{-1}\left(\left(\frac{R^2}{S}\right)^{N/2}E\right)\right\},
\end{equation}
where $F^{-1}$ is the inverse function of $F$;

\item[(ii)]  if $v\in L^\infty(Q)$, then
\begin{equation}\label{gr:bd2}
        \|Dv\|_{L^\infty(Q_{R/2,S/4})}\leq C\,\left\{\left(\frac{\operatorname{osc}_{Q_{R,S}}v}{R}\right)+B^{-1}\left(\frac{(\operatorname{osc}_{Q_{R,S}}v)^2}{S}\right) \right\}.
    \end{equation}
\end{itemize}
\end{theorem}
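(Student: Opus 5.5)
Throughout, I would work under the auxiliary smoothness assumptions announced in the introduction. These are that $\A_0$ is smooth and nondegenerate, as produced by Lemma \ref{lem:approx:A}, and that $v$ is smooth enough to differentiate the equation; the constants must not depend on the regularization, which is removed in Section \ref{sec:approximation}.

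\emph{Step 1: conditional pointwise bound.} Differentiating \eqref{eq:hom} in $x_k$, each $w=D_kv$ solves the linear equation $\partial_tw-\operatorname{div}(\nabla_\xi\A_0(t,Dv)Dw)=0$. A standard computation then shows that $H=(|Dv|^2-\lambda^2)_+$ is a weak subsolution of $\partial_tH-\operatorname{div}(\nabla_\xi\A_0(t,Dv)DH)\le0$. Assume $\sup_{Q_R^{(\lambda)}}|Dv|\le4\lambda$. On $\{H>0\}$ we have $\lambda<|Dv|\le4\lambda$, so by \eqref{ultra:utile} the coefficients are comparable to $a(\lambda)$ with constants depending only on $\data$. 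Rescaling time by $a(\lambda)$ turns $Q_R^{(\lambda)}$ into a standard cylinder $Q_R$, where the equation is uniformly parabolic. The De Giorgi--Moser $L^\infty$--$L^1$ bound for nonnegative subsolutions then gives $H(x_0,t_0)\le C\miint_{Q_R^{(\lambda)}}H$. On $\{H>0\}$ one has $|Dv|^2\le C B(|Dv|)/a(\lambda)$ by \eqref{Bcomeb} and \eqref{ultra:utile}. This yields
\[
|Dv|(x_0,t_0)\le\lambda+C\Big(\frac1{a(\lambda)}\miint_{Q_R^{(\lambda)}}B(|Dv|)\,dxdt\Big)^{1/2},
\]
which is Proposition \ref{prop:temp:grbd}.

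\emph{Step 2: proof of (i).} Normalize $(x_0,t_0)=0$ and consider nested cylinders $Q_{\tau R,\tau^2S}$ for $1/2\le\tau\le1$, with $M(\tau)=\sup_{Q_{\tau R,\tau^2 S}}|Dv|$. For $1/2\le t<s\le1$, set $\lambda=M(s)/4$ and apply Step 1 at every point of the inner cylinder, splitting into two cases according to $\theta_\lambda=\frac{S}{R^2}a(\lambda)$.
\begin{itemize}
\item[(a)] If $\theta_\lambda\ge1$, the intrinsic time scale is short. I take the radius $\rho=(s-t)R$; then $\rho^2/a(\lambda)\lesssim(s-t)S$, so the intrinsic cylinder fits inside the outer one.
\item[(b)] If $\theta_\lambda<1$, I shrink the radius to $\rho=(s-t)R\sqrt{\theta_\lambda}$, so that $\rho^2/a(\lambda)=(s-t)^2S$.
\end{itemize}
In both cases the average over the intrinsic cylinder is bounded by $|Q_{R,S}|/|Q_\rho^{(\lambda)}|$ times $E$. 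Unwinding the powers, case (a) gives the contribution $C(s-t)^{-\gamma}(SE/R^2)^{1/2}$. In case (b) one has to absorb a term of the form $\big(E(R^2/S)^{N/2}\lambda^{-2}a(\lambda)^{-(N+2)/2}\big)^{1/2}\lambda$. Using the elementary alternative ``either $\lambda\le F^{-1}(\ldots)$ or the term is $\le\lambda/4$'', this leads to
\[
M(t)\le\tfrac12M(s)+C(s-t)^{-\gamma}\Big[(SE/R^2)^{1/2}+F^{-1}\big((R^2/S)^{N/2}E\big)\Big].
\]
Here the hypothesis $i_a>-4/(N+2)$ is exactly what makes $F(s)=s^2a(s)^{(N+2)/2}$ strictly increasing with $F(Cs)\ge C^{\epsilon}F(s)$, so that $F^{-1}$ exists and the absorption is legitimate. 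Lemma \ref{lem:iteration} then gives \eqref{gr:bd1}; $M$ is bounded under the smoothness assumptions.

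\emph{Step 3: proof of (ii).} Let $\omega=\operatorname{osc}_{Q_{R,S}}v$. The energy Caccioppoli inequality, obtained as in Step 1 of Theorem \ref{thm:bddness} by testing with $(v-\inf v)\zeta^{s_B}$, gives on any subcylinder $Q_{\rho,T'}$ with a doubled copy inside $Q_{R,S}$ the bound $\miint B(|Dv|)\le C[B(\omega/\rho)+\omega^2/T']$. I insert this into Step 1, in the same nested-cylinder scheme with $\lambda=M(s)/4$, $\rho\sim(s-t)R$, and the intrinsic time $T'=\rho^2/a(\lambda)$ whenever it is at most $(s-t)^2S$.
\begin{itemize}
\item[(a)] If $\lambda\rho\ge K\omega$, then $B(\omega/\rho)/a(\lambda)\le C\lambda^2K^{-i_B}$ by \eqref{B2t} and \eqref{Bcomeb}, and $\omega^2/(T'a(\lambda))=\omega^2/\rho^2\le\lambda^2/K^2$. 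For $K$ large these terms are absorbed into $\lambda$.
\item[(b)] If $\lambda\rho<K\omega$, then directly $\lambda\le CK\omega/((s-t)R)$.
\end{itemize}
When instead the intrinsic time exceeds $(s-t)^2S$, i.e.\ $a(\lambda)<\rho^2/((s-t)^2S)$, I use the full time depth. In the only nontrivial subcase $\lambda\rho\gtrsim\omega$, this gives $B(\lambda)\approx a(\lambda)\lambda^2\lesssim\lambda^2\rho^2/((s-t)^2S)$, which after combining with $\lambda\rho\lesssim\omega$-type bounds yields $\lambda\le CB^{-1}(\omega^2/((s-t)^cS))$. Collecting, $M(t)\le\frac12M(s)+C(s-t)^{-\gamma}[\omega/R+B^{-1}(\omega^2/S)]$, and Lemma \ref{lem:iteration} gives \eqref{gr:bd2}; note that no supercriticality enters here.

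\emph{Main obstacle.} I expect the main difficulty to be the bookkeeping of the time geometry: guaranteeing that the intrinsic cylinders fit inside the prescribed $Q_{R,S}$ in each regime of $\theta_\lambda$, and producing the sharp functions $F^{-1}$ and $B^{-1}$ with only polynomial losses in $(s-t)$ so that the iteration lemma applies. I would first check both cases carefully in the $p$-Laplace case to calibrate the exponents.
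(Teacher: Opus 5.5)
Your Step 1 and part (i) are essentially the paper's argument: the paper uses $Q^{(\lambda)}_{r\min\{1,\sqrt{\theta_\lambda}\}}(x_1,t_1)$ with $r=(\eta-\sigma)R/4$, which are exactly your two radii. The gap is in part (ii), in the regime where the intrinsic time $\rho^2/a(\lambda)$ exceeds $(s-t)^2S$.

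In that regime the only intrinsic cylinders that fit are $Q^{(\lambda)}_{\rho'}$ with $\rho'=c(s-t)\sqrt{S\,a(\lambda)}<\rho$. Step 1 is only available on intrinsic cylinders. Enlarging $Q^{(\lambda)}_{\rho'}$ to a cylinder of radius $\rho$ with the full time depth costs the volume factor $(\rho/\rho')^N\sim\theta_\lambda^{-N/2}$, and this is precisely what produces $F^{-1}$ and the supercritical restriction in (i). So you must apply Step 1 and the Caccioppoli inequality on $Q^{(\lambda)}_{\rho'}$, which gives the error $B(\omega/\rho')/a(\lambda)+\omega^2/\rho'^2$.

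Your subcase hypothesis $\lambda\rho\ge K\omega$ gives no control of $\omega/\rho'$, because $\rho'<\rho$. So the nontrivial subcase is never actually treated. The computation you write instead invokes $\lambda\rho\le C\omega$, which is the opposite, trivial subcase: there $\lambda\le C\omega/((s-t)R)$ already holds and the $B^{-1}$ term is never needed. As written, the two sentences contradict each other, and the absorption in this regime is missing.

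The fix is to tie the dichotomy to the radius actually used. Set $r\coloneqq\frac{s-t}{4}\min\{R,\sqrt{S\,a(\lambda)}\}$, as in \eqref{new:r}, and distinguish $\lambda r\ge K\omega$ from $\lambda r<K\omega$.

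In the first case $\omega/r\le\lambda/K$ and $\omega^2/r^2\le\lambda^2/K^2$, so the absorption goes through verbatim as in your case (a).

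In the second case there are two options. Either $r\sim(s-t)R$, and then $\lambda\le CK\omega/((s-t)R)$. Or $r\sim(s-t)\sqrt{S\,a(\lambda)}$, and then $\lambda\sqrt{a(\lambda)}<4K\omega/((s-t)\sqrt S)$. Hence $B(\lambda)\le C\,a(\lambda)\lambda^2\le CK^2\omega^2/((s-t)^2S)$, which gives the $B^{-1}(\omega^2/S)$ term after using the lower index of $B$.

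With this single dichotomy your iteration closes as intended, and no supercriticality is used.
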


Throughout this section, we shall work under the a priori regularity assumption:
\begin{equation}\tag{S}\label{smooth}
    \text{the function $\A_0(t,\xi)$, and the solution $v(x,t)$ to \eqref{eq:hom} are smooth.}
\end{equation}
This assumption will allow us to perform all the necessary computations; it will be removed in Section  \ref{sec:approximation} by means of a suitable approximation procedure.

We start by proving a Caccioppoli inequality for $v$.

\begin{proposition}\label{prop:cac:osc}
    Let $v$ be a bounded local weak solution to \eqref{eq:hom} with \eqref{iasa}, \eqref{ass:Ahom} and \eqref{smooth} in force. Then for all $ Q_{R,S}=Q_{R,S}(x_0,t_0)\Subset \Omega_T$, we have
    \begin{equation}\label{cac:osc}
        \miint_{Q_{R/2,S/4}} B(|Dv|)\,dxdt\leq C\left[B\left(\frac{\omega}{R}\right)+\frac{\omega^2}{S} \right],\quad \omega\coloneqq \operatorname*{osc}_{Q_{R,S}}v,
    \end{equation}
    with $C=C(\data)>0$.
\end{proposition}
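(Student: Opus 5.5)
We test the equation with $(v-k)\zeta^{s_B}$, where $k$ is a constant (e.g. $k=\inf_{Q_{R,S}}v$ or the midpoint of the range of $v$). This keeps $|v-k|\leq\omega$ on $Q_{R,S}$. The cutoff $\zeta\in C^\infty(Q_{R,S};[0,1])$ satisfies $\zeta\equiv1$ on $Q_{R/2,S/4}$, vanishes near the lateral boundary and at $t=t_0-S$, and has $|D\zeta|\leq C/R$, $|\partial_t\zeta|\leq C/S$. Under \eqref{smooth} all computations are classical. As in Step 1 of the proof of Theorem \ref{thm:bddness}, we may subtract $\A_0(t,0)$, which does not change the equation. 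Lemma \ref{lemma:Agrco} (with no $x$-dependence) then gives $\A_0(t,\xi)\cdot\xi\geq c_1B(|\xi|)$ and $|\A_0(t,\xi)|\leq C_1 b(|\xi|)$, with no additive constants.

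Integrating over $B_R\times(t_0-S,\tau)$ gives three contributions. The time-derivative term becomes $\tfrac12\int_{B_R\times\{\tau\}}(v-k)^2\zeta^{s_B}\,dx$ minus $\tfrac{s_B}{2}\iint (v-k)^2\zeta^{s_B-1}\partial_t\zeta$. The second piece is bounded by $C\omega^2|Q_{R,S}|/S$, and we simply drop the nonnegative first piece. The diffusion term yields the good quantity $c_1\iint B(|Dv|)\zeta^{s_B}$ minus the cross term $s_B\iint |\A_0(t,Dv)|\,|v-k|\,\zeta^{s_B-1}|D\zeta|$. We estimate this cross term exactly as in Step 1 of Theorem \ref{thm:bddness}. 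Young's inequality \eqref{young1} gives
\[
b(|Dv|)\zeta^{s_B-1}\,\omega|D\zeta|\leq \delta\,\widetilde B\big(b(|Dv|)\zeta^{s_B-1}\big)+C_\delta B(\omega|D\zeta|).
\]
Then \eqref{tB:usef}, \eqref{come:BwB} and \eqref{wBbt} give $\widetilde B(b(|Dv|)\zeta^{s_B-1})\leq C B(|Dv|)\zeta^{s_B}$. Choosing $\delta$ small, this part is absorbed into the left-hand side.

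After absorption we obtain
\[
\iint_{Q_{R,S}}B(|Dv|)\zeta^{s_B}\,dxdt\leq C\iint_{Q_{R,S}}\Big[B\Big(\frac{C\omega}{R}\Big)+\frac{\omega^2}{S}\Big]dxdt.
\]
Dividing by $|Q_{R/2,S/4}|=c(N)|Q_{R,S}|$ and using \eqref{B2t} to remove the constant inside $B$ yields \eqref{cac:osc}.

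The only delicate point is the handling of $\A_0(t,0)$: \eqref{ass:Ahom} alone gives growth of $\A_0(t,\xi)-\A_0(t,0)$. We use that $\A_0(t,0)$ depends only on $t$, so $\iint \A_0(t,0)\cdot D\big((v-k)\zeta^{s_B}\big)\,dx\,dt=0$ by spatial integration by parts, since $\zeta(\cdot,t)$ is compactly supported in $B_R$. Hence we may replace $\A_0$ by $\A_0-\A_0(t,0)$ without loss, and no additive constants appear in the final estimate.
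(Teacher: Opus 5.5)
Your proposal is correct and follows essentially the same route as the paper: test with $(v-c)\zeta^{s_B}$, absorb the cross term via Young's inequality together with \eqref{tB:usef} and \eqref{wBbt}, and take $c=\inf_{Q_{R,S}}v$ so that $|v-c|\leq\omega$. The one difference is that you justify explicitly the normalization $\A_0(t,0)=0$, via spatial integration by parts against the compactly supported test function. The paper instead uses this normalization implicitly when it invokes \eqref{deb:ass:A} with $\Lambda_{\rm h}=0$.
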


\begin{proof}
By translation, we may assume $(x_0,t_0)=(0,0)$. Let $c\in \R$ and Let $\zeta\in C^\infty(Q_{R,S};[0,1])$ be a cutoff function
with compact spatial support in $B_R$, vanishing at $t=-S$, and such that
\[
    \zeta\equiv1\quad\text{on }Q_{R/2,S/4},
    \qquad |D\zeta|\leq\frac{C(N)}R,
    \qquad |\partial_t\zeta|\leq\frac{C(N)}S.
\]
We multiply \eqref{eq:hom} with $(v-c)\,\zeta^{s_B}$ and integrate over $Q_{R,t}=B_R\times (-S,t)$, and find
\begin{equation*}
    \frac{1}{2}\iint_{Q_{R,t}} \partial_t (v-c)^2\,\zeta^{s_B}\,dxdt+\iint_{Q_{R,t}} \A_0(t,Dv)\cdot \left[ Dv\,\zeta^{s_B}+s_B\,(v-c)\,\zeta^{s_B-1}\,D\zeta \right]\,dxdt=0.
\end{equation*}
  Integrating by parts, and since $\zeta(\cdot,-S)=0$,
  \[
   \frac{1}{2}\iint_{Q_{R,t}} \partial_t (v-c)^2\,\zeta^{s_B}\,dxdt=\frac{1}{2}\int_{B_R\times \{t\}} (v-c)^2\,\zeta^{s_B}\,dx-\frac{s_B}{2}\iint_{Q_{R,t}}(v-c)^2\,\zeta^{s_B-1}\,\partial_t\zeta\,dxdt.
  \]
By \eqref{deb:ass:A} (with $\A(x,t,\xi)=\A_0(t,\xi)$ and $\Lambda_{\mathrm{h}}=0$),
\[
\A_0(t,Dv)\cdot Dv\,\zeta^{s_B}\geq c_1\,B(|Dv|)\zeta^{s_B},
\]
  and using also \eqref{young1}, \eqref{tB:usef} and \eqref{wBbt},
  \[
  \begin{split}
      |\A_0(t,Dv)\,(v-c)\,\zeta^{s_B-1}\cdot D\zeta|&\leq C\,b(|Dv|)\zeta^{s_B-1}\,|(v-c)|\,|D\zeta|
      \\
      &\leq \delta\,\widetilde{B}\left(b(|Dv|)\zeta^{s_B-1} \right)+C_\delta\,B\big(|(v-c)|\,|D\zeta| \big)
      \\
      &\leq C\,\delta\,B(|Dv|)\,\zeta^{s_B}+C_\delta\,B\big(|(v-c)|\,|D\zeta| \big).
  \end{split}
  \]
Combining all of the above expressions, and choosing $\delta$ small enough depending on $\data$ to reabsorb terms, we thus get
\begin{equation*}
    \begin{split}
        \frac{1}{S}\sup_{t\in (-S/4,0
        )}&\mint_{B_{R/2}\times \{t\}} (v-c)^2\,dx+\miint_{Q_{R/2,S/4}} B(|Dv|)\,dxdt
        \\
        &\leq C\,\miint_{Q_{R,S}} B\left(\frac{|v-c|}{R}\right)\,dxdt+C\,\miint_{Q_{R,S}} \frac{(v-c)^2}{S}\,dxdt.
    \end{split}
\end{equation*}
Choosing $c=\inf_{Q_{R,S}}v$, and using that $|v-c|\leq \omega$ and the monotonicity of $B$ yield \eqref{cac:osc}
\end{proof}

Next we establish a conditional pointwise gradient estimate
on intrinsic cylinders.

\begin{proposition}\label{prop:temp:grbd}
    Let $v$ be a local solution to \eqref{eq:hom} with \eqref{iasa}, \eqref{ass:Ahom} and \eqref{smooth} in force. Suppose that 
    \begin{equation}\label{ass:lambda}
    Q_{R}^{(\l)}(x_0,t_0)\Subset \Omega_T\quad\text{and}\quad |Dv|\leq 4\,\lambda \quad\text{on $Q_{R}^{(\l)}(x_0,t_0)$,}
    \end{equation}
    where $Q_{R}^{(\l)}(x_0,t_0)$ is defined by \eqref{def:cilintr}. Then
    \begin{equation}\label{first:bd}
        |Dv|(x_0,t_0)\leq \lambda+C\,\left(\frac{1}{a(\l)}\, \miint_{Q^{(\l)}_R(x_0,t_0)} B(|Dv|)\,dxdt\right)^{1/2}.
    \end{equation}
    with $C>0$ depending on $\data$.
\end{proposition}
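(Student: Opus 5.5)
The plan is to show that $H\coloneqq(|Dv|^2-\lambda^2)_+$ is a nonnegative subsolution of a linear parabolic equation which becomes uniformly parabolic after intrinsic rescaling. A local maximum principle for such subsolutions then gives \eqref{first:bd}.

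\textbf{Step 1: the differentiated equation.} Under \eqref{smooth}, differentiate \eqref{eq:hom} in $x_k$. Then $w_k\coloneqq D_kv$ solves
\[
\partial_t w_k-\operatorname{div}\big(\nabla_\xi\A_0(t,Dv)\,Dw_k\big)=0 .
\]
Multiply by $w_k$ and sum over $k$. Writing $M(x,t)\coloneqq\nabla_\xi\A_0(t,Dv(x,t))$, this gives
\[
\partial_t|Dv|^2-\operatorname{div}\big(M\,D|Dv|^2\big)=-2\sum_k M Dw_k\cdot Dw_k\leq 0
\]
by the ellipticity in \eqref{ass:Ahom}. Hence $|Dv|^2$ is a subsolution. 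Since $s\mapsto(s-\lambda^2)_+$ is convex and nondecreasing, $H$ is a weak subsolution of the same linear equation.

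\textbf{Step 2: uniform parabolicity on the support of $H$.} On $\{H>0\}\cap Q_R^{(\lambda)}$ we have $\lambda<|Dv|\leq4\lambda$. By \eqref{ultra:utile} this gives $a(|Dv|)\simeq a(\lambda)$, so
\[
c\,a(\lambda)|\eta|^2\leq M\eta\cdot\eta,\qquad |M|\leq C a(\lambda)\quad\text{there}.
\]
Where $H=0$ the matrix $M$ plays no role, since $DH=0$ a.e. there. When testing with $H\varphi^2$, or with truncations $(H-k)_+\varphi^2$, $k\geq0$, we may therefore replace $M$ by a matrix with eigenvalues comparable to $a(\lambda)$.

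Now rescale: set $\tilde H(y,s)\coloneqq H(x_0+Ry,\,t_0+R^2 s/a(\lambda))$ on $Q_1$. Then $\tilde H$ is a nonnegative subsolution of $\partial_s\tilde H-\operatorname{div}(\tilde M D\tilde H)\leq0$, where $\tilde M=M/a(\lambda)$ has ellipticity constants depending only on $\data$.

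\textbf{Step 3: local maximum principle.} Apply the classical $L^\infty$–$L^1$ estimate for nonnegative subsolutions of uniformly parabolic linear equations with bounded measurable coefficients. This is the Moser/De Giorgi local maximum principle, proved by a De Giorgi iteration like that of Theorem \ref{thm:bddness}. It yields
\[
\sup_{Q_{1/2}}\tilde H\leq C\miint_{Q_1}\tilde H .
\]
In particular $H(x_0,t_0)\leq C\miint_{Q_R^{(\lambda)}}H$. To pass to $B$, use $H\leq|Dv|^2$ on $\{|Dv|>\lambda\}$ and note that there $a(|Dv|)\simeq a(\lambda)$, because $|Dv|\in(\lambda,4\lambda]$. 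With \eqref{Bcomeb} this gives
\[
|Dv|^2\leq C\,\frac{a(|Dv|)|Dv|^2}{a(\lambda)}\leq \frac{C}{a(\lambda)}B(|Dv|).
\]
Hence $H(x_0,t_0)\leq \frac{C}{a(\lambda)}\miint_{Q_R^{(\lambda)}}B(|Dv|)$. Finally,
\[
|Dv|(x_0,t_0)\leq\big(\lambda^2+H(x_0,t_0)\big)^{1/2}\leq\lambda+H(x_0,t_0)^{1/2},
\]
which is \eqref{first:bd}.

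\textbf{Main obstacle.} The hardest step is justifying that the linear operator may be treated as uniformly parabolic, given that $M$ degenerates or blows up where $|Dv|$ is small. The way around this is to carry out the Caccioppoli inequalities in the De Giorgi/Moser iteration only with test functions supported in $\{H>k\}$, $k\geq0$, where Step 2 applies. One does not rely on an abstract linear theorem. Smoothness of $v$, assumed in \eqref{smooth}, is also needed to differentiate the equation and to make the truncation argument rigorous. Without it one would use difference quotients.
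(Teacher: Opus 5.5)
Your proposal is correct and follows essentially the same route as the paper. You differentiate the equation to see that $|Dv|^2$, and hence $H=(|Dv|^2-\lambda^2)_+$, is a subsolution of the linearized equation. You then neutralize the coefficient on $\{H=0\}$, which the paper does by simply redefining it as $a(\lambda)\,\mathrm{Id}$ there. After rescaling the intrinsic cylinder to $Q_1$ and applying the standard $L^\infty$--$L^1$ estimate for nonnegative subsolutions, you convert back to $B(|Dv|)$ via $a(|Dv|)\simeq a(\lambda)$ on $\{H>0\}$ and \eqref{Bcomeb}.
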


\begin{proof}
For the sake of exposition, we divide the proof into a few steps.
\vspace{0.1cm}

\noindent\textit{Step 1: $|Dv|^2$ is a subsolution.} We differentiate Equation \eqref{eq:hom} with respect to $x_k$, we apply the chain rule $D_k\A_0(t,Dv)=\nabla_\xi \A_0(t,Dv)\,D(D_k v)$ and we multiply the resulting equation by $D_k v$. Summing over $k=1,\dots,N$ yields
\[
\begin{split}
0&=\sum_{k=1}^N \partial_t (D_kv)\,D_k v-\sum_{k=1}^N\mathrm{div}\big(\nabla_\xi \A_0(t,Dv)\,D(D_kv)\big)\,D_kv
\\
&=\frac{1}{2}\partial_t |Dv|^2-\frac{1}{2}\mathrm{div}\big(\nabla_\xi \A_0(t,Dv)\,D(|Dv|^2) \big)+\sum_{k=1}^N \nabla_\xi \A_0(t,Dv)\,D(D_kv)\cdot D(D_kv)
\end{split}
\]
Owing to \eqref{ass:Ahom},
\[
\sum_{k=1}^N \nabla_\xi \A_0(t,Dv)\,D(D_kv)\cdot D(D_kv)\geq c\,a(|Dv|)\,|D^2v|^2\geq 0,
\]
so coupling the two expression above, we deduce that $h\coloneqq |Dv|^2$ is a nonnegative subsolution to
\begin{equation}\label{subsol}
    \partial_t h-\mathrm{div}(\nabla_\xi \A_0(t,Dv)\,Dh)\leq 0\quad\text{in $\Omega_T$.}
\end{equation}

\noindent\textit{Step 2: Scaling and conclusion.}
By \eqref{subsol} and the stability of weak subsolutions under nondecreasing convex truncations, the function
$H\coloneqq (h-\lambda^2)_+$ is still a nonnegative weak subsolution to
\begin{equation}\label{subsol:2}
    \partial_t H-\mathrm{div}\big(\nabla_\xi\A_0(t,Dv)\,DH\big)\leq 0
    \quad\text{in $Q^{(\l)}_R(x_0,t_0)$.}
\end{equation}
See, for instance, \cite[Chapter 1, Lemma 1.1]{DiBenedettoGianazzaVespri2012}.
Now, by \eqref{ass:lambda}, we have
\begin{equation*}
    \lambda\leq |Dv|\leq 4\,\lambda
    \quad\text{a.e. on $Q^{(\l)}_R(x_0,t_0)\cap\{H>0\}$.}
\end{equation*}
Hence, by \eqref{ultra:utile},
\begin{equation}\label{soa}
    c\,a(\lambda)\leq a(|Dv|)\leq C\,a(\lambda)
    \quad\text{a.e. on $Q^{(\l)}_R(x_0,t_0)\cap\{H>0\}$,}
\end{equation}
for some positive constants $0<c\leq 1 \leq  C$ depending only on $i_a,s_a$.

Since $DH=0$ a.e. on $\{H=0\}$, the value of the coefficient matrix on this set is immaterial for the equation satisfied by $H$. We may therefore redefine it there in any convenient way. In particular, we set
\begin{equation*}
    M(x,t)\coloneqq
    \begin{cases}
        \nabla_\xi\A_0\big(t,Dv(x,t)\big)
        &\text{on $Q^{(\l)}_R(x_0,t_0)\cap\{H>0\}$,}
        \\
        a(\lambda)\,\mathrm{Id}
        &\text{on $Q^{(\l)}_R(x_0,t_0)\cap\{H=0\}$.}
    \end{cases}
\end{equation*}
With this choice,
\[
    M\,DH=\nabla_\xi\A_0(t,Dv)\,DH
    \quad\text{a.e. in $Q^{(\l)}_R(x_0,t_0)$,}
\]
and therefore $H$ is a nonnegative weak subsolution to
\begin{equation}\label{subsol:t}
    \partial_t H-\mathrm{div}\big(M(x,t)\,DH\big)\leq 0
    \quad\text{in $Q^{(\l)}_R(x_0,t_0)$.}
\end{equation}
Moreover, by \eqref{ass:Ahom}, for every $\xi\in\R^N$ we have
\[
    c\,a(\lambda)|\xi|^2
    \leq M(x,t)\xi\cdot\xi,
    \qquad
    |M(x,t)|
    \leq C\,a(\lambda)
\]
for a.e. $(x,t)\in Q^{(\l)}_R(x_0,t_0)$. We now apply a scaling argument. Set
\[
    y\coloneqq \frac{x-x_0}{R},
    \qquad
    \tau\coloneqq \frac{a(\lambda)}{R^2}(t-t_0),
\]
and define
\[
    \widetilde H(y,\tau)\coloneqq
    H\left(x_0+Ry,t_0+\frac{R^2}{a(\lambda)}\tau\right),
\]
together with
\[
    \widetilde M(y,\tau)\coloneqq
    \frac{1}{a(\lambda)}
    M\left(x_0+Ry,t_0+\frac{R^2}{a(\lambda)}\tau\right).
\]
Then by \eqref{subsol:t}, we have that $\widetilde H$ is a nonnegative weak subsolution to
\begin{equation}\label{subsol:scaled}
    \partial_\tau \widetilde H
    -\mathrm{div}_y\big(\widetilde M(y,\tau)\,D_y\widetilde H\big)
    \leq 0
    \quad\text{in $Q_1$.}
\end{equation}
Moreover, by the ellipticity estimates obtained above, for every $\xi\in\R^N$ we have
\[
    c|\xi|^2
    \leq
    \widetilde M(y,\tau)\xi\cdot\xi,
    \qquad
    |\widetilde M(y,\tau)|
    \leq
    C
\]
for a.e. $(y,\tau)\in Q_1$. Hence \eqref{subsol:scaled} is a uniformly parabolic equation, with ellipticity constants depending only on the $\data$.

We can therefore apply the standard $L^\infty$ estimate for nonnegative weak subsolutions to uniformly parabolic equations, obtaining
\begin{equation}\label{linfty:scaled}
    \operatorname*{sup}_{Q_{1/2}}\widetilde H
    \leq
    C\left(
    \miint_{Q_1}\widetilde H\,dyd\tau
    \right),
\end{equation}
where $C=C(N,i_a,s_a)>0$. See, e.g.,  \cite[Chapter VI, Theorem 6.17]{Lieberman1996}, \cite[Chapter 12, Theorem 2.1]{DiBenedettoGianazza2023}.
Scaling back to the original variables, \eqref{linfty:scaled} yields
\begin{equation}\label{linfty:H}
    \operatorname*{sup}_{Q^{(\l)}_{R/2}(x_0,t_0)}H
    \leq
    C\left(
    \miint_{Q^{(\l)}_R(x_0,t_0)}H\,dxdt
    \right).
\end{equation}
Recalling that $H=(|Dv|^2-\lambda^2)_+$, and using \eqref{soa} and that $a(t)\,t^2\leq C\,B(t)$ by \eqref{Bcomeb}, we conclude that
\begin{equation*}
\begin{split}
    \operatorname*{ess\,sup}_{Q^{(\l)}_{R/2}(x_0,t_0)}
    \big(|Dv|^2-\lambda^2\big)_+
   & \leq
    C\left(
    \miint_{Q^{(\l)}_R(x_0,t_0)}
    \big(|Dv|^2-\lambda^2\big)_+\,dxdt
    \right)
    \\
    &\leq C'\,\left(\frac{1}{a(\l)}
    \miint_{Q^{(\l)}_R(x_0,t_0)}
    a(|Dv|)\,\big(|Dv|^2-\lambda^2\big)_+\,dxdt
    \right)
    \\
    &\leq C''\,\left(\frac{1}{a(\l)}
    \miint_{Q^{(\l)}_R(x_0,t_0)}
    B(|Dv|)\,dxdt
    \right).
    \end{split}
\end{equation*}
This implies \eqref{first:bd}, that is our thesis.
\end{proof}

% \begin{remark}\label{rem:altri:cil}
%     \rm{Estimate \eqref{first:bd} remains valid if the intrinsic cylinders
%     $Q^{(\lambda)}_R(x_0,t_0)$ are replaced by
%     \[
%         \mathcal{Q}^{(\lambda)}_r(x_0,t_0)
%         \coloneqq
%         B_{r\sqrt{a(\lambda)}}(x_0)\times(t_0-r^2,t_0).
%     \]
%     Indeed, the two families of cylinders differ only by a reparametrization of
%     the radius, since
%     \[
%         \mathcal{Q}^{(\lambda)}_r(x_0,t_0)
%         =
%         Q^{(\lambda)}_{r\sqrt{a(\lambda)}}(x_0,t_0).
%     \]
%     Thus, the claim follows simply by replacing
%     $R$ with $r\sqrt{a(\lambda)}$ in \eqref{first:bd}.}
% \end{remark}

We are now in the position to prove Theorem \ref{thm:grbddhom} under the regularity assumption \eqref{smooth}.

\begin{proof}[Proof of Theorem \ref{thm:grbddhom} (i) under \eqref{smooth}] \textit{Step 1: fixing the parameters.} Let $Q_{R,S}=Q_{R,S}(x_0,t_0)\Subset \Omega_T$; by translation, we may assume $(x_0,t_0)=(0,0)$. Fix
\begin{equation}\label{let:parameter}
    \frac{1}{2}\leq \sigma<\eta\leq \frac{3}{4},\quad \mathcal{Q}_\sigma\coloneqq Q_{\sigma R, \sigma^2 S},\quad M(\eta)\coloneqq \|Dv\|_{L^\infty(\mathcal{Q}_\eta)}.
\end{equation}
   We may assume $M(\eta)>0$, otherwise there is nothing to prove, and we set
   \begin{equation}\label{l:set}
   \lambda\coloneqq \frac{1}{4}M(\eta).
   \end{equation}
   Hence $|Dv|\leq 4\lambda$ throughout $\mathcal{Q}_\eta$.  We also put
   \begin{equation}\label{put}
   r\coloneqq \frac{\eta-\sigma}{4}R,\qquad \theta_\lambda\coloneqq \frac{S}{R^2}\,a(\lambda).
   \end{equation}
   For $(x_1,t_1)\in \mathcal{Q}_\sigma$, we observe that, since
\[
\frac{S}{R^2}\min\{1,\theta_\lambda^{-1}\}
=
\frac{\min\{1,\theta_\lambda\}}{a(\lambda)},
\]
we have
   
\[
Q^{(\lambda)}_{r\min\{1,\sqrt{\theta_\lambda}\}}(x_1,t_1)=B_{r\min\{1,\sqrt{\theta_\lambda}\}}(x_1)
\times
\left(
t_1-\frac{S}{R^2}\,r^2
\min\{1,\theta_\lambda^{-1}\},
\,t_1
\right),
\]
and by the definition of $r$, we also have
\[
Q^{(\lambda)}_{r\min\{1,\sqrt{\theta_\lambda}\}}(x_1,t_1)\subset Q_{r,\frac{S}{R^2}r^2}(x_1,t_1)\subset \mathcal{Q}_{\eta},
\]
\[
\frac{|Q_{r,\frac{S}{R^2}r^2}(x_1,t_1)|}{|Q^{(\lambda)}_{r\min\{1,\sqrt{\theta_\lambda}\}}(x_1,t_1)|}=\frac{\max\{1,\theta_\lambda \}}{\min\{1,\theta_\lambda\}^{N/2}},
\]
   and 
\[
\frac{|Q_{R,S}|}{|Q_{r,\frac{S}{R^2}r^2}(x_1,t_1)|}=\left(\frac{R}{r}\right)^{N+2}=\frac{4^{N+2} }{(\eta-\sigma)^{N+2}}.
\]
   Therefore we may apply the result of Proposition \ref{prop:temp:grbd}, which together with the expressions above yields
\begin{equation*}
\begin{split}
        |Dv|(x_1,t_1) &\leq \lambda+C\,\left(\frac{1}{a(\l)}\, \miint_{Q^{(\lambda)}_{r\min\{1,\sqrt{\theta_\lambda}\}}(x_1,t_1)} B(|Dv|)\,dxdt\right)^{1/2}
        \\
        &\leq \lambda+C\,\left(\frac{1}{a(\l)}\, \frac{\max\{1,\theta_\lambda\}}{\min\{1,\theta_\lambda\}^{N/2}}\,\miint_{Q_{r,\frac{S}{R^2}r^2}(x_1,t_1)} B(|Dv|)\,dxdt\right)^{1/2}
        \\
        &\leq \lambda+\frac{C'}{(\eta-\sigma)^{\frac{(N+2)}{2}}}\,\left(\frac{1}{a(\l)}\, \frac{\max\{1,\theta_\lambda\}}{\min\{1,\theta_\l^{N/2}\}}\miint_{Q_{R,S}} B(|Dv|)\,dxdt\right)^{1/2}
        \end{split}
\end{equation*}
Recalling the definition of $E$ in \eqref{dufe}, we have found
\begin{equation}\label{ste:1}
    |Dv|(x_1,t_1)\leq \lambda+\frac{C}{(\eta-\sigma)^{\frac{(N+2)}{2}}}\,\left(\frac{1}{a(\l)}\, \frac{\max\{1,\theta_\lambda\}}{\min\{1,\theta_\lambda\}^{N/2}}\,E\right)^{1/2},
\end{equation}
for all $(x_1, t_1)\in \mathcal{Q}_\sigma$, with $C=C(\data)>0$.
\vspace{0.1cm}

\noindent\textit{Step 2: the alternatives and iteration.}
In the case
\begin{equation}\label{if:1}
    M(\sigma)\leq 2\lambda=\frac{1}{2}M(\eta),
\end{equation}
then we already have the contraction needed for iteration, so there is nothing to prove. Thus, assume that \eqref{if:1} does not hold. Then there exists $(x_1,t_1)\in \mathcal{Q}_\sigma$ such that $|Dv|(x_1, t_1)>2\lambda$. From this information and \eqref{ste:1}, we deduce
\begin{equation}\label{deduce}
    \lambda\leq \frac{C}{(\eta-\sigma)^{\frac{(N+2)}{2}}}\,\left(\frac{1}{a(\l)}\, \frac{\max\{1,\theta_\lambda\}}{\min\{1,\theta_\lambda\}^{N/2}}\,E\right)^{1/2}.
\end{equation}
In particular, in the case $\theta_\lambda \geq 1$, from \eqref{deduce} and \eqref{put} we deduce
\begin{equation}\label{deduce:1}
    \lambda\leq  \frac{C}{(\eta-\sigma)^{\frac{(N+2)}{2}}}\,\frac{\theta_\lambda^{1/2}}{a(\lambda)^{1/2}}\,E^{1/2}=\frac{C}{(\eta-\sigma)^{\frac{(N+2)}{2}}}\,\left( \frac{S}{R^2}E\right)^{1/2},
\end{equation}
whereas in the case $\theta_\lambda<1$ we get
\begin{equation}\label{deduce:2}
     \lambda\leq  \frac{C}{(\eta-\sigma)^{\frac{(N+2)}{2}}}\left( \frac{E}{a(\lambda)\,\theta_\lambda^{N/2}}\right)^{1/2}=\frac{C}{(\eta-\sigma)^{\frac{(N+2)}{2}}}\left(\frac{(R^2/S)^{N/2}}{a(\lambda)^{1+N/2}}\,E \right)^{1/2}.
\end{equation}
Now let $F(s)$ be given by \eqref{dufe}, and observe that
\[
\frac{F'(s)s}{F(s)}=2+\frac{(N+2)}{2}\,\frac{a'(s)s}{a(s)}.
\]
By the assumption $i_a>-4/(N+2)$, the function $F$ satisfies
\begin{equation}\label{conseq}
    0<2+\frac{N+2}{2}i_a
    \leq i_F\coloneqq\inf_{s>0}\frac{F'(s)s}{F(s)}
    \leq \sup_{s>0}\frac{F'(s)s}{F(s)}
    \eqqcolon s_F
    \leq 2+\frac{N+2}{2}s_a<\infty.
\end{equation}
In particular, $F$ is strictly increasing on $(0,\infty)$ and hence admits an inverse. Therefore, \eqref{deduce:2} yields
\begin{equation}\label{deduce:3}
    \lambda
    \leq
    F^{-1}\left(
        \frac{C\,(R^2/S)^{N/2}}{(\eta-\sigma)^{N+2}}\,E
    \right)
    \leq
    \frac{C'}{(\eta-\sigma)^{(N+2)/i_F}}\,
    F^{-1}\left(
        \left(\frac{R^2}{S}\right)^{N/2}E
    \right),
\end{equation}
Observe that in the last inequality we used   \eqref{b2t}, with $F$ in place of $b$, and taking into account \eqref{conseq}.

Since trivially $M(\sigma)\leq M(\eta)=4\lambda$, combining the alternatives \eqref{if:1}, \eqref{deduce:1} and \eqref{deduce:3}, we have found
\begin{equation}
    M(\sigma)\leq \frac{1}{2}M(\eta)+\frac{C}{(\eta-\sigma)^{\frac{(N+2)}{2}}}\left( \frac{S}{R^2}E\right)^{1/2}+\frac{C}{(\eta-\sigma)^{(N+2)/i_F}}\, F^{-1}\left(
        \left(\frac{R^2}{S}\right)^{N/2}E
    \right),
\end{equation}
for all $1/2\leq \sigma<\eta\leq 3/4$. The iterative Lemma \ref{lem:iteration} yields 
\[
M(1/2)\leq C\left( \frac{S}{R^2}E\right)^{1/2}+C\, F^{-1}\left(
        \left(\frac{R^2}{S}\right)^{N/2}E
    \right),
\]
that is \eqref{gr:bd1}, our thesis.
\end{proof}

\begin{proof}[Proof of Theorem \ref{thm:grbddhom} (ii) under \eqref{smooth}] Let $\sigma,\eta, \mathcal{Q}_\sigma$, $M(\eta)$  and $\lambda$ be as in \eqref{let:parameter} and \eqref{l:set}. Again, we may assume $4\lambda=M(\eta)>0$ for otherwise there is nothing to prove. Now set
\begin{equation}\label{new:r}
    r\coloneqq \frac{(\eta-\sigma)}{4}\,\min\Big\{R,\sqrt{S\,a(\lambda)} \Big\}.
\end{equation}
With this choice, for every $(x_1,t_1)\in \mathcal{Q}_\sigma$, we have that 
\[
Q^{(\lambda)}_{2r}(x_1,t_1)\subset  \mathcal{Q}_\eta.
\]
    Since $|Dv|\leq 4\lambda=M(\eta)$ in $\mathcal{Q}_\eta$, we may apply Propositions \ref{prop:temp:grbd} and couple it with  Proposition \ref{prop:cac:osc} (with $Q^{(\lambda)}_r(x,t)$ in place of $Q_{R,S}$), and get
\begin{equation}
\begin{split}
    |Dv|(x_1,t_1)&\leq
    \lambda+C\,\left(\frac{1}{a(\lambda)}\,\miint_{Q^{(\lambda)}_r(x_1,t_1)}B(|Dv|)\,dxdt \right)^{1/2}
    \\
    &\leq\lambda+C\, \left[ \frac{B(\omega/r)}{a(\lambda)}+\frac{\omega^2}{r^2}\right]^{1/2},
    \end{split}
\end{equation}
   for all $(x_1,t_1)\in \mathcal{Q}_\sigma$, where $\omega=\operatorname*{osc}_{Q_{R,S}}v$. Thus, we have found
   \begin{equation}\label{st:2}
       M(\sigma)\leq \lambda+C\, \left[ \frac{B(\omega/r)}{a(\lambda)}+\frac{\omega^2}{r^2}\right]^{1/2}.
   \end{equation}
We now fix a parameter $K=K(\data)\geq 1$ to be chosen later, and we distinguish two cases.
\\
\noindent\textit{Case 1.} Assume that $\lambda\,r\geq K\omega$; then
\[
\frac{\omega}{r}\leq \frac{\l}{K},
\]
   which together with \eqref{st:2} and the monotonicity of $B$, \eqref{B2t}, \eqref{Bcomeb} gives
   \begin{equation*}
   \begin{split}
       M(\sigma)&\leq \lambda+C\left[\frac{B(\l/K)}{a(\l)}+\left(\frac{\l}{K}\right)^2 \right]^{1/2}\leq \lambda+C\left[K^{-i_B}\frac{B(\lambda)}{a(\lambda)}+\left(\frac{\l}{K}\right)^2\right]^{1/2}
       \\
       &\leq \lambda+ C'\left[\frac{\lambda}{K^{i_B/2}}+\frac{\lambda}{K}\right]\leq \lambda+\frac{2C'}{K^{i_B/2}}\,\lambda.
       \end{split}
   \end{equation*}
   where in the last inequality we used that $K\geq 1$ and $i_B\leq 2$. We now choose $K=K(\data)\geq 1$ so large that $2C'/K^{i_B/2}<1$, so the above inequality implies
   \begin{equation}\label{alt:1}
       M(\sigma)\leq 2\lambda=\frac{1}{2}M(\eta).
   \end{equation}
\noindent\textit{Case 2.} Suppose that  $\lambda\,r< K\omega$. Then by \eqref{new:r} we have
   \[
   \min\big\{\l R,\l\sqrt{Sa(\l)}\big\}<\frac{4K}{(\eta-\sigma)}\omega;
   \]
thus either
\[
\l <\frac{4K}{(\eta-\sigma)}\frac{\omega}{R}\quad\text{or}\quad\l\sqrt{a(\l)} <\frac{4K}{(\eta-\sigma)}\frac{\omega}{\sqrt{S}}.
\]
In the second case, recalling  \eqref{Bcomeb}, we have
\[
B(\l)\leq \frac{(4K)^2}{(\eta-\sigma)^2}\,\frac{\omega^2}{S};
\]
Combining the two alternatives above, and using \eqref{B2t}, we obtain
\begin{equation}\label{alt:2}
    \l\leq \frac{C}{(\eta-\sigma)}\frac{\omega}{R}+\frac{C}{(\eta-\sigma)^{2/i_B}}\,B^{-1}\left(\frac{\omega^2}{S} \right).
\end{equation}
\noindent\textit{Conclusion.} Combining the two cases \eqref{alt:1}, \eqref{alt:2} and that $M(\sigma)\leq M(\eta)=4\lambda$ gives
\[
M(\sigma)\leq \frac{1}{2}M(\eta)+ \frac{C}{(\eta-\sigma)}\frac{\omega}{R}+\frac{C}{(\eta-\sigma)^{2/i_B}}\,B^{-1}\left(\frac{\omega^2}{S} \right),
\]
for all $1/2\leq \sigma<\eta\leq 3/4$. The iterative Lemma \ref{lem:iteration}  gives
\[
M(1/2)\leq C\,\frac{\omega}{R}+C\,B^{-1}\left(\frac{\omega^2}{S} \right),
\]
that is \eqref{gr:bd2}, our thesis.
\end{proof}

\subsection{Oscillation estimates}\label{subsec:oscillation} In this section we prove H\"older continuity of the solution $v$ to \eqref{eq:hom}.

\begin{lemma}[Oscillation estimate]\label{lem:osc:v}
    Let $v$ be a local weak solution to \eqref{eq:hom}
    with \eqref{iasa} and \eqref{ass:A}
    in force, and let
    $Q_{R,S}=Q_{R,S}(x_0,t_0)\Subset\Omega_T$.
    Assume that
\[
\|Dv\|_{L^\infty(Q_{R,S})}\leq \lambda.
\]
Then
    \begin{equation}\label{osc:v}
        \operatorname*{osc}_{Q_{R,S}}v
        \leq C\left(
        R \,\l+\frac{S}{R}b(\l)
        \right),
    \end{equation}
    where $C>0$ depends only on $\data$.
    In particular, if $Q^{(\lambda)}_R=Q_R^{(\lambda)}(x_0,t_0)$ satisfies \eqref{ass:lambda}, 
    then
    \begin{equation}\label{osc:v:intrinsic}
        \operatorname*{osc}_{Q_R^{(\lambda)}}v
        \leq C\,\lambda\, R.
    \end{equation}
\end{lemma}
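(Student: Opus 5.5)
Since $|Dv|\le\lambda$ on $Q_{R,S}$, the spatial oscillation at each fixed time is at most $2\lambda R$. It therefore suffices to control how much the spatial mean of $v$ can change in time. The plan is the standard mollified-average argument. Fix a cutoff $\phi\in C_c^\infty(B_R(x_0))$ with $0\le\phi\le1$, $\phi\equiv1$ on $B_{R/2}(x_0)$, $|D\phi|\le C/R$, and set
\[
m(t)\coloneqq\frac{\int_{B_R}v(x,t)\phi(x)\,dx}{\int_{B_R}\phi\,dx}.
\]
For times $t_0-S<s<t<t_0$, test the weak formulation of \eqref{eq:hom} with $\phi(x)\mathbbm{1}_{(s,t)}$; this is justified by Steklov averaging, or directly under \eqref{smooth}, and $\phi$ is time-independent. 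This gives
\[
|m(t)-m(s)|\le\frac{1}{\int\phi}\int_s^t\!\!\int_{B_R}|\A_0(\tau,Dv)||D\phi|\,dxd\tau\le C\,\frac{(t-s)}{R}\,b(\lambda).
\]
Here I use $|\A_0(\tau,\xi)-\A_0(\tau,0)|\le C_1 b(|\xi|)$ from Lemma \ref{lemma:Agrco}. The constant $\A_0(\tau,0)$ is independent of $x$, so it integrates to zero against $D\phi$ and may be subtracted; equivalently, we normalize $\A_0(\tau,0)=0$ as in the paper. We also use monotonicity of $b$ and $\int\phi\ge c|B_R|$. Hence $\operatorname{osc}_{(t_0-S,t_0)}m\le C\frac{S}{R}b(\lambda)$.

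Next we compare $v$ with its weighted mean. For each fixed $t$, $v(\cdot,t)$ is Lipschitz on $B_R$ with constant $\lambda$, and $m(t)$ is a weighted average of $v(\cdot,t)$ over $B_R$. Therefore $|v(x,t)-m(t)|\le2\lambda R$ for all $x\in B_R$. For $(x,t),(y,s)\in Q_{R,S}$ this yields
\[
|v(x,t)-v(y,s)|\le|v(x,t)-m(t)|+|m(t)-m(s)|+|m(s)-v(y,s)|\le4\lambda R+C\frac{S}{R}b(\lambda),
\]
which is \eqref{osc:v}. Without smoothness, $v\in C_{\rm loc}(L^2_{\rm loc})$ makes $m$ continuous, and the Lipschitz bound holds for a.e. $t$, so essential oscillations are handled the same way.

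For \eqref{osc:v:intrinsic}, we apply \eqref{osc:v} with $S=R^2/a(\lambda)$. We may use the bound $|Dv|\le4\lambda$ on $Q_R^{(\lambda)}$ from \eqref{ass:lambda}, replacing $\lambda$ by $4\lambda$ and using \eqref{b2t}. Then $\frac{S}{R}b(\lambda)=R\,\frac{b(\lambda)}{a(\lambda)}=R\lambda$, so the oscillation is at most $C\lambda R$.

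The only delicate point is the rigorous testing with a time indicator: either mollify $\mathbbm{1}_{(s,t)}$ in time and pass to the limit using continuity of $t\mapsto v(\cdot,t)$ in $L^2$, or rely on \eqref{smooth}. Everything else is routine.
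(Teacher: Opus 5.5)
Your proposal is correct and follows essentially the same argument as the paper: a weighted spatial mean $m(t)$, the bound $|v-m|\le 2\lambda R$ from the gradient bound, and $|m'(t)|\le C\,b(\lambda)/R$ from testing with a time-independent cutoff after subtracting $\A_0(t,0)$, followed by the identity $\frac{S}{R}b(\lambda)=\lambda R$ for $S=R^2/a(\lambda)$. Two of your steps differ slightly from the paper, and both are sound. First, under \eqref{ass:lambda} you replace $\lambda$ by $4\lambda$ and use \eqref{b2t}, which is more careful than the paper's direct substitution. Second, you justify the time testing directly by mollifying $\mathbbm{1}_{(s,t)}$ and using $v\in C_{\rm loc}(L^2_{\rm loc})$, whereas the paper proves the lemma under \eqref{smooth} and removes that assumption by approximation.
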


\begin{proof}[Proof under \eqref{smooth}]
    By translation, we may assume that $(x_0,t_0)=(0,0)$.  Let $\eta\in C^\infty_c(B_R)$
    be a nonnegative function such that
    \[
        \int_{B_R}\eta_R\,dx=1,
        \qquad
        \|\eta_R\|_{L^\infty(B_R)}
        \leq C\,R^{-N},
        \qquad
        \|D\eta_R\|_{L^1(B_R)}
        \leq\frac{C}{R},
    \]
    where $C=C(N)>0$, and set
    \[
        m(t)\coloneqq
        \int_{B_R}v(y,t)\eta_R(y)\,dy.
    \]
    Since $\|Dv\|_{L^\infty(Q_{R,S})}
    \leq\lambda$, for all $(x,t)\in Q_{R,S}$
     we have
    \[
    \begin{split}
        |v(x,t)-m(t)|
        &\leq
        \int_{B_R}
        |v(x,t)-v(y,t)|\,\eta_R(y)\,dy\\
        &\leq
        \lambda\int_{B_R}
        |x-y|\,\eta_R(y)\,dy
        \leq 2\lambda R.
    \end{split}
    \]

    On the other hand, by  \eqref{eq:hom}, we have
    \[
    \begin{split}
        m'(t)
        =-\int_{B_R}
        \A_0(t,Dv)\cdot D\eta_R\,dx=-\int_{B_R}
        \big(\A_0(t,Dv)-\A_0(t,0)\big)
        \cdot D\eta_R\,dx
    \end{split}
    \]
    for all $t\in(-S,0)$.
    Hence, by Lemma \ref{lemma:Agrco}
    and the monotonicity of $b$, we obtain
    \[
    \begin{split}
        |m'(t)|
        &\leq
        C\int_{B_R}
        b(|Dv|)|D\eta|\,dx\leq
        Cb(\lambda)\|D\eta\|_{L^1(B_R(x_0))}
        \leq C\frac{b(\lambda)}{R}.
    \end{split}
    \]
    Consequently,
    \[
        |m(t)-m(s)|
        \leq C\frac{S}{R}b(\lambda)
    \]
    for all $t,s\in(t_0-S,t_0)$.

    Combining the previous estimates,
    for all $(x,t),(y,s)\in Q_{R,S}$
    we infer that
    \[
    \begin{split}
        |v(x,t)-v(y,s)|
        &\leq
        |v(x,t)-m(t)|
        +|m(t)-m(s)|+
        |m(s)-v(y,s)|\\
        &\leq
        C\left(
        R\lambda+\frac{S}{R}b(\lambda)
        \right),
    \end{split}
    \]
   which proves \eqref{osc:v}. Finally, if $S=R^2/a(\lambda)$,
    recalling that $b(\lambda)=\lambda a(\lambda)$,
    we have
    \[
        \frac{S}{R}b(\lambda)
        =\frac{R}{a(\lambda)}b(\lambda)
        =\lambda R,
    \]
    and \eqref{osc:v:intrinsic} follows.
\end{proof}

\subsection{Spatial gradient H\"older continuity}
In this section we establish H\"older continuity of the spatial gradient of solutions to \eqref{eq:hom}. The main result is the following.

\begin{theorem}\label{thm:gradhold}
    Let $v$ be a local weak solution to \eqref{eq:hom} under the assumptions \eqref{iasa} and \eqref{ass:Ahom}.
    Assume that either
\[
i_B>\frac{2N}{N+2}\quad\text{or}\quad v\in L^\infty_{\rm loc}(\Omega_T).
\]
    Then there exists $\beta=\beta(\data)\in (0,1)$ such that
    \[
    Dv\in C_{\rm{loc}}^{\beta,\beta/2}(\Omega_T).
    \] 
    Moreover there exists a constant $C=C(\data)>0$ such that the following hold
    \begin{itemize}
        \item[(i)] for all
$Q^{(\lambda)}_R=Q^{(\lambda)}_R(x_0,t_0)$  such that 
\[
\sup_{Q^{(\lambda)}_R} |Dv|\leq \lambda,
\]
we have
    \begin{equation}\label{osc:intr}
        \operatorname*{osc}_{Q^{(\lambda)}_r} Dv\leq C\,\lambda\,\left( \frac{r}{R}\right)^{\beta}\quad \text{for all }0<r\leq R;
    \end{equation}
\item[(ii)] for all $Q_{R}=Q_{R}(x_0,t_0)\Subset \Omega_T$ and all $0<r<R$, we have
\begin{equation}\label{osc:extr}
    \operatorname*{osc}_{Q_{r}} Dv\leq C\,M_0\,\left(\frac{r}{R} \,a^\star(M_0)\right)^\beta,\qquad M_0\coloneqq\|Dv\|_{L^\infty(Q_R)},
\end{equation}
    where $a^\star(r)\coloneqq \max\{a(r)^{1/2}, a(r)^{-1/2}\}$.
    \end{itemize}
 \end{theorem}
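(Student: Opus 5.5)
We prove (i) first, under \eqref{smooth}, by a De Giorgi-type alternative on intrinsic cylinders followed by iteration. The case $\sup_{Q^{(\lambda)}_R}|Dv|\le\lambda$ is what we call nondegenerate, and it is handled by rescaling. Set $y=(x-x_0)/R$, $\tau=a(\lambda)(t-t_0)/R^2$ and $\widetilde v=v/(\lambda R)$. This maps $Q^{(\lambda)}_R$ onto $Q_1$, and $\widetilde v$ solves an equation of the same type with stress field
\[
\widetilde\A(\tau,\xi)=\A_0(t,\lambda\xi)/b(\lambda).
\]
By \eqref{ultra:utile}, this rescaled field satisfies \eqref{ass:Ahom} with $a$ replaced by $a_\lambda(s)=a(\lambda s)/a(\lambda)$. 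The new function has the same indices $i_a,s_a$, and $|D\widetilde v|\le1$. It therefore suffices to prove $\operatorname{osc}_{Q_r}D\widetilde v\le Cr^\beta$ with constants depending only on $\data$.

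For this we use the fundamental alternative, Proposition \ref{prop:fund:alt}, which is built from the De Giorgi inequalities (Propositions \ref{pr:1DG}, \ref{prop:levels}) and the logarithmic estimate (Proposition \ref{prop:log}). These are applied to $(\pm D_k v-\mu)_\pm$, which are subsolutions of the differentiated equation. We fix $\mu=\sup|Dv|\le\lambda$ on a cylinder $Q^{(\mu)}_\rho$. If the measure-theoretic condition $|\{D_kv<\mu/2\}\cap Q^{(\mu)}_\rho|\le\nu|Q^{(\mu)}_\rho|$ holds for some $k$ and some sign, the De Giorgi lemma gives $|Dv|\ge\mu/4$ on $Q^{(\mu)}_{\rho/2}$. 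There $a(|Dv|)\simeq a(\mu)$, so each $D_kv$ solves the linear equation $\partial_t w-\operatorname{div}(\nabla_\xi\A_0(t,Dv)Dw)=0$, which is uniformly parabolic after intrinsic scaling. Classical De Giorgi--Nash--Moser theory then gives the decay $\operatorname{osc}_{Q^{(\mu)}_r}Dv\le C\mu(r/\rho)^{\beta_0}$. No continuity of $\nabla_\xi\A_0$ is needed, because the equation for $D_kv$ is linear with merely bounded measurable coefficients. If the condition fails for every $k$ and both signs, the second alternative gives $\sup_{Q^{(\mu)}_{\rho/2}}|Dv|\le(1-\eta)\mu$.

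Iterating, we set $\mu_{j+1}=(1-\eta)\mu_j$ and $\rho_{j+1}=c\rho_j$ and stay in the second alternative as long as it occurs. We must check that the cylinders are nested: $Q^{(\mu_{j+1})}_{\rho_{j+1}}\subset Q^{(\mu_j)}_{\rho_j/2}$. By \eqref{ciu:a}, $a(\mu_{j+1})\ge(1-\eta)^{s_a}a(\mu_j)$ when $s_a\ge0$, and $a(\mu_{j+1})\ge a(\mu_j)$ when the exponent is negative. So a choice $c=c(\data)$ small enough, for example $c\le\tfrac12(1-\eta)^{s_a/2}$, ensures nesting in both the singular and degenerate regimes. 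Once the first alternative occurs, we switch to the linear decay. Combining the two regimes yields \eqref{osc:intr} with $\beta=\min\{\beta_0,\log(1-\eta)/\log c\}$.

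\textbf{Main obstacle.} The delicate step is the second alternative, i.e.\ showing that the sup decreases when $D_kv$ is near $\mu$ only on a small set. This uses the logarithmic estimate to propagate the measure information forward in time, and the time length of intrinsic cylinders has to be matched carefully. We would rely on Proposition \ref{prop:fund:alt} exactly as stated there.

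For (ii), set $\lambda=M_0$ and consider the two regimes. If $a(M_0)\ge1$, then $Q^{(M_0)}_R\subset Q_R$. If $a(M_0)<1$, we instead use $Q^{(M_0)}_{R\sqrt{a(M_0)}}\subset Q_R$. In both cases we take the intrinsic radius $R'=R\min\{1,\sqrt{a(M_0)}\}$. For $r<R$ we cover $Q_r$: if $a(M_0)\le 1$, then $Q_r\subset Q^{(M_0)}_{r}$ directly. If $a(M_0)>1$, we use $Q_r\subset Q^{(M_0)}_{r\sqrt{a(M_0)}}$ when $r\sqrt{a(M_0)}\le R$, and the trivial bound $2M_0$ otherwise. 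Applying (i) then gives $\operatorname{osc}_{Q_r}Dv\le CM_0(r\,a^\star(M_0)/R)^\beta$, since $\max\{1,\sqrt a\}/\min\{1,\sqrt a\}=a^\star(M_0)^2$ at worst, which is absorbed by adjusting $\beta$ and $C$.

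Local boundedness of $Dv$, and hence the qualitative statement $Dv\in C^{\beta,\beta/2}_{\rm loc}$, follows from Theorem \ref{thm:grbddhom}: part (i) applies in the supercritical case $i_B>2N/(N+2)$ (equivalently $i_a>-4/(N+2)$), and part (ii) applies for bounded $v$. Finally, \eqref{smooth} is removed later by the approximation procedure of Section \ref{sec:approximation}.
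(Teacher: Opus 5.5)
Your overall strategy matches the paper's. You use the fundamental alternative of Proposition~\ref{prop:fund:alt}, iterate with $\mu_j=(1-\eta)^j\lambda$ and geometrically shrinking radii (nested via $s_a$), and switch to linear De Giorgi--Nash--Moser decay once the first alternative occurs. For (ii) you compare $Q_r$ with $M_0$-intrinsic cylinders. The initial rescaling to $Q_1$ with $a_\lambda(s)=a(\lambda s)/a(\lambda)$ is correct but unnecessary.

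\textbf{First gap: the iteration quantity.} Proposition~\ref{prop:fund:alt}(II) controls components, not the modulus. It gives $\max_k\sup_{Q^{(\mu)}_{\delta_0\rho}}|D_kv|\le\eta_0\mu$, hence only $|Dv|\le\sqrt N\eta_0\mu$. Since $\eta_0=1-2^{-m_1-1}$ is close to $1$, this need not be below $\mu$, so you cannot iterate on $\mu=\sup|Dv|$. Iterate on $\max_k\sup|D_kv|$, as the paper does: the hypothesis $\sup|Dv|\le\lambda$ gives this at step $0$, and afterwards you use $|Dv|\le\sqrt N\mu_j$. Also, the conclusion of (II) holds on $Q^{(\mu)}_{\delta_0\rho}$, not $Q^{(\mu)}_{\rho/2}$; this just forces $c\le\delta_0(1-\eta)^{s_a/2}$.

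\textbf{Second gap: returning to fixed-$\lambda$ cylinders.} You check nesting of the chain $Q^{(\mu_j)}_{\rho_j}$, but the claim concerns the fixed-$\lambda$ cylinders $Q^{(\lambda)}_r$ (your rescaled $Q_r$), with $\beta=\min\{\beta_0,\log(1-\eta)/\log c\}$. When $i_a<0$, $a(\mu_j)$ can exceed $a(\lambda)$ by the factor $(1-\eta)^{ji_a}>1$. Then $Q^{(\mu_j)}_{\rho_j}$ is shorter in time than $Q^{(\lambda)}_{\rho_j}$ and does not contain it. So $\operatorname*{osc}_{Q^{(\mu_j)}_{\rho_j}}Dv\le C\mu_j$ does not give the same bound on $Q^{(\lambda)}_{\rho_j}$.

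The fix is the paper's step. By \eqref{ciu:a}, $a(\mu_j)\le(1-\eta)^{ji_a}a(\lambda)$, so $Q^{(\lambda)}_{\vartheta^jR}\subset Q^{(\mu_j)}_{\rho_j}$ with $\vartheta\coloneqq c(1-\eta)^{-i_a/2}$. The same factor is needed to place $\lambda$-cylinders inside the $\mu_{j_0}$-cylinders where the linear decay holds. The exponent then becomes $\beta=\min\{\beta_0,\log(1-\eta)/\log\vartheta\}$; compare $\vartheta=\theta\eta_0^{(-i_a)/2}$ in the paper. Since only the existence of some $\beta(\data)$ is claimed, this repairs rather than replaces your strategy, but as written your exponent and final combination step are not justified.

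\textbf{Minor points in (ii).} Each case already yields exactly the factor $a^\star(M_0)$, since $\max\{1,\sqrt a\}/\min\{1,\sqrt a\}=a^\star$, not $a^{\star2}$. So no absorption is needed, and a power of $a^\star(M_0)$ could not be absorbed into $C$ anyway. For $a(M_0)\le1$ you also need the trivial bound when $R\sqrt{a(M_0)}<r<R$.
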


As in the previous section, throughout this part we shall assume that \eqref{smooth} is in force.

Theorem \ref{thm:gradhold} will be a consequence of the following
\begin{proposition}[The fundamental alternative]\label{prop:fund:alt}
    Let $v$ be a local solution to \eqref{eq:hom} under the assumptions \eqref{iasa}, \eqref{ass:Ahom} and \eqref{smooth}.  Suppose that
    \begin{equation}\label{intr:hup0}
       Q_{R}^{(\lambda)}= Q_{R}^{(\lambda)}(x_0,t_0)\Subset \Omega_T,\quad \max_{k=1,\dots,N}\sup_{Q_{_{R}}^{(\lambda)}} |D_kv|\leq \lambda,
    \end{equation}  
%     and set
% \[
% \mathrm{M}(R)\coloneqq \sup_{k=1,\dots,N}\sup_{Q_R^{(\lambda)}} |D_k v|,\qquad 0<R\leq R_0.
% \]
There exist universal constants
    \begin{equation*}
        \mu_0,\eta_0,\delta_0\in (0,1)\quad\text{depending only on $\data$}
    \end{equation*}
    such that the following alternatives hold:
    \begin{itemize}
        \item[(I)] if for some $k=1,\dots,N$ we have either 
          \begin{equation}\label{alt:0}
    \begin{split}
       \big |\{D_k v<\lambda/2\}&\cap Q^{(\l)}_R\big|\leq \mu_0\,|Q^{(\l)}_R|
       \\
       &\text{or}
       \\
       \big |\{D_k v>-\lambda/2&\}\cap Q^{(\l)}_R\big|\leq \mu_0\,|Q^{(\l)}_R|
       \end{split}
    \end{equation}  
        then
\begin{equation}\label{thalt1:modDu}
    |D v|\geq \lambda/4\quad\text{in $Q^{(\l)}_{R/2}$.}
\end{equation}
    \item[(II)] In the complementary case, that is, if \begin{equation}\label{alt:3}
        \begin{cases}
            \big |\{D_k v<\lambda/2\}\cap Q^{(\l)}_R\big|>\mu_0\,|Q^{(\l)}_R|\qquad\text{and}
            \\
            \\
            \big |\{D_k v>-\lambda/2\}\cap Q^{(\l)}_R\big|> \mu_0\,|Q^{(\l)}_R|,\quad\text{for all $k=1,\dots,N$,}
        \end{cases}
    \end{equation}
    then
    \begin{equation}\label{thesis:alt3}
        \max_{k=1,\dots,N}\sup_{Q_{\delta_0 R}^{(\lambda)}} |D_k v|\leq \eta_0\,\lambda\,.
    \end{equation}
    \end{itemize}
\end{proposition}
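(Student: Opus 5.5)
The plan is the De Giorgi method applied to the differentiated equation, after intrinsic rescaling. First we normalise. Translate to $(x_0,t_0)=0$ and rescale by $y=x/R$, $\tau=a(\lambda)t/R^2$, $w(y,\tau)=v(Ry,R^2\tau/a(\lambda))/(\lambda R)$. Then $|D_kw|\le1$ on $Q_1$, and $w$ solves an equation of the same type with stress field $\A_\lambda(\tau,\xi)=\A_0(\cdot,\lambda\xi)/b(\lambda)$. This field satisfies \eqref{ass:Ahom} with $a$ replaced by $a_\lambda(s)=a(\lambda s)/a(\lambda)$, which has the same indices $i_a,s_a$ and $a_\lambda(1)=1$. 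So it suffices to take $\lambda=1$, $R=1$. Differentiating the equation in $x_k$ shows that $u_k\coloneqq D_kw$ solves the linear equation
\[
\partial_\tau u_k-\operatorname{div}\big(\nabla_\xi\A_\lambda(\tau,Dw)Du_k\big)=0 ,
\]
whose coefficients are bounded by $C\,a_\lambda(|Dw|)\le C$ (since $|Dw|\le\sqrt N$ and $s_a\ge0$). The coefficients degenerate only where $|Dw|$ is small.

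\emph{Alternative (I).} Assume $|\{u_k<1/2\}\cap Q_1|\le\mu_0|Q_1|$; the other case follows by replacing $w$ with $-w$. Test with $(u_k-\ell)_-\zeta^2$ for levels $\ell_j\downarrow1/4$ between $1/2$ and $1/4$. On $\{u_k<\ell_j\}$ we only know $|Dw|\le\sqrt N$, so the ellipticity $a_\lambda(|Dw|)$ may be small there; this is the main obstacle. I would handle it as in the elliptic De Giorgi inequalities of \cite{Antonini2027}, which Propositions \ref{pr:1DG} and \ref{prop:levels} extend. The key point is that the energy to control is $\iint a_\lambda(|Dw|)|D(u_k-\ell)_-|^2\zeta^2$. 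Using $\sum_k$ and the identity behind Step 1 of Proposition \ref{prop:temp:grbd}, the quantity $a_\lambda(|Dw|)|D^2w|^2$ dominates $|DG(Dw)|^2$ for a suitable function $G$ with $|DG|\sim a_\lambda^{1/2}$. Hence the Caccioppoli inequality, combined with the parabolic Sobolev embedding applied to $\mathcal G(u_k)$, where $\mathcal G(s)=\int_0^s a_\lambda(|\sigma|)^{1/2}d\sigma$ is comparable to a power of $s$ near $0$, gives a recursion
\[
Y_{j+1}\le C\,b^jY_j^{1+2/(N+2)},\qquad Y_j\coloneqq\frac{|\{u_k<\ell_j\}\cap Q_{r_j}|}{|Q_{r_j}|}.
\]
The degeneracy only changes constants, because the relevant levels lie in $[1/4,1/2]$, away from $0$. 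On the set where $u_k\ge1/4>0$ we also have $|Dw|\ge1/4$, so $a_\lambda(|Dw|)\simeq1$ there by \eqref{ultra:utile}. Working with $(u_k-\ell)_-$ localised to $\{u_k>1/4\}$, that is, with the doubly truncated function $\min\{(\ell-u_k)_+,\ell-1/4\}$, the equation is uniformly parabolic on the support of the gradient of the test function, exactly as for $H$ in Proposition \ref{prop:temp:grbd}. Lemma \ref{lem:hyp} then yields $u_k\ge1/4$ in $Q_{1/2}$ once $\mu_0$ is small, and this gives \eqref{thalt1:modDu}.

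\emph{Alternative (II).} Now, for every $k$, each of the sets $\{u_k<1/2\}$ and $\{u_k>-1/2\}$ occupies a proportion $\mu_0$ of $Q_1$. I would show that $(u_k-\ell)_+$ with $\ell\ge1/2$ decreases; the case $(u_k+\ell)_-$ is symmetric. On $\{u_k>1/2\}$ we again have $a_\lambda(|Dw|)\simeq1$, so the function $(u_k-1/2)_+$ is a subsolution of a uniformly parabolic equation. Its truncations $(u_k-\ell)_+$, with $\ell\in[1/2,1)$, vanish on a set of measure at least $\mu_0|Q_1|$. This is where the logarithmic estimate, Proposition \ref{prop:log}, enters. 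It provides a time slab on which $\{u_k\le 1/2\}$ has positive spatial density for all later times. Then Lemma \ref{lem:isoper} together with the energy inequality (the standard "shrinking lemma") make $|\{u_k>1-2^{-m}\}\cap Q_{1/2}|$ arbitrarily small for $m=m(\data)$ large. A final De Giorgi iteration at levels between $1-2^{-m}$ and $1-2^{-m-1}$ gives $u_k\le1-2^{-m-1}$ in $Q_{\delta_0}$. Doing this for all $k$ and both signs gives \eqref{thesis:alt3} with $\eta_0=1-2^{-m-1}$. The main obstacle is making the logarithmic and isoperimetric steps quantitative independently of the possibly singular or degenerate behaviour of $a_\lambda$ at small gradients. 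I would resolve it, as above, by restricting every test function to levels above $1/4$, where the structure is uniformly parabolic with constants depending only on $\data$.
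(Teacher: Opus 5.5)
\emph{Alternative (II).} Your plan follows the paper's route: a good time slice by Fubini, propagation via Proposition \ref{prop:log}, isoperimetric shrinking, and a final De Giorgi iteration. All of these work at levels above $1/2$, where $a_\lambda(|Dw|)\simeq1$, so this part is fine. One correction: the logarithmic step gives positive spatial density of $\{u_k\le 1-2^{-m_0}\}$, not of $\{u_k\le 1/2\}$.

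\emph{Alternative (I): the genuine gap.} You claim the coefficients satisfy $|\nabla_\xi\A_\lambda(\tau,Dw)|\le C\,a_\lambda(|Dw|)\le C$ because $|Dw|\le\sqrt N$ and $s_a\ge0$. This is false when $i_a<0$. For $0<s<1$ you only have $a_\lambda(s)\le s^{i_a}$; for instance $a(t)=t^{p-2}$ with $p<2$ gives $a_\lambda(s)=s^{p-2}\to\infty$ as $s\to0$. So on $\{u_k<\ell\}$, where $Dw$ may vanish, the coefficients can be unbounded, not merely degenerate.

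Testing with $(u_k-\ell)_-\zeta^2$ then leaves the term $\iint a_\lambda(|Dw|)(u_k-\ell)_-^2|D\zeta|^2$. This is not bounded by a data constant times $|\{u_k<\ell\}|$.

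The doubly truncated function $z=\min\{(\ell-u_k)_+,\ell-\tfrac14\}$ does not localize anything. Indeed $D(z\zeta^2)=-Du_k\mathbbm{1}_{\{1/4<u_k<\ell\}}\zeta^2+z\,D\zeta^2$, and the second piece lives on all of $\{u_k<\ell\}\cap\operatorname{spt}\zeta$. On $\{u_k\le1/4\}$ there is no coercive term to absorb $\nabla_\xi\A_\lambda(\tau,Dw)Du_k\cdot D\zeta^2\,z$.

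The analogy with $H$ in Proposition \ref{prop:temp:grbd} also fails. $H$ is a convex nondecreasing truncation of a subsolution, which is why its coefficients can be redefined on $\{H=0\}$. By contrast, $z=\psi(u_k)$ with $\psi$ neither convex nor concave, so $z$ satisfies no one-sided inequality. Moreover, $\max\{u_k,\tfrac14\}$ is only a subsolution, while a lower bound needs a supersolution.

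The $\mathcal G$-device also fails: it weighs by $a_\lambda(|u_k|)$, whereas the energy carries $a_\lambda(|Dw|)$, and these are not comparable since $a$ is not monotone. As written, your Caccioppoli step is valid only when $a_\lambda$ is bounded on $[0,\sqrt N]$, essentially the regime $i_a\ge0$.

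\emph{The missing idea} is the one built into \eqref{DGG1}. In the cross term $\nabla_\xi\A_0(t,Dv)D(D_kv)\cdot D\zeta^2\,g(D_kv)$, write $\nabla_\xi\A_0(t,Dv)D(D_kv)=D_k[\A_0(t,Dv)]$ and integrate by parts in $x_k$. Then only the growth bound $|\A_0(t,Dv)|\le Cb(|Dv|)$ enters. A Young inequality weighted by $a(|Dv|)$ gives
\[
b(|Dv|)\,\zeta|D\zeta|\,g'(D_kv)|D(D_kv)|\le\varepsilon\, a(|Dv|)|D(D_kv)|^2g'(D_kv)\zeta^2+C\,a(|Dv|)|Dv|^2g'(D_kv)|D\zeta|^2 .
\]
Since $b$ is increasing, $a(|Dv|)|Dv|^2=b(|Dv|)|Dv|\le C\,b(\lambda)\lambda$ and $b(|Dv|)\le C\,b(\lambda)$, whatever $a$ does near $0$. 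This produces \eqref{DGl1}, whose left-hand side needs ellipticity only on $\{\kappa<D_kv<\ell\}$ with $\kappa\ge\lambda/8$.

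Your citation of Propositions \ref{pr:1DG} and \ref{prop:levels} points in the right direction. If you apply \eqref{DGl1} with $\kappa=\ell_{j+1}$, $\ell=\ell_j$ and use your doubly truncated function only for the Sobolev step, the rest of your iteration goes through and coincides with the paper's proof. The mechanism you actually describe, however, does not justify that inequality.
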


We start by proving the parabolic counterpart of the De Giorgi's type inequalities of \cite[Lemmas 4.6-4.7]{Antonini2027}.

\begin{proposition}\label{pr:1DG}
    Let $v$ be a local solution to \eqref{eq:hom} under the assumptions
    \eqref{iasa}, \eqref{ass:Ahom} and \eqref{smooth}.
    
    Let $g:\R\to\R$ be a Lipschitz function such that $g'(t)\geq0$
    for a.e. $t\in\R$, and let $G$ be a nonnegative primitive of $g$.
    Then there exist constants $0<c<1<C$ depending on $\data$
    such that the following holds.
    For every cylinder
    \[
        Q_\tau\coloneqq\mathcal{B}\times(t_*,\tau)
        \Subset\Omega_T,
    \]
    where $\mathcal{B}\subset\R^N$ is open, every $k=1,\dots,N$,
    and every smooth function $\zeta$ on
    $\mathcal{B}\times[t_*,\tau]$ such that
    \[
        \operatorname{spt}\zeta(\cdot,t)\subset K
        \qquad\text{for all }t\in[t_*,\tau],
    \]
 for some compact set $K\Subset \B$, we have
 \begin{equation}\label{DGG1}
    \begin{split}
        &\int_{\mathcal{B}\times\{\tau\}}
        G(D_kv)\,\zeta^2\,dx
        +
        c\iint_{Q_\tau}
        a(|Dv|)\,|D(D_kv)|^2\,g'(D_kv)\,\zeta^2\,dxdt
        \\
        &\quad\leq
        \int_{\mathcal{B}\times\{t_*\}}
        G(D_kv)\,\zeta^2\,dx
        +
        C\iint_{Q_\tau}
        a(|Dv|)\,|Dv|^2\,g'(D_kv)\,|D\zeta|^2\,dxdt
        \\
        &\qquad+
        C\iint_{Q_\tau}
        b(|Dv|)\,|g(D_kv)|\,|D^2\zeta^2|\,dxdt
        +
        \iint_{Q_\tau}
        G(D_kv)\,|\partial_t\zeta^2|\,dxdt.
    \end{split}
    \end{equation}
    If, in addition,
    \[
        |\{g\neq0\}\cap\{g'=0\}|=0,
    \]
    then
    \begin{equation}\label{DGG2}
    \begin{split}
        &\int_{\mathcal{B}\times\{\tau\}}
        G(D_kv)\,\zeta^2\,dx
        +
        c\iint_{Q_\tau}
        a(|Dv|)\,|D(D_kv)|^2\,g'(D_kv)\,\zeta^2\,dxdt
        \\
        &\leq
        \int_{\mathcal{B}\times\{t_*\}}
        G(D_kv)\,\zeta^2\,dx
        +
        C\iint_{Z_k\cap Q_\tau}
        a(|Dv|)\,
        \frac{g^2(D_kv)}{g'(D_kv)}
        |D\zeta|^2\,dxdt
        \\
        &\quad+
        \iint_{Q_\tau}
        G(D_kv)\,|\partial_t\zeta^2|\,dxdt,
    \end{split}
    \end{equation}
    where $Z_k\coloneqq
        \{D(D_kv)\neq0\}\cap\{g(D_kv)\neq0\}$.
\end{proposition}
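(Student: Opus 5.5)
The plan is to differentiate the equation in $x_k$ and test the differentiated equation with $g(D_kv)\zeta^2$, which is legitimate under \eqref{smooth}. Writing $w\coloneqq D_kv$, the derivative of \eqref{eq:hom} with respect to $x_k$ gives
\[
\partial_t w-\operatorname{div}\big(\nabla_\xi\A_0(t,Dv)\,Dw\big)=0 .
\]
Multiplying by $g(w)\zeta^2$ and integrating over $Q_\tau$, the time term becomes $\partial_t G(w)\,\zeta^2$. Integrating it by parts in $t$ yields the two slice integrals at $\tau$ and $t_*$, plus an error bounded by $\iint G(w)|\partial_t\zeta^2|$; here we use $G\geq0$. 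The diffusion term, after one spatial integration by parts (no boundary terms, since $\operatorname{spt}\zeta(\cdot,t)\subset K$), splits as
\[
\iint \nabla_\xi\A_0\,Dw\cdot Dw\;g'(w)\zeta^2 \;+\;\iint g(w)\,\nabla_\xi\A_0\,Dw\cdot D\zeta^2 .
\]
By \eqref{ass:Ahom} and $g'\geq0$, the first integral is at least $\lambda_0\iint a(|Dv|)|Dw|^2g'(w)\zeta^2$. This is the good term.

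For \eqref{DGG2}, I estimate the cross term by $|\nabla_\xi\A_0|\le\Lambda_0 a(|Dv|)$ and use Young's inequality in the form
\[
a\,|g|\,|Dw|\,2\zeta|D\zeta|\le \varepsilon\, a|Dw|^2g'\zeta^2+C_\varepsilon\, a\,\frac{g^2}{g'}|D\zeta|^2 .
\]
This is valid where $g'>0$. The integrand vanishes where $Dw=0$ or $g(w)=0$, so only $Z_k$ contributes. The hypothesis $|\{g\neq0\}\cap\{g'=0\}|=0$ ensures the set where $g'=0$ inside $Z_k$ is negligible; one may need a coarea/level-set remark that $\{g(w)\neq0,\ g'(w)=0,\ Dw\neq0\}$ is null. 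Absorbing the $\varepsilon$-term gives \eqref{DGG2}.

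For \eqref{DGG1} there is no assumption on the zero set of $g'$, so the cross term must be handled without dividing by $g'$. The trick I will use is to move the derivative off $Dw$ by integrating by parts once more. Write
\[
\nabla_\xi\A_0(t,Dv)\,Dw=\partial_{x_k}\big[\A_0(t,Dv)\big],
\]
by the chain rule. Then
\[
\iint g(w)\,\partial_k\big[\A_0(t,Dv)\big]\cdot D\zeta^2
=-\iint g'(w)\,D_kw\;\A_0(t,Dv)\cdot D\zeta^2-\iint g(w)\,\A_0(t,Dv)\cdot D(D_k\zeta^2).
\]
We may assume $\A_0(t,0)=0$, since subtracting it does not change the equation. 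Lemma \ref{lemma:Agrco} then gives $|\A_0(t,Dv)|\le C\,b(|Dv|)=C\,a(|Dv|)|Dv|$, so the last integral is bounded by $C\iint b(|Dv|)|g(w)||D^2\zeta^2|$.

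In the remaining integral, $|D_kw|\le|Dw|$ and $|D\zeta^2|=2\zeta|D\zeta|$. Young's inequality with weight $g'\geq0$ gives
\[
g'\,|Dw|\,a|Dv|\,\zeta|D\zeta|\le\varepsilon\, a|Dw|^2g'\zeta^2+C_\varepsilon\, a|Dv|^2g'|D\zeta|^2 ,
\]
with no division by $g'$. Absorbing $\varepsilon$ yields \eqref{DGG1}.

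The main obstacle is the rigour of the chain-rule step for merely Lipschitz $g$, where $g'$ is defined only a.e. I would first prove both inequalities for smooth $g$ with $g'\ge0$, then approximate a Lipschitz $g$ by mollifications $g_\epsilon$, for which $g_\epsilon'\to g'$ a.e. with uniform bounds. Passing to the limit needs care on sets where $Dw=0$, which is handled since all such terms carry a factor $Dw$ or are continuous in $g$. The second integration by parts also requires $\A_0(t,Dv)$ to be differentiable in $x$, which holds by \eqref{smooth}.
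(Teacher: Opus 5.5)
Your proposal is correct and follows the paper's proof essentially step by step. You differentiate in $x_k$ and test with $g(D_kv)\zeta^2$. For \eqref{DGG1} you rewrite $\nabla_\xi\A_0(t,Dv)\,D(D_kv)=D_k[\A_0(t,Dv)]$, integrate by parts once more, and use $|\A_0(t,Dv)|\le C\,b(|Dv|)$ with Young's inequality. For \eqref{DGG2} you apply Young's inequality with weight $g^2/g'$ on $Z_k$, together with the same coarea remark the paper makes in its footnote.

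The mollification of $g$ is not needed, since $D[g(D_kv)]=g'(D_kv)\,D(D_kv)$ holds a.e.\ for Lipschitz $g$ and smooth $v$. If you keep it, your justification is slightly off: the term $a(|Dv|)|Dv|^2g'(D_kv)|D\zeta|^2$ neither carries a factor $D(D_kv)$ nor depends continuously on $g$. You should first restrict it to $\{D(D_kv)\neq0\}$, which is legitimate because it comes from an integrand containing $D_k(D_kv)$, and only then let the mollification parameter tend to zero.
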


\begin{proof}
    We differentiate Equation \eqref{eq:hom} with respect to $x_k$,
    and setting $w_k\coloneqq D_kv$ we get
    \begin{equation}\label{eq:linear}
        \partial_t w_k-
        \mathrm{div}\big(\nabla_\xi\A_0(t,Dv)\,Dw_k\big)=0
        \quad\text{in $\Omega_T$.}
    \end{equation}
    Multiplying \eqref{eq:linear} by $g(w_k)\,\zeta^2$
    and integrating over $Q_\tau$, we get
    \begin{equation}\label{tp:1}
    \begin{split}
        &\int_{\mathcal{B}\times\{\tau\}}
        G(w_k)\,\zeta^2\,dx
        +
        \iint_{Q_\tau}
        \nabla_\xi\A_0(t,Dv)\,Dw_k
        \cdot g'(w_k)\,Dw_k\,\zeta^2\,dxdt
        \\
        &\quad+
        \iint_{Q_\tau}
        \nabla_\xi\A_0(t,Dv)\,Dw_k
        \cdot D\zeta^2\,g(w_k)\,dxdt
        \\
        &=
        \int_{\mathcal{B}\times\{t_*\}}
        G(w_k)\,\zeta^2\,dx
        +
        \iint_{Q_\tau}
        G(w_k)\,\partial_t\zeta^2\,dxdt,
    \end{split}
    \end{equation}
    where we used that, via integration by parts,
    \[
    \begin{split}
        \iint_{Q_\tau}
       & (\partial_t w_k)\,g(w_k)\,\zeta^2\,dxdt
        =
        \iint_{Q_\tau}
        \partial_t G(w_k)\,\zeta^2\,dxdt
        \\
        &=
        \int_{\mathcal{B}\times\{\tau\}}
        G(w_k)\,\zeta^2\,dx
        -
        \int_{\mathcal{B}\times\{t_*\}}
        G(w_k)\,\zeta^2\,dx-
        \iint_{Q_\tau}
        G(w_k)\,\partial_t\zeta^2\,dxdt.
    \end{split}
    \]
    The second and third integrals on the left-hand side of
    \eqref{tp:1} can be estimated exactly as in
    \cite[Proof of Lemma 4.6]{Antonini2027}.
    For the sake of completeness, we report the full details.

    Via integration by parts, we find
    \begin{equation}\label{parts}
    \begin{split}
        \int_{\mathcal{B}}
        &\nabla_\xi\A_0(t,Dv)\,Dw_k
        \cdot D\zeta^2\,g(w_k)\,dx=
        \int_{\mathcal{B}}
        D_k\big(\A_0(t,Dv)\big)
        \cdot D\zeta^2\,g(w_k)\,dx
        \\
        &=
        -\int_{\mathcal{B}}
        \A_0(t,Dv)\cdot D\zeta^2\,
        g'(w_k)\,D_kw_k\,dx-
        \int_{\mathcal{B}}
        \A_0(t,Dv)\cdot D(D_k\zeta^2)\,g(w_k)\,dx.
    \end{split}
    \end{equation}
    Then, by \eqref{ass:Ahom},
    \begin{equation}\label{cro:1}
    \begin{split}
        \iint_{Q_\tau}
        \nabla_\xi\A_0(t,Dv)\,Dw_k
        \cdot g'(w_k)\,Dw_k\,\zeta^2\,dxdt\geq
        c_0\iint_{Q_\tau}
        a(|Dv|)\,|Dw_k|^2\,g'(w_k)\,\zeta^2\,dxdt
    \end{split}
    \end{equation}
    for some $c_0=c_0(\data)>0$.
    Moreover, using \eqref{deb:ass:A}
    (with $\Lambda_{\rm h}=0$), that $b(t)=a(t)t$,
    and Young's inequality,
    \begin{equation*}
    \begin{split}
        \bigg|\iint_{Q_\tau}
        &\A_0(t,Dv)\cdot D\zeta^2\,
        g'(w_k)\,D_kw_k\,dxdt\bigg|\leq
        C\iint_{Q_\tau}
        b(|Dv|)\,|\zeta|\,|D\zeta|\,
        g'(w_k)\,|D_kw_k|\,dxdt
        \\
        &\leq
        \frac{c_0}{4}\iint_{Q_\tau}
        a(|Dv|)\,|Dw_k|^2\,g'(w_k)\,\zeta^2\,dxdt+
        C\iint_{Q_\tau}
        a(|Dv|)\,|Dv|^2\,g'(w_k)\,|D\zeta|^2\,dxdt,
    \end{split}
    \end{equation*}
    and
    \begin{equation*}
    \begin{split}
        \bigg|\iint_{Q_\tau}
        \A_0(t,Dv)\cdot D(D_k\zeta^2)\,g(w_k)\,dxdt\bigg|\leq
        C\iint_{Q_\tau}
        b(|Dv|)\,|g(w_k)|\,|D^2\zeta^2|\,dxdt.
    \end{split}
    \end{equation*}
    Combining the two estimates above with
    \eqref{cro:1} and \eqref{parts}, substituting the resulting
    estimates into \eqref{tp:1}, and using that $G\geq0$,
    we obtain \eqref{DGG1} after absorbing the corresponding
    term into the left-hand side.

    Next, by \eqref{ass:Ahom} and Young's inequality\footnote{ Here the computations are justified since
       $ \big|Z_k\cap Q_\tau\cap\{g'(w_k)=0\}\big|=0 $
    by the coarea formula and the assumption
    $|\{g'=0\}\cap\{g\neq0\}|=0$.},
    \begin{equation*}
    \begin{split}
        2\bigg|\iint_{Q_\tau}
        &\nabla_\xi\A_0(t,Dv)\,Dw_k
        \cdot D\zeta\,\zeta\,g(w_k)\,dxdt\bigg|\leq
        C\iint_{Z_k\cap Q_\tau}
        a(|Dv|)\,|Dw_k|\,|D\zeta|\,|\zeta|\,|g(w_k)|\,dxdt
        \\
        &\leq
        \frac{c_0}{4}\iint_{Q_\tau}
        a(|Dv|)\,|Dw_k|^2\,g'(w_k)\,\zeta^2\,dxdt+
        C\iint_{Z_k\cap Q_\tau}
        a(|Dv|)\,
        \frac{g(w_k)^2}{g'(w_k)}\,|D\zeta|^2\,dxdt.
    \end{split}
    \end{equation*}
    Combining this estimate with \eqref{cro:1} and \eqref{tp:1},
    and using again that $G\geq0$, we obtain \eqref{DGG2}
    after absorption.
    This completes the proof.
\end{proof}

\begin{proposition}[Parabolic De Giorgi's type inequalities on intrinsic cylinders]\label{prop:levels}
Let $v$ be a local solution to \eqref{eq:hom} under the assumptions \eqref{iasa}, \eqref{ass:Ahom}, \eqref{smooth}, and suppose that 
\begin{equation}\label{intr:hup}
       Q_{R}^{(\lambda)}= Q_{R}^{(\lambda)}(x_0,t_0)\Subset \Omega_T,\quad\text{and}\quad \max_{k=1,\dots,N}\sup_{Q_{_{R}}^{(\lambda)}} |D_kv|\leq \lambda.
    \end{equation}  
Then there exists $C=C(\data)>0$ such that, for all $0<r_1< r_2\leq R$, for all $k=1,\dots,N$, and for all 
    \begin{equation}\label{kell}
        \frac{\lambda}{8}\leq \kappa<\ell\leq \lambda,
    \end{equation}
    we have the following De Giorgi's type inequalities
    \begin{equation}\label{DGl1}
    \begin{split}
      \sup_{t\in (t_0-r_1^2/a(\l),t_0) }\int_{B_{r_1}(x_0)\times \{t\}} (D_kv-\ell)_-^2dx &+ a(\l) \iint_{Q^{(\lambda)}_{r_1}\cap \{\kappa<D_k v<\ell\} } |D(D_k v)|^2\,dxdt
      \\
      &\leq C\,\frac{a(\l)\,\l^2}{(r_2-r_1)^2}\,|Q^{(\l)}_{r_2}\cap \{D_k v<\ell\}|,
      \end{split}
    \end{equation}
and
\begin{equation}\label{DGl2}
    \begin{split}
        \sup_{t\in (t_0-r_1^2/a(\l),t_0) }\int_{B_{r_1}(x_0)\times \{t\}} (D_kv-\kappa)_+^2dx &+ a(\l) \iint_{Q^{(\lambda)}_{r_1}\cap \{D_k v>\kappa\} } |D(D_k v)|^2\,dxdt
      \\
      &\leq C\,\frac{a(\l)\,(\l-\kappa)^2}{(r_2-r_1)^2}\,|Q^{(\l)}_{r_2}\cap \{D_k v>\kappa\}|.
    \end{split}
\end{equation}
\end{proposition}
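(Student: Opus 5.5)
We will derive both inequalities from \eqref{DGG1} in Proposition \ref{pr:1DG}. We choose $\mathcal{B}=B_{r_2}(x_0)$ and $t_*=t_0-r_2^2/a(\lambda)$, and take a cutoff $\zeta$ with $\zeta\equiv1$ on $Q^{(\lambda)}_{r_1}$ and $\zeta$ vanishing near $\partial B_{r_2}\times[t_*,t_0]$ and on $\{t=t_*\}$. It satisfies
\[
|D\zeta|\leq \frac{C}{r_2-r_1},\qquad |D^2\zeta^2|\leq \frac{C}{(r_2-r_1)^2},\qquad |\partial_t\zeta^2|\leq \frac{C\,a(\lambda)}{(r_2-r_1)^2}.
\]
The initial term then drops. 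We let $\tau$ range over $(t_0-r_1^2/a(\lambda),t_0)$ and take the supremum at the end. Since $|D_kv|\leq\lambda$, we have $|Dv|\leq\sqrt N\lambda$ throughout $Q^{(\lambda)}_R$, so by \eqref{ultra:utile} $a(|Dv|)|Dv|^2\leq C a(\lambda)\lambda^2$ and $b(|Dv|)\leq Cb(\lambda)=C\lambda a(\lambda)$.

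For \eqref{DGl1} we take $g(s)=-(s-\ell)_-$, which is nondecreasing and Lipschitz with $g'=\mathbbm 1_{\{s<\ell\}}$, and $G(s)=\tfrac12(s-\ell)_-^2$. The right-hand side of \eqref{DGG1} then has three terms, each supported on $\{D_kv<\ell\}$. The gradient term is bounded by $C a(\lambda)\lambda^2(r_2-r_1)^{-2}|Q^{(\lambda)}_{r_2}\cap\{D_kv<\ell\}|$. Since $|g|\leq 2\lambda$, the $D^2\zeta^2$ term is bounded by $C\lambda a(\lambda)\cdot\lambda(r_2-r_1)^{-2}$ times the same measure. Since $G\leq 2\lambda^2$, the time term has the same bound. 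On the left, $g'(D_kv)=1$ on $\{\kappa<D_kv<\ell\}$. The key step is the lower bound for $a(|Dv|)$ there: on this set $|Dv|\geq |D_kv|>\kappa\geq\lambda/8$ and $|Dv|\leq\sqrt N\lambda$, so \eqref{ultra:utile} gives $a(|Dv|)\geq c\,a(\lambda)$. This yields \eqref{DGl1}.

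For \eqref{DGl2} we take $g(s)=(s-\kappa)_+$ and $G(s)=\tfrac12(s-\kappa)_+^2$. Now $g'=\mathbbm 1_{\{s>\kappa\}}$, and on $\{D_kv>\kappa\}$ again $\lambda/8\leq|Dv|\leq\sqrt N\lambda$, so $a(|Dv|)\approx a(\lambda)$ on both sides. The main obstacle is that the right side must carry the factor $(\lambda-\kappa)^2$, not $\lambda^2$. The $D^2\zeta^2$ term of \eqref{DGG1} gives only $b(\lambda)(\lambda-\kappa)$, and the gradient term gives $a(\lambda)|Dv|^2$, neither of the right form. So we use \eqref{DGG2} instead, which applies because $\{g\neq0\}\cap\{g'=0\}=\emptyset$. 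Its middle term is
\[
a(|Dv|)\frac{g^2}{g'}|D\zeta|^2\leq C a(\lambda)\frac{(D_kv-\kappa)_+^2}{(r_2-r_1)^2}\leq C\,\frac{a(\lambda)(\lambda-\kappa)^2}{(r_2-r_1)^2}\quad\text{on }\{D_kv>\kappa\},
\]
using $D_kv\leq\lambda$. The time term is bounded by $G\,|\partial_t\zeta^2|\leq C(\lambda-\kappa)^2a(\lambda)(r_2-r_1)^{-2}$ on the same set. Both right-hand terms are therefore controlled by $C\,a(\lambda)(\lambda-\kappa)^2(r_2-r_1)^{-2}|Q^{(\lambda)}_{r_2}\cap\{D_kv>\kappa\}|$, which gives \eqref{DGl2}.

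For \eqref{DGl1} one could equally use \eqref{DGG2}, but the crude bound from \eqref{DGG1} already suffices there. The case $\ell$ close to $\kappa$ causes no issue, because the left-hand side only involves the set $\{\kappa<D_kv<\ell\}$.
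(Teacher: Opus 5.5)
Your proof is correct and follows the paper's argument essentially verbatim. You use \eqref{DGG1} with $g=-(s-\ell)_-$ for \eqref{DGl1}, with the lower bound $a(|Dv|)\ge c\,a(\lambda)$ on $\{\kappa<D_kv<\ell\}$, and \eqref{DGG2} with $g=(s-\kappa)_+$ for \eqref{DGl2} to produce the factor $(\lambda-\kappa)^2$.

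One small correction concerns the bound $a(|Dv|)|Dv|^2\le C\,a(\lambda)\lambda^2$ on all of $Q^{(\lambda)}_R$, where $|Dv|$ may be close to zero. It does not follow from \eqref{ultra:utile}; it follows from the monotonicity of $b$ (hence of $s\mapsto b(s)s$) together with \eqref{b2t}, as in the paper's \eqref{uutil:2}.
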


\begin{proof}
  By translation, we may assume $(x_0,t_0)=(0,0)$.  Let $K$ be a compact set such that $B_{r_1}\Subset K\Subset B_{r_2}$, and let $\zeta$ be a smooth cut-off function such that, 
    \begin{equation}\label{zeta}
    \begin{split}
        &0\leq \zeta\leq 1,\qquad \operatorname{spt}\zeta(\cdot,t)\subset K\quad\text{for all $t\in (-r_2^2/a(\l),0]$},
        \\
      & \zeta\equiv 1\quad\text{on $Q^{(\lambda)}_{r_1}$},\qquad \zeta(\cdot,-r_2^2/a(\l))=0,
        \\
      &   |D\zeta|^2
    +|D^2\zeta|\leq \frac{C(N)}{(r_2-r_1)^2},\qquad |\partial_t \zeta|\leq \frac{C(N)}{(r_2-r_1)^2}\,a(\l).
    \end{split}
    \end{equation}
Owing to \eqref{intr:hup} and \eqref{kell}, we have that $\l/8\leq |Dv|\leq \sqrt{N}\,\lambda$ on $\{D_kv>\kappa\}\cap \mathrm{spt}\,\zeta$: it follows by \eqref{ultra:utile} that
\begin{equation}\label{uutil:1}
    c\,a(\lambda)\leq a(|Dv|)\leq C\,a(\lambda)\quad\text{on $\{D_kv>\kappa\}\cap \mathrm{spt}\,\zeta$,}
\end{equation}
and by using the monotonicity of $b$ and \eqref{b2t},
\begin{equation}\label{uutil:2}
    a(|Dv|)\,|Dv|=b(|Dv|)\leq b(\sqrt{N}\lambda)\leq C\,b(\lambda)=C\,a(\l)\,\l\quad\text{on $Q^{(\l)}_R$,}
\end{equation}
 for some positive constants $c,C$ depending only on $N,i_a,s_a$.

We apply \eqref{DGG1} with 
\[
Q_\tau=B_{r_2}\times (-r_2^2/a(\l),\tau),\qquad g(t)=-(t-\ell)_-=(t-\ell)\mathbbm{1}_{\{t<\ell\}},\qquad G(t)=\frac12(t-\ell)_-^2.
\]
Using that $\zeta(\cdot,-r_2^2/a(\lambda))=0$, and taking the supremum over $\tau\in (-r^2_2/a(\l),0) $, we obtain
\begin{equation*}
\begin{split}
    \sup_{t\in(-r_2^2/a(\lambda),0)}
    &\int_{B_{r_2}\times \{t\}}
    (D_kv-\ell)_-^2\,\zeta^2\,dx
    +
    \iint_{Q_{r_2}^{(\lambda)}\cap\{D_kv<\ell\}}
    a(|Dv|)\,|D(D_kv)|^2\,\zeta^2\,dxdt
    \\
    \leq\;&
    C\iint_{Q_{r_2}^{(\lambda)}\cap\{D_kv<\ell\}}
    a(|Dv|)\,|Dv|^2\,|D\zeta|^2\,dxdt+
    C\iint_{Q_{r_2}^{(\lambda)}}
    b(|Dv|)\,(D_kv-\ell)_-\,|D^2\zeta^2|\,dxdt
    \\
    &+
    \frac{C}{2}\iint_{Q_{r_2}^{(\lambda)}}
    (D_kv-\ell)_-^2\,|\partial_t\zeta^2|\,dxdt .
\end{split}
\end{equation*}

Then  we estimate via \eqref{uutil:1} and \eqref{zeta},
\[
\begin{split}
    \int_{\{D_k v<\ell\}} &a(|Dv|)\,|D(D_kv)|^2\,\zeta^2\,dxdt \geq \int_{\{\kappa<D_k v<\ell\}}a(|Dv|)\,|D(D_kv)|^2\,\zeta^2\,dxdt
    \\
    &\geq c\,a(\l)\,\int_{\{\kappa<D_k v<\ell\}}|D(D_kv)|^2\,\zeta^2\,dxdt
\geq c\,a(\l)\,\int_{Q_{r_1}^{(\l)}\cap\{\kappa<D_k v<\ell\}}|D(D_kv)|^2\,dxdt
\end{split}
\]
We estimate the remaining terms using \eqref{uutil:2}, the bound $|Dv|\leq\sqrt{N}\,\lambda$, and, by  \eqref{intr:hup} and \eqref{kell}, the estimate $|(D_kv-\ell)_-|\leq 2\lambda$, together with the properties in \eqref{zeta}. This proves \eqref{DGl1}.

Next, we apply \eqref{DGG2} with the same $Q_\tau$, and
\[
g(t)=(t-\kappa)_+,\qquad G(t)=\frac12(t-\kappa)_+^2,
\] In the resulting inequality, we use \eqref{uutil:1}-\eqref{uutil:2}, estimate $(D_kv-\kappa)_+\leq(\lambda-\kappa)$, and exploit the properties in \eqref{zeta}. This yields \eqref{DGl2}.

\end{proof}

We shall also need the logarithmic type estimate as in
\cite[II. Proposition 3.2]{DiBenedetto1993}. To this end, for
$m\geq2$, we introduce the function
\begin{equation}\label{def:Psim}
    \Psi_m(s)\coloneqq
    \left[
    \log\left(
    \frac{\lambda/2}
    {\lambda/2-(s-\l/2)_++2^{-m}\lambda}
    \right)
    \right]_+,
    \qquad s\leq\lambda.
\end{equation}
Notice that
\begin{equation}\label{Psim:prop}
    \{\Psi_m>0\}
    =
    \left\{
    s>\frac{\lambda}{2}+2^{-m}\lambda
    \right\},
    \qquad
    0\leq\Psi_m\leq(m-1)\log2.
\end{equation}

\begin{proposition}[Logarithmic estimate]\label{prop:log}
    Let $v$ be a local solution to \eqref{eq:hom} under the assumptions
    \eqref{iasa}, \eqref{ass:Ahom}, \eqref{smooth}, and suppose that
    \eqref{intr:hup} holds.

    There exists $C=C(\data)>0$ such that, for every
    $m\geq2$, every $k=1,\dots,N$, every
    \[
        t_0-\frac{R^2}{a(\lambda)}<t_*<t_0,
    \]
    and every time independent function
    $\zeta\in C^\infty_c(B_R(x_0))$, we have
    \begin{equation}\label{log:est:time}
    \begin{split}
        \sup_{t\in(t_*,t_0)}
        \int_{B_R(x_0)\times\{t\}}
        \Psi_m^2(D_kv)\,\zeta^2\,dx
        &\leq
        \int_{B_R(x_0)\times\{t_*\}}
        \Psi_m^2(D_kv)\,\zeta^2\,dx
        \\
        &\quad+
        C\,a(\lambda)
        \int_{t_*}^{t_0}\int_{B_R(x_0)}
        \Psi_m(D_kv)\,|D\zeta|^2\,dxdt.
    \end{split}
    \end{equation}
\end{proposition}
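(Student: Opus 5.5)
We test the differentiated equation \eqref{eq:linear} for $w_k=D_kv$ with $\varphi=(\Psi_m^2)'(w_k)\,\zeta^2=2\Psi_m(w_k)\Psi_m'(w_k)\zeta^2$, with $\zeta$ time independent, and integrate over $B_R(x_0)\times(t_*,t)$. By \eqref{def:Psim}, $\Psi_m$ is Lipschitz on $(-\infty,\lambda]$ and vanishes for $s\le \lambda/2+2^{-m}\lambda$. Since $\zeta$ does not depend on time, the parabolic term gives
\[
\int_{B_R\times\{t\}}\Psi_m^2(w_k)\zeta^2\,dx-\int_{B_R\times\{t_*\}}\Psi_m^2(w_k)\zeta^2\,dx,
\]
with no $\partial_t\zeta$ contribution. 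This is why the right-hand side of \eqref{log:est:time} contains no time-derivative term.

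For the diffusion term, we compute on $\{\Psi_m>0\}$ the classical DiBenedetto identity
\[
(\Psi_m^2)''=2(1+\Psi_m)(\Psi_m')^2.
\]
It holds because $\Psi_m$ is a logarithm of $1/(\text{affine})$ there, so $\Psi_m''=(\Psi_m')^2$. The diffusion integrand then splits into two pieces:
\[
2(1+\Psi_m)(\Psi_m')^2\,\nabla_\xi\A_0\,Dw_k\cdot Dw_k\,\zeta^2
+4\Psi_m\Psi_m'\,\zeta\,\nabla_\xi\A_0\,Dw_k\cdot D\zeta .
\]
On $\{\Psi_m(w_k)>0\}$ we have $\lambda/2<w_k\le\lambda$. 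Hence, by \eqref{intr:hup}, $\lambda/2\le|Dv|\le\sqrt N\lambda$, and \eqref{ultra:utile} gives $c\,a(\lambda)\le a(|Dv|)\le C\,a(\lambda)$ there. Using \eqref{ass:Ahom}, the first piece is bounded below by
\[
c\,a(\lambda)\,(1+\Psi_m)(\Psi_m')^2|Dw_k|^2\zeta^2 .
\]
The second piece is bounded by
\[
C\,a(\lambda)\,\Psi_m|\Psi_m'|\,|Dw_k|\,\zeta\,|D\zeta| .
\]

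We then apply Young's inequality to the second piece:
\[
C\,a(\lambda)\,\Psi_m|\Psi_m'||Dw_k|\zeta|D\zeta|\le \tfrac c2\,a(\lambda)\,\Psi_m(\Psi_m')^2|Dw_k|^2\zeta^2+C'a(\lambda)\,\Psi_m|D\zeta|^2 .
\]
The first term is absorbed since $\Psi_m\le 1+\Psi_m$. The remaining term integrates to exactly the right-hand side of \eqref{log:est:time}. Finally, we drop the nonnegative gradient term and take the supremum over $t\in(t_*,t_0)$.

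The only delicate point is the justification of the test function: $\Psi_m$ is only Lipschitz, with a kink at $s=\lambda/2+2^{-m}\lambda$. One option is to invoke Proposition \ref{pr:1DG}-type reasoning with $g=(\Psi_m^2)'$, which is Lipschitz, nondecreasing and vanishes where $\Psi_m=0$. The cleaner route, which we adopt, is to work directly: under \eqref{smooth}, $w_k$ is smooth, and the composition $\Psi_m^2(w_k)$ is $C^{1,1}$ in $w_k$. The chain rule therefore holds a.e., and the second derivative formula is needed only on the open set $\{\Psi_m(w_k)>0\}$. Note also that $\Psi_m$ is only defined for $s\le\lambda$, and $w_k\le\lambda$ by \eqref{intr:hup}, so the composition makes sense.
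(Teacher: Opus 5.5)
Your argument is correct and is essentially the paper's proof. The paper applies \eqref{DGG2} from Proposition \ref{pr:1DG} with $g=2\Psi_m\Psi_m'$, $G=\Psi_m^2$, and uses $g'=2(1+\Psi_m)(\Psi_m')^2$, the bound $g^2/g'=2\Psi_m^2/(1+\Psi_m)\le 2\Psi_m$, the comparability $a(|Dv|)\simeq a(\lambda)$ on $\{\Psi_m>0\}$, and $\partial_t\zeta=0$; you simply inline the test-function computation and Young absorption that \eqref{DGG2} packages, and your absorption via $\Psi_m\le 1+\Psi_m$ is exactly the inequality $g^2/g'\le 2\Psi_m$.
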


\begin{proof}
    For brevity, set $\Psi\coloneqq\Psi_m$.
    On the set $\{\Psi>0\}$, we have
    \begin{equation}\label{Psi:der}
        \Psi'(s)
        =
        \frac{1}{\lambda-s+2^{-m}\lambda},
        \qquad
        \Psi''(s)
        =
        \big(\Psi'(s)\big)^2.
    \end{equation}
    We apply \eqref{DGG2} with
    \[
        g(s)\coloneqq2\Psi(s)\Psi'(s)
        \quad\text{on $\{\Psi>0\}$},
        \qquad
        G(s)\coloneqq\Psi^2(s),
    \]
    setting $g(s)=0$ on $\{\Psi(s)=0\}$.
    By \eqref{Psi:der}, on $\{\Psi>0\}$ we have
    \[
        g'(s)
        =
        2\big(\Psi'(s)\big)^2
        +2\Psi(s)\Psi''(s)
        =
        2\big(1+\Psi(s)\big)
        \big(\Psi'(s)\big)^2>0.
    \]
 Since only the range $s\leq\lambda$ is relevant, in view of $D_k v\leq \lambda$ by
    \eqref{intr:hup}, we may also  harmlessly extend $g,G$ outside this range
    so that the assumptions of Proposition \ref{pr:1DG} are satisfied. 
    We also have \begin{equation}\label{log:quotient}
        \frac{g(s)^2}{g'(s)}
        =
        2\,\frac{\Psi(s)^2}{1+\Psi(s)}
        \leq2\Psi(s)
        \qquad\text{on $\{\Psi>0\}$.}
    \end{equation}

    By \eqref{Psim:prop} and \eqref{intr:hup}, we have
    \[
        \frac{\lambda}{2}<D_kv\leq|Dv|\leq\sqrt{N}\lambda
        \qquad\text{on }
        \{\Psi(D_kv)>0\}\cap Q_R^{(\lambda)}.
    \]
    Thus, by \eqref{ultra:utile},
    \begin{equation}\label{log:acomp}
        c\,a(\lambda)
        \leq a(|Dv|)
        \leq C\,a(\lambda)
        \qquad\text{on }
        \{\Psi(D_kv)>0\}\cap Q_R^{(\lambda)},
    \end{equation}
    for some positive constants $c,C$ depending only on $\data$.

       Applying \eqref{DGG2} on
    \[
        Q_\tau=B_R(x_0)\times(t_*,\tau),
        \qquad t_*<\tau<t_0,
    \]
    dropping the nonnegative second term on the left-hand side,
    and using \eqref{log:quotient}, \eqref{log:acomp},
    and the fact that $\partial_t\zeta=0$, we obtain
    \[
    \begin{split}
        \int_{B_R(x_0)\times\{\tau\}}
        \Psi^2(D_kv)\,\zeta^2\,dx
        &\leq
        \int_{B_R(x_0)\times\{t_*\}}
        \Psi^2(D_kv)\,\zeta^2\,dx+
        C\,a(\lambda)
        \iint_{Q_\tau}
        \Psi(D_kv)\,|D\zeta|^2\,dxdt.
    \end{split}
    \]
    Enlarging the last integral to
    $B_R(x_0)\times(t_*,t_0)$ and taking the supremum over
    $\tau\in(t_*,t_0)$ proves \eqref{log:est:time}.
\end{proof}

\subsubsection{Proof of the first alternative}\label{ssub:1alt} We are now in the position to prove
\begin{proof}[Proof of Proposition \ref{prop:fund:alt} (I): the first alternative.]
By translation, we may assume $(x_0,t_0)=(0,0)$. Assume that, for some $k\in\{1,\dots,N\}$,
\begin{equation}\label{at:00}
    \big|\{D_kv<\lambda/2\}\cap Q_R^{(\lambda)}\big|
    \leq \mu_0\,|Q_R^{(\lambda)}|,
\end{equation}
where $\mu_0=\mu_0(\data)\in(0,1)$ will be fixed below.
We prove that
\begin{equation}\label{first:goal}
    D_kv\geq \frac{\lambda}{4}
    \qquad\text{in }Q_{R/2}^{(\lambda)},
\end{equation}
which immediately yields \eqref{thalt1:modDu}.

The other possibility in \eqref{alt:0} follows by symmetry. Indeed, if
\[
    \big|\{D_kv>-\lambda/2\}\cap Q_R^{(\lambda)}\big|
    \leq \mu_0\,|Q_R^{(\lambda)}|,
\]
we set $\tilde v\coloneqq-v$ and $\A_1(t,\xi)\coloneqq-\A_0(t,-\xi)$;  then $\tilde v$ solves
\[
    \partial_t \tilde v-\operatorname{div}\big(\A_1(t,D\tilde v)\big)=0
    \qquad\text{in }\Omega_T,
\]
and $\A_1$ satisfies the same structural assumptions as $\A_0$. Moreover $\{D_k\tilde v<\lambda/2\}=\{D_kv>-\lambda/2\}$. Thus it suffices to prove \eqref{first:goal} under \eqref{at:00}.

% Thus set
% \[
%     w_k\coloneqq D_kv,
%     \qquad 
%     \chi\coloneqq 1+\frac{2}{N}.
% \]
For $j=0,1,\dots$, we define
\begin{equation}\label{first:seq}
    r_j\coloneqq \frac{R}{2}+\frac{R}{2^{j+1}},
    \qquad
    \ell_j\coloneqq \frac{\lambda}{4}
                    +\frac{\lambda}{2^{j+2}},
\end{equation}
and
\[
    A_j\coloneqq
    Q_{r_j}^{(\lambda)}\cap\{D_k v<\ell_j\}.
\]
Notice that
\[
    r_0=R,\qquad r_j\searrow \frac{R}{2},
    \qquad
    \ell_0=\frac{\lambda}{2},
    \qquad
    \ell_j\searrow\frac{\lambda}{4},
\]
and
\begin{equation}\label{first:gaps}
    r_j-r_{j+1}=\frac{R}{2^{j+2}},
    \qquad
    \ell_j-\ell_{j+1}=\frac{\lambda}{2^{j+3}}.
\end{equation}
In particular $ \frac{\lambda}{8}
    \leq \ell_{j+1}<\ell_j\leq\lambda$,
so that Proposition \ref{prop:levels} applies at every step. Let also
\[
    \widehat r_j\coloneqq\frac{r_j+r_{j+1}}{2},
\]
and define the truncated function
\[
    z_j
    \coloneqq
    \min\big\{(\ell_j-D_k v)_+\,,\,(\ell_j-\ell_{j+1})\big\}.
\]
We have
\begin{equation}\label{first:zgrad}
    Dz_j
    =
    -D(D_k v)\,\mathbbm{1}_{\{\ell_{j+1}<D_k v<\ell_j\}}
    \qquad\text{a.e.},
\end{equation}
whereas
\begin{equation}\label{first:zlevel}
    z_j=\ell_j-\ell_{j+1}
    \qquad\text{on }\{D_k v<\ell_{j+1}\}.
\end{equation}

Applying \eqref{DGl1} with $ r_1=\widehat r_j,\,
    r_2=r_j,\,
    \kappa=\ell_{j+1},\,
    \ell=\ell_j$, and using \eqref{first:zgrad}, we obtain
\begin{equation}\label{first:energy1}
\begin{split}
    &\sup_{t\in
    (-\widehat r_j^2/a(\lambda),\,0)}
    \int_{B_{\widehat r_j}\times \{t\}}z_j^2\,dx
    +
    a(\lambda)
    \iint_{Q_{\widehat r_j}^{(\lambda)}}
    |Dz_j|^2\,dxdt
    \\
    &\qquad\leq
    C\,\frac{a(\lambda)\,\lambda^2}
    {(r_j-\widehat r_j)^2}\,|A_j|
    \leq
    C\,4^j\frac{a(\lambda)\lambda^2}{R^2}\,|A_j|.
\end{split}
\end{equation}

Choose now $\psi_j\in C_c^\infty(B_{\widehat r_j})$ such that
\[
    0\leq\psi_j\leq1,
    \qquad
    \psi_j\equiv1\quad\text{in }B_{r_{j+1}},
    \qquad
    |D\psi_j|
    \leq\frac{C(N)}{(r_j-r_{j+1})}.
\]
Since
\[
    0\leq z_j
    \leq(\ell_j-\ell_{j+1})\,\mathbbm{1}_{A_j},
\]
from \eqref{first:energy1} and \eqref{first:gaps} we infer
\begin{equation}\label{first:energy2}
\begin{split}
    &\sup_{t\in
    (-\widehat r_j^2/a(\lambda),\,0)}
    \int_{B_{\widehat r_j}}
    (z_j\psi_j)^2\,dx
    +
    a(\lambda)
    \iint_{Q_{\widehat r_j}^{(\lambda)}}
    |D(z_j\psi_j)|^2\,dxdt\leq
    C\,4^j\frac{a(\lambda)\,\lambda^2}{R^2}\,|A_j|.
\end{split}
\end{equation}

We then set 
\[
\chi=1+\frac{2}{N},
\]
 and we  apply the parabolic Sobolev inequality \cite[I. Proposition 3.1]{DiBenedetto1993} (with $p=m=2$) to $z_j\psi_j$, thus getting
\begin{equation}\label{first:sob}
\begin{split}
    \iint_{Q_{\widehat r_j}^{(\lambda)}}
    |z_j\psi_j|^{2\chi}\,dxdt
    &\leq
    C
    \left(
        \sup_{t\in (-\widehat{r}_j^2/a(\l),0)}\int_{B_{\widehat r_j}}
        (z_j\psi_j)^2\,dx
    \right)^{2/N}
    \iint_{Q_{\widehat r_j}^{(\lambda)}}
    |D(z_j\psi_j)|^2\,dxdt
    \\
    &\leq
    \frac{C}{a(\lambda)}
    \left(
        4^j\,
        \frac{a(\lambda)\,\lambda^2}{R^2}
        |A_j|
    \right)^{\chi}.
\end{split}
\end{equation}
On the other hand, by \eqref{first:zlevel} and the choice of $\psi_j$,
\[
    z_j\psi_j
    =
    \ell_j-\ell_{j+1}
    \qquad\text{on }A_{j+1}.
\]
Therefore \eqref{first:sob} gives
\[
    (\ell_j-\ell_{j+1})^{2\chi}|A_{j+1}|
    \leq
    \frac{C}{a(\lambda)}
    \left(
        4^j
        \frac{a(\lambda)\lambda^2}{R^2}
        |A_j|
    \right)^\chi.
\]
Using \eqref{first:gaps}, we conclude that
\begin{equation}\label{first:measure-rec}
    |A_{j+1}|
    \leq
    C_0\,\mathfrak b^j\,
    a(\lambda)^{2/N}R^{-2\chi}\,
    |A_j|^\chi,
\end{equation}
where $C_0>0$ and $\mathfrak b>1$ depend only on $\data$.

We now normalize the measures by setting
\[
    Y_j\coloneqq\frac{|A_j|}{|Q_R^{(\lambda)}|}.
\]
Since
\[
\begin{split}
    a(\lambda)^{2/N}R^{-2\chi}
    |Q_R^{(\lambda)}|^{\chi-1}
    &=
    |B_1|^{2/N}
    a(\l)^{2/N}R^{-2\chi}
    \left(\frac{R^{N+2}}{a(\lambda)}\right)^{2/N}=C(N),
\end{split}
\]
Equation \eqref{first:measure-rec} reduces to the dimensionless superlinear recursion
\begin{equation}\label{first:Yrec}
    Y_{j+1}
    \leq
    C_1\,\mathfrak b^j\,Y_j^{1+2/N},
\end{equation}
where $C_1=C_1(\data)>0$. Moreover, by \eqref{at:00},
\[
    Y_0
    =
    \frac{
    |\{D_k v<\lambda/2\}\cap Q_R^{(\lambda)}|
    }{|Q_R^{(\lambda)}|}
    \leq\mu_0.
\]
Thereby choosing $\mu_0=\mu_0(\data)\in(0,1)$
small enough for the iteration Lemma \ref{lem:hyp}  applied to
\eqref{first:Yrec}, we obtain  $Y_j\longrightarrow0$. Since $r_j\searrow R/2$ and $\ell_j\searrow\lambda/4$, it follows that
\[
    \big|
    Q_{R/2}^{(\lambda)}
    \cap\{D_kv<\lambda/4\}
    \big|=0,
\]
which implies \eqref{first:goal}, that is our thesis.
\end{proof}

\subsubsection{Proof of the second alternative} In this part we establish the second alternative of Proposition \ref{prop:fund:alt}. To this end, let us assume that \eqref{at:00} is violated, that is
\begin{equation}\label{not:alt1}
    |\{D_k v<\l/2\}\cap Q^{(\l)}_R|>\mu_0\,|Q^{(\l)}_R|.
\end{equation}

We start by showing the following.

\begin{lemma}[Good time level]\label{lem:good:time}
    Let $v$ be a local solution to \eqref{eq:hom} under the
    assumptions \eqref{iasa}, \eqref{ass:Ahom} and \eqref{smooth} and  \eqref{intr:hup};
    suppose that \eqref{not:alt1} holds for
    some $k=1,\dots,N$. Then there exists
    \begin{equation}\label{good:time:interval}
        t_*\in\left(
        t_0-\frac{R^2}{a(\lambda)},\,
        t_0-\left( \frac{\mu_0}{2}\right)\frac{R^2}{a(\lambda)}
        \right)
    \end{equation}
    such that
    \begin{equation}\label{good:time:slice}
        \big|\{x\in B_R(x_0):D_kv(x,t_*)>\lambda/2\}\big|
        \leq
        \left(\frac{1-\mu_0}{1-\mu_0/2}\right)\,|B_R(x_0)|.
    \end{equation}
\end{lemma}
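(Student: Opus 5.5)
This is a measure-theoretic argument by contradiction, in the spirit of DiBenedetto's ``good time slice'' lemma. By translation we take $(x_0,t_0)=(0,0)$ and write $T_R\coloneqq R^2/a(\lambda)$, so that $Q_R^{(\lambda)}=B_R\times(-T_R,0)$ and $|Q_R^{(\lambda)}|=|B_R|\,T_R$. Under \eqref{smooth} the map $t\mapsto|\{x\in B_R:D_kv(x,t)>\lambda/2\}|$ is measurable, so by Fubini we may work slice by slice in time.

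Suppose the claim fails. Then for every $t\in(-T_R,-(\mu_0/2)T_R)$ we have
\[
|\{x\in B_R:D_kv(x,t)>\lambda/2\}|>\frac{1-\mu_0}{1-\mu_0/2}\,|B_R|,
\]
equivalently
\[
|\{x\in B_R:D_kv(x,t)\leq\lambda/2\}|<\left(1-\frac{1-\mu_0}{1-\mu_0/2}\right)|B_R|=\frac{\mu_0/2}{1-\mu_0/2}\,|B_R|.
\]

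We bound the bad set $\{D_kv<\lambda/2\}\cap Q_R^{(\lambda)}$ by splitting the time interval.
\begin{itemize}
\item On $(-(\mu_0/2)T_R,0)$ we use the trivial bound, which gives measure at most $(\mu_0/2)T_R|B_R|$.
\item On $(-T_R,-(\mu_0/2)T_R)$, which has length $(1-\mu_0/2)T_R$, we integrate the slice estimate. Since $\{D_kv<\lambda/2\}\subset\{D_kv\leq\lambda/2\}$, this gives measure strictly less than $(1-\mu_0/2)T_R\cdot\frac{\mu_0/2}{1-\mu_0/2}|B_R|=(\mu_0/2)T_R|B_R|$.
\end{itemize}
The strict inequality survives integration because it holds on a set of positive length. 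Summing the two pieces,
\[
|\{D_kv<\lambda/2\}\cap Q_R^{(\lambda)}|<\mu_0\,T_R|B_R|=\mu_0\,|Q_R^{(\lambda)}|.
\]
This contradicts \eqref{not:alt1}, so some $t_*$ in the interval \eqref{good:time:interval} satisfies \eqref{good:time:slice}.

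There is no real obstacle here: the argument is pure bookkeeping, and neither the equation nor Proposition \ref{prop:levels} is needed. The only points needing care are the following.
\begin{itemize}
\item The complement set is $\{D_kv\leq\lambda/2\}$ rather than $\{D_kv<\lambda/2\}$; the inclusion goes in the correct direction.
\item Measurability of the slices, which follows from continuity of $D_kv$ under \eqref{smooth}, or from Fubini in general.
\end{itemize}
The constant $\frac{1-\mu_0}{1-\mu_0/2}<1$ is exactly the one produced by this computation.
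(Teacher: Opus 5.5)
Your proof is correct and is essentially the paper's argument in contrapositive form. The paper argues directly: writing $A(t)$ for the slice superlevel set and $I$ for the interval in \eqref{good:time:interval}, it bounds $\int_I|A(t)|\,dt\le|\{D_kv>\lambda/2\}\cap Q_R^{(\lambda)}|<(1-\mu_0)|Q_R^{(\lambda)}|$, divides by $|I|=(1-\mu_0/2)R^2/a(\lambda)$, and picks a slice that lies below the average. Your care about keeping the inequality strict is harmless but unnecessary, since \eqref{not:alt1} is already strict.
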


\begin{proof}
    Set
    \[
        I\coloneqq\left(
        t_0-\frac{R^2}{a(\lambda)},\,
        t_0-\left( \frac{\mu_0}{2}\right)\frac{R^2}{a(\lambda)}
        \right),
        \qquad
        A(t)\coloneqq
        \{x\in B_R(x_0):D_kv(x,t)>\lambda/2\}.
    \]
    By Fubini's theorem and \eqref{not:alt1}, we have
    \begin{equation*}
    \begin{split}
        \int_I |A(t)|\,dt
        &\leq
        \big|\{D_kv>\lambda/2\}\cap Q_R^{(\lambda)}\big|\leq
        |Q_R^{(\lambda)}|
        -\big|\{D_kv<\lambda/2\}\cap Q_R^{(\lambda)}\big|<
        (1-\mu_0)|Q_R^{(\lambda)}|.
    \end{split}
    \end{equation*}
Dividing by  $|I|=\left(1-\frac{\mu_0}{2}\right)
        \frac{R^2}{a(\lambda)}$ yields
    \[
        \frac{1}{|I|}\int_I |A(t)|\,dt
        <
       \left( \frac{1-\mu_0}{1-\mu_0/2}\right)\,|B_R(x_0)|.
    \]
  Consequently, there must exist $t_*\in I$ such that
    \eqref{good:time:slice} holds. This ends the proof.
\end{proof}

\begin{lemma}[Propagation of lower level sets]\label{lem:propagation}
 Under the notations and assumptions of Lemma \ref{lem:good:time},  there exists an integer $m_0\geq3$, depending only
    on $\data$, such that
    \begin{equation}\label{propagation:lower}
        \big|
        \{x\in B_R(x_0):
        D_kv(x,t)\leq(1-2^{-m_0})\lambda\}
        \big|
        \geq
        \frac{\mu_0}{8}\,|B_R(x_0)|
    \end{equation}
    for all $t\in (t_*,t_0)$.
\end{lemma}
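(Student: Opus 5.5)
The plan follows DiBenedetto's propagation argument, with the logarithmic estimate of Proposition \ref{prop:log} doing the work. Translate so that $(x_0,t_0)=(0,0)$. Fix $\sigma\in(0,1)$, to be chosen small in terms of $\mu_0$ and $N$. Take a time-independent cutoff $\zeta\in C^\infty_c(B_R)$ with $0\le\zeta\le1$, $\zeta\equiv1$ on $B_{(1-\sigma)R}$ and $|D\zeta|\le C/(\sigma R)$. Apply \eqref{log:est:time} with $\Psi=\Psi_m$ and the good time $t_*$ of Lemma \ref{lem:good:time}.

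First I bound the initial term. By \eqref{Psim:prop}, $\Psi_m(D_kv(\cdot,t_*))$ vanishes off $A(t_*)=\{D_kv(\cdot,t_*)>\lambda/2\}$ and is at most $(m-1)\log 2$. Hence by \eqref{good:time:slice},
\[
\int_{B_R\times\{t_*\}}\Psi_m^2\zeta^2\,dx\le (m-1)^2\log^2 2\,\frac{1-\mu_0}{1-\mu_0/2}|B_R|.
\]
For the second term, $t_0-t_*\le R^2/a(\lambda)$. This intrinsic choice cancels the factor $a(\lambda)$ in front, giving
\[
C a(\lambda)\int_{t_*}^{0}\!\!\int_{B_R}\Psi_m|D\zeta|^2\le C\,a(\lambda)\frac{R^2}{a(\lambda)}(m-1)\frac{|B_R|}{\sigma^2R^2}=\frac{C(m-1)}{\sigma^2}|B_R|.
\]
This cancellation is exactly why the intrinsic geometry is used, and it is the only place where the structure (via \eqref{log:acomp}) enters.

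For the lower bound on the left side, fix $t\in(t_*,0)$. On the set $\{x\in B_{(1-\sigma)R}: D_kv(x,t)>(1-2^{-m})\lambda\}$ we have
\[
\Psi_m\ge\log\frac{\lambda/2}{2^{-m}\lambda+2^{-m}\lambda}=(m-2)\log2.
\]
Therefore
\[
(m-2)^2\log^22\,\big|\{x\in B_{(1-\sigma)R}: D_kv>(1-2^{-m})\lambda\}\big|\le\Big[(m-1)^2\log^22\,\frac{1-\mu_0}{1-\mu_0/2}+\frac{C(m-1)}{\sigma^2}\Big]|B_R|.
\]
Dividing and adding $|B_R\setminus B_{(1-\sigma)R}|\le N\sigma|B_R|$ yields
\[
|\{x\in B_R:D_kv(x,t)>(1-2^{-m})\lambda\}|\le\Big[\Big(\tfrac{m-1}{m-2}\Big)^2\tfrac{1-\mu_0}{1-\mu_0/2}+\tfrac{C}{\sigma^2 m}+N\sigma\Big]|B_R|.
\]

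The main delicate step is fixing the parameters so that the bracket is at most $1-\mu_0/8$. Note that $\frac{1-\mu_0}{1-\mu_0/2}\le1-\mu_0/2$. First choose $\sigma$ with $N\sigma\le\mu_0/8$. Then choose $m_0\ge3$ large, depending on $\data$ through $C$ and $\mu_0$, so that $\big(\tfrac{m_0-1}{m_0-2}\big)^2(1-\mu_0/2)\le1-3\mu_0/8$ and $C/(\sigma^2m_0)\le\mu_0/8$. The bracket is then at most $1-\mu_0/8$, and taking complements gives \eqref{propagation:lower} for every $t\in(t_*,t_0)$.

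A technical point to check is that Proposition \ref{prop:log} indeed applies for $t_*$ in the interval \eqref{good:time:interval}. This holds, since that proposition only requires $t_*>t_0-R^2/a(\lambda)$ together with \eqref{intr:hup}.
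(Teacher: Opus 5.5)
Your proof is correct and follows essentially the same route as the paper. You apply the logarithmic estimate from the good time slice with a time-independent cutoff, use the intrinsic cancellation of $a(\lambda)$ against $t_0-t_*\le R^2/a(\lambda)$, and use the lower bound $\Psi_m\ge(m-2)\log 2$ on the upper level set before choosing $m_0$ large. The only difference is bookkeeping: you take $\zeta\equiv1$ on $B_{(1-\sigma)R}$ with $N\sigma\le\mu_0/8$ and split the budget as $1-3\mu_0/8+\mu_0/8+\mu_0/8$. The paper takes $\rho=(1-\mu_0/8)^{1/N}$ and bounds the interior part by $(1+\beta_0)/2\le 1-\mu_0/4$.
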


\begin{proof}
    By translation, we may assume $(x_0,t_0)=(0,0)$. We set
\[
    \beta_0\coloneqq\frac{1-\mu_0}{1-\mu_0/2},
    \quad\text{and}\quad
    \rho\coloneqq\left(1-\frac{\mu_0}{8}\right)^{1/N}.
\]
Observe that $\rho\in (1/2,1)$ depends only on $\data$, and
    \begin{equation}\label{propagation:rho}
        |B_R\setminus B_{\rho R}|
        =
        \frac{\mu_0}{8}\,|B_R|.
    \end{equation}
    Let $\zeta\in C^\infty_c(B_R)$ be a time independent
    cut-off function such that
    \begin{equation}\label{propagation:cutoff}
        0\leq\zeta\leq1,\qquad
        \zeta=1\quad\text{in }B_{\rho R},
        \qquad
        |D\zeta|\leq\frac{C}{(1-\rho)R}.
    \end{equation}

    By Lemma \ref{lem:good:time}, there exists
    \[
        t_*\in
        \left(
        -\frac{R^2}{a(\lambda)},\,
        -\frac{\mu_0}{2}\frac{R^2}{a(\lambda)}
        \right)
    \]
    such that
    \begin{equation}\label{propagation:good}
        \big|
        \{x\in B_R:D_kv(x,t_*)>\lambda/2\}
        \big|
        \leq
        \beta_0\,|B_R|.
    \end{equation}
    We fix an integer $m\geq3$, and apply \eqref{log:est:time},
    thus getting 
    \begin{equation}\label{propagation:logestimate}
    \begin{split}
        \sup_{t\in(t_*,0)}
        \int_{B_R\times\{t\}}
        \Psi_m^2(D_kv)\,\zeta^2\,dx
        &\leq
        \int_{B_R\times\{t_*\}}
        \Psi_m^2(D_kv)\,\zeta^2\,dx
        \\
        &\quad+
        C\,a(\lambda)
        \int_{t_*}^0\int_{B_R}
        \Psi_m(D_kv)\,|D\zeta|^2\,dxdt.
    \end{split}
    \end{equation}
    Owing to \eqref{Psim:prop} and \eqref{propagation:good},
    we have
    \begin{equation}\label{propagation:initial}
    \begin{split}
        \int_{B_R\times\{t_*\}}
        \Psi_m^2(D_kv)\,\zeta^2\,dx
        &\leq
        (m-1)^2(\log2)^2
        \big|
        \{x\in B_R:D_kv(x,t_*)>\lambda/2\}
        \big|
        \\
        &\leq
        \beta_0(m-1)^2(\log2)^2\,|B_R|.
    \end{split}
    \end{equation}
    Moreover, using again \eqref{Psim:prop},
    \eqref{propagation:cutoff}, and
    $-t_*\leq R^2/a(\lambda)$, we find
    \begin{equation}\label{propagation:error}
    \begin{split}
        a(\lambda)
        \int_{t_*}^0\int_{B_R}
        \Psi_m(D_kv)\,|D\zeta|^2\,dxdt
        &\leq
        \frac{C\,a(\lambda)(m-1)(-t_*)}
        {(1-\rho)^2R^2}\,|B_R|\leq
        \frac{C(m-1)}{(1-\rho)^2}\,|B_R|.
    \end{split}
    \end{equation}
    Combining \eqref{propagation:logestimate}--\eqref{propagation:error},
    we get
    \begin{equation}\label{propagation:Psi}
        \sup_{t\in(t_*,0)}
        \int_{B_R\times\{t\}}
        \Psi_m^2(D_kv)\,\zeta^2\,dx
        \leq
        \left[
        \beta_0(m-1)^2(\log2)^2
        +
        \frac{C(m-1)}{(1-\rho)^2}
        \right]|B_R|.
    \end{equation}

    Let us now set
    \[
        E_m(t)\coloneqq
        \left\{
        x\in B_R:
        D_kv(x,t)>(1-2^{-m})\lambda
        \right\}.
    \]
    On $E_m(t)$, by \eqref{def:Psim},
    \[
        \Psi_m(D_kv)
        \geq
        \log\left(
        \frac{\lambda/2}
        {\lambda/2-[(1-2^{-m})\lambda-\l/2]+2^{-m}\lambda}
        \right)
        =
        (m-2)\log2.
    \]
    Since $\zeta=1$ in $B_{\rho R}$, from
    \eqref{propagation:Psi} it follows that
    \begin{equation}\label{propagation:Em}
        |E_m(t)\cap B_{\rho R}|
        \leq
        \left[
        \beta_0
        \left(\frac{m-1}{m-2}\right)^2
        +
        \frac{C(m-1)}
        {(1-\rho)^2(m-2)^2}
        \right]|B_R|
    \end{equation}
    for all $t\in(t_*,0)$.
    The quantity in square brackets converges to $\beta_0$
    as $m\to\infty$. Therefore, we may choose an integer
    $m_0\geq3$, depending only on $\data$, so large that
    \[
        |E_{m_0}(t)\cap B_{\rho R}|
        \leq
        \frac{1+\beta_0}{2}\,|B_R|
        \leq
        \left(1-\frac{\mu_0}{4}\right)|B_R|,
    \]
    where in the last inequality we used that
    \[
        \frac{1+\beta_0}{2}
        =
        1-\frac{\mu_0}{2(2-\mu_0)}
        \leq
        1-\frac{\mu_0}{4}.
    \]
    Consequently, by \eqref{propagation:rho},
    \[
    \begin{split}
        |E_{m_0}(t)|
        &\leq
        |E_{m_0}(t)\cap B_{\rho R}|
        +
        |B_R\setminus B_{\rho R}|
        \\
        &\leq
        \left(1-\frac{\mu_0}{4}\right)|B_R|
        +
        \frac{\mu_0}{8}|B_R|
        =
        \left(1-\frac{\mu_0}{8}\right)|B_R|.
    \end{split}
    \]
    Taking complements in $B_R$, we obtain
    \[
        \big|
        \{x\in B_R:
        D_kv(x,t)\leq(1-2^{-m_0})\lambda\}
        \big|
        \geq
        \frac{\mu_0}{8}\,|B_R|
    \]
    for all $t\in(t_*,0)$. This proves \eqref{propagation:lower} and completes the proof.
\end{proof}

\begin{lemma}[Arbitrary smallness of upper level sets]\label{lem:small:measure}
    Under the notations and assumptions of Lemma \ref{lem:good:time}, set
    \[
        Q_*\coloneqq B_R(x_0)\times(t_*,t_0).
    \]
    Then, for every $\theta_0\in(0,1)$, there exists an integer
    $m_1=m_1(\data,\theta_0)$ such that,
    \begin{equation}\label{small:measure:goal}
        \big|
        Q_*\cap\{D_kv>(1-2^{-m_1})\,\lambda\}
        \big|
        \leq\theta_0\,|Q_*|,
    \end{equation}
\end{lemma}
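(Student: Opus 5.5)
This follows DiBenedetto's classical "shrinking lemma" argument \cite[Chapter II, Lemma 7.1]{DiBenedetto1993}, adapted to the intrinsic cylinder. By translation we take $(x_0,t_0)=(0,0)$ and use the time levels $\ell_m\coloneqq(1-2^{-m})\lambda$ for $m_0\leq m\leq m_1$, where $m_0$ is the integer from Lemma \ref{lem:propagation}. For each such $m$ and each $t\in(t_*,0)$, Lemma \ref{lem:propagation} gives
\[
|\{x\in B_R: D_kv(x,t)\leq \ell_m\}|\geq|\{x\in B_R: D_kv(x,t)\leq \ell_{m_0}\}|\geq \frac{\mu_0}{8}|B_R|.
\]
We apply the isoperimetric Lemma \ref{lem:isoper} on each time slice with $k=\ell_m$ and $l=\ell_{m+1}$, so that $l-k=2^{-m-1}\lambda$. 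This yields
\[
2^{-m-1}\lambda\,|A_{m+1}(t)|\leq \frac{C R}{\mu_0}\int_{\{\ell_m<D_kv<\ell_{m+1}\}\cap B_R}|D(D_kv)|\,dx,
\qquad A_m(t)\coloneqq\{x\in B_R:D_kv(x,t)>\ell_m\}.
\]

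Next we integrate this in $t\in(t_*,0)$ and apply Cauchy--Schwarz. Set $Y_m\coloneqq|Q_*\cap\{D_kv>\ell_m\}|$ and $D_m\coloneqq(Q_*\cap\{D_kv>\ell_m\})\setminus(Q_*\cap\{D_kv>\ell_{m+1}\})$. Then
\[
2^{-m}\lambda\,Y_{m+1}\leq \frac{CR}{\mu_0}\Big(\iint_{Q_*\cap\{\ell_m<D_kv<\ell_{m+1}\}}|D(D_kv)|^2\Big)^{1/2}|D_m|^{1/2}.
\]
The energy factor is controlled by \eqref{DGl2} with $\kappa=\ell_m\geq\lambda/8$, so that $\lambda-\kappa=2^{-m}\lambda$. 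We use it on the pair of cylinders $Q^{(\lambda)}_{R}$ and $Q^{(\lambda)}_{2R}$, or, more carefully, on $B_R\times(t_*,0)$ inside a slightly larger cylinder. Since \eqref{intr:hup} is only assumed on $Q_R^{(\lambda)}$, the cleanest route is to run the whole argument with $B_{R/2}$ in the measure statements and $B_R$ in the energy estimates. Alternatively, one can first reduce $R$ by a fixed factor, using that $t_*\leq-\mu_0R^2/(2a(\lambda))$ leaves room in time. In either case one gets
\[
a(\lambda)\iint|D(D_kv)|^2\leq C\frac{a(\lambda)4^{-m}\lambda^2}{R^2}|Q_R^{(\lambda)}|\leq \frac{C}{\mu_0}\,\frac{a(\lambda)4^{-m}\lambda^2}{R^2}|Q_*|,
\]
where we used $|Q_*|\geq \frac{\mu_0}{2}|Q^{(\lambda)}_R|$. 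Inserting this, the factors $2^{-m}\lambda$ and $a(\lambda)$ cancel exactly; this cancellation is the point of the intrinsic scaling. We obtain $Y_{m+1}^2\leq C\mu_0^{-3}|Q_*|\,|D_m|$ with $C=C(\data)$.

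Finally we sum over $m=m_0,\dots,m_1-1$. The sets $D_m$ are disjoint subsets of $Q_*$, and $Y_{m}\geq Y_{m_1}$ for $m\leq m_1$. Hence
\[
(m_1-m_0)\,Y_{m_1}^2\leq C\mu_0^{-3}|Q_*|^2,
\qquad\text{that is,}\qquad Y_{m_1}\leq \Big(\frac{C}{\mu_0^3(m_1-m_0)}\Big)^{1/2}|Q_*|.
\]
Choosing $m_1$ large in terms of $\theta_0$ and $\data$ gives \eqref{small:measure:goal}.

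The main obstacle is the bookkeeping of cylinders, namely making the energy inequality \eqref{DGl2} available on a cylinder that contains $Q_*$ while the gradient bound \eqref{intr:hup} holds only on $Q_R^{(\lambda)}$. I would handle this by proving the measure estimate on $B_{R/2}\times(t_*,0)$ with a spatially shrunk version of Lemma \ref{lem:propagation}, which follows from the same proof by replacing $\rho$ appropriately. A second option is to take $r_1=R$ and $r_2=R$ via the time-independent cutoff version of \eqref{DGG2}, with the cutoff equal to $1$ on $B_{\rho R}$, accepting a factor $(1-\rho)^{-2}$ that depends only on $\data$.
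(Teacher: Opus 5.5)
Your core mechanism matches the paper's proof. You apply Lemma \ref{lem:isoper} on time slices, using the lower bound from Lemma \ref{lem:propagation}. You then use the energy bound \eqref{DGl2} with $\lambda-\kappa=2^{-m}\lambda$, Cauchy--Schwarz, and a telescoping sum over the disjoint shells $D_m$.

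The gap is exactly the cylinder bookkeeping you flag, and neither of your fixes closes it. No admissible cutoff in \eqref{DGG2} or \eqref{DGl2} gives unweighted control of $|D(D_kv)|$ up to $\partial B_R$. So the isoperimetric step can only be run on slices of a smaller ball $B_{\sigma R}$, and the layer $(B_R\setminus B_{\sigma R})\times(t_*,t_0)$, of measure $(1-\sigma^N)|Q_*|$, is lost.

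\emph{Second option.} Here $\sigma=\rho$ depends only on $\data$, so the lost layer is a fixed fraction of $|Q_*|$. For the $\rho$ of Lemma \ref{lem:propagation} it is exactly $\frac{\mu_0}{8}|Q_*|$. Your argument therefore yields at best a bound $(1-\rho^N+o(1))|Q_*|$ as $m_1\to\infty$, and cannot reach $\theta_0<1-\rho^N$.

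\emph{First option.} This fails on two counts. The lost layer $(B_R\setminus B_{R/2})\times(t_*,t_0)$ has measure $(1-2^{-N})|Q_*|$. Moreover, the ``spatially shrunk'' Lemma \ref{lem:propagation} does not follow from the same proof. The only information at $t_*$ is $|\{D_kv(\cdot,t_*)>\lambda/2\}\cap B_R|\leq\beta_0|B_R|$. The complementary set has relative measure only $\frac{\mu_0}{2-\mu_0}$ and may sit entirely in the annulus $B_R\setminus B_{R/2}$, so nothing is known on $B_{R/2}$.

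\emph{The missing idea} is to tie the shrinking to $\theta_0$. The paper takes $\sigma=(1-\gamma)^{1/N}$ with $\gamma=\mu_0\theta_0/16$. It restricts time to $J=(t_*,t_0)\cap(t_0-\sigma^2R^2/a(\lambda),t_0)$, so that $B_{\sigma R}\times J\subset Q^{(\lambda)}_{\sigma R}$, and checks that the discarded region has measure at most $\frac{\theta_0}{2}|Q_*|$. Since $\gamma\leq\mu_0/16$, the propagation lower bound survives on $B_{\sigma R}$ with constant $\frac{\mu_0}{16}$. The iteration then gives
$|A_M|\leq C(1-\sigma)^{-1}(M-m_0-1)^{-1/2}|Q_*|$, with $(1-\sigma)^{-1}\leq 16N/(\mu_0\theta_0)$. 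This $\theta_0$-dependent factor is harmless because $m_1$ may depend on $\theta_0$: take $m_1-m_0-1\gtrsim\theta_0^{-4}$.

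With $\rho=\rho(\theta_0)$ chosen this way, your time-independent-cutoff variant of \eqref{DGG2} started at $t_*$ is a legitimate alternative to the window $J$. The initial term $\frac12(2^{-m}\lambda)^2|B_R|$ is of the same order as the others, because $t_0-t_*\geq\frac{\mu_0}{2}R^2/a(\lambda)$.

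A minor point: start the sum at $m_0+1$ rather than $m_0$. Lemma \ref{lem:isoper} needs a lower bound on $|\{D_kv<\ell_m\}|$, while Lemma \ref{lem:propagation} only controls $\{D_kv\leq\ell_{m_0}\}$.
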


\begin{proof}
    By translation, we may assume $(x_0,t_0)=(0,0)$.
    Set $I\coloneqq(t_*,0)$, so that $Q_*=B_R\times I$ and
    \[
        \frac{\mu_0}{2}\frac{R^2}{a(\lambda)}
        <|I|<
        \frac{R^2}{a(\lambda)}.
    \]
    We fix $\theta_0\in(0,1)$ and set
    \[
        \gamma\coloneqq\frac{\mu_0\theta_0}{16},
        \qquad
        \sigma\coloneqq(1-\gamma)^{1/N},
        \qquad
        J\coloneqq
        I\cap\left(-\frac{\sigma^2R^2}{a(\lambda)},0\right).
    \]
    Observe that $\sigma\in(1/2,1)$,
    $|B_R\setminus B_{\sigma R}|=\gamma|B_R|$, and
    \[
        1-\sigma^2\leq2\gamma,
        \qquad
        |I\setminus J|
        \leq\frac{(1-\sigma^2)R^2}{a(\lambda)}.
    \]
    Therefore,
    \begin{equation}\label{small:measure:boundary}
    \begin{split}
        |Q_*\setminus(B_{\sigma R}\times J)|
        &\leq
        |B_R\setminus B_{\sigma R}|\,|I|
        +|B_R|\,|I\setminus J|
        \\
        &\leq
        \left(\gamma+\frac{2(1-\sigma^2)}{\mu_0}\right)|Q_*|
        \\
        &\leq
        \frac{\mu_0+4}{16}\theta_0|Q_*|
        \leq\frac{\theta_0}{2}|Q_*|.
    \end{split}
    \end{equation}

    Let $m_0=m_0(\data)\in \N$ be given by
    Lemma \ref{lem:propagation}. For every integer $j\geq m_0$,
    we set
    \[
        \ell_j\coloneqq(1-2^{-j})\lambda,
        \qquad
        A_j\coloneqq
        (B_{\sigma R}\times J)\cap\{D_kv>\ell_j\}.
    \]
    By Lemma \ref{lem:propagation}, for every
    $j\geq m_0+1$ and every $t\in J$ we have
    \begin{equation}\label{small:measure:lower}
    \begin{split}
        \big|\{x\in B_{\sigma R}:D_kv(x,t)<\ell_j\}\big|
        &\geq
        \big|\{x\in B_R:D_kv(x,t)\leq\ell_{m_0}\}\big|
        -\gamma|B_R|
        \\
        &\geq
        \left(\frac{\mu_0}{8}-\gamma\right)|B_R|
        \geq\frac{\mu_0}{16}|B_R|,
    \end{split}
    \end{equation}
    where we used that $\ell_{m_0}\leq \ell_j$, $|B_R\setminus B_{\sigma R}|=\gamma |B_R|$ and
    $\gamma\leq\mu_0/16$.

    Applying \eqref{DGl2} with $r_1=\sigma R$, $r_2=R$
    and $\kappa=\ell_j$, and using
    $B_{\sigma R}\times J\subset Q_{\sigma R}^{(\lambda)}$,
    we obtain
    \begin{equation}\label{small:measure:energy}
    \begin{split}
        \iint_{A_j}|D(D_kv)|^2\,dxdt
        &\leq
        \frac{C(2^{-j}\lambda)^2}{(1-\sigma)^2R^2}
        \big|Q_R^{(\lambda)}\cap\{D_kv>\ell_j\}\big|
        \\
        &\leq
        \frac{C(2^{-j}\lambda)^2}{(1-\sigma)^2R^2}|Q_*|,
    \end{split}
    \end{equation}
    where $C=C(\data)>0$, and
    in the last inequality we used that $|Q_R^{(\lambda)}|\leq(2/\mu_0)|Q_*|$.

  For every $t\in J$ and every $j\geq m_0+1$, we apply
Lemma \ref{lem:isoper} to $D_kv(\cdot,t)$ in
$B_{\sigma R}$, with $\kappa=\ell_j$ and
$\ell=\ell_{j+1}$. Thus,
\[
\begin{split}
&(\ell_{j+1}-\ell_j)
\big|\{x\in B_{\sigma R}:
D_kv(x,t)>\ell_{j+1}\}\big|
\\
&\qquad\leq
C\,\frac{(\sigma R)^{N+1}}
{\big|\{x\in B_{\sigma R}:
D_kv(x,t)<\ell_j\}\big|}
\int_{B_{\sigma R}\cap
\{\ell_j<D_kv<\ell_{j+1}\}}
|D(D_kv)|\,dx.
\end{split}
\]
By \eqref{small:measure:lower} and since
$\sigma<1$, we have
\[
\begin{split}
\frac{(\sigma R)^{N+1}}
{\big|\{x\in B_{\sigma R}:
D_kv(x,t)<\ell_j\}\big|}
\leq
\frac{16(\sigma R)^{N+1}}
{\mu_0|B_R|}\leq C(\data)R.
\end{split}
\]
Recalling that
$\ell_{j+1}-\ell_j=2^{-j-1}\lambda$,
integrating over $J$, and applying
Fubini's theorem, we obtain
\[
\begin{split}
2^{-j-1}\lambda\,|A_{j+1}|
&\leq
CR\int_J\int_{B_{\sigma R}\cap
\{\ell_j<D_kv<\ell_{j+1}\}}
|D(D_kv)|\,dxdt
\\
&\leq
CR\iint_{A_j\setminus A_{j+1}}
|D(D_kv)|\,dxdt.
\end{split}
\]
Hence, by H\"older's inequality and
\eqref{small:measure:energy}, we find
\[
\begin{split}
2^{-j-1}\lambda\,|A_{j+1}|
&\leq
CR
\left(
\iint_{A_j}|D(D_kv)|^2\,dxdt
\right)^{1/2}
|A_j\setminus A_{j+1}|^{1/2}
\\
&\leq
\frac{C\,2^{-j}\lambda}{1-\sigma}
|Q_*|^{1/2}
|A_j\setminus A_{j+1}|^{1/2}.
\end{split}
\]
Consequently,
\begin{equation}\label{small:measure:iteration}
|A_{j+1}|^2
\leq
\frac{C}{(1-\sigma)^2}
|Q_*|\,|A_j\setminus A_{j+1}|,
\end{equation}
where $C=C(\data)>0$. Since the sets $A_j$ are decreasing, for every
integer $M\geq m_0+2$ and every
$j=m_0+1,\dots,M-1$, we have
$A_M\subset A_{j+1}$. Hence, by
\eqref{small:measure:iteration},
\[
\begin{split}
(M-m_0-1)|A_M|^2
&\leq
\sum_{j=m_0+1}^{M-1}|A_{j+1}|^2\leq
\frac{C}{(1-\sigma)^2}|Q_*|
\sum_{j=m_0+1}^{M-1}
|A_j\setminus A_{j+1}|
\\
&=
\frac{C}{(1-\sigma)^2}|Q_*|
|A_{m_0+1}\setminus A_M|\leq
\frac{C}{(1-\sigma)^2}|Q_*|^2.
\end{split}
\]
Consequently,
\begin{equation}\label{small:measure:decay}
    |A_M|
    \leq
    \frac{C}{(1-\sigma)\sqrt{M-m_0-1}}
    |Q_*|.
\end{equation}
We recall that
$\sigma^N=1-\gamma$, where
$\gamma=\mu_0\theta_0/16$. Since $\sigma<1$,
\[
    \gamma
    =1-\sigma^N
    =(1-\sigma)\sum_{h=0}^{N-1}\sigma^h
    \leq N(1-\sigma),
\]
and therefore
\[
    \frac{1}{1-\sigma}
    \leq\frac{N}{\gamma}
    =\frac{16N}{\mu_0\theta_0}.
\]
Inserting this inequality into
\eqref{small:measure:decay}, we obtain
\[
    |A_M|
    \leq
    \frac{C_1}{\theta_0\sqrt{M-m_0-1}}
    |Q_*|,
\]
where $C_1=C_1(\data)>0$.

We may thus choose an integer
$m_1\geq m_0+2$, depending only on
$\data$ and $\theta_0$, such that
\[
    m_1-m_0-1
    \geq 4\,(C_1)^2\,\theta_0^{-4},
\]
and with this choice, we infer that
\begin{equation}\label{small:measure:final}
    |A_{m_1}|
    \leq\frac{\theta_0}{2}|Q_*|.
\end{equation}
Finally, recalling the definition of
$A_{m_1}$ and using
\eqref{small:measure:boundary} together
with \eqref{small:measure:final},
we conclude that
\[
\begin{split}
\big|
Q_*\cap\{D_kv>&(1-2^{-m_1})\lambda\}
\big|\leq
|A_{m_1}|
+
|Q_*\setminus(B_{\sigma R}\times J)|
\\
&\leq
\frac{\theta_0}{2}|Q_*|
+
\frac{\theta_0}{2}|Q_*|
=\theta_0|Q_*|.
\end{split}
\]
This completes the proof.
\end{proof}

\begin{lemma}\label{lem:quasi:2alt}
    Under the notation and assumptions of
    Lemmas \ref{lem:good:time}, there exist
    $\delta_0,\eta_0\in(0,1)$, depending only
    on $\data$, such that
    \begin{equation}\label{part:2alt}
        D_kv(x,t)\leq\eta_0\lambda
        \quad\text{for all }
        (x,t)\in Q_{\delta_0R}^{(\lambda)}(x_0,t_0).
    \end{equation}
\end{lemma}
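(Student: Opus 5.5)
We will run a De Giorgi iteration from above on the upper level sets of $D_kv$, started from the smallness supplied by Lemma \ref{lem:small:measure}. By translation take $(x_0,t_0)=(0,0)$ and set $Q_*=B_R\times(t_*,0)$, with $t_*$ from Lemma \ref{lem:good:time}. Since $t_*<-\frac{\mu_0}{2}\frac{R^2}{a(\lambda)}$, we have $Q^{(\lambda)}_{\rho_0 R}\subset Q_*$ with $\rho_0\coloneqq\sqrt{\mu_0/2}$. Moreover $|Q^{(\lambda)}_{\rho_0 R}|\geq c(\data)|Q_*|$, because $|Q_*|\leq|Q_R^{(\lambda)}|$ and $|Q^{(\lambda)}_{\rho_0R}|=\rho_0^{N+2}|Q_R^{(\lambda)}|$. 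Fix $\theta_0\in(0,1)$, to be chosen depending only on $\data$, and let $m_1=m_1(\data,\theta_0)$ be given by Lemma \ref{lem:small:measure}. Writing $\kappa_0\coloneqq(1-2^{-m_1})\lambda$, we then get
\[
\big|Q^{(\lambda)}_{\rho_0R}\cap\{D_kv>\kappa_0\}\big|\leq \theta_0|Q_*|\leq C\theta_0\,|Q^{(\lambda)}_{\rho_0R}|.
\]

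For the iteration, define
\[
r_j=\tfrac{\rho_0R}{2}+\tfrac{\rho_0R}{2^{j+1}},\qquad \kappa_j=\lambda-2^{-m_1}\lambda\big(\tfrac12+2^{-j-1}\big),
\]
so that $\kappa_j\uparrow \lambda-2^{-m_1-1}\lambda$ and $\kappa_{j+1}-\kappa_j=2^{-m_1}\lambda\,2^{-j-2}$. All the $\kappa_j$ lie in $[\lambda/8,\lambda)$, so \eqref{DGl2} applies with $r_1=\hat r_j$, $r_2=r_j$ and $\kappa=\kappa_j$, where $\hat r_j$ is the midpoint of $r_{j+1}$ and $r_j$. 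Note that $\lambda-\kappa_j\leq 2^{-m_1}\lambda$. Hence \eqref{DGl2} bounds the sup-in-time $L^2$ norm of $(D_kv-\kappa_j)_+$, plus $a(\lambda)$ times the Dirichlet integral, by $C4^j a(\lambda)(2^{-m_1}\lambda)^2R^{-2}|A_j|$, where $A_j\coloneqq Q^{(\lambda)}_{r_j}\cap\{D_kv>\kappa_j\}$.

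Next multiply by a spatial cutoff $\psi_j$ and apply the parabolic Sobolev inequality with $\chi=1+2/N$, exactly as in the proof of the first alternative, using $(D_kv-\kappa_j)_+\geq\kappa_{j+1}-\kappa_j$ on $A_{j+1}$. The factors $2^{-m_1}\lambda$ cancel, being homogeneous of the same degree on both sides. After normalizing $Y_j=|A_j|/|Q^{(\lambda)}_{\rho_0R}|$ this gives
\[
Y_{j+1}\leq C_1\mathfrak b^jY_j^{1+2/N},
\]
with $C_1,\mathfrak b$ depending only on $\data$ (the ratio $\rho_0$ depends only on $\data$). Since $Y_0\leq C\theta_0$, we choose $\theta_0$ so small that $Y_0$ falls below the threshold of Lemma \ref{lem:hyp}; then $Y_j\to0$. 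This gives $D_kv\leq(1-2^{-m_1-1})\lambda$ on $Q^{(\lambda)}_{\rho_0R/2}$, and we take $\eta_0=1-2^{-m_1-1}$ and $\delta_0=\rho_0/2$.

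The main obstacle is the ordering of choices. The iteration constants must be independent of $m_1$, since $m_1$ is determined only after $\theta_0$. This is guaranteed by the homogeneity just noted: the gap $\lambda-\kappa$ in \eqref{DGl2} and the level increments are both proportional to $2^{-m_1}\lambda$, so they cancel in the recursion. The other point to check is that the reference cylinders $Q^{(\lambda)}_{r_j}$ are contained in $Q_*$, which lies inside the cylinder of \eqref{intr:hup}. This holds because $r_j\leq\rho_0R$, so $r_j^2/a(\lambda)\leq \frac{\mu_0}{2}\frac{R^2}{a(\lambda)}<|t_*|$; the cutoffs in the proof of Proposition \ref{prop:levels} vanish at time $-r_j^2/a(\lambda)$, so only the condition $\kappa\geq\lambda/8$ is needed there.
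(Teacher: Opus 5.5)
Your proposal is correct and follows essentially the same argument as the paper. The two proofs share the same levels (your $\kappa_j$ coincide with the paper's $\ell_j=(1-2^{-m_1-1}(1+2^{-j}))\lambda$), the energy bound \eqref{DGl2}, the parabolic Sobolev step, and a recursion $Y_{j+1}\leq C_1\mathfrak b^jY_j^{1+2/N}$ whose constants do not depend on $m_1$ because $(\lambda-\kappa_j)/(\kappa_{j+1}-\kappa_j)\leq 2^{j+2}$; both also choose $\theta_0$ before $m_1$. The only difference is cosmetic: the paper takes $\delta_0=\sqrt{\mu_0}/4$ and starts the iteration at $r_0=2\delta_0R$, while your $\delta_0=\tfrac12\sqrt{\mu_0/2}$ is equally admissible since $Q^{(\lambda)}_{\rho_0R}\subset Q_*$.
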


\begin{proof}
    By translation, we may assume $(x_0,t_0)=(0,0)$.
    Set
    \[
        \delta_0=\delta_0(\data)
        \coloneqq\frac{\sqrt{\mu_0}}{4}.
    \]
    By \eqref{good:time:interval}, we have
    \[
        Q_{2\delta_0R}^{(\lambda)}
        \subset Q_*\coloneqq B_R\times(t_*,0).
    \]
    Fix $\theta_0=\theta_0(\data)\in(0,1)$,
    to be chosen below, and let
    $m_1=m_1(\data,\theta_0)$ be given by
    Lemma \ref{lem:small:measure}. Then
    \begin{equation}\label{quasi:initial}
        \big|Q_*\cap
        \{D_kv>(1-2^{-m_1})\lambda\}\big|
        \leq\theta_0|Q_*|.
    \end{equation}

    For $j=0,1,\dots$, we set
    \[
        r_j\coloneqq\delta_0R(1+2^{-j}),
        \qquad
        \ell_j\coloneqq
        \left(1-2^{-m_1-1}(1+2^{-j})\right)\lambda,
    \]
    and
    \[
        A_j\coloneqq
        Q_{r_j}^{(\lambda)}\cap\{D_kv>\ell_j\}.
    \]
    Observe that
    \[
        r_0=2\delta_0R,\qquad
        r_j\searrow\delta_0R,
        \qquad
        \ell_j\nearrow(1-2^{-m_1-1})\lambda,
    \]
    and
    \[
        r_j-r_{j+1}
        =\delta_0\,2^{-j-1}R,
        \qquad
        \ell_{j+1}-\ell_j
        =2^{-m_1-j-2}\lambda.
    \]
    In particular, $\lambda/8\leq\ell_j<\lambda$. Let
    \[
        \widehat r_j\coloneqq
        \frac{r_j+r_{j+1}}{2},
        \qquad
        w_j\coloneqq(D_kv-\ell_j)_+.
    \]
    Applying \eqref{DGl2} with
    $r_1=\widehat r_j$, $r_2=r_j$,
    and $\kappa=\ell_j$, we obtain
    \begin{equation}\label{quasi:energy1}
        \sup_{t\in
        (-\widehat r_j^2/a(\lambda),0)}
        \int_{B_{\widehat r_j}}w_j^2\,dx
        +
        a(\lambda)
        \iint_{Q_{\widehat r_j}^{(\lambda)}}
        |Dw_j|^2\,dxdt\leq
        C\,4^j\,
        \frac{a(\lambda)(\lambda-\ell_j)^2}{R^2}
        |A_j|,
    \end{equation}
    where $C=C(\data)>0$. Next choose $\psi_j\in C_c^\infty(B_{\widehat r_j})$
    such that
    \[
        0\leq\psi_j\leq1,\qquad
        \psi_j\equiv1
        \quad\text{in }B_{r_{j+1}},
        \qquad
        |D\psi_j|
        \leq\frac{C(N)}{r_j-r_{j+1}}.
    \]
    Since, by \eqref{intr:hup},
    \[
        0\leq w_j\leq
        (\lambda-\ell_j)\mathbbm{1}_{A_j}
        \quad\text{in }Q_{r_j}^{(\lambda)},
    \]
    using \eqref{quasi:energy1} and the
    properties of $\psi_j$, we infer
    \begin{equation}\label{quasi:energy}
    \begin{split}
        \sup_{t\in
        (-\widehat r_j^2/a(\lambda),0)}
        \int_{B_{\widehat r_j}}
        (w_j\psi_j)^2\,dx+
        &a(\lambda)
        \iint_{Q_{\widehat r_j}^{(\lambda)}}
        |D(w_j\psi_j)|^2\,dxdt
        \\
       & \leq
        C\,4^j
        \frac{a(\lambda)(\lambda-\ell_j)^2}{R^2}
        |A_j|.
    \end{split}
    \end{equation}

    We now set $\chi\coloneqq1+2/N$.
    Applying the parabolic Sobolev inequality
    \cite[I. Proposition 3.1]{DiBenedetto1993}
    (with $p=m=2$) to $w_j\psi_j$, and using
    \eqref{quasi:energy}, we find
    \[
    \begin{split}
        &\iint_{Q_{\widehat r_j}^{(\lambda)}}
        |w_j\psi_j|^{2\chi}\,dxdt
        \\
        &\quad\leq
        C\left(
        \sup_{t\in
        (-\widehat r_j^2/a(\lambda),0)}
        \int_{B_{\widehat r_j}}
        (w_j\psi_j)^2\,dx
        \right)^{2/N}
        \iint_{Q_{\widehat r_j}^{(\lambda)}}
        |D(w_j\psi_j)|^2\,dxdt
        \\
        &\quad\leq
        \frac{C}{a(\lambda)}
        \left(
        4^j\frac{a(\lambda)
        (\lambda-\ell_j)^2}{R^2}
        |A_j|
        \right)^\chi.
    \end{split}
    \]
    On the other hand, by the definition of
    $w_j$ and the choice of $\psi_j$, we have
    \[
        w_j\psi_j\geq
        \ell_{j+1}-\ell_j
        \qquad\text{on }A_{j+1}.
    \]
    Therefore,
    \[
        (\ell_{j+1}-\ell_j)^{2\chi}
        |A_{j+1}|
        \leq
        \frac{C}{a(\lambda)}
        \left(
        4^j\frac{a(\lambda)
        (\lambda-\ell_j)^2}{R^2}
        |A_j|
        \right)^\chi,
    \]
    and since $ \frac{\lambda-\ell_j}
        {\ell_{j+1}-\ell_j}
        \leq2^{j+2}$, we conclude that
\begin{equation}\label{quasi:measure}
        |A_{j+1}|
        \leq
        C_0\,\mathfrak b^j\,
        a(\lambda)^{2/N}R^{-2\chi}
        |A_j|^\chi,
    \end{equation}
    where $C_0>0$ and $\mathfrak b>1$
    depend only on $\data$.

    We now normalize the measures by setting
    \[
        Y_j\coloneqq
        \frac{|A_j|}
        {|Q_{2\delta_0R}^{(\lambda)}|}.
    \]
    Since $\delta_0=\delta_0(\data)$,
    \eqref{quasi:measure} yields
    \begin{equation}\label{quasi:Yrec}
        Y_{j+1}\leq
        C_1\,\mathfrak b^j\,Y_j^{1+2/N},
    \end{equation}
    where $C_1=C_1(\data)>0$.
    Moreover, by \eqref{quasi:initial}
    and \eqref{good:time:interval}, we have
    \[
        Y_0
        \leq\theta_0
        \frac{|Q_*|}
        {|Q_{2\delta_0R}^{(\lambda)}|}
        \leq C(\data)\theta_0.
    \]
    Choosing $\theta_0=\theta_0(\data)>0$
    sufficiently small, Lemma \ref{lem:hyp}
    applied to \eqref{quasi:Yrec} yields
    $Y_j\longrightarrow0$.

    Since $r_j\searrow\delta_0R$ and
    $\ell_j\nearrow(1-2^{-m_1-1})\lambda$,
    we infer
    \[
        \big|
        Q_{\delta_0R}^{(\lambda)}
        \cap
        \{D_kv>(1-2^{-m_1-1})\lambda\}
        \big|=0.
    \]
which implies
    \eqref{part:2alt} with
    \[
        \eta_0=\eta_0(\data)\coloneqq
        1-2^{-m_1-1}\in(0,1).
    \]
This completes the proof.
\end{proof}

We are now in the position to prove the second alternative.

\begin{proof}[Proof of Proposition \ref{prop:fund:alt} (II):
the second alternative]
    Suppose that \eqref{alt:3} holds.
    Applying Lemma \ref{lem:quasi:2alt} to $v$
    and $\widetilde v\coloneqq-v$, which solves an
    equation satisfying the same structural assumptions,
    we obtain
    \[
        -\eta_0\lambda\leq D_kv\leq\eta_0\lambda
        \qquad\text{in }Q_{\delta_0R}^{(\lambda)},
    \]
    for every $k=1,\dots,N$. This proves \eqref{thesis:alt3}, that is our thesis.
\end{proof}

\subsubsection{Proof of Theorem \ref{thm:gradhold}
with \eqref{smooth} in force}\label{f:subsec}

Let us first recall the oscillation decay estimate for
uniformly parabolic equations. We state it in the linear
form needed below.

\begin{lemma}[Uniformly parabolic oscillation decay]
\label{lem:par:osc}
    Let $w$ be a local weak solution to
    \[
        \partial_t w-
        a(\lambda)\operatorname{div}
        \big(\mathbb A(x,t)Dw\big)=0
        \quad\text{in }Q_R^{(\lambda)},
    \]
    where $\lambda>0$ and $\mathbb A$ is a
    measurable matrix satisfying
    \[
        c_*|\xi|^2
        \leq\mathbb A(x,t)\xi\cdot\xi,
        \qquad
        |\mathbb A(x,t)|\leq C_*
    \]
    for a.e. $(x,t)\in Q_R^{(\lambda)}$
    and every $\xi\in\R^N$, with
    $0<c_*\leq C_*<\infty$.

    Then there exist $\beta_g\in(0,1)$
    and $C_g\geq1$, depending only on
    $N,c_*,C_*$, such that
    \begin{equation}\label{par:osc:linear}
        \operatorname*{osc}_{Q_r^{(\lambda)}}w
        \leq C_g
        \left(\frac rR\right)^{\beta_g}
        \operatorname*{osc}_{Q_R^{(\lambda)}}w
    \end{equation}
    for every $0<r\leq R$.
\end{lemma}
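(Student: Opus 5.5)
The plan is to reduce the lemma to the classical oscillation decay for linear uniformly parabolic equations with bounded measurable coefficients, by the same intrinsic rescaling used in the proof of Proposition \ref{prop:temp:grbd}. First translate so that $(x_0,t_0)=(0,0)$. Then introduce
\[
y\coloneqq \frac{x}{R},\qquad \tau\coloneqq \frac{a(\lambda)}{R^2}\,t,\qquad \widetilde w(y,\tau)\coloneqq w\Big(Ry,\frac{R^2}{a(\lambda)}\tau\Big),\qquad \widetilde{\mathbb A}(y,\tau)\coloneqq \mathbb A\Big(Ry,\frac{R^2}{a(\lambda)}\tau\Big).
\]
Checking the weak formulation under this change of variables, the factor $a(\lambda)$ in front of the divergence is exactly compensated by the time rescaling. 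Hence $\widetilde w$ is a local weak solution of $\partial_\tau\widetilde w-\operatorname{div}_y(\widetilde{\mathbb A}D_y\widetilde w)=0$ in $Q_1$. The ellipticity bounds of $\widetilde{\mathbb A}$ are the same constants $c_*,C_*$. Moreover, $Q_r^{(\lambda)}$ corresponds exactly to the standard cylinder $Q_{r/R}$.

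Next I invoke the De Giorgi--Nash--Moser theory for such equations, e.g.\ \cite[Chapter VI]{Lieberman1996} or \cite{DiBenedettoGianazzaVespri2012}. This gives local boundedness of $\widetilde w$ and a reduction of oscillation: there exist $\gamma\in(0,1)$ and a fixed ratio $\varsigma\in(0,1)$, depending only on $N,c_*,C_*$, such that for every cylinder $Q_\rho(z)$ inside the domain,
\[
\operatorname*{osc}_{Q_{\varsigma\rho}(z)}\widetilde w\leq \gamma \operatorname*{osc}_{Q_\rho(z)}\widetilde w .
\]
I will apply this with cylinders centered at the top point $(0,0)$, which is allowed since the cylinders are backward and $Q_\rho\subset Q_1$ for $\rho\le1$. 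Iterating gives $\operatorname{osc}_{Q_{\varsigma^j}}\widetilde w\le\gamma^j\operatorname{osc}_{Q_1}\widetilde w$.

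For an arbitrary $s\in(0,1]$, choose $j$ with $\varsigma^{j+1}<s\le\varsigma^j$ and use the monotonicity of the oscillation under inclusion. This yields $\operatorname{osc}_{Q_s}\widetilde w\le \gamma^{-1}s^{\beta_g}\operatorname{osc}_{Q_1}\widetilde w$ with $\beta_g=\log\gamma/\log\varsigma$ and $C_g=\gamma^{-1}$. Scaling back with $s=r/R$ gives \eqref{par:osc:linear}.

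The only delicate point is the oscillation of $w$ on the full cylinder $Q_R^{(\lambda)}$, which may be infinite. In that case the estimate is trivially true. In the case of interest, the applications to $w=D_kv$ under \eqref{smooth}, $w$ is bounded, so there is nothing further to check. The real content is the classical reduction of oscillation, which I take as known rather than reprove.
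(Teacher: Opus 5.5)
Your proposal is correct and follows the same route as the paper. The paper simply invokes the De Giorgi--Nash--Moser oscillation decay for uniformly parabolic equations and notes that rescaling time removes the factor $a(\lambda)$, so the constants depend only on $N,c_*,C_*$; your explicit change of variables and the iteration from the one-step reduction of oscillation to power decay (decreasing $\beta_g$ if necessary so that $\beta_g\in(0,1)$) just spell out these standard details.
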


These estimates are standard consequences of the
De Giorgi--Nash--Moser theory for uniformly
parabolic equations; see, e.g.,
\cite[Chapter 12, Theorems 2.1 and 3.1]
{DiBenedettoGianazza2023}.
Notice that the constants are independent of
$a(\lambda)$, as follows by rescaling the
time variable.

\begin{proof}[Proof of Theorem
\ref{thm:gradhold} under \eqref{smooth}]

\noindent\textit{(i) Intrinsic oscillation estimate.}
Let $Q_R^{(\lambda)}\Subset\Omega_T$ be such that \eqref{intr:hup0} holds.
We prove \eqref{osc:intr} by combining
Proposition \ref{prop:fund:alt} with
Lemma \ref{lem:par:osc}.

So let $\eta_0,\delta_0\in(0,1)$ be the constants
given by Proposition \ref{prop:fund:alt}.
We set
\[
    K=K(\data)\coloneqq\max\{1,\eta_0^{-s_a/2}\},
\]
and we fix $\theta=\theta(\data)\in(0,1)$
sufficiently small so that
\begin{equation}\label{hold:theta}
    \theta K\leq
    \min\left\{\delta_0,\frac14\right\},
    \qquad
    C_g(2\theta K)^{\beta_g}\leq\eta_0,
    \qquad
    \theta<\eta_0,
\end{equation}
where $C_g,\beta_g$ are the constants from
Lemma \ref{lem:par:osc}, with ellipticity
constants $c_*,C_*$ depending only on $\data$,
as specified in \eqref{determined} below.

For $j=0,1,\dots$, define
\[
    R_j\coloneqq\theta^jR,
    \qquad
    \lambda_j\coloneqq\eta_0^j\lambda.
\]
Observe that, by \eqref{ciu:a}, $\frac{a(\lambda_j)}
    {a(\lambda_{j+1})}
    \leq\eta_0^{-s_a}\leq K^2$. Consequently, our choice of $\theta$ implies
\begin{equation}\label{hold:nesting}
    Q_{R_{j+1}}^{(\lambda_{j+1})}
    \subset
    Q_{\delta_0R_j}^{(\lambda_j)}
    \subset Q_{R_j}^{(\lambda_j)}
\end{equation}
for every $j\geq0$. We claim that
\begin{equation}\label{hold:discrete}
    \operatorname*{osc}_{
    Q_{R_j}^{(\lambda_j)}}Dv
    \leq C\lambda_j,
    \qquad j=0,1,\dots,
\end{equation}
where $C=C(\data)>0$.

Suppose first that the second alternative of
Proposition \ref{prop:fund:alt} holds at every
step. By \eqref{thesis:alt3} and
\eqref{hold:nesting}, we obtain inductively
\[
    \max_{k=1,\dots,N}
    \sup_{Q_{R_j}^{(\lambda_j)}}|D_kv|
    \leq\lambda_j.
\]
Therefore,
\[
    \operatorname*{osc}_{
    Q_{R_j}^{(\lambda_j)}}Dv
    \leq2\sqrt N\,\lambda_j,
\]
which proves \eqref{hold:discrete} in this case.

Suppose instead that the first alternative
holds at some step, and let $j_0\geq0$
be the first such integer.
Until this step, the second alternative
and \eqref{hold:nesting} give
\[
    \max_{k=1,\dots,N}
    \sup_{Q_{R_j}^{(\lambda_j)}}|D_kv|
    \leq\lambda_j,
    \qquad j=0,\dots,j_0.
\]
By the first alternative at step $j_0$
and \eqref{thalt1:modDu}, we obtain
\[
    \frac{\lambda_{j_0}}4
    \leq |Dv|
    \leq\sqrt N\,\lambda_{j_0}
    \qquad\text{in }
    Q_{R_{j_0}/2}^{(\lambda_{j_0})}.
\]
Hence, by \eqref{ultra:utile} and \eqref{ass:Ahom},
\begin{equation}\label{determined}
    c_*a(\lambda_{j_0})|\xi|^2
    \leq
    \nabla_\xi\A_0(t,Dv)\xi\cdot\xi,
    \qquad
    |\nabla_\xi\A_0(t,Dv)|
    \leq C_* a(\lambda_{j_0})
\end{equation}
in $Q_{R_{j_0}/2}^{(\lambda_{j_0})}$,
where $c_*,C_*>0$ depend only on $\data$.

Differentiating \eqref{eq:hom} with
respect to $x_k$, we see that
$w_k\coloneqq D_kv$ solves
\[
    \partial_t w_k-
    \operatorname{div}\big(
    \nabla_\xi\A_0(t,Dv)Dw_k\big)=0.
\]
Thus $w_k$ satisfies a uniformly parabolic
linear equation in
$Q_{R_{j_0}/2}^{(\lambda_{j_0})}$,
and Lemma \ref{lem:par:osc} applies.

For every $j>j_0$, by
\eqref{ciu:a} we have $ \frac{a(\lambda_{j_0})}{a(\lambda_j)}
    \leq \eta_0^{-s_a(j-j_0)}\leq  K^{2(j-j_0)}$,
and since $\theta K\leq1/4$, we have 
\[
    Q_{R_j}^{(\lambda_j)}
    \subset
    Q_{(\theta K)^{j-j_0}R_{j_0}}
    ^{(\lambda_{j_0})}\subset
    Q_{R_{j_0}/2}^{(\lambda_{j_0})}.
\]
Applying
\eqref{par:osc:linear} to every component $w_k=D_k v$ 
of $Dv$ yields
\[
\begin{split}
  &\operatorname*{osc}_{Q_{R_j}^{(\lambda_j)}}Dv\leq
    \operatorname*{osc}_{
    Q_{(\theta K)^{j-j_0}R_{j_0}}^{(\lambda_{j_0})}}
    Dv\leq
    C_g\left(
    \frac{(\theta K)^{j-j_0}R_{j_0}}
    {R_{j_0}/2}
    \right)^{\beta_g}
    \operatorname*{osc}_{
    Q_{R_{j_0}/2}^{(\lambda_{j_0})}}Dv
    \\
    &\quad=
    C_g\big(2(\theta K)^{j-j_0}\big)^{\beta_g}
    \operatorname*{osc}_{
    Q_{R_{j_0}/2}^{(\lambda_{j_0})}}Dv\leq
    C(\data)
    (\theta K)^{(j-j_0)\beta_g}
    \lambda_{j_0}
    \\
    &\quad\leq C(\data)\eta_0^{j-j_0}\lambda_{j_0}
    =C(\data)\lambda_j,
\end{split}
\]
where in the last inequality we used
\eqref{hold:theta}. This proves
\eqref{hold:discrete} also in the second case.

We now pass from the discrete estimate
\eqref{hold:discrete} to arbitrary radii.
Set
\[
    \vartheta=\vartheta(\data)\coloneqq
    \theta\,\eta_0^{(-i_a)/2}\in(0,1).
\]
By \eqref{ciu:a}, we have $\frac{a(\lambda_j)}{a(\lambda)}
    \leq \eta_0^{ji_a}
    =\eta_0^{-j(-i_a)}$,
and consequently
\begin{equation}\label{hold:fixed}
    Q_{\vartheta^jR}^{(\lambda)}
    \subset Q_{R_j}^{(\lambda_j)}
    \qquad\text{for every }j\geq0.
\end{equation}
Given $0<r\leq R$, choose $j\geq0$
such that
\[
    \vartheta^{j+1}R<r\leq\vartheta^jR.
\]
Then, using \eqref{hold:fixed}
and \eqref{hold:discrete}, we obtain
\[
\begin{split}
    \operatorname*{osc}_{Q_r^{(\lambda)}}Dv
    &\leq
    \operatorname*{osc}_{
    Q_{R_j}^{(\lambda_j)}}Dv
    \leq C\eta_0^j\lambda\leq
    C\lambda\left(\frac rR\right)^\beta,
\end{split}
\]
where, recalling $\vartheta<\eta_0<1$, we set
\[
    \beta=\beta(\data)\coloneqq
    \frac{\log\eta_0}{\log\vartheta}\in(0,1).
\]
This proves \eqref{osc:intr}.

\medskip
\noindent\textit{(ii) Oscillation estimate on
standard parabolic cylinders.}
Let $Q_R=Q_R(x_0,t_0)\Subset\Omega_T$,
and set
\[
    M_0\coloneqq
    \|Dv\|_{L^\infty(Q_R)}.
\]
If $M_0=0$, the conclusion is immediate.
We may therefore assume $M_0>0$.

We distinguish two cases, according to
the size of $a(M_0)$.

If $a(M_0)\geq1$, then
\[
    Q_R^{(M_0)}\subset Q_R.
\]
For every $0<r\leq R/a(M_0)^{1/2}$,
we also have
\[
    Q_r\subset
    Q_{r\,a(M_0)^{1/2}}^{(M_0)}.
\]
Applying \eqref{osc:intr} with
$\lambda=M_0$ gives
\[
    \operatorname*{osc}_{Q_r}Dv
\leq
    \operatorname*{osc}_{
    Q_{r\,a(M_0)^{1/2}}^{(M_0)}}Dv\leq
    CM_0\left(
    \frac{r\,a(M_0)^{1/2}}{R}
    \right)^\beta.
\]
For $R/a(M_0)^{1/2}<r<R$, the same
estimate follows, after increasing $C$,
from the trivial bound
$\operatorname*{osc}_{Q_r}Dv\leq2M_0$.

Suppose now that $a(M_0)<1$, and set
\[
    \rho\coloneqq R\,a(M_0)^{1/2}.
\]
Then $Q_\rho^{(M_0)}\subset Q_R$,
and, for every $0<r\leq\rho$,
\[
    Q_r\subset Q_r^{(M_0)}.
\]
Therefore, again by \eqref{osc:intr},
\[
\begin{split}
    \operatorname*{osc}_{Q_r}Dv
    \leq
    \operatorname*{osc}_{Q_r^{(M_0)}}Dv\leq
    CM_0\left(\frac r\rho\right)^\beta
    =
    CM_0\left(
    \frac{r}{R\,a(M_0)^{1/2}}
    \right)^\beta.
\end{split}
\]
For $\rho<r<R$, the same bound follows
from $\operatorname*{osc}_{Q_r}Dv\leq2M_0$. Recalling that
\[
    a^\star(M_0)
    =\max\big\{
    a(M_0)^{1/2},a(M_0)^{-1/2}
    \big\},
\]
we conclude in both cases that
\[
    \operatorname*{osc}_{Q_r}Dv
    \leq
    CM_0\left(
    \frac rR\,a^\star(M_0)
    \right)^\beta,
    \qquad 0<r<R,
\]
which is precisely \eqref{osc:extr}. The proof is complete.
\end{proof}

\section{Proof of Theorem
\ref{main:thm}}\label{sec:perturb}

In this section, we complete the proof of Theorem \ref{main:thm}. In order to justify the computations, we will work under
the additional assumption
\begin{equation}\tag{S1}\label{pert:smooth}
\begin{split}
    &\text{the functions }u,\ \A,\ f\text{ are smooth, and}
    \\
    &c\,|\eta|^2\leq \nabla_\xi\A(x,t,\xi)\eta\cdot\eta,\qquad |\nabla_\xi\A(x,t,\xi)|\leq C,
    \end{split}
\end{equation}
for some constants $0<c\leq C$, and all $(x,t,\xi)$ and all $\eta\in \R^N$.
Again, assumption \eqref{pert:smooth} will be removed by means of a suitable approximation procedure in Section \ref{sec:approximation}.

\begin{lemma}[Oscillation lemma]\label{lem:pert:gluing}
    Let $u$ be a local weak solution to \eqref{eq:1} under \eqref{iasa}-\eqref{ass:f} and \eqref{pert:smooth}.  Suppose that
    $Q_{R,S}=Q_{R,S}(x_0,t_0)\Subset\Omega_T$ and
    $\|Du\|_{L^\infty(Q_{R,S})}\leq\lambda$, with $\lambda>0$.
    Then
    \begin{equation}\label{pert:gluing-general}
    \begin{split}
        \operatorname*{osc}_{Q_{R,S}}u
        \leq C\bigg[R\lambda+
        \frac{S}{R}\Big\{b(\lambda)
        +\Lambda_{\rm h}R^\alpha(1+b(\lambda))\Big\}+S^{1-1/d}R^{-N/d}\|f\|_{L^d(Q_{R,S})}\bigg],
    \end{split}
    \end{equation}
    where $C=C(\data,d)>0$.
    In particular, for $S=R^2/a(\lambda)$,
    \begin{equation}\label{pert:gluing-intrinsic}
        \operatorname*{osc}_{Q_R^{(\lambda)}}u
        \leq C\lambda R\big(1+\mathcal P_\lambda(R)\big),
    \end{equation}
    where 
    \begin{equation}\label{pert:size}
        \mathcal P_\lambda(R)\coloneqq
        \Lambda_{\rm h}R^\alpha\frac{1+b(\lambda)}{b(\lambda)}
        +R^{1-\frac{N+2}{d}}
        \frac{a(\lambda)^{1/d}}{b(\lambda)}
        \|f\|_{L^d(Q_R^{(\lambda)})}.
    \end{equation}
\end{lemma}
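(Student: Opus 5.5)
We follow the proof of Lemma \ref{lem:osc:v}, now keeping track of the spatial dependence of $\A$ and of the source term. By translation, take $(x_0,t_0)=(0,0)$. Let $\eta_R\in C^\infty_c(B_R)$ be nonnegative with $\int\eta_R=1$, $\|\eta_R\|_\infty\le CR^{-N}$, $\|D\eta_R\|_{L^1}\le C/R$, and set $m(t)\coloneqq\int_{B_R}u(y,t)\eta_R(y)\,dy$. Since $|Du|\le\lambda$ on $Q_{R,S}$, we get $|u(x,t)-m(t)|\le 2\lambda R$ for all $(x,t)\in Q_{R,S}$, exactly as in Lemma \ref{lem:osc:v}.

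Under \eqref{pert:smooth} we may differentiate $m$ and use the equation:
\[
m'(t)=-\int_{B_R}\A(x,t,Du)\cdot D\eta_R\,dx+\int_{B_R}f\,\eta_R\,dx .
\]
Since the equation is unchanged by subtracting $\A(0,t,0)$, we normalize $\A(0,t,0)=0$. We then split
\[
\A(x,t,Du)=\big[\A(x,t,Du)-\A(0,t,Du)\big]+\A(0,t,Du).
\]
By \eqref{A:hold}, the first bracket is bounded by $\Lambda_{\rm h}R^\alpha(1+b(\lambda))$. By Lemma \ref{lemma:Agrco} and the monotonicity of $b$, the second term is bounded by $C_1 b(\lambda)$. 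Integrating against $|D\eta_R|$ then gives
\[
\Big|\int_{B_R}\A(x,t,Du)\cdot D\eta_R\,dx\Big|\le \frac{C}{R}\Big\{b(\lambda)+\Lambda_{\rm h}R^\alpha(1+b(\lambda))\Big\}.
\]
For the source term we do not bound pointwise in time. Instead, we integrate $|m(t)-m(s)|\le\int_s^t|m'|$ and estimate the $f$-contribution globally:
\[
\int_{-S}^0\!\!\int_{B_R}|f|\eta_R\,dx\,dt\le CR^{-N}\|f\|_{L^1(Q_{R,S})}\le CR^{-N}|Q_{R,S}|^{1-1/d}\|f\|_{L^d(Q_{R,S})}=C\,S^{1-1/d}R^{-N/d}\|f\|_{L^d(Q_{R,S})}.
\]
Combining the two bounds,
\[
|m(t)-m(s)|\le C\,\frac SR\{b(\lambda)+\Lambda_{\rm h}R^\alpha(1+b(\lambda))\}+C\,S^{1-1/d}R^{-N/d}\|f\|_{L^d(Q_{R,S})}.
\]
The triangle inequality $|u(x,t)-u(y,s)|\le|u(x,t)-m(t)|+|m(t)-m(s)|+|m(s)-u(y,s)|$ then yields \eqref{pert:gluing-general}.

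For \eqref{pert:gluing-intrinsic}, insert $S=R^2/a(\lambda)$ and use $b(\lambda)=\lambda a(\lambda)$. This gives $\frac SR b(\lambda)=\lambda R$ and
\[
\frac SR\Lambda_{\rm h}R^\alpha(1+b(\lambda))=\lambda R\,\Lambda_{\rm h}R^\alpha\frac{1+b(\lambda)}{b(\lambda)}.
\]
The $f$-term becomes
\[
R^{2-2/d-N/d}a(\lambda)^{-1+1/d}\|f\|_{L^d}=\lambda R\cdot R^{1-\frac{N+2}{d}}\frac{a(\lambda)^{1/d}}{\lambda a(\lambda)}\|f\|_{L^d},
\]
which matches $\lambda R$ times the second summand of $\mathcal P_\lambda(R)$ in \eqref{pert:size}. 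The constant depends only on $\data$ and $d$; $\Lambda_{\rm h}$ appears explicitly.

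The main step needing care is this handling of $f$: it must be estimated through $\int|m'|\,dt$ and Hölder's inequality on the whole cylinder, since a bound on $m'(t)$ for each fixed $t$ would require $f(\cdot,t)\in L^d$ slicewise. Everything else is routine.
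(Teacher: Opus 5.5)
Your proposal is correct and follows the paper's proof essentially step for step. It uses the same weighted mean $m(t)$, the bound $|u-m|\le 2\lambda R$, control of $m'$ through \eqref{A:hold} and Lemma \ref{lemma:Agrco}, time integration with H\"older's inequality for the source term, and the substitution $S=R^2/a(\lambda)$. The only cosmetic difference is that you normalize $\A(x_0,t,0)=0$, whereas the paper subtracts $\A(x_0,t,0)$ directly and uses $\int_{B_R(x_0)}\A(x_0,t,0)\cdot D\eta\,dx=0$.
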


\begin{proof}
   We argue as in the proof of Lemma \ref{lem:osc:v}.  Choose a nonnegative $\eta\in C_c^\infty(B_R(x_0))$ such that
    \[
        \int_{B_R(x_0)}\eta\,dx=1,\qquad
        \|\eta\|_\infty\leq CR^{-N},\qquad
        \|D\eta\|_{L^1}\leq C/R,
    \]
    and set $m(t)\coloneqq\int_{B_R(x_0)}u(x,t)\eta(x)\,dx$.
    Since $|Du|\leq\lambda$, we have
    \begin{equation}\label{pert:spatial-gluing}
        |u(x,t)-m(t)|
        \leq\int_{B_R(x_0)}|u(x,t)-u(y,t)|\eta(y)\,dy
        \leq2\lambda R.
    \end{equation}
    On the other hand, the weak formulation of \eqref{eq:1} gives
    \[
        m'(t)=-\int_{B_R(x_0)}
        \big(\A(x,t,Du)-\A(x_0,t,0)\big)\cdot D\eta\,dx
        +\int_{B_R(x_0)}f\eta\,dx.
    \]
    where we also used that $\int_{B_R(x_0)}\A(x_0,t,0)\cdot D\eta\,dx=0$.
    By \eqref{A:hold}, \eqref{co:gr}, and the monotonicity of $b(t)$,
    \[
        |\A(x,t,Du)-\A(x_0,t,0)|
        \leq Cb(\lambda)+\Lambda_{\rm h}R^\alpha(1+b(\lambda)).
    \]
    Integrating over $t_0-S\leq s<t\leq t_0$, and applying
    H\"older's inequality to the source term, we obtain
    \[
        |m(t)-m(s)|\leq
        C\frac{S}{R}\Big\{b(\lambda)
        +\Lambda_{\rm h}R^\alpha(1+b(\lambda))\Big\}+CS^{1-1/d}R^{-N/d}\|f\|_{L^d(Q_{R,S})}.
    \]
    Combining this estimate with \eqref{pert:spatial-gluing}
    proves \eqref{pert:gluing-general}.
    Finally, if $S=R^2/a(\lambda)$, then
    \[
        \frac{S}{R}b(\lambda)=\lambda R,
        \qquad
        \frac{S^{1-1/d}R^{-N/d}}{\lambda R}
        =R^{1-\frac{N+2}{d}}\frac{a(\lambda)^{1/d}}{b(\lambda)},
    \]
    which gives \eqref{pert:gluing-intrinsic}.
\end{proof}

\subsection{Freezing the coefficients and comparison}
\label{subsec:pert:comparison}
In the following section, we consider a cylinder $Q^{(\l)}_R=Q_R^{(\lambda)}(x_0,t_0)$ as in \eqref{def:cilintr} such that
    \[Q_R^{(\lambda)}\Subset\Omega_T,
        \qquad\lambda\geq1,
        \qquad\|Du\|_{L^\infty(Q_R^{(\lambda)})}\leq\lambda.
    \]

    Let $v$ be the weak solution to
    \begin{equation}\label{pert:frozen}
        \begin{cases}
            \partial_t v-\mathrm{div}\big(\A(x_0,t,Dv)\big)=0
            &\text{in }Q_R^{(\lambda)},\\
            v=u&\text{on }\partial_{\rm par}Q_R^{(\lambda)}.
        \end{cases}
    \end{equation}

We establish the following

\begin{lemma}[Comparison lemma]\label{lem:pert:comparison}
 Assume that \eqref{iasa}-\eqref{ass:f} and \eqref{pert:smooth} hold.
    There exists a unique weak solution $v$ to \eqref{pert:frozen} such that $v\in C^\infty(Q^{(\l)}_R)$.
    
    Moreover, there exists $c_0=c_0(\data)>0$ such that, if

    \begin{equation}\label{lh:f}
    \Lambda_{\rm h}R^\alpha\leq c_0\quad\text{and}\quad    R^{1-\frac{N+2}{d}}
        \frac{a(\lambda)^{1/d}}{b(\lambda)}
        \|f\|_{L^d(Q_R^{(\lambda)})}\leq1,
    \end{equation}
    then
    \begin{equation}\label{pert:frozen-gradient}
        \|Dv\|_{L^\infty(Q_{R/2}^{(\lambda)})}\leq C\lambda,
    \end{equation}
        \begin{equation}\label{pert:frozen-oscillation}
        \operatorname*{osc}_{Q_r^{(\lambda)}}Dv
        \leq C\lambda\left(\frac rR\right)^\beta,
        \qquad0<r\leq R/2,
    \end{equation}
    and
    \begin{equation}\label{pert:L2-comparison}
         \miint_{Q_{R/2}^{(\lambda)}}
    |Du-Dv|^2\,dxdt
    \leq C\lambda^2\mathcal P_\lambda(R)^{2/s_B}.
    \end{equation}
    Here $C$ depends only on $\data$ and $d$.
\end{lemma}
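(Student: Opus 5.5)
Existence, uniqueness and smoothness of $v$ come first. Under \eqref{pert:smooth} the frozen field $\xi\mapsto\A(x_0,t,\xi)$ is uniformly monotone with bounded Jacobian, and it is smooth. Standard monotone operator theory (Galerkin or Lions' theorem in $L^2(I;W^{1,2})$) produces a unique weak solution with boundary datum $u$. Classical quasilinear theory for smooth uniformly parabolic equations with smooth data (Ladyzhenskaya--Solonnikov--Ural'tseva) then gives $v\in C^\infty(Q_R^{(\lambda)})$. This puts $v$ in the setting \eqref{smooth} of Section \ref{sec:homogeneous} with $\A_0(t,\xi)=\A(x_0,t,\xi)$, which satisfies \eqref{ass:Ahom}. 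We may subtract $\A(x_0,t,0)$ and assume it is zero.

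Next come the gradient bounds for $v$. Lemma \ref{lem:pert:gluing} together with \eqref{lh:f} gives $\operatorname{osc}_{Q_R^{(\lambda)}}u\le C\lambda R(1+\mathcal P_\lambda(R))$. Since $\lambda\ge1$ we have $(1+b(\lambda))/b(\lambda)\le 2/b(1)$, so $\mathcal P_\lambda(R)\le C$. The comparison principle for the frozen problem yields $\operatorname{osc}_{Q_R^{(\lambda)}}v\le\operatorname{osc}_{\partial_{\rm par}Q_R^{(\lambda)}}u\le C\lambda R$. Apply Theorem \ref{thm:grbddhom}(ii) on $Q_{R,S}$ with $S=R^2/a(\lambda)$, on slightly smaller concentric cylinders so that they are compactly contained. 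Since $B^{-1}(C\lambda^2a(\lambda))\le C\lambda$ by \eqref{Bcomeb}, we get $\|Dv\|_{L^\infty(Q_{R/2}^{(\lambda)})}\le C\lambda$. A covering argument takes care of exact radii, and this is \eqref{pert:frozen-gradient}. For \eqref{pert:frozen-oscillation}, apply Theorem \ref{thm:gradhold}(i) with $\lambda'=C\lambda$ on $Q^{(\lambda')}_{R'}$, with $R'$ chosen so that this cylinder lies in $Q_{R/2}^{(\lambda)}$. Because $a(C\lambda)\simeq a(\lambda)$ by \eqref{ultra:utile}, $R'$ is a fixed fraction of $R$. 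Intrinsic cylinders for $\lambda$ and $\lambda'$ are comparable up to radius factors, so rescaling radii gives the estimate for $0<r\le R/2$.

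The main step is the comparison \eqref{pert:L2-comparison}. Test the difference of the equations with $(u-v)\zeta$, where $\zeta$ is a time cutoff, or simply with $u-v$, which vanishes on $\partial_{\rm par}Q_R^{(\lambda)}$; Steklov averages are unnecessary by smoothness. This yields
\[\sup_t\int(u-v)^2+\iint\big(\A(x_0,t,Du)-\A(x_0,t,Dv)\big)\cdot D(u-v)\le\iint|\A(x,t,Du)-\A(x_0,t,Du)||D(u-v)|+\iint|f||u-v|.\]
By \eqref{strong:coer} the left side controls $\iint a(|Du|+|Dv|)|Du-Dv|^2$. Estimate the first term on the right with \eqref{A:hold}, $|Du|\le\lambda$ and Young's inequality. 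The second term uses Hölder's inequality, the parabolic Sobolev embedding for $u-v$ and the $\sup_t L^2$ term. Together these give $\miint B(|Du-Dv|)$-type control of order $b(\lambda)\lambda\,\mathcal P_\lambda(R)$. The obstacle is turning this into an $L^2$ bound with the power $2/s_B$ without assuming a lower bound on $a(|Du|+|Dv|)$. I would use \eqref{el:comparison} to bound $\miint B(|Du-Dv|)\le C\delta B(\lambda)+C\delta^{-1}\miint a(|Du|+|Dv|)|Du-Dv|^2$, where the integrals of $B(|Dv|)$ are controlled by the energy inequality for $v$. Optimizing $\delta\sim\mathcal P_\lambda^{1/2}$ gives $\miint B(|Du-Dv|)\le CB(\lambda)\mathcal P_\lambda^{c}$. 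On $Q_{R/2}^{(\lambda)}$ both gradients are bounded by $C\lambda$, so $|Du-Dv|\le C\lambda$ there. Then \eqref{mon:iBsB} gives $B(s)\ge cB(\lambda)(s/\lambda)^{s_B}$ for $s\le C\lambda$. Combined with Jensen's inequality this yields $\miint|Du-Dv|^2\le\lambda^2\big(\miint(|Du-Dv|/\lambda)^{s_B}\big)^{2/s_B}\le C\lambda^2\mathcal P_\lambda^{2/s_B}$, after tuning exponents and using $\mathcal P_\lambda\le C$.
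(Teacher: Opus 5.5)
Your treatment of existence, of \eqref{pert:frozen-gradient} and of \eqref{pert:frozen-oscillation} is fine. The paper rescales $V=v/C_0$ rather than passing to $\lambda'=C\lambda$, which is equivalent.

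\textbf{The gap: the exponent $2/s_B$ is lost in the final step.} Your energy argument gives at best
\[
\miint_{Q_R^{(\lambda)}}a(|Du|+|Dv|)|Du-Dv|^2\,dxdt\le CB(\lambda)\mathcal P_\lambda(R).
\]
You then pass through \eqref{el:comparison} with $\delta\sim\mathcal P_\lambda^{1/2}$. This gives only $\miint B(|Du-Dv|)\le CB(\lambda)\mathcal P_\lambda^{1/2}$, hence $\miint(|Du-Dv|/\lambda)^{s_B}\le C\mathcal P_\lambda^{1/2}$, hence $\miint|Du-Dv|^2\le C\lambda^2\mathcal P_\lambda^{1/s_B}$. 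No tuning of exponents recovers the lost square root. Since $\mathcal P_\lambda$ is small, the bound $\mathcal P_\lambda\le C$ only lets you trade a power for a smaller one, never a larger one. A weaker exponent would still feed into Step 1 of the main proof, with a different $\beta_*$, but it is not the lemma as stated.

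\textbf{How to fix it.} The detour is unnecessary. The degeneracy of $a$ you were worried about is compensated exactly by the power $s_B$. On $Q^{(\lambda)}_{R/2}$ you have $|Du-Dv|\le|Du|+|Dv|\le C\lambda$. Hence \eqref{ciu:a} (with $s_a>0$) and \eqref{Bcomeb} give, pointwise,
\[
a(|Du|+|Dv|)\,|Du-Dv|^2\geq c\,a(\lambda)\Big(\frac{|Du-Dv|}{\lambda}\Big)^{s_a}|Du-Dv|^2\geq c\,B(\lambda)\Big(\frac{|Du-Dv|}{\lambda}\Big)^{s_B}.
\]
Integrating over $Q^{(\lambda)}_{R/2}$ gives $\miint(|Du-Dv|/\lambda)^{s_B}\le C\mathcal P_\lambda(R)$. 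H\"older's inequality ($s_B\ge2$) then gives \eqref{pert:L2-comparison}. This is the paper's argument.

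\textbf{The right-hand terms must also be linear in $\mathcal P_\lambda(R)$.} For the same reason, every error term must be bounded linearly in $\mathcal P_\lambda(R)$, and your sketch does not yet guarantee this.
\begin{itemize}
\item \emph{Source term.} Your parabolic Sobolev route is not spelled out. Your only gradient control on $u-v$ is the weighted quantity above, so any sublinear Young step would again degrade the exponent. You already have a better tool. Since $v$ lies between the parabolic boundary values of $u$, Lemma \ref{lem:pert:gluing} gives $|u-v|\le\operatorname*{osc}_{Q^{(\lambda)}_R}u\le C\lambda R$. H\"older then gives directly
\[
\miint|f||u-v|\le C\lambda R\,|Q^{(\lambda)}_R|^{-1/d}\|f\|_{L^d}\le CB(\lambda)\mathcal P_\lambda(R).
\]
\item \emph{Coefficient term.} An ordinary quadratic Young inequality would produce the uncontrolled factor $1/a(|Du|+|Dv|)$. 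Instead, use \eqref{Young} and \eqref{wBbt} to get $b(\lambda)|Du-Dv|\le B(|Du-Dv|)+\widetilde B(b(\lambda))$. Then apply the pointwise bound
\[
B(|Du-Dv|)+B(|Dv|)\le CB(\lambda)+Ca(|Du|+|Dv|)|Du-Dv|^2,
\]
obtained from \eqref{el:comparison} with a fixed small $\delta$ that absorbs $B(|Dv|)$. The term is then absorbed into the left-hand side once $c_0$ is small.
\end{itemize}
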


\begin{proof}
    Existence, uniqueness and smoothness of $v$ follow from the monotone evolution
    theory and schauder theory for parabolic equations--see Section \ref{subsec:pert:approximation}.
    By the parabolic maximum principle, $v$ takes values between the
    infimum and the supremum of $u$ on the parabolic boundary.
    Thus, by Lemma \ref{lem:pert:gluing}, \eqref{pert:uniform-ratios}
    and \eqref{lh:f},
    \begin{equation}\label{pert:frozen-values}
        \operatorname*{osc}_{Q_R^{(\lambda)}}v
        \leq\operatorname*{osc}_{Q_R^{(\lambda)}}u
        \leq C\lambda R,
        \qquad |u-v|\leq C\lambda R.
    \end{equation}
    Applying \eqref{gr:bd2} with $S=R^2/a(\lambda)$ gives
    \[
    \begin{split}
        \|Dv\|_{L^\infty(Q_{R/2}^{(\lambda)})}
        &\leq C\left[
        \frac{\operatorname*{osc}_{Q_R^{(\lambda)}}v}{R}
        +B^{-1}\left(
        \frac{a(\lambda)}{R^2}
        (\operatorname*{osc}_{Q_R^{(\lambda)}}v)^2\right)\right]\\
        &\leq C\left[\lambda+B^{-1}\big(Ca(\lambda)\lambda^2\big)\right]
        \leq C\lambda,
    \end{split}
    \]
    where the last inequality follows from \eqref{Bcomeb} and
    \eqref{B2t}. This proves \eqref{pert:frozen-gradient}.

    We next subtract Equations \eqref{eq:1} and \eqref{pert:frozen}, and test with $u-v$.
    Since $u-v$ vanishes on the parabolic boundary, dropping the
    nonnegative term  $\frac12\int_{B_R(x_0)}
|u(x,t_0)-v(x,t_0)|^2\,dx$ gives
    \begin{equation}\label{pert:comparison-identity}
    \begin{split}
        &\iint_{Q_R^{(\lambda)}}
        \big(\A(x_0,t,Du)-\A(x_0,t,Dv)\big)\cdot(Du-Dv)\,dxdt\\
        &\quad\leq
        \iint_{Q_R^{(\lambda)}}
        \big(\A(x_0,t,Du)-\A(x,t,Du)\big)\cdot(Du-Dv)\,dxdt
        +\iint_{Q_R^{(\lambda)}} f(u-v)\,dxdt.
    \end{split}
    \end{equation}
By \eqref{strong:coer}, we have
\begin{equation}\label{luc}
\begin{split}
    \iint_{Q_R^{(\lambda)}}&
        \big(\A(x_0,t,Du)-\A(x_0,t,Dv)\big)\cdot(Du-Dv)\,dxdt
        \\
        &\geq c\,\iint_{Q_R^{(\lambda)}}a(|Du|+|Dv|)|Du-Dv|^2\,dxdt. 
        \end{split}
\end{equation}
Since $\lambda\geq 1$, by \eqref{B=1} and the monotonicity of $B$ we have $B(\lambda)\geq c$. Thus, we may estimate
\[
|Du-Dv|\leq C(B(\lambda)+B(|Du-Dv)),
\]
whereas by choosing the parameter $\delta$ in \eqref{el:comparison} sufficiently
    small to absorb $B(|Dv|)$, and since $B(|Du|)\leq B(\lambda)$ we have
    \begin{equation}\label{pert:algebraic-comparison}
        B(|Du-Dv|)+B(|Dv|)
        \leq CB(\lambda)
        +Ca(|Du|+|Dv|)|Du-Dv|^2.
    \end{equation}
    Then Young's inequality \eqref{young1} and \eqref{wBbt} imply
    \[
    \begin{split}
        b(\lambda)|Du-Dv|
        &\leq B(|Du-Dv|)+\widetilde B(b(\lambda))\\
        &\leq CB(\lambda)
        +Ca(|Du|+|Dv|)|Du-Dv|^2.
    \end{split}
    \]
    Therefore, by \eqref{A:hold} and the estimates above, we infer
    \begin{equation}\label{pert:coefficient-error}
    \begin{split}
        &\big|\A(x_0,t,Du)-\A(x,t,Du)\big|\,|Du-Dv|\\
        &\quad\leq
        C\Lambda_{\rm h}R^\alpha
        \frac{1+b(\lambda)}{b(\lambda)}
        \big[B(\lambda)+a(|Du|+|Dv|)|Du-Dv|^2\big].
    \end{split}
    \end{equation}
    Moreover, by \eqref{pert:frozen-values} and H\"older's inequality,
    \begin{equation}\label{pert:source-error}
    \begin{split}
        \miint_{Q_R^{(\lambda)}}|f(u-v)|\,dxdt
        &\leq C\lambda R\,|Q_R^{(\lambda)}|^{-1/d}
        \|f\|_{L^d(Q_R^{(\lambda)})}\\
        &\leq CB(\lambda)R^{1-\frac{N+2}{d}}
        \frac{a(\lambda)^{1/d}}{b(\lambda)}
        \|f\|_{L^d(Q_R^{(\lambda)})}.
    \end{split}
    \end{equation}
    Inserting \eqref{luc},  \eqref{pert:coefficient-error} and
    \eqref{pert:source-error} into \eqref{pert:comparison-identity},
    and choosing $c_0$ sufficiently small to re-absorb the term
    $a(|Du|+|Dv|)|Du-Dv|^2$, we find
    \begin{equation}\label{pert:weighted-comparison}
        \miint_{Q_R^{(\lambda)}}
        a(|Du|+|Dv|)|Du-Dv|^2\,dxdt
        \leq CB(\lambda)\mathcal P_\lambda(R).
    \end{equation}
    On $Q_{R/2}^{(\lambda)}$, we have $|Du|+|Dv|\leq C\lambda$.
  The inequalities \eqref{ciu:a} and \eqref{Bcomeb} thus give
    \[
    \begin{split}
        a(|Du|+|Dv|)|Du-Dv|^2
        &\geq c\,a(\lambda)
        \left(\frac{|Du-Dv|}{\lambda}\right)^{s_a}|Du-Dv|^2\\
        &\geq cB(\lambda)
        \left(\frac{|Du-Dv|}{\lambda}\right)^{s_B},
    \end{split}
    \]
 which together with \eqref{pert:weighted-comparison}, upon restricting to $Q^{(\l)}_{R/2}$,  yields 
 \[
 \miint_{Q_{R/2}^{(\lambda)}}|Du-Dv|^{s_B}\,dxdt
        \leq C\lambda^{s_B}\mathcal P_\lambda(R)
 \]
so \eqref{pert:L2-comparison} follows by H\"older's inequality as $s_B\geq 2$.

Finally, let $C_0=C_0(\data,d)\geq1$ be the constant in
\eqref{pert:frozen-gradient}, and set $V\coloneqq v/C_0$.
Then
\[
    \partial_t V-
    \mathrm{div}\big(\widetilde{\A}(t,DV)\big)=0,
    \qquad
    \widetilde{\A}(t,\xi)\coloneqq
    C_0^{-1}\A(x_0,t,C_0\xi),\qquad  \sup_{Q_{R/2}^{(\lambda)}}|DV|\leq\lambda.
\]
Moreover,  by \eqref{ciu:a}, 
the vector field $\widetilde{\A}$ satisfies
\eqref{ass:Ahom} with the same function $a$ and
structural constants depending only on $\data$ and $d$.
Therefore, Theorem \ref{thm:gradhold} applied to $V$
on $Q_{R/2}^{(\lambda)}$ yields
\[
\begin{split}
    \operatorname*{osc}_{Q_r^{(\lambda)}}Dv
    &=C_0\operatorname*{osc}_{Q_r^{(\lambda)}}D V\leq C(\data,d)\,\lambda
    \left(\frac rR\right)^\beta,
    \qquad 0<r\leq R/2,
\end{split}
\]
where $\beta=\beta(\data,C_0)\in(0,1)$. This proves \eqref{pert:frozen-oscillation}, and completes the proof.

\end{proof}

\subsection{Proof of Theorem \texorpdfstring{\ref{main:thm}}{main theorem}}\label{subsec:pf:mainthm} In the following section, we establish the following:

\begin{proof}[Proof of Theorem \ref{main:thm} under \eqref{pert:smooth}]
\noindent\textbf{Step 1. Interpolation of the comparison radii.}
\label{subsec:pert:campanato}
Fix $Q_{2\rho}(x_0,t_0)\Subset \Omega_T$, and set
\begin{equation}\label{pert:scales}
\begin{split}
    \sigma&\coloneqq\min\left\{\alpha,1-\frac{N+2}{d}\right\},
    \\
    R_*&\coloneqq
    \left(\frac{c_*}{1+\Lambda_{\rm h}
    +\|f\|_{L^d(Q_{2\rho}(x_0,t_0))}}\right)^{1/\sigma},
\end{split}
\end{equation}
where $c_*=c_*(\data)\in(0,1)$ is chosen so small that
$2c_*\leq\min\{c_0,1\}$, with $c_0$ as in Lemma
\ref{lem:pert:comparison}.
We also let
\[
\rho\leq R_1<R_2\leq2\rho,
\]
and set
\begin{equation}\label{pert:interpolation-geometry}
    \lambda\coloneqq1+\|Du\|_{L^\infty(Q_{R_2}(x_0,t_0))},
    \qquad
    R_0\coloneqq\frac18\min\{R_2-R_1,R_*\}
    \min\{1,\sqrt{a(\lambda)}\}.
\end{equation}
We  observe that, for every $(x,t)\in Q_{R_1}(x_0,t_0)$, we  have
\begin{equation}\label{pert:geometric-inclusion}
    Q_{2R_0}^{(\lambda)}(x,t)\subset Q_{R_2}(x_0,t_0).
\end{equation}
Indeed, $2R_0\leq(R_2-R_1)/4$ and
$4R_0^2/a(\lambda)\leq(R_2-R_1)^2/16
\leq R_2^2-R_1^2$.

Notice also that, as $\lambda\geq1$, we have $b(\lambda)\geq b(1)=1$, and \eqref{ciu:a}, $-1<i_a\leq 0$ imply
\begin{equation}\label{pert:uniform-ratios}
    \frac{1+b(\lambda)}{b(\lambda)}\leq2,
    \qquad
    \frac{a(\lambda)^{1/d}}{b(\lambda)}
    =\lambda^{-1}a(\lambda)^{-(1-1/d)}
    \leq\lambda^{-1-i_a(1-1/d)}\leq1.
\end{equation}
Consequently, for all  $Q_R^{(\lambda)}(x,t)\subset Q_{2\rho}(x_0,t_0)$ and
$R\leq R_*\leq1$, the choice \eqref{pert:scales} into \eqref{pert:size} yields
\begin{equation}\label{pert:smallness}
    \mathcal P_\lambda(R)
    \leq2\big(\Lambda_{\rm h}+\|f\|_{L^d(Q_{2\rho}(x_0,t_0))}\big)R^\sigma
    \leq2c_*\left(\frac{R}{R_*}\right)^\sigma.
\end{equation}
In particular, this estimate is uniform in $\lambda\geq1$.
By \eqref{pert:smallness}, the assumptions of Lemma
\ref{lem:pert:comparison} are  thus satisfied for every $R\leq R_*$.

So fix $(x,t)\in Q_{R_1}(x_0,t_0)$ and $0<r\leq R/8$, with $R\leq R_0$ to be fixed.
Let $v$ be the solution to problem \eqref{pert:frozen} on $Q_R^{(\lambda)}(x,t)$.
By the properties of the integral averages, Lemma
\ref{lem:pert:comparison}, and \eqref{pert:smallness}, we have

\begin{equation}\label{pert:two-scales}
\begin{split}
    &\miint_{Q_r^{(\lambda)}(x,t)}
    |Du-(Du)_{Q_r^{(\lambda)}(x,t)}|^2\,dyds\\
    &\quad\leq
    2\,\miint_{Q_r^{(\lambda)}(x,t)}|Du-Dv|^2\,dyds
    +2\,\miint_{Q_r^{(\lambda)}(x,t)}
    |Dv-(Dv)_{Q_r^{(\lambda)}(x,t)}|^2\,dyds\\
    &\quad\leq C\lambda^2\left[
    \left(\frac Rr\right)^{N+2}
    \left(\frac{R}{R_*}\right)^{2\sigma/s_B}+\left(\frac rR\right)^{2\beta}
    \right].
\end{split}
\end{equation}

For $0<r<R_0/8$, we now choose
\begin{equation}\label{pert:radii}
    R\coloneqq R_0\left(\frac{8r}{R_0}\right)^\kappa,
    \qquad
    \kappa\coloneqq
    \frac{N+2+2\beta}{N+2+2\beta+2\sigma/s_B}\in(0,1),
\end{equation}
 and 
\begin{equation}\label{pert:exponent}
    \beta_*\coloneqq
    \frac{2\beta\sigma}{s_B(N+2+2\beta)+2\sigma}\in(0,1).
\end{equation}
Then $8r<R<R_0$. Since $R_0\leq R_*$, the two terms in
\eqref{pert:two-scales} satisfy
\[
\begin{split}
    \left(\frac rR\right)^{2\beta}
    &\leq C\left(\frac r{R_0}\right)^{2\beta(1-\kappa)},\\
    \left(\frac Rr\right)^{N+2}
    \left(\frac R{R_*}\right)^{2\sigma/s_B}
    &\leq C\left(\frac r{R_0}\right)^{
    \kappa(N+2+2\sigma/s_B)-(N+2)}.
\end{split}
\]
The choice of $\kappa$ and $\beta_*$ gives
\[
    2\beta(1-\kappa)
    =\kappa(N+2+2\sigma/s_B)-(N+2)
    =2\beta_*.
\]
We conclude that
\begin{equation}\label{pert:campanato}
    \miint_{Q_r^{(\lambda)}(x,t)}
    |Du-(Du)_{Q_r^{(\lambda)}(x,t)}|^2\,dyds
    \leq C\lambda^2\left(\frac r{R_0}\right)^{2\beta_*},
    \qquad0<r\leq R_0.
\end{equation}
Notice that, for $R_0/8\leq r\leq R_0$, this follows from the trivial bound
$|Du|\leq\lambda$, after  possibly increasing $C$. 

Then, by
\eqref{pert:campanato}, for  $j=0,1,\dots,$ we have
\[
\begin{split}
    &\left|(Du)_{Q_{2^{-j-1}r}^{(\lambda)}(x,t)}
    -(Du)_{Q_{2^{-j}r}^{(\lambda)}(x,t)}\right|\\
    &\quad\leq C\left(
    \miint_{Q_{2^{-j}r}^{(\lambda)}(x,t)}
    |Du-(Du)_{Q_{2^{-j}r}^{(\lambda)}(x,t)}|^2\,dyds
    \right)^{1/2}
    \leq C\lambda\,2^{-j\beta_*}(r/R_0)^{\beta_*}.
\end{split}
\]
Summing over $j\geq0$, and using the continuity of $Du$ at $(x,t)$ under
\eqref{pert:smooth}, gives
\begin{equation}\label{pert:point-to-mean}
    |Du(x,t)-(Du)_{Q_r^{(\lambda)}(x,t)}|
    \leq C\lambda\left(\frac r{R_0}\right)^{\beta_*},
    \qquad0<r\leq R_0.
\end{equation}
We remark that all constants in \eqref{pert:campanato} and
\eqref{pert:point-to-mean} depend only on $\data,\alpha,d$.

\medskip
\noindent\textbf{Step 2. The quantitative gradient bound.}
\label{subsec:pert:gradient}
    For each $(x,t)\in Q_{R_1}(x_0,t_0)$, integration by parts in
    space gives
    \begin{equation}\label{pert:mean-gradient}
        |(Du)_{Q_r^{(\lambda)}(x,t)}|
        \leq\frac{1}{|Q_r^{(\lambda)}|}
        \int_{t-r^2/a(\lambda)}^t
        \left|\int_{\partial B_r(x)}\big(u-\inf_{Q_{2\rho}(x_0,t_0)}u\big)\,\nu\,d\mathcal H^{N-1}
        \right|\,ds\leq C\frac{\omega}{r},
    \end{equation}
    where $\nu$ is the outward normal to $\partial B_r(x)$, and $\omega=\operatorname{osc}_{Q_{2\rho}(x_0,t_0)} u$.
    Thus \eqref{pert:point-to-mean} implies
    \[
        |Du(x,t)|\leq C\lambda(r/R_0)^{\beta_*}+C\omega/r.
    \]
    Choose $r=\theta R_0$, with
    $\theta=\theta(\data,\alpha,d)\in(0,1)$ sufficiently small
    that $C\theta^{\beta_*}\leq1/8$.
    Taking the supremum over $(x,t)\in Q_{R_1}(x_0,t_0)$, and recalling the definition of $R_0$ in \eqref{pert:interpolation-geometry}, we get
    \begin{equation}\label{pert:pre-absorption}
    \begin{split}
        \|Du\|_{L^\infty(Q_{R_1}(x_0,t_0))}
        \leq\frac{\lambda}{8}
        +C\frac{\omega}{\min\{R_2-R_1,R_*\}}
        \left(1+\frac{1}{\sqrt{a(\lambda)}}\right).
    \end{split}
    \end{equation}
We now claim that
    \begin{equation}\label{pert:interpolation}
    \begin{split}
        \|Du\|_{L^\infty(Q_{R_1}(x_0,t_0))}
        \leq{}&\frac14\|Du\|_{L^\infty(Q_{R_2}(x_0,t_0))}\\
        &+C\left[1+
        \frac{\omega}{\min\{R_2-R_1,R_*\}}
        +B^{-1}\left(
        \frac{\omega^2}{\min\{R_2-R_1,R_*\}^2}\right)\right].
    \end{split}
    \end{equation}
    If $\omega=0$, the assertion is immediate.
    If
    $\lambda<B^{-1}(\omega^2/\min\{R_2-R_1,R_*\}^2)$,
    it follows directly from $|Du|\leq\lambda$ by  \eqref{pert:interpolation-geometry}.
   In the complementary case, by \eqref{B2t} and
\eqref{Bcomeb}, we have
\[
\begin{split}
    \frac{\omega^2}{\min\{R_2-R_1,R_*\}^2}
    &=B\left(B^{-1}\left(
    \frac{\omega^2}{\min\{R_2-R_1,R_*\}^2}
    \right)\right)\\
    &\leq B(\lambda)\left[
    \frac{B^{-1}\left(
    \omega^2/\min\{R_2-R_1,R_*\}^2
    \right)}{\lambda}
    \right]^{i_B}\\
    &\leq C\lambda^2a(\lambda)\left[
    \frac{B^{-1}\left(
    \omega^2/\min\{R_2-R_1,R_*\}^2
    \right)}{\lambda}
    \right]^{i_B}.
\end{split}
\]
Taking square roots gives
\[
    \frac{\omega}
    {\min\{R_2-R_1,R_*\}\sqrt{a(\lambda)}}
    \leq C\lambda\left[
    \frac{B^{-1}\left(
    \omega^2/\min\{R_2-R_1,R_*\}^2
    \right)}{\lambda}
    \right]^{i_B/2},
\]
    and since $1<i_B\leq2$, Young's inequality with exponents $(\frac{2}{i_B},\frac{2}{2-i_B})$ implies
    \[C\frac{\omega}
        {\min\{R_2-R_1,R_*\}\sqrt{a(\lambda)}}\leq\frac\lambda8
        +C B^{-1}\left(
        \frac{\omega^2}{\min\{R_2-R_1,R_*\}^2}\right).
    \]
    Substituting into \eqref{pert:pre-absorption} and recalling
    $\lambda=1+\|Du\|_{L^\infty(Q_{R_2}(x_0,t_0))}$ proves
    \eqref{pert:interpolation}.

    Next, we observe that
    \[
        \min\{R_2-R_1,R_*\}
        \geq\frac{R_2-R_1}{\rho}\min\{\rho,R_*\},
        \qquad\rho\leq R_1<R_2\leq2\rho,
    \]
and thus by \eqref{B2t}, \eqref{pert:interpolation} becomes
\[
\begin{split}
    \|Du\|_{L^\infty(Q_{R_1}(x_0,t_0))}
    \leq{}&\frac14\|Du\|_{L^\infty(Q_{R_2}(x_0,t_0))}\\
    &+C\left(\frac{\rho}{R_2-R_1}\right)^{2/i_B}
    \left[1+\frac{\omega}{\min\{\rho,R_*\}}
    +B^{-1}\left(
    \frac{\omega^2}{\min\{\rho,R_*\}^2}\right)\right].
\end{split}
\]
Applying Lemma \ref{lem:iteration} with
$\phi(r)=\|Du\|_{L^\infty(Q_r(x_0,t_0))}$,
$t=R_1$, $s=R_2$, and $R=2\rho$, we finally obtain \eqref{pert:gradient}.

\medskip
\noindent\textbf{Step 3. H\"older continuity of the solution and the gradient.}
From the results of the previous step, we have
\[
    \|Du\|_{L^\infty(Q_\rho(x_0,t_0))}\leq\lambda.
\]
with $\lambda$ as in \eqref{pert:lambda-final}.
Then we fix
\[
    R_0\coloneqq\frac1{32}\min\{\rho,R_*\}
    \min\{1,\sqrt{a(\lambda)}\}.
\]
For every $(x,t)\in Q_{\rho/2}(x_0,t_0)$, the cylinders
$Q_{2R_0}^{(\lambda)}(x,t)$ are contained in
$Q_\rho(x_0,t_0)$.
Consequently, \eqref{pert:campanato} and
\eqref{pert:point-to-mean} hold for all these centers
with the same $R_0$ and $\lambda$.

Let $(x_1,t_1),(x_2,t_2)\in Q_{\rho/2}(x_0,t_0)$,
with $t_1\leq t_2$, and set
\[
    r=|x_1-x_2|+\sqrt{a(\lambda)(t_2-t_1)}.
\]
If $0<r\leq R_0/4$, then
\[
    Q_r^{(\lambda)}(x_1,t_1)\cup
    Q_r^{(\lambda)}(x_2,t_2)
    \subset Q_{2r}^{(\lambda)}(x_1,t_2).
\]
Since $(x_1,t_2)\in Q_{\rho/2}(x_0,t_0)$,
by \eqref{pert:campanato}
we obtain
\[
\begin{split}
    &\left|(Du)_{Q_r^{(\lambda)}(x_1,t_1)}
    -(Du)_{Q_{2r}^{(\lambda)}(x_1,t_2)}\right|+
    \left|(Du)_{Q_r^{(\lambda)}(x_2,t_2)}
    -(Du)_{Q_{2r}^{(\lambda)}(x_1,t_2)}\right|\\
    &\quad\leq C\left(
    \miint_{Q_{2r}^{(\lambda)}(x_1,t_2)}
    |Du-(Du)_{Q_{2r}^{(\lambda)}(x_1,t_2)}|^2\,dyds
    \right)^{1/2}\leq C\lambda\left(\frac r{R_0}\right)^{\beta_*}.
\end{split}
\]
Combining this estimate with \eqref{pert:point-to-mean},
applied at $(x_1,t_1)$ and $(x_2,t_2)$, gives
\[
    |Du(x_1,t_1)-Du(x_2,t_2)|
    \leq C\lambda\left(\frac r{R_0}\right)^{\beta_*}.
\]
If $r>R_0/4$, the same estimate follows from
$|Du|\leq\lambda$.
Recalling the definition of $R_0$, this proves \eqref{pert:holder}.

It remains to prove \eqref{pert:oscillation}.
For $(x,t)\in Q_{\rho/2}(x_0,t_0)$ and $0<r\leq\rho/2$,
we have $Q_r(x,t)\subset Q_\rho(x_0,t_0)$.
Applying \eqref{pert:gluing-general} with $R=r$ and $S=r^2$,
and using \eqref{pert:gradient}, we obtain
\[
\begin{split}
    \operatorname*{osc}_{Q_r(x,t)}u
    \leq C\bigg[
    &r\lambda+r b(\lambda)
    +\Lambda_{\rm h}r^{1+\alpha}(1+b(\lambda))\\
    &+r^{2-(N+2)/d}
    \|f\|_{L^d(Q_{2\rho}(x_0,t_0))}\bigg].
\end{split}
\]
Since $r\leq1$, $d>N+2$, and $b(\lambda)\geq b(1)=1$,
this gives \eqref{pert:oscillation}.
Together with the continuity of $Du$, this also proves
$u\in C^{1,1/2}_{\rm loc}(\Omega_T)$, and completes the proof.

\end{proof}

\section{Approximation procedure}\label{sec:approximation}
\label{subsec:pert:approximation}

In this section, we regularize Equation \eqref{eq:1}
and complete the proofs of Theorems
\ref{thm:grbddhom}, \ref{thm:gradhold},
\ref{main:thm}, and Lemma \ref{lem:osc:v},
by removing assumptions \eqref{smooth} and \eqref{pert:smooth}.

To this end, we fix two open cylinders
$Q\Subset\widetilde Q\Subset\Omega_T$, with $Q$
containing the closures of the cylinders
appearing in the statements of the
aforementioned results. We write
$Q=\B\times\mathcal T$, where $\B\subset\R^N$
is a ball and $\mathcal T=(t_1,t_2)\subset(0,T)$.

By Theorem \ref{thm:bddness}, or directly
by the boundedness assumption in the
corresponding case, we have
\[
    u\in L^\infty(\widetilde Q).
\]
Since subtracting the vector field
$t\mapsto\A(x_*,t,0)$, for a fixed $x_*$
in the spatial base of $\widetilde Q$, does not
change Equation \eqref{eq:1}, we may and
shall assume that $\A(x_*,t,0)=0$.
By \eqref{ovu:Linf}, the quantity
$|\A(x,t,0)|$ is then bounded on $\widetilde Q$.

Now let $\widetilde\rho\in C_c^\infty(\R^{N+1};[0,\infty))$ be a
nonnegative mollifier, supported in $B_1\times(-1,1)$, such that
$\int_{\R^{N+1}}\widetilde\rho\,dxdt=1$. For $\delta>0$, set
\[
    \widetilde\rho_\delta(x,t)\coloneqq
    \delta^{-N-2}\widetilde\rho\left(\frac{x}{\delta},\frac{t}{\delta^2}\right).
\]
For $\delta>0$ sufficiently small, we define the space-time convolutions
\[
    u_\delta^{\rm bd}\coloneqq\widetilde\rho_\delta*u,
    \qquad f_\delta\coloneqq\widetilde\rho_\delta*f
    \qquad\text{in a neighborhood of $\overline Q$.}
\]
In particular, $u_\delta^{\rm bd}\in C^\infty(\overline Q)$ and,
by Jensen's inequality and the properties of convolution,
\begin{equation}\label{boud}
\begin{split}
    \iint_Q B(|Du_\delta^{\rm bd}|)\,dxdt
    &\leq \iint_{\widetilde Q}B(|Du|)\,dxdt,\\
    \|u_\delta^{\rm bd}\|_{L^\infty(Q)}
    &\leq \|u\|_{L^\infty(\widetilde Q)},\qquad
    \|f_\delta\|_{L^d(Q)}\leq \|f\|_{L^d(\widetilde Q)}.
\end{split}
\end{equation}
Furthermore, writing
\[
    G_\delta(x,t)\coloneqq\widetilde\rho_\delta*\Big(\A\big(x,t,Du(x,t)\big)\Big),
\]
from \eqref{eq:1} we deduce that $u_\delta^{\rm bd}$ is solution to
\begin{equation}\label{homreg:bdtime}
    \partial_t u_\delta^{\rm bd}=\mathrm{div}\,G_\delta+f_\delta
    \qquad\text{in }Q.
\end{equation}
By the growth of $\A$ in \eqref{deb:ass:A},
Equations \eqref{tB:usef}, \eqref{wBbt} and Jensen's inequality, we infer
\begin{equation}\label{homreg:Gdelta}
    \iint_Q\widetilde B(|G_\delta|)\,dxdt
    \leq C\left[|\widetilde Q|+
    \iint_{\widetilde Q}B(|Du|)\,dxdt\right],
\end{equation}
where $C$ is independent of $\delta$.

Then let $\A_\e(x,t,\xi)$ be the regularization of $\A$
in $\widetilde Q$ provided by Lemma \ref{lem:approx:A}.
We also let $u_{\e,\delta}$ be solution to the parabolic problem
\begin{equation}\label{eq:homreg}
    \begin{cases}
        \partial_t u_{\e,\delta}-\mathrm{div}\big(
        \A_\e(x,t,Du_{\e,\delta})\big)=f_\delta
        \quad &\text{in $Q$}\\
        u_{\e,\delta}=u_\delta^{\rm bd}
        \quad &\text{on $\partial_{\rm par}Q$.}
    \end{cases}
\end{equation}

We have the following

\begin{proposition}\label{prop:liscia}
    For every $\e>0$ and all sufficiently small $\delta>0$,
    problem \eqref{eq:homreg} admits a unique weak solution $u_{\e,\delta}$.
    Moreover,
\begin{equation}\label{eq:homreg:energy}
\begin{split}
    &\limsup_{\e\to0}
    \iint_Q B_\e(|Du_{\e,\delta}|)\,dxdt
    \leq C\left[|\widetilde Q|+
    \iint_{\widetilde Q}B(|Du|)\,dxdt\right],\\
    &\|u_{\e,\delta}\|_{L^\infty(Q)}
    \leq\|u\|_{L^\infty(\widetilde Q)}
    +C\left(1+\|f\|_{L^d(\widetilde Q)}^{
    \frac{N+i_B}{i_B(N+1)-N}}\right),
\end{split}
\end{equation}
for some $C$ independent of $\e,\delta$.
Moreover, $u_{\e,\delta}\in C^\infty(Q)$,
and hence it solves \eqref{eq:homreg}
in the classical sense in $Q$.
In the homogeneous case, when $f=0$ and
$\A=\A_0(t,\xi)$, the first bound holds without
$|\widetilde Q|$, while the second reduces to
$\|u_{\e,\delta}\|_{L^\infty(Q)}
\leq\|u\|_{L^\infty(\widetilde Q)}$.
\end{proposition}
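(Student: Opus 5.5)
Existence and uniqueness come from monotone operator theory. For fixed $\e>0$ and small $\delta$, the field $\A_\e$ from Lemma \ref{lem:approx:A} is smooth in $(x,t,\xi)$. By \eqref{approx:A:unif} it is uniformly elliptic with quadratic growth, with constants depending on $\e$. So \eqref{eq:homreg} is a Cauchy--Dirichlet problem for a uniformly parabolic quasilinear equation with smooth boundary datum $u^{\rm bd}_\delta$ and smooth source $f_\delta$. I would write $u_{\e,\delta}=u^{\rm bd}_\delta+w$, with $w\in L^2(\mathcal T;W^{1,2}_0(\B))$ vanishing at $t=t_1$. Existence of $w$ then follows from the Lions theory of monotone evolution operators, since $\xi\mapsto\A_\e(x,t,\xi+Du^{\rm bd}_\delta)$ is strictly monotone, coercive and of linear growth. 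Uniqueness follows by testing the difference of two solutions with itself and using \eqref{strong:coer} for $\A_\e$. Interior smoothness comes from the classical quasilinear theory (Ladyzhenskaya--Solonnikov--Ural'tseva) for smooth uniformly parabolic structure. First, De Giorgi--Nash and the difference-quotient method give $Du_{\e,\delta}\in L^\infty_{\rm loc}$ and $C^{\gamma}$; then linear Schauder theory, applied to the differentiated equation and bootstrapped, gives $C^\infty(Q)$.

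For the energy bound I would test \eqref{eq:homreg} with $u_{\e,\delta}-u^{\rm bd}_\delta$, which is admissible. Subtracting the equation \eqref{homreg:bdtime} satisfied by $u^{\rm bd}_\delta$, the time-derivative terms combine into $\frac12\frac{d}{dt}\int(u_{\e,\delta}-u^{\rm bd}_\delta)^2\ge$ a nonnegative boundary term, which I drop. This gives
\[
\iint_Q \A_\e(x,t,Du_{\e,\delta})\cdot D(u_{\e,\delta}-u^{\rm bd}_\delta)\le \iint_Q G_\delta\cdot D(u_{\e,\delta}-u^{\rm bd}_\delta)+\iint_Q 0\cdot(\dots),
\]
since $f_\delta$ appears in both equations and cancels. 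I would then estimate the terms as follows.
\begin{itemize}
\item Lower bound: by \eqref{approx:A:co} and the boundedness of $|\A_\e(x,t,0)|$ (uniform in $\e$, from \eqref{approx:A:hold} and $\A(x_*,t,0)=0$), the left side is at least $c\iint B_\e(|Du_{\e,\delta}|)$ minus error terms.
\item Cross terms: these involve $\A_\e(x,t,Du_{\e,\delta})\cdot Du^{\rm bd}_\delta$, $G_\delta\cdot Du_{\e,\delta}$ and $G_\delta\cdot Du^{\rm bd}_\delta$. I bound them with the Young inequalities \eqref{Young}, \eqref{young1} in the $B_\e$-scale and reabsorb.
\item Passing to $\e\to0$: the remaining data terms involve $B_\e(|Du^{\rm bd}_\delta|)$ and $\widetilde B_\e(|G_\delta|)$. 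Both data are fixed smooth functions, so as $\e\to0$ these converge to $B(|Du^{\rm bd}_\delta|)$ and $\widetilde B(|G_\delta|)$ by \eqref{BE:unif} and the analogous statement for the conjugates. Those limits are then controlled by \eqref{boud} and \eqref{homreg:Gdelta}.
\end{itemize}
Taking $\limsup_{\e\to0}$ yields the first estimate in \eqref{eq:homreg:energy}.

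The main obstacle is uniformity in $\e$. The Young constants must depend only on $i_a,s_a$, which holds because $a_\e$ satisfies \eqref{iaesae} with the same indices. In the homogeneous case $\A_\e(t,0)=0$ and $G_\delta$ carries no constant term, so the $|\widetilde Q|$ contribution disappears.

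For the $L^\infty$ bound I would apply Theorem \ref{thm:global:bddness} to \eqref{eq:homreg}, with $u_0=u^{\rm bd}_\delta$. The field $\A_\e$ satisfies \eqref{ass:A} and \eqref{A:hold} with $a_\e$ in place of $a$, with constants depending only on $\data$ and the same $i_a,s_a$, so the constant there is independent of $\e$. By \eqref{boud}, $\|f_\delta\|_{L^d}\le\|f\|_{L^d(\widetilde Q)}$ and $\|u^{\rm bd}_\delta\|_\infty\le\|u\|_{L^\infty(\widetilde Q)}$. One caveat is that Theorem \ref{thm:global:bddness} as written contains the additive constant $C$ even when $f=0$. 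In the homogeneous case I would instead use the comparison principle: constants solve $\partial_t w-\mathrm{div}\,\A_{0,\e}(t,Dw)=0$ because $\A_{0,\e}(t,0)=0$. Testing with $(u_{\e,\delta}-\sup u^{\rm bd}_\delta)_+$, and symmetrically from below, then gives the sharp bound $\|u_{\e,\delta}\|_{L^\infty(Q)}\le\|u\|_{L^\infty(\widetilde Q)}$.
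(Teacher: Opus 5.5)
Your proposal is correct and follows essentially the same route as the paper: monotone-operator theory for existence and uniqueness, then difference quotients, De Giorgi--Nash--Moser and Schauder bootstrapping for interior smoothness, then testing with $u_{\e,\delta}-u^{\rm bd}_\delta$ against \eqref{homreg:bdtime} using $\e$-uniform Young inequalities and the local uniform convergence $B_\e\to B$, $\widetilde B_\e\to\widetilde B$ on the fixed bounded data, and finally Theorem \ref{thm:global:bddness} for the $L^\infty$ bound, with the maximum principle in the homogeneous case. One small imprecision is that the uniform bound on $|\A_\e(x,t,0)|$ does not follow from \eqref{approx:A:hold} and $\A(x_*,t,0)=0$ alone, since $\A_\e(x_*,t,0)$ need not vanish; it comes from $\A_\e(x,t,0)=c_\gamma(x,t)$ being a mollification of the bounded field $\A(\cdot,\cdot,0)$, see \eqref{approx:A:zero} and \eqref{ovu:Linf}.
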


\begin{proof}

For every fixed $\e>0$,
the uniform ellipticity and linear growth
of $\A_\e$, ensured by
\eqref{approx:A:unif}, allow us to apply
the standard existence theory for
monotone parabolic operators; see, e.g.,
\cite[Chapter II, Section 1, Theorem 1.2
and Remark 1.10]{Lions1969}, \cite{Minty1962, Showalter1997} and \cite[Chapter 8]{Roubicek2013}.
Consequently, problem \eqref{eq:homreg}
admits a unique weak solution
\[
\begin{split}
    u_{\e,\delta}\in
    C(\overline{\mathcal T};L^2(\B))
    \cap L^2(\mathcal T;W^{1,2}(\B)),\quad \text{such that}\\
    u_{\e,\delta}-u_\delta^{\rm bd}
    \in L^2(\mathcal T;W^{1,2}_0(\B)),\quad\text{and}\quad
    u_{\e,\delta}(\cdot,t_1)=u_\delta^{\rm bd}(\cdot,t_1)
    \ \text{in }L^2(\B).
\end{split}
\]

Let us now prove its smoothness; we adapt the proof of
\cite[Proposition 4.2]{Antonini2027}
to the parabolic setting.
Throughout the proof, the constants
are allowed to depend on $\e$ and $\delta$.

We first show that
\begin{equation}\label{homreg:W22}
    D^2u_{\e,\delta}\in L^2_{\rm loc}(Q).
\end{equation}
For $k=1,\dots,N$ and $h\neq0$, let
\[
    \Delta_h^k u_{\e,\delta}(x,t)
    \coloneqq
    \frac{u_{\e,\delta}(x+he_k,t)
    -u_{\e,\delta}(x,t)}{h}.
\]
Taking spatial difference quotients
in \eqref{eq:homreg} gives
\begin{equation}\label{homreg:difference}
    \partial_t(\Delta_h^k u_{\e,\delta})
    -\mathrm{div}\left(
    \mathbb A_h
    D(\Delta_h^k u_{\e,\delta})
    \right)=\mathrm{div}\,H_h+\Delta_h^k f_\delta,
\end{equation}
where
\[
\begin{split}
    \mathbb A_h(x,t)
    &\coloneqq\int_0^1
    \nabla_\xi\A_\e\big(
    x+he_k,t,Du_{\e,\delta}(x,t)+s\big(
    Du_{\e,\delta}(x+he_k,t)
    -Du_{\e,\delta}(x,t)
    \big)\big)\,ds,\\
    H_h(x,t)&\coloneqq
    \frac{\A_\e(x+he_k,t,Du_{\e,\delta}(x,t))
    -\A_\e(x,t,Du_{\e,\delta}(x,t))}{h}.
\end{split}
\]
By \eqref{approx:A:unif}, we have
\begin{equation}\label{homreg:matell}
    c\e|\zeta|^2
    \leq\mathbb A_h(x,t)\zeta\cdot\zeta,
    \qquad
    |\mathbb A_h(x,t)|
    \leq C\e^{-1}\quad\text{for every $\zeta\in\R^N$.}
\end{equation}
For every fixed $\e>0$ and every $Q'\Subset Q$,
Lemma \ref{lem:approx:A}  gives
$\sup_{Q\times\R^N}|\nabla_x\A_\e|<\infty$.
Thus, by the properties of difference quotients,  $H_h$ and $\Delta_h^kf_\delta$ are
locally bounded independently of $h$.

Let $\eta\in C_c^\infty(Q;[0,1])$.
For $|h|$ sufficiently small, we test
\eqref{homreg:difference} with
$\eta^2\Delta_h^k u_{\e,\delta}$.
Integrating over the portion of $Q$
preceding a time $\tau$ and using
Steklov averages to justify the
time integration by parts, we obtain
\[
\begin{split}
    &\frac12\int_{\B\times \{\tau\}}
    \eta^2
    |\Delta_h^k u_{\e,\delta}|^2\,dx+
    \iint_{Q\cap\{t<\tau\}}
    \eta^2\mathbb A_h
    D(\Delta_h^k u_{\e,\delta})
    \cdot D(\Delta_h^k u_{\e,\delta})
    \,dxdt\\
    &=
    \iint_{Q\cap\{t<\tau\}}
    \eta\,\partial_t\eta\,
    |\Delta_h^k u_{\e,\delta}|^2
    \,dxdt-
    2\iint_{Q\cap\{t<\tau\}}
    \eta\,\Delta_h^k u_{\e,\delta}\,
    \mathbb A_hD(\Delta_h^k u_{\e,\delta})
    \cdot D\eta\,dxdt\\
    &\quad-\iint_{Q\cap\{t<\tau\}}
    H_h\cdot D(\eta^2\Delta_h^k u_{\e,\delta})\,dxdt
    +\iint_{Q\cap\{t<\tau\}}
    \eta^2\Delta_h^k f_\delta\,
    \Delta_h^k u_{\e,\delta}\,dxdt.
\end{split}
\]
By \eqref{homreg:matell} and Young's
inequality, we have
\[
\begin{split}
    &2\eta|\Delta_h^k u_{\e,\delta}|
    |\mathbb A_hD(\Delta_h^k u_{\e,\delta})|
    |D\eta|+
    |H_h|\,\big|D(\eta^2\Delta_h^k u_{\e,\delta})\big|\\
    &\qquad\leq
    \frac{c\e}{2}\eta^2
    |D(\Delta_h^k u_{\e,\delta})|^2
    +C_{\e,\eta}\left(
    |\Delta_h^k u_{\e,\delta}|^2+|H_h|^2\right).
\end{split}
\]
Consequently,
\begin{equation}\label{homreg:diffest}
\begin{split}
    &\operatorname*{ess\,sup}_{\tau}
    \int_{\B\times \{\tau\}}
    \eta^2
    |\Delta_h^k u_{\e,\delta}|^2\,dx+
    c\e\iint_Q
    \eta^2|D(\Delta_h^k u_{\e,\delta})|^2
    \,dxdt\\
    &\leq
    C_{\e,\delta,\eta}\left(1+
    \iint_{\operatorname{spt}\eta}
    |\Delta_h^k u_{\e,\delta}|^2\,dxdt\right).
\end{split}
\end{equation}
Since $Du_{\e,\delta}\in L^2(Q)$,
the right-hand side is bounded
independently of $h$ thanks to the standard
estimates for difference quotients \cite[Lemmas 7.23--7.24]{GilbargTrudinger2001}.
Letting $h\to0$, we deduce
\eqref{homreg:W22}.

Moreover, since
$\nabla_\xi\A_\e$ and $\nabla_x\A_\e$
are locally bounded,
the chain rule gives
\[
    \partial_t u_{\e,\delta}
    =\mathrm{div}\big(
    \A_\e(x,t,Du_{\e,\delta})\big)+f_\delta
    \in L^2_{\rm loc}(Q).
\]
We may therefore differentiate
\eqref{eq:homreg} with respect to $x_k$
in the sense of distributions, obtaining
\[
\begin{split}
    &\partial_t(D_ku_{\e,\delta})
    -\mathrm{div}\left(
    \nabla_\xi\A_\e(x,t,Du_{\e,\delta})
    D(D_ku_{\e,\delta})
    \right)\\
    &\qquad=\mathrm{div}\big(
    (\partial_{x_k}\A_\e)(x,t,Du_{\e,\delta})\big)
    +\partial_{x_k}f_\delta.
\end{split}
\]
In particular, $D_ku_{\e,\delta}$
is a weak solution of a linear
uniformly parabolic equation in
divergence form, with locally bounded
right-hand side and coefficient
matrix satisfying
\[
    c\e|\zeta|^2
    \leq
    \nabla_\xi\A_\e(x,t,Du_{\e,\delta})
    \zeta\cdot\zeta,
    \qquad
    |\nabla_\xi\A_\e(x,t,Du_{\e,\delta})|
    \leq C\e^{-1}.
\]
By the parabolic
De Giorgi--Nash--Moser theorem
\cite{Moser1964}, \cite[III. Theorem 1.1]{DiBenedetto1993},
\cite[Chapter 12]{DiBenedettoGianazza2023},
we conclude that
\[
    Du_{\e,\delta}
    \in C^{\beta,\beta/2}_{\rm loc}(Q)
\]
for some $\beta\in(0,1)$,
possibly depending on $\e,\delta$.
Since $\A_\e\in C^\infty$,
it follows that
\[
    \nabla_\xi\A_\e
    (x,t,Du_{\e,\delta}),\quad
    \nabla_x\A_\e(x,t,Du_{\e,\delta})
    \in C^{\beta,\beta/2}_{\rm loc}(Q).
\]
Applying the interior Schauder
estimates for linear uniformly
parabolic equations in divergence
form (see \cite[Chapter IV, Theorem 4.8]{Lieberman1996})
to the equation satisfied by
$D_ku_{\e,\delta}$, we obtain
\[
    Du_{\e,\delta}
    \in C^{1+\beta,(1+\beta)/2}_{\rm loc}(Q).
\]
Consequently, $D^2u_{\e,\delta}
\in C^{\beta,\beta/2}_{\rm loc}(Q)$,
and again by the equation and
the smoothness of $\A_\e$ and $f_\delta$,
we deduce
\[
    \partial_t u_{\e,\delta}
    \in C^{\beta,\beta/2}_{\rm loc}(Q).
\]
Hence $u_{\e,\delta}\in C^{2+\beta,1+\beta/2}_{\rm loc}(Q)$.
A standard bootstrap argument, obtained by iterating
the preceding argument via Schauder estimates, then yields
$u_{\e,\delta}\in C^\infty(Q)$.

Next, we derive the energy estimate.
Testing \eqref{eq:homreg} with
$u_{\e,\delta}-u_\delta^{\rm bd}$, integrating over
$\B\times(t_1,\tau)$, where $t_1<\tau<t_2$, and using
\eqref{homreg:bdtime}, we obtain
\begin{equation}\label{eq:homreg:energy1}
\begin{split}
    &\frac12\int_{\B}
    |u_{\e,\delta}-u_\delta^{\rm bd}|^2(x,\tau)\,dx
    +\int_{t_1}^{\tau}\int_{\B}
    \A_\e(x,t,Du_{\e,\delta})\cdot Du_{\e,\delta}\,dxdt\\
    &\qquad=\int_{t_1}^{\tau}\int_{\B}
    \A_\e(x,t,Du_{\e,\delta})\cdot Du_\delta^{\rm bd}\,dxdt
    +\int_{t_1}^{\tau}\int_{\B}
    G_\delta\cdot D(u_{\e,\delta}-u_\delta^{\rm bd})\,dxdt.
\end{split}
\end{equation}
Observe that we have used the initial condition
$u_{\e,\delta}(\cdot,t_1)=u_\delta^{\rm bd}(\cdot,t_1)$, the fact that $u_{\e,\delta}-u_\delta^{\rm bd}$
has zero spatial trace, and that the terms containing
$f_\delta$ cancel.
By \eqref{approx:A:co} and the uniform bound on
$|\A_\e(x,t,0)|$, we have
\begin{equation}\label{eq:homreg:energy2}
\begin{split}
    &\frac12\int_{\B}
    |u_{\e,\delta}-u_\delta^{\rm bd}|^2(x,\tau)\,dx
    +c\int_{t_1}^{\tau}\int_{\B}
    B_\e(|Du_{\e,\delta}|)\,dxdt\\
    &\qquad\leq C\int_{t_1}^{\tau}\int_{\B}
    b_\e(|Du_{\e,\delta}|)|Du_\delta^{\rm bd}|\,dxdt
    +\int_{t_1}^{\tau}\int_{\B}
    |G_\delta|\big(|Du_{\e,\delta}|+|Du_\delta^{\rm bd}|\big)\,dxdt
    +C|\widetilde Q|.
\end{split}
\end{equation}
By Young's inequality \eqref{young1} for $B_\e$ and its
complementary Young function $\widetilde B_\e$, and recalling \eqref{iaesae}, we have
\[
\begin{split}
    &Cb_\e(|Du_{\e,\delta}|)|Du_\delta^{\rm bd}|
    +|G_\delta||Du_{\e,\delta}|
    \leq \frac{c}{2}B_\e(|Du_{\e,\delta}|)
    +C B_\e(|Du_\delta^{\rm bd}|)
    +C\widetilde B_\e(|G_\delta|),
\end{split}
\]
while, by Young's inequality \eqref{Young},
\[
    |G_\delta||Du_\delta^{\rm bd}|
    \leq B(|Du_\delta^{\rm bd}|)+\widetilde B(|G_\delta|).
\]
Inserting these bounds into \eqref{eq:homreg:energy2},
absorbing the corresponding term, and taking the
supremum over $\tau\in\mathcal T$, we obtain
\begin{equation}\label{eq:homreg:energy3}
\begin{split}
    &\operatorname*{sup}_{t\in\mathcal T}\int_{\B\times \{t\}}
    |u_{\e,\delta}-u_\delta^{\rm bd}|^2(x,t)\,dx
    +\iint_Q B_\e(|Du_{\e,\delta}|)\,dxdt\\
    &\qquad\leq C\left\{|\widetilde Q|+
    \iint_Q\Big[
    B_\e(|Du_\delta^{\rm bd}|)
    +\widetilde B_\e(|G_\delta|)
    +B(|Du_\delta^{\rm bd}|)
    +\widetilde B(|G_\delta|)\Big]dxdt\right\}.
\end{split}
\end{equation}
For each fixed $\delta>0$, the functions
$Du_\delta^{\rm bd}$ and $G_\delta$ are bounded on $Q$.
As $B_\e\to B$ locally uniformly by \eqref{BE:unif}, the complementary
Young functions $\widetilde B_\e$ also converge to
$\widetilde B$ locally uniformly.
Therefore, by \eqref{boud}, \eqref{homreg:Gdelta} and \eqref{eq:homreg:energy3} we get
\[
\begin{split}
    &\limsup_{\e\to0}\bigg\{
    \operatorname*{sup}_{t\in\mathcal T}
    \int_{\B\times \{t\}}|u_{\e,\delta}-u_\delta^{\rm bd}|^2(x,t)\,dx
    +\iint_Q B_\e(|Du_{\e,\delta}|)\,dxdt\bigg\}\\
    &\qquad\leq C\left[|\widetilde Q|+
    \iint_{\widetilde Q}B(|Du|)\,dxdt\right],
\end{split}
\]
which proves the first estimate in \eqref{eq:homreg:energy}.
For what concerns the second one, by Theorem
\ref{thm:global:bddness} and \eqref{boud}, we deduce
\[
    \|u_{\e,\delta}\|_{L^\infty(Q)}
    \leq\|u_\delta^{\rm bd}\|_{L^\infty(Q)}
    +C\left(1+\|f_\delta\|_{L^d(Q)}^{
    \frac{N+i_B}{i_B(N+1)-N}}\right),
\]
which yields \eqref{eq:homreg:energy}.
When $f=0$ and $\A=\A_0(t,\xi)$,
we may assume $\A_0(t,0)=0$ and hence
$\A_{0,\e}(t,0)=0$.
The terms involving $|\widetilde Q|$ then disappear
from the energy estimate in view of \eqref{deb:ass:A} with $\Lambda_{\rm h}=0$, and the parabolic maximum
principle yields
$\|u_{\e,\delta}\|_{L^\infty(Q)}
\leq\|u\|_{L^\infty(\widetilde Q)}$.
This completes the proof.
\end{proof}

In what follows, when dealing with the homogeneous case \eqref{eq:hom}, we take
$u=v$, $\A(x,t,\xi)=\A_0(t,\xi)$ and $f=0$,
and denote the preceding approximations by
$v_\delta^{\rm bd}$, $\A_{0,\e}$ and $v_{\e,\delta}$.
We remark that, by \eqref{approx:A:hold} with $\Lambda_{\rm h}=0$, $\A_{0,\e}$ is independent of $x$.

\begin{proof}[Completion of the proof of Theorems
\ref{thm:grbddhom} and \ref{thm:gradhold},
and Lemma \ref{lem:osc:v}: removal of \eqref{smooth}]
We make use of  \eqref{gr:bd1}-\eqref{gr:bd2},  \eqref{osc:v}, \eqref{osc:intr}-\eqref{osc:extr} applied to $v_{\e,\delta}$, with $a_\e, b_\e,B_\e$ in place of $a,b,B$, and get
\begin{equation}\label{iuy}
    \|Dv_{\e,\delta}\|_{L^\infty(Q_{R/2,S/4})}
 \le C\left\{\left(\frac S{R^2}E_{\e,\delta}\right)^{1/2}
 +F_\e^{-1}\left(\left(\frac{R^2}{S}\right)^{N/2}E_{\e,\delta}\right)\right\}\qquad \text{if $i_B>\frac{2N}{N+2}$,}
\end{equation}
where
\[
    F_\e(s)\coloneqq s^2a_\e(s)^{(N+2)/2}\quad\text{and}\quad E_{\e,\delta}\coloneqq\miint_{Q_{R,S}}B_\e(|Dv_{\e,\delta}|)\,dxdt,
\]
    we have
    \begin{equation}\label{iuy1}
        \begin{split}\|Dv_{\e,\delta}\|_{L^\infty(Q_{R/2,S/4})}\leq C\,\left\{\left(\frac{\operatorname{osc}_{Q_{R,S}}v_{\e,\delta}}{R}\right)+B_\e^{-1}\left(\frac{(\operatorname{osc}_{Q_{R,S}}v_{\e,\delta})^2}{S}\right) \right\},
\\
     \operatorname*{osc}_{Q_{R,S}}v_{\e,\delta}
        \leq C\left(
        R \,\|Dv_{\e,\delta}\|_{L^\infty(Q_{R,S})}+\frac{S}{R}b_\e(\|Dv_{\e,\delta}\|_{L^\infty(Q_{R,S})})
        \right), 
        \\
    \operatorname*{osc}_{Q_{r}} Dv_{\e,\delta}\leq C\,M_0\,\left(\frac{r}{R} \,a_\e^\star(M_0)\right)^\beta,\qquad M_0\coloneqq\|Dv_{\e,\delta}\|_{L^\infty(Q_R)},
        \end{split}
\end{equation}
and if  $Q^{(\lambda)}_R=Q_R^{(\l)}(x_0,t_0)=B_R(x_0)\times (t_0-R^2/a_\e(\l), t_0)$   is such that $\|Dv_{\e,\delta}\|_{L^\infty(Q_R^{(\l)})}\leq \lambda$, then
\begin{equation}\label{sue0}
      \operatorname*{osc}_{Q^{(\lambda)}_r} Dv_{\e,\delta}\leq C\,\lambda\,\left( \frac{r}{R}\right)^{\beta}\quad \text{for all }0<r\leq R
\end{equation}

    We remark that the constant $C=C(\data)$ in view of \eqref{approx:A:ell} and \eqref{iaesae}. In particular, by \eqref{eq:homreg:energy}, \eqref{iuy1}, \eqref{BE:unif} and a standard covering argument, we have that
\[
\|v_{\e,\delta}\|_{C^{\theta,\theta/2}(Q')}+\|D v_{\e,\delta}\|_{C^{\theta,\theta/2}(Q')}\leq C\big(\data, \|v\|_{L^\infty(\widetilde Q)}, Q'\big)\quad \text{for all $Q'\Subset Q$,}
\]
    for some $\theta=\theta(\data)\in (0,1)$.

Therefore, by the Arzel\'a-Ascoli theorem,
up to a diagonal subsequence as $\e,\delta\to0$,
and possibly decreasing $\theta$, we have
\begin{equation}\label{c:to:w}
    v_{\e,\delta}\to w
    \quad\text{and}\quad
    Dv_{\e,\delta}\to Dw
    \qquad\text{in }C^{\theta,\theta/2}_{\rm loc}(Q),
\end{equation}
for some function $w$.

We claim that
\begin{equation}\label{w:energy}
    w\in L^\infty(Q)
    \cap L^B(\mathcal T;W^{1,B}(\B)),
\end{equation}
and that $w=v$ on $\partial_{\rm par}Q$.
To this end, first observe that, by \eqref{usso} applied to $B_\e$ and \eqref{iaesae}, we have $t^{i_B}\leq C(B_\e(t)+1)$ for $t\geq 0$, and thus   by
\eqref{eq:homreg:energy} along the chosen subsequence
we have
\begin{equation}\label{homreg:weakbounds}
\begin{split}
    &\iint_Q |Dv_{\e,\delta}|^{i_B}\,dxdt
    \leq
    C\left(
    1+\iint_{\widetilde Q}B(|Dv|)\,dxdt
    \right).
\end{split}
\end{equation}
which, up to further non-relabeled subsequences,
\[
    v_{\e,\delta}\to w
    \quad\text{weakly in }
    L^{i_B}(\mathcal T;W^{1,i_B}(\B)).
\]
Moreover, for every $Q'\Subset Q$,
by \eqref{c:to:w}, \eqref{BE:unif}
and \eqref{eq:homreg:energy}, we obtain
\[
    \iint_{Q'}B(|Dw|)\,dxdt
    \leq C\iint_{\widetilde Q}B(|Dv|)\,dxdt,
    \qquad
    \|w\|_{L^\infty(Q')}
    \leq\|v\|_{L^\infty(\widetilde Q)}.
\]
Letting $Q'\nearrow Q$ gives
\eqref{w:energy}.

We now show that $w=v$ on
$\partial_{\rm par}Q$.
By the properties of convolution,
\begin{equation}\label{homreg:bdconv}
    v_\delta^{\rm bd}\to v
    \quad\text{in }
    L^{i_B}(\mathcal T;W^{1,i_B}(\B)).
\end{equation}
On the other hand, since $ v_{\e,\delta}-v_\delta^{\rm bd}
    \in L^{i_B}(\mathcal T;W^{1,i_B}_0(\B))$, 
and these differences are uniformly
bounded in the same space by
\eqref{homreg:weakbounds} and
\eqref{boud}, the weak closedness
of $L^{i_B}(\mathcal T;W^{1,i_B}_0(\B))$
implies
\[
    w-v\in
    L^{i_B}(\mathcal T;W^{1,i_B}_0(\B)).
\]
Thus $w=v$ on the lateral boundary
of $Q$ in the sense of traces.

To identify the initial condition,
we recall that weak solutions to
\eqref{eq:hom} admit a representative
which is continuous in time with
values in $L^2_{\rm loc}(\Omega)$.
Consequently, the space-time convolution
gives
\[
    v_\delta^{\rm bd}(\cdot,t_1)
    \to v(\cdot,t_1)
    \qquad\text{in }L^2(\B).
\]
For every
$\varphi\in C_c^\infty(\B\times[t_1,t_2))$,
the weak formulation of
\eqref{eq:homreg} gives
\begin{equation}\label{homreg:weakform}
\begin{split}
    &-\iint_Q v_{\e,\delta}\,
    \partial_t\varphi\,dxdt
    +\iint_Q
    \A_{0,\e}(t,Dv_{\e,\delta})
    \cdot D\varphi\,dxdt=
    \int_{\B}
    v_\delta^{\rm bd}(x,t_1)
    \varphi(x,t_1)\,dx.
\end{split}
\end{equation}

By \eqref{c:to:w} and Lemma \ref{lem:approx:A}, 
passing to the limit we conclude that
\[
    -\iint_Q w\,\partial_t\varphi\,dxdt
    +\iint_Q\A_0(t,Dw)\cdot D\varphi\,dxdt=
    \int_{\B}v(x,t_1)\varphi(x,t_1)\,dx.
\]
In particular,
\[
    \partial_t w-\operatorname{div}\big(\A_0(t,Dw)\big)=0
    \qquad\text{in }Q.
\]
To justify the time continuity, observe that
\[
    w-v\in
    L^\infty(\mathcal T;L^2(\B))
    \cap L^B(\mathcal T;W^{1,B}_0(\B))
\]
and by Remark \ref{rem:energy}
\[
    \partial_t(w-v)
    =
    \operatorname{div}\big(
    \A_0(t,Dw)-\A_0(t,Dv)\big),
    \qquad
    \A_0(t,Dw)-\A_0(t,Dv)
    \in L^{\widetilde B}(Q;\R^N).
\]
The standard time-regularization argument in the Orlicz
energy spaces (see, e.g., \cite{ElmahiMeskine2005, AhmedOubeidBenkiraneSidiElVally2014}) therefore gives
\[
    w-v\in C([t_1,t_2];L^2(\B)).
\]
Since $v\in C([t_1,t_2];L^2(\B))$, the function $w$
also admits a representative in this space.
Moreover, the  weak formulation identifies its initial trace $w(\cdot,t_1)=v(\cdot,t_1)$ in $L^2(\B)$.

We have thus proved that $w$ and $v$
are weak solutions to the same equation
in $Q$, with the same parabolic boundary
values. By uniqueness,
we deduce that $ w=v$ in $Q$; consequently, \eqref{c:to:w} becomes 
\begin{equation*}
    v_{\e,\delta}\to v
    \quad\text{and}\quad
    Dv_{\e,\delta}\to Dv
    \qquad\text{in }
    C^{\theta,\theta/2}_{\rm loc}(Q)
\end{equation*}
along the chosen subsequence. Thereby passing to the limit in  \eqref{iuy}-\eqref{sue0}, and using   \eqref{ae:unif},
\eqref{be:unif} and \eqref{BE:unif}, we 
recover \eqref{gr:bd1}--\eqref{gr:bd2},
\eqref{osc:v}, and
\eqref{osc:intr}--\eqref{osc:extr}
for the original solution $v$. This completes the proofs.

\end{proof}

\begin{proof}[Completion of the proof of Theorem
\ref{main:thm}: removal of \eqref{pert:smooth}]
We make use of \eqref{pert:gradient}, \eqref{pert:holder}
and \eqref{pert:oscillation} applied to $u_{\e,\delta}$,
with $a_\e,b_\e,B_\e,f_\delta$ in place of
$a,b,B,f$. Fix $Q_{2\rho}(x_0,t_0)\Subset Q$,
with $0<2\rho\leq1$, and recalling the explicit expression of \eqref{pert:scales}, we set
\[
\begin{split}
    \omega_{\e,\delta}&\coloneqq
    \operatorname*{osc}_{Q_{2\rho}(x_0,t_0)}u_{\e,\delta},\\
    R_{*,\e,\delta}&\coloneqq\left(
    \frac{c_*}{1+\Lambda_{\rm h}
    +\|f_\delta\|_{L^d(Q_{2\rho}(x_0,t_0))}}
    \right)^{1/\sigma},\\
    \lambda_{\e,\delta}&\coloneqq
    1+\frac{\omega_{\e,\delta}}{\min\{\rho,R_{*,\e,\delta}\}}
    +B_\e^{-1}\left(\frac{\omega_{\e,\delta}^2}
    {\min\{\rho,R_{*,\e,\delta}\}^2}\right).
\end{split}
\]
 We then have
\begin{equation}\label{pert:approx-gradient}
    \|Du_{\e,\delta}\|_{L^\infty(Q_\rho(x_0,t_0))}
    \leq C\lambda_{\e,\delta},
\end{equation}
and
\begin{equation}\label{pert:approx-holder}
\begin{split}
    |Du_{\e,\delta}(x,t)-Du_{\e,\delta}(y,s)|\leq C\lambda_{\e,\delta}\left[
    \frac{|x-y|+\sqrt{a_\e(\lambda_{\e,\delta})|t-s|}}
    {\min\{\rho,R_{*,\e,\delta}\}\min\{1,\sqrt{a_\e(\lambda_{\e,\delta})}\}}
    \right]^{\beta_*}
\end{split}
\end{equation}
for all $(x,t),(y,s)\in Q_{\rho/2}(x_0,t_0)$.
Moreover, by \eqref{pert:oscillation},
\begin{equation}\label{pert:approx-gluing}
\begin{split}
    \operatorname*{osc}_{Q_r(x,t)}u_{\e,\delta}
    \leq C(\datah)\big[
    &\lambda_{\e,\delta}
    +b_\e(\lambda_{\e,\delta})\\
    &+\|f_\delta\|_{L^d(Q_{2\rho}(x_0,t_0))}
    \big]r
\end{split}
\end{equation}
whenever $(x,t)\in Q_{\rho/2}(x_0,t_0)$
and $0<r\leq\rho/2$.

We remark that $C=C(\data,\alpha,d)$ and
$\beta_*=\beta_*(\data,\alpha,d)$, are independent 
of $\e,\delta$, by \eqref{approx:A:ell},
\eqref{approx:A:hold} and \eqref{iaesae}.
In particular, by \eqref{boud} and
\eqref{eq:homreg:energy} and since $f_\delta\to f$ in $L^d$,
\[
    \omega_{\e,\delta}\leq
    2\|u_{\e,\delta}\|_{L^\infty(Q)}\leq C,
    \qquad
    R_{*,\e,\delta}\geq
    \left(\frac{c_*}{1+\Lambda_{\rm h}
    +2\|f\|_{L^d(\widetilde Q)}}\right)^{1/\sigma}>0.
\]
Since $B_\e(1)\to B(1)\geq c(i_a,s_a)>0$, by \eqref{iaesae}
and \eqref{BE:unif} we also have
$1\leq \lambda_{\e,\delta}\leq C$.
Furthermore, $a_\e(\lambda_{\e,\delta})$
is uniformly bounded above and away from zero
by \eqref{iaesae} and \eqref{ae:unif}.
Thus \eqref{pert:approx-gradient} and
\eqref{pert:approx-holder} give uniform local
bounds and H\"older estimates for $u_{\e,\delta},Du_{\e,\delta}$.
Therefore, by a standard covering argument,
\[
    \|u_{\e,\delta}\|_{C^{\theta,\theta/2}(Q')}
    +\|Du_{\e,\delta}\|_{C^{\theta,\theta/2}(Q')}
    \leq C\big(\data,\alpha,d,\Lambda_{\rm h},
    \|u\|_{L^\infty(\widetilde Q)},
    \|f\|_{L^d(\widetilde Q)},Q'\big)
    \quad\text{for all $Q'\Subset Q$,}
\]
for some $\theta=\theta(\data,\alpha,d)\in(0,1)$. By the Arzel\'a--Ascoli theorem,
up to a diagonal subsequence as $\e,\delta\to0$,
and possibly decreasing $\theta$, we have
\begin{equation}\label{pert:compactness}
    u_{\e,\delta}\to w
    \quad\text{and}\quad
    Du_{\e,\delta}\to Dw
    \qquad\text{in }C^{\theta,\theta/2}_{\rm loc}(Q),
\end{equation}
for some function $w$.
Now, as in the approximation argument for homogeneous equation, we show that the limit function
\[
    w\in  L^\infty(\mathcal T;L^2(\B))
    \cap L^B(\mathcal T;W^{1,B}(\B)),
\]
and that $w=u$ on $\partial_{\rm par}Q$.
To this end, since $t^{i_B}\leq C(B_\e(t)+1)$  by \eqref{usso} applied to $B_\e$
and \eqref{iaesae}, by \eqref{eq:homreg:energy} we have
\[
    \iint_Q|Du_{\e,\delta}|^{i_B}\,dxdt
    \leq C\left[1+
    \iint_{\widetilde Q}B(|Du|)\,dxdt\right],
\]
which, up to a further non-relabeled subsequence,
gives
\[
    u_{\e,\delta}\to w
    \quad\text{weakly in }
    L^{i_B}(\mathcal T;W^{1,i_B}(\B)).
\]
Moreover, for every $Q'\Subset Q$,
by \eqref{pert:compactness}, \eqref{BE:unif}
and \eqref{eq:homreg:energy}, we obtain
\[
    \iint_{Q'}B(|Dw|)\,dxdt
    \leq C\left[|\widetilde Q|+
    \iint_{\widetilde Q}B(|Du|)\,dxdt\right],
    \qquad
    \|w\|_{L^\infty(Q')}\leq C.
\]
Letting $Q'\nearrow Q$ gives the asserted
energy and boundedness properties.

We now show that $w=u$ on
$\partial_{\rm par}Q$.
By the properties of convolution,
\[
    u_\delta^{\rm bd}\to u
    \quad\text{in }
    L^{i_B}(\mathcal T;W^{1,i_B}(\B)).
\]
On the other hand, since
$u_{\e,\delta}-u_\delta^{\rm bd}
\in L^{i_B}(\mathcal T;W^{1,i_B}_0(\B))$,
the weak closedness of
$L^{i_B}(\mathcal T;W^{1,i_B}_0(\B))$
implies
\[
    w-u\in
    L^{i_B}(\mathcal T;W^{1,i_B}_0(\B)).
\]
Thus $w=u$ on the lateral boundary
of $Q$ in the sense of traces.

Again, by the time-continuity of $u$ with respect to $L^2_{\rm loc}(\Omega)$, and by the properties of space-time convolution 
give
\[
    u_\delta^{\rm bd}(\cdot,t_1)
    \to u(\cdot,t_1)
    \qquad\text{in }L^2(\B).
\]
For every
$\varphi\in C_c^\infty(\B\times[t_1,t_2))$,
the weak formulation of \eqref{eq:homreg} then gives
\[
\begin{split}
    &-\iint_Q u_{\e,\delta}\,
    \partial_t\varphi\,dxdt
    +\iint_Q
    \A_\e(x,t,Du_{\e,\delta})
    \cdot D\varphi\,dxdt\\
    &\quad=\iint_Q f_\delta\varphi\,dxdt
    +\int_{\B}
    u_\delta^{\rm bd}(x,t_1)
    \varphi(x,t_1)\,dx.
\end{split}
\]
By \eqref{pert:compactness}, Lemma \ref{lem:approx:A} and $f_\delta\to f$ in $L^d(Q)$,
passing to the limit we conclude that
\[
\begin{split}
    &-\iint_Q w\,\partial_t\varphi\,dxdt
    +\iint_Q\A(x,t,Dw)\cdot D\varphi\,dxdt\\
    &\quad=\iint_Qf\varphi\,dxdt
    +\int_{\B}u(x,t_1)\varphi(x,t_1)\,dx.
\end{split}
\]
In particular,
\[
    \partial_t w-\mathrm{div}(\A(x,t,Dw))=f
    \qquad\text{in }Q.
\]
Then, by Remark \ref{rem:energy}
\[
    \partial_t(w-u)
    =
    \operatorname{div}\big(
    \A(x,t,Dw)-\A(x,t,Du)\big),
    \qquad
    \A(x,t,Dw)-\A(x,t,Du)
    \in L^{\widetilde B}(Q;\R^N),
\]
so standard time-regularization argument in the Orlicz
energy spaces (see, e.g., \cite{ElmahiMeskine2005, AhmedOubeidBenkiraneSidiElVally2014}) and the  time continuity of $u$ w.r.t. the $L^2_{\rm loc}(\Omega)$ topology give
\[
    w\in C([t_1,t_2];L^2(\B)).
\]
Moreover, the  weak formulation identifies its initial trace $w(\cdot,t_1)=u(\cdot,t_1)$ in $L^2(\B)$.
 Therefore,  by uniqueness of solutions,
we deduce that $w=u$ in $Q$.
Consequently, \eqref{pert:compactness} becomes
\[
    u_{\e,\delta}\to u
    \quad\text{and}\quad
    Du_{\e,\delta}\to Du
    \qquad\text{in }
    C^{\theta,\theta/2}_{\rm loc}(Q)
\]
along the chosen subsequence.
Thereby passing to the limit in
\eqref{pert:approx-gradient}-\eqref{pert:approx-gluing}, and using
\eqref{ae:unif}, \eqref{be:unif}
and \eqref{BE:unif}, we recover
\eqref{pert:gradient} and \eqref{pert:holder}
for the original solution $u$.
This completes the proof of Theorem \ref{main:thm}.
\end{proof}

\section*{Statements and Declarations}

 \par\noindent {\bf Acknowledgments.} 
  The author is supported by the Italian Ministry of University and
Research (MUR) through the FIS 2 project "SiGmA - Singularities in Geometric Analysis: Minimal Surfaces
and Mean Curvature Flows", project code FIS-2023-02962, CUP G53C25000120001.

\bigskip
\par\noindent
\textbf{Declaration of AI assistance.}
The author used ChatGPT to assist with LaTeX typesetting,
language editing, and consistency checks.
All mathematical ideas, arguments, and results are the author's own.
The author takes full
responsibility for the content of the manuscript.

\bigskip{}{}

 \par\noindent {\bf Data availability statement.} Data sharing not applicable to this article as no datasets were generated or analyzed during the current study. 

\section*{Compliance with Ethical Standards}\label{conflicts}

\par\noindent
{\bf Funding}. This research was partly funded by GNAMPA   of the Italian INdAM - National Institute of High Mathematics (grant number not available).

\bigskip
\par\noindent
{\bf Conflict of Interest}. The author declares that there is no conflict of interest.

\printbibliography

\end{document}